\newtheorem{theorem}{Theorem}
\newtheorem{proposition}[theorem]{Proposition}
\newtheorem{lemma}[theorem]{Lemma}
\newtheorem{cor}[theorem]{Corollary}
\newtheorem{definition}[theorem]{Definition}
\newtheorem{remarque}[theorem]{Remark}
\def\operatorname#1{\mathop{\operator@font #1}\nolimits}%
\newcommand{\R}{\mathbb{R}}
\newcommand{\C}{\mathbb{C}}
\newcommand{\Z}{\mathbb{Z}}
\newcommand{\half}{{\frac{1}{2}}}
\newcommand{\thalf}{{\tfrac{1}{2}}}
\newcommand{\Imc}{{\mathcal{I}}m\ }
\newcommand{\Imag}{\mathcal{I}m\,}
\newcommand{\Id}{\operatorname{Id}}
\newcommand{\Sp}{\operatorname{Sp}}
\newcommand{\Or}{\operatorname{O}}
\newcommand{\card}{\operatorname{Card}}
\newcommand{\Gr}{\operatorname{Gr}}
\newcommand{\End}{\operatorname{End}}
\newcommand{\Mat}{\operatorname{Mat}}
\newcommand{\sign}{\operatorname{Sign}}
\newcommand{\tr}{^{\tau}\!}
\newcommand{\Ker}{\operatorname{Ker}}
\newcommand{\im}{\operatorname{Im}}
\newcommand{\mat}{\left(\begin{array}{cc}F & B \\C & D \end{array}\right)}
\newenvironment{proof}[1][{}]{{ \textsc{Proof{#1}:~}}}{{\hspace*{\fill}$\square$\\}}
\def\adots{\mathinner{\mkern2mu\raise 1pt\hbox{.}\mkern 3mu\raise
4pt\hbox{.}\mkern2mu\raise 8pt\hbox{{.}}}}
\title{The Conley-Zehnder index for a path of symplectic matrices}
\author{Jean Gutt \\
	\small\hbox{\parbox[t]{1.9in}{\begin{center}
	D{\'e}partement de Math{\'e}matique \\
	Universit{\'e} Libre de Bruxelles \\
	Campus Plaine, C. P. 218 \\
	Boulevard du Triomphe \\
	B-1050 Bruxelles \\
	Belgium\\
	jeangutt\char64ulb.ac.be\end{center}}}
	\hbox{\parbox[t]{.6in}{\begin{center}\rm and \end{center}}}
	\hbox{\parbox[t]{1.9in}{\begin{center}
	Universit\'e de Strasbourg\\
	IRMA\\
	7 rue Ren\'e Descartes\\
	67000 Strasbourg\\
	France\\
	{gutt\char64math.unistra.fr}\end{center}}}
	}
\date{} % Delete this line to display the current date
\begin{document} 

\maketitle

\begin{abstract}
We give here a self contained and elementary introduction to the Conley-Zehnder index for a path of symplectic matrices.
We start from the definition of the index as the degree of a map into the circle for a path starting at the 
identity and ending at a matrix for which $1$ is not an eigenvalue.
We prove some properties which characterize this index
using  normal forms for symplectic matrices obtained from geometrical considerations.
We explore the relations to Robbin-Salamon index
for paths of Lagrangians. We give an axiomatic  characterization of
the generalization of the Conley-Zehnder index for any continuous path of symplectic matrices
defined by Robbin and Salamon.
\end{abstract}

%%%%%%%%%%%%%%%%%%%%%%%%%
\section*{Introduction}
One can find in the literature different definitions of the index of a path of
symplectic matrices; the aim of this paper is to clarify the relations between those definitions
and to give a self contained presentation of the Conley-Zehnder index and its properties.
Along the way, we are led to establish normal forms for symplectic matrices via elementary geometrical methods.
We also give an axiomatic characterization of the generalization of the Conley-Zehnder index
defined by Robbin and Salamon for any path of symplectic matrices.\\

The first index for some paths of symplectic matrices was introduced by Arnold \cite{Arnold} who was considering loops of symplectic matrices.
The Conley-Zehnder index of a path \cite{Conley} of symplectic matrices is an integer
associated to each path of symplectic matrices which starts from the identity
and ends at a matrix which does not admit $1$ as an eigenvalue.

This index is useful in the definition of some homologies, in particular
Floer homology and contact homology (see for instance Salamon \cite{Salamon} and Bourgeois \cite{Bourgeois}).
These homologies  give invariants for corresponding geometrical structures
and are defined as  generalizations of Morse theory. The vector spaces of
the complexes defining those homologies are spanned by
 critical points of a functional defined on an infinite dimensional
space, typically a space of loops with values in a manifold.
To define the grading of those vector spaces, one has to associate
an integer to a critical point of the functional, typically a loop which is a solution
of some ordinary differential equation.
In classical variational calculus, one uses Morse index; but here
one cannot use the classical Morse theory since the Hessian has infinite dimensional stable and unstable manifolds.
The way it is done is to associate to those special loops a path
of symplectic matrices and the number associated to the loop
is  obtained from the Conley-Zehnder index of the corresponding path of matrices.
Links between the Conley-Zehnder index and the Morse index have been obtained by Viterbo \cite{Viterbo} for cotangent bundles and for $C^2$ small hamiltonians by Salamon and Zehnder \cite{SalamonZehnder}. Links between the Conley-Zehnder index and Leray index, including generalized
index and axiomatic characterization, have been obtained by de Gosson \cite{DeGosson} (see also the references therein).
\\

We have chosen to start here with the definition of the Conley-Zehnder index given in Salamon \cite{Salamon}, as
 the degree of a map into the circle for an admissible path, i.e. a path $\psi:[0,1]\rightarrow  \Sp(\R^{2n},\Omega_0)$ starting at the 
identity ($\psi(0)=\Id$) and ending at a matrix for which $1$ is not an eigenvalue.

An important result used in this definition is the fact that semisimple matrices are dense in the symplectic group.
This we prove, using  normal forms for symplectic matrices.
Normal forms of symplectic matrices can be found in the literature (for instance in Long \cite{Long}) ; we give here
a construction using elementary geometrical methods; the normal forms  we obtain
agree with those of Long, with some more details which are useful in our characterization of the generalized
index.

We include proofs of the properties of the Conley-Zehnder index given in \cite{Salamon}: its naturality (i.e. invariance under conjugation by a path of symplectic matrices), the homotopy property
(i.e. the index is constant under continuous deformations of admissible paths of symplectic matrices), the zero property (i.e.  it vanishes on an admissible path of matrices such that $\psi(s)$ has no eigenvalue on the circle for $s>0$), the product property (i.e. it is additive with respect to the decomposition of the symplectic vector space as a symplectic direct sum of  two symplectic subspaces), the loop property (i.e. the index of the path $\psi'$ obtained by multiplying the path $\psi$  by a loop of symplectic matrices $\phi$ is the sum of the index of $\psi$ and twice the Maslov
index of $\phi$), the signature property (i.e. the index of a path $\psi(t)=\exp tJ_0S$ is equal to half the signature of $S$ if $S$
is a symmetric non degenerate matrix with all eigenvalues of absolute value $<2\pi$), the determinant property $\bigl($i.e. the parity of the Conley-Zehnder index depends only on $\psi(1)$, precisely 
$(-1)^{n- \mu_{CZ}(\psi)} = \operatorname{sign}\det(\Id - \psi(1))\bigr)$ and the inverse property
$\bigl($i.e. $\mu_{CZ}(\psi^{-1}) = \mu_{CZ}(\psi\tr) = -\mu_{CZ}(\psi)\bigr)$.\\
We show that the homotopy, loop and signature properties characterize the Conley-Zehnder index on admissible paths.\\

In \cite{RobbinSalamon}, Robbin and Salamon define a Maslov-type index with half integer values for a continuous path of Lagrangians in a symplectic vector space $(\R^{2n},\Omega_0).$
This index depends on the choice of a reference Lagrangian.  
Robbin and Salamon generalize the Conley-Zehnder index to any continuous path of symplectic matrices as the index of the path of Lagrangians in $(\R^{2n}\times \R^{2n},\overline{\Omega} = -\Omega_0 \oplus \Omega_0)$ given by the graphs of the matrices, with reference Lagrangian given by the diagonal.
We include the proof that this new index, which we call the Robbin-Salamon index, does  satisfy the homotopy, loop and signature properties on the set of admissible paths.
We show that this  index is invariant under conjugation by a path of symplectic matrices.
We prove that the Robbin-Salamon index is characterized by its invariance under homotopies with fixed end points, its additivity under catenation of paths,   the fact that it vanishes on any paths where the dimension of the eigenspace of eigenvalue $1$ is constant and its value on paths  $\psi(t)=\exp tJ_0S$ with  $S$
symmetric with all eigenvalues of absolute value $<2\pi$.

Another index associated by Robbin and Salamon to a path of symplectic matrices is the index of the path of Lagrangians in $(\R^{2n},\Omega_0)$ given by the image
of a fixed Lagrangian under the matrices.
We show that this index does not coincide with the previous one.
Nevertheless, we show that both indices coincide on symplectic shears.\\

Although we have included proofs of many known  results  in this paper, we hope that this presentation may  help a reader who is first introduced to the subject.\\

{\bf{Thanks}}{ This work is an extension of part of my Master thesis presented in May 2010 at the Universit\'e Libre de Bruxelles.
I thank my supervisors, Fr\'ed\'eric Bourgeois and Alexandru Oancea who introduced me to this subject, encouraged me to write this text and suggested many improvements.}

%%%%%%%%
\tableofcontents

%%%%%%%%%%%%%%%%%%%%%
\section{The symplectic and the unitary groups}

Consider a real vector space $V$ of dimension $2n$ with a non degenerate skewsymmetric bilinear form $\Omega$ and a compatible complex structure $j$  (i.e. $j : V\rightarrow V$ is linear,
$j^2=-\Id, \Omega(jv,jw)=\Omega(v,w)~\forall v,w\in V$ and the symmetric bilinear form $g$ defined by $g(v,w):=\Omega(v,jw)$ is positive definite).\\
We choose a basis of $V$ (identifying thus $V$ with $\R^{2n})$ in which the  matrix associated to $\Omega$ is $\Omega_0: =
\left( \begin{array}{cc}
	0 & \Id \\
	-\Id & 0
\end{array} \right)$
and the matrix associated to $j$ is $ J_0 :=
\left( \begin{array}{cc}
	0 & -\Id \\
	\Id & 0
\end{array} \right) .$
The matrix associated to $g$ in this basis is $\Id.$
We obtain such a basis $\{\, e_1,\ldots,e_n,f_1,\ldots,f_n\,\}$  by induction on $n$,  choosing a vector $e_1$ such that $g(e_1,e_1)=1$ and letting $f_1=je_1.$
We consider then $V'$ the  subspace which is orthogonal to the space generated by $\{ e_1, f_1\};$ it is symplectic and of dimension $2n-2.$
\begin{definition}
	The symplectic group $\Sp(V,\Omega)$ is the set of linear transformations of $V$ which preserve $\Omega.$
	\begin{eqnarray}
		\Sp(\R^{2n},\Omega_0)&=& \left\{ {A} \in \Mat (2n\times 2n,\R) \ \vert \ {A}\tr \ \Omega_0 {A} = \Omega_0 \right\} \\
		&=& \left\{ \mat \left\vert 
		\begin{array}{l}
			F, B, C, D  \in \Mat (n\times n,\R)\\
			F\tr C 
			\textrm{~and~}  B\tr D\ \textrm{~are symmetric}\\
			F\tr D -  C\tr  B = \Id
		\end{array}\right. \right\}
	\end{eqnarray}
where ${A}\tr$ denotes the transpose of the matrix $A$.\\
	In particular $\Sp(\R^{2},\Omega_0)=Sl(2,\R)= \left\{ \, {A} \in \Mat (2\times 2,\R)\, \vert \, \det(A)=1\,\right\}.$
	
	The orthogonal group $\Or(V,g)$ is the set of linear transformations of $V$ which preserves $g.$
	We have $\Or\left(\R^{2n}\right) = \left\{ {A} \in \End \left( \R^{2n}\right)\ \vert \ {A}\tr {A} = \Id \right\}$
	\begin{equation}
		\Or\left(\R^{2n}\right) = \left\{ \mat\left\vert
		\begin{array}{ccc}
			F\tr F +  C\tr C & = & \Id \\ 
			B\tr B+D\tr D & = & \Id \\ 
			F\tr B+ C\tr D & = & 0
		\end{array}\right. \right\}.
	\end{equation}
\end{definition}
\begin{definition}
	The real vector space $V$ of dimension $2n$ with the complex structure $j$ is identified with a complex vector space $W$ of dimension $n$: $\left( \R^{2n},J_0\right) \cong \C^n$
	identifying the point $\left(x,y\right):=\left(x_1,\ldots,x_n,y_1,\ldots,y_n\right)\in \R^{2n}$ with the point
	\begin{equation*}
		z:=\left(z_1=x_1+iy_1,\ldots,z_n=x_n+iy_n\right) \in \C^n
	\end{equation*}
	and the map $j$ corresponds  to the multiplication by $i:$
	\begin{equation*}
		J_0
		\left(\begin{array}{c}
			x\\
			y
		\end{array}\right) =
		\left( \begin{array}{cc}
			0&-\Id \\
			\Id&0
		\end{array} \right)
		\left( \begin{array}{c}
			x \\
			y
		\end{array} \right) =
		\left( \begin{array}{r}
			-y \\
			x
		\end{array} \right)\Leftrightarrow i(x+iy)=-y+ix.
	\end{equation*}
	The Hermitian product on $\C^n, \left<( z_1,\ldots,z_n),(z'_1,\ldots,z'_n)\right> = \overline{z_1}z'_1+\cdots+\overline{z_n}z'_n,$
	corresponds to the bilinear form $h$ on $\R^{2n}$ determined by the the metric and the $2$-form:
	\begin{eqnarray*}
		h \bigl( (x,y), (x',y') \bigr) & = & (x_1-i y_1)(x'_1+iy'_1)+\cdots+(x_n-i y_n)(x'_n+iy'_n)\\
		& =  & (x_1x'_1 + y_1 y'_1 + \cdots + x_n x'_n + y_n y'_n )\\
		& & \qquad \qquad+i (x_1y'_1-y_1x'_1 + \cdots + x_n y'_n - y_n x'_n )\\
		& = & g \bigl( (x,y), (x',y') \bigr) + i \Omega \bigl( (x,y), (x',y') \bigr).
	\end{eqnarray*}
\end{definition}
A $\C$-linear map $ \tilde{{A}}$ on $\C^n,(\tilde{{A}} : \C^n \rightarrow \C^n ~\textrm{with}~\tilde{A}\circ i= i\circ \tilde{A})$ corresponds to a  $\R$-linear map $A$ on
$\R^{2n}~A : \R^{2n} \rightarrow \R^{2n}$ such that $AJ_0=J_0A.$
Thus
\begin{lemma}
	\begin{equation*}
		A = \mat \textrm{defines a }\C{\textrm{-linear map }} \tilde{A}{\textrm{  iff }}  D=F  \textrm { and } C=-B;
	\end{equation*}
	in this case $\tilde{A}(z) = Fx+By+i(Cx+Dy) = (F+iC)(x+iy)= (F+iC)z.$
\end{lemma}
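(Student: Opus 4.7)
The plan is to verify the stated characterization directly from the $\mathbb{C}$-linearity condition $AJ_0 = J_0 A$, which was noted to be equivalent to $\tilde A$ being $\mathbb{C}$-linear in the discussion preceding the lemma. First I would compute the two block products explicitly:
\begin{equation*}
AJ_0 = \mat \left(\begin{array}{cc} 0 & -\Id \\ \Id & 0 \end{array}\right) = \left(\begin{array}{cc} B & -F \\ D & -C \end{array}\right),
\end{equation*}
and similarly
\begin{equation*}
J_0A = \left(\begin{array}{cc} 0 & -\Id \\ \Id & 0 \end{array}\right)\mat = \left(\begin{array}{cc} -C & -D \\ F & B \end{array}\right).
\end{equation*}
Equating the four blocks yields exactly the pair of relations $D=F$ and $C=-B$ (each relation appears twice), proving the first assertion.

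For the second assertion, I would just translate the action of $A$ through the identification $(x,y)\leftrightarrow x+iy$. The image of $(x,y)$ under $A$ is $(Fx+By,\,Cx+Dy)$, so with $D=F$ and $C=-B$ this corresponds to
\begin{equation*}
\tilde A(z) = (Fx+By) + i(Cx+Fy).
\end{equation*}
On the other hand $(F+iC)(x+iy) = Fx - Cy + i(Cx+Fy)$, which coincides with the previous expression because $-C=B$. Hence $\tilde A(z)=(F+iC)z$, as claimed.

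There is no serious obstacle here; the entire argument is a matrix calculation followed by a one-line verification under the $\mathbb{R}^{2n}\cong\mathbb{C}^n$ identification. The only small point to be careful about is to read off both consequences $D=F$ and $C=-B$ from the block equation and to remember to use $C=-B$ (rather than $B=-C$) when matching the real part of $(F+iC)z$ with the real part of $\tilde A(z)$.
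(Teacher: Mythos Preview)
Your proof is correct and is exactly the verification the paper has in mind: the lemma is stated without proof immediately after the remark that $\tilde A$ is $\C$-linear iff $AJ_0=J_0A$, so your block computation and the subsequent check under the identification $(x,y)\leftrightarrow x+iy$ are precisely what is being left to the reader.
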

\begin{definition}
	The unitary group $U(n)$ is the set of linear transformations of $\C^n$ which preserve the hermitian product.
	\begin{equation*}
		\textrm{U}\left(n\right) = \left\{ \tilde{\mathcal{A}} : \C^n \rightarrow \C^n \,\vert \,\C\textrm{-linear and}\ \overline{\tilde{\mathcal{A}}}\tr \tilde{\mathcal{A}}=\Id \right\}
	\end{equation*}
\end{definition}
\begin{proposition}
	Considering $\textrm{U}\left(n\right)$ as a set of transformation of $\R^{2n}$, we have:
	\begin{equation*}
		\textrm{U}\left(n\right) = \textrm{O}\left(\R^{2n}\right) \cap \textrm{Sp}\left(\R^{2n},\Omega_0\right).
	\end{equation*}
\end{proposition}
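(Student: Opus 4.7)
The plan is to establish the two inclusions separately, using the decomposition $h = g + i\Omega$ identified in the preceding definition together with the lemma characterizing $\C$-linearity.

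For the inclusion $U(n) \subseteq \Or(\R^{2n}) \cap \Sp(\R^{2n},\Omega_0)$, I take $\tilde A \in U(n)$ and view it as an $\R$-linear map $A$ on $\R^{2n}$. Unitarity means that $A$ preserves $h$, and since $A$ is $\C$-linear it commutes with $J_0$ so the real and imaginary parts of $h\bigl(Av,Aw\bigr) = h(v,w)$ are $g(Av,Aw)=g(v,w)$ and $\Omega(Av,Aw)=\Omega(v,w)$. This shows $A \in \Or(\R^{2n}) \cap \Sp(\R^{2n},\Omega_0)$.

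For the reverse inclusion, I would start from $A \in \Or(\R^{2n}) \cap \Sp(\R^{2n},\Omega_0)$, so $A\tr A = \Id$ and $A\tr \Omega_0 A = \Omega_0$. Combining these gives $\Omega_0 A = (A\tr)^{-1}\Omega_0 = A\Omega_0$, i.e.\ $A$ commutes with $\Omega_0$. Since one checks directly from the chosen basis that $J_0 \Omega_0 = \Id$, so $\Omega_0 = -J_0$, this is equivalent to $AJ_0 = J_0 A$. By the lemma, $A$ therefore has the block form with $D=F$ and $C=-B$, hence defines a $\C$-linear map $\tilde A$ on $\C^n$. Finally, since $A$ preserves both $g$ and $\Omega$ and $h = g + i\Omega$, we get $h(\tilde A z, \tilde A z') = h(z,z')$, so $\tilde A \in U(n)$.

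No step is really hard; the only thing to watch is the small computation identifying $\Omega_0$ with $-J_0$ (equivalently $\Omega_0^{-1} = J_0$), which is what lets the orthogonality and symplectic conditions combine to give commutation with $J_0$. This is the classical ``two out of three'' principle: among the three structures $(g,\Omega,J_0)$ that are linked by the compatibility relation $g(\cdot,\cdot) = \Omega(\cdot,J_0\cdot)$, preserving any two of them is equivalent to preserving all three together with $\C$-linearity.
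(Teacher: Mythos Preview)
Your proof is correct and follows essentially the same approach as the paper. The only cosmetic difference is in the reverse inclusion: the paper derives $J_0A=AJ_0$ from the compatibility relation $g(u,v)=\Omega(u,J_0v)$ and the nondegeneracy of $\Omega$, whereas you reach the same conclusion via the matrix identity $\Omega_0=-J_0$ and the observation that $A^\tau=A^{-1}$ turns the symplectic condition into $\Omega_0A=A\Omega_0$; these are two phrasings of the same one-line computation.
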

\begin{proof}
	If $A \in \End(\R^{2n})$ corresponds to a unitary transformation, we have
	\begin{equation*}
		g(Au,Av)+i\Omega (Au,Av) = g(u,v)+i\Omega (u,v)~\forall u,v\in \R^{2n}
	\end{equation*}
	thus $A\in  \textrm{O}\left(\R^{2n}\right) \cap \textrm{Sp}\left(\R^{2n},\Omega_0\right).$
	On the other hand, if $A \in \textrm{O}\left(\R^{2n}\right) \cap \textrm{Sp}\left(\R^{2n},\Omega_0\right),$ because $ g(u,v) = \Omega (u,jv)$ we have
	\begin{equation*}
		\left\{\begin{array}{cc}g(Au,Av) & =  g(u,v)  \\
		\Omega (Au,Av) & =  \Omega(u,v)
		 \end{array}\right.  \forall u,v\Rightarrow \Omega(Au,jAv) = \Omega(u,jv) = \Omega (Au,Ajv).
	\end{equation*}
	This implies, since $\Omega$ is non degenerate, that $jAv=Ajv \ \forall v$ and thus $J_0A=AJ_0.$ Thus $A$ corresponds to a $\C$-linear transformation of $\C^n$ which is clearly unitary.
\end{proof}
\begin{definition}
	If $A \in \End(\R^{2n})$ corresponds to a $\C$-linear transformation $\tilde{A},$  $A=
	\left(\begin{array}{cc}
		B & -C\\
		C & B
	\end{array}\right),$
	and we define
	\begin{equation*}
		{ \det}_{\C}A := \det\tilde{A}=\det(B+iC).
	\end{equation*}
\end{definition}
%%%%%%%%%%%%%%%%%%%%%%%%%%%%%
\subsection{Topology of $\Sp(\R^{2n},\Omega_0)$}\label{preliminaries}
We show in this paragraph that the fundamental group of the symplectic group is the group $\Z.$
The arguments are essentially taken from \cite{Abbondandolo} section $1.3.3 .$\\
We use the symbol $ \operatorname{diag} (a_1,\ldots,a_{2n})$ to denote a diagonal matrix
with entries $a_1,\ldots,a_{2n}$ on the diagonal.
\begin{theorem}
	Every element $A\in \Sp(\R^{2n},\Omega_0)$ admits a unique polar decomposition
	\begin{equation*}
		A=OP
	\end{equation*}
	where $O$ is a symplectic and orthogonal  (thus unitary) matrix and where the matrix $P=\left(A\tr A \right)^{\half}$ is a symplectic   positive definite symmetric matrix.
	We have $P=\exp \half S$ where $S$ is a symmetric matrix belonging to the symplectic Lie algebra
	\begin{equation}
		\textrm{sp}(\R^{2n},\Omega_0)=\{ \Lambda \in \Mat (2n\times 2n,\R) \,\vert\,\Lambda^\tau \Omega_0+\Omega_0\Lambda=0\}.
	\end{equation}
	The group $\Sp(\R^{2n},\Omega_0)$ is homeomorphic to the topological product of the unitary group $U(n)$ and of the vector space of real symmetric matrices $2n\times 2n$ in the
	symplectic Lie algebra $\textrm{sp}(\R^{2n},\Omega_0).$
\end{theorem}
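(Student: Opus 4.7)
The plan is to reduce the statement to the classical polar decomposition of an invertible real matrix, and then show that when $A$ is symplectic all the ingredients of that decomposition stay inside the symplectic group.

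First I would recall that any invertible $A\in \mathrm{GL}(2n,\R)$ admits a unique polar decomposition $A=OP$ with $O\in \Or(\R^{2n})$ and $P=(A\tr A)^{\half}$ symmetric positive definite, and that the maps $A\mapsto P$ and $A\mapsto O$ are continuous. The only thing to check is therefore that, for $A$ symplectic, both $P$ and $O$ are symplectic. For $P$ this amounts to checking that the square root of a symmetric, positive definite, symplectic matrix is again symplectic. For $O$ it then follows automatically: $\Sp(\R^{2n},\Omega_0)$ is closed under transposition (from $A\tr \Omega_0 A=\Omega_0$ one obtains $A^{-1}=-\Omega_0 A\tr\Omega_0$, which upon transposing gives $(A\tr)^{-1}=-\Omega_0 A\Omega_0$), hence $A\tr A$ is symplectic; once $P=(A\tr A)^{\half}$ is known symplectic, $O=AP^{-1}$ is a product of symplectic matrices, and it is orthogonal by the classical argument ($O\tr O=P^{-1}A\tr A P^{-1}=\Id$). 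By the preceding proposition $O$ is then unitary, and uniqueness is inherited from the classical polar decomposition.

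The core step, and the main obstacle, is therefore the following: if $P$ is symmetric, positive definite and symplectic, then so is $P^{\half}$; more precisely, $\log P$ lies in $\mathrm{sp}(\R^{2n},\Omega_0)$. The condition $P\in\Sp$ combined with $P=P\tr$ reads $P\Omega_0 P=\Omega_0$, equivalently
\begin{equation*}
P^{-1}=-\Omega_0 P\,\Omega_0=\Omega_0^{-1}P\,\Omega_0.
\end{equation*}
Writing $P=\exp T$ with $T=\log P$ symmetric (this uses the spectral theorem and the fact that $P$ has positive eigenvalues), the right-hand side equals $\exp(\Omega_0^{-1}T\Omega_0)$ while the left equals $\exp(-T)$. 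Now $\Omega_0^{-1}T\Omega_0$ is again symmetric (a direct check using $\Omega_0\tr=-\Omega_0$), and the exponential map is injective on the space of symmetric matrices. Hence $\Omega_0^{-1}T\Omega_0=-T$, i.e.\ $T\tr\Omega_0+\Omega_0 T=0$, so $T\in\mathrm{sp}(\R^{2n},\Omega_0)$. Setting $S=2T$ gives $P=\exp(\half S)$ with $S$ symmetric in the symplectic Lie algebra, and $P^{\half}=\exp(\tfrac{1}{4}S)\in\Sp$ since $\mathrm{sp}$ is closed under scaling and $\exp$ maps $\mathrm{sp}$ into $\Sp$.

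Finally, for the topological statement, the map $\Sp(\R^{2n},\Omega_0)\to U(n)\times\bigl(\mathrm{Sym}\cap\mathrm{sp}(\R^{2n},\Omega_0)\bigr)$ sending $A=O\exp(\half S)$ to $(O,S)$ is a continuous bijection. Its inverse $(O,S)\mapsto O\exp(\half S)$ is clearly continuous, while continuity in the forward direction follows from continuity of $A\mapsto A\tr A$, continuity of the square-root and logarithm on the open cone of positive definite symmetric matrices (via the holomorphic functional calculus, or diagonalisation), and continuity of $O=AP^{-1}$. This yields the claimed homeomorphism.
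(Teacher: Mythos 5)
Your proof is correct, and the overall skeleton (reduce to the classical polar decomposition of an invertible matrix, show the factors stay in $\Sp$, pass to the Lie algebra via $\log$) is the same as the paper's. There is, however, one genuine difference worth noting in how you establish that $P$ and $O$ are symplectic. The paper applies the involution $B\mapsto\Omega_0^{-1}(B\tr)^{-1}\Omega_0$ (which fixes $\Sp$ pointwise) to the product $A=OP$, checks that the resulting factors $O'=\Omega_0^{-1}(O\tr)^{-1}\Omega_0$ and $P'=\Omega_0^{-1}(P\tr)^{-1}\Omega_0$ are again orthogonal and symmetric positive definite, and invokes the uniqueness of the polar decomposition to conclude $O=O'$ and $P=P'$, i.e.\ both are symplectic. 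You instead observe directly that $\Sp$ is closed under transposition, hence $A\tr A\in\Sp$, and then prove as a stand-alone lemma that the positive-definite square root (equivalently, the symmetric logarithm) of a symmetric positive definite symplectic matrix stays in $\Sp$, via injectivity of $\exp$ on symmetric matrices. Your route is a bit more hands-on and isolates a reusable lemma; the paper's uniqueness trick is slicker but less modular. Both versions end up proving $S\in\mathrm{sp}(\R^{2n},\Omega_0)$ by essentially the same $\exp$/$\log$ computation. You are also slightly more careful than the paper about the continuity of $A\mapsto(O,S)$, which the paper asserts without detail; your reference to continuity of the square root and logarithm on the cone of positive definite symmetric matrices is exactly what is needed.
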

\begin{proof}
	The matrix $ A\tr A$ is clearly symmetric and positive definite (because the scalar product $v.A\tr A v$ with $v\in \R^{2n}$ is zero iff $A v.A v=0$ iff $A v=0$ iff $v=0$ because $A$ is invertible).
	We can thus find an orthogonal matrix $K$ such that $KA\tr A K^{-1}=\operatorname{diag} (a_1,\ldots,a_{2n})$ with all $a_i>0.$
	We define the symmetric matrices $P=K^{-1} \operatorname{diag} (\sqrt{a_1},\ldots,\sqrt{a_{2n}})K$ and $S=K^{-1}\operatorname{diag} (\log a_1,\ldots,\log a_{2n})K.$
	The matrix $S$ is the unique symmetric matrix such that $\exp S= A\tr A$ and $P$ is the unique symmetric positive definite matrix such that $P^2=A\tr A.$
	
	\noindent [Uniqueness comes from the fact  that if $S$ is symmetric, then $S$ is diagonalizable and the eigenvectors of $S$ are the eigenvectors of $\exp S.$
	Thus if $S$ and $S'$ are symmetric and such that $\exp S=\exp S',$ they have the same eigenvectors and are simultaneously diagonalizable.
	Thus $\exp S \exp -S'= \exp(S-S')= K\operatorname (b_1,\ldots,b_{2n}) K^{-1}=\Id$ thus $S-S'=0.$
	The uniqueness of $P$ is shown in a similar way because $P$ and $P^2$ have the same eigenvectors.]
	
	We define $O=AP^{-1};$ we have $O\tr O=P^{-1}A\tr A P^{-1}=\Id$ thus $O$ is orthogonal.
	
	This decomposition is unique; indeed, if $A=O'P',$ we have
	\begin{equation*}
		A\tr A =P'(O')\tr (O') P'=(P')^2
	\end{equation*}
	thus $P=P'$ and $O=O'.$
	
	The matrices $P$ and $O$ are symplectic.
	Indeed a matrix $B$ is symplectic iff $B\tr \Omega_0 B =\Omega_0$ thus iff $B=(\Omega_0)^{-1}(B^\tau)^{-1}\Omega_0$ and  we have
	\begin{eqnarray*}
		OP&=&(\Omega_0)^{-1}\bigl((OP)\tr\bigr)^{-1}\Omega_0=(\Omega_0)^{-1}(O\tr)^{-1}(P\tr)^{-1}\Omega_0\\
		&=&(\Omega_0)^{-1}(O\tr)^{-1}\Omega_0\, (\Omega_0)^{-1}(P\tr)^{-1}\Omega_0=O' P'
	\end{eqnarray*}
	with $O'= (\Omega_0)^{-1}(O\tr)^{-1}\Omega_0$ and $P'=(\Omega_0)^{-1}(P\tr)^{-1}\Omega_0$.  Thus, by uniqueness of the decomposition, $O=O'$ and $P=P'$.  Hence
	$P=(\Omega_0)^{-1}(P\tr)^{-1}\Omega_0$ and $P$ is symplectic, and similarly for $O.$
	
	We have $P=\exp \half S=(\Omega_0)^{-1}\exp(-\half S\tr) \Omega_0=\exp\bigl(-\half (\Omega_0)^{-1}S\tr \Omega_0\bigr)$ thus 
	$S=-\Omega_0^{-1} S\tr \Omega_0$ and $S\in \textrm{sp}(\R^{2n},\Omega_0).$
	
	We have associated to every matrix $A\in \Sp(\R^{2n},\Omega_0)$  a unique element $O\in U(n)$ and a unique real symmetric matrix $S$ $2n\times 2n$ belonging to the
	symplectic Lie algebra $\textrm{sp}(\R^{2n},\Omega_0)$ in such a way  that $A= O\exp(\half S).$
	
	Reciprocally, if $O\in U(n)$ and if $S$ is a real symmetric $2n\times 2n$ matrix  in the symplectic Lie algebra $\textrm{sp}(\R^{2n},\Omega_0),$ then $\exp tS$ belongs to
	$\Sp(\R^{2n},\Omega_0)$ for all $t$ and $A= O\exp(\half S)$ is in $\Sp(\R^{2n},\Omega_0).$
	
	Hence there is a  homeomorphism between $\Sp(\R^{2n},\Omega_0)$ and the product of $U(n)$ by the vector space of real symmetric $2n\times 2n$ matrices belonging to the symplectic
	Lie algebra $\textrm{sp}(\R^{2n},\Omega_0).$
\end{proof}
\begin{lemma}
	The group $U(n)$ is homeomorphic to the cartesian product of $S^1$ and of the group $SU(n)$ of unitary matrices of determinant $1.$
	
	The group $SU(n)$ is simply connected.
\end{lemma}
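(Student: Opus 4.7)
The plan is to decompose the lemma into its two independent halves. For the homeomorphism $U(n)\cong S^1\times SU(n)$, the natural approach is to trivialize the surjective homomorphism $\det\colon U(n)\to S^1$ by constructing an explicit continuous section. I would take $s\colon S^1\to U(n)$ defined by $s(\lambda)=\operatorname{diag}(\lambda,1,\ldots,1)$, which clearly satisfies $\det s(\lambda)=\lambda$. Then the map
\begin{equation*}
\Phi\colon U(n)\longrightarrow S^1\times SU(n),\qquad \Phi(A)=\bigl(\det A,\ s(\det A)^{-1}A\bigr),
\end{equation*}
is continuous and well defined (the second entry has determinant $\overline{\det A}\cdot\det A=1$), and its continuous inverse is $(\lambda,B)\mapsto s(\lambda)B$. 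This step is routine and requires no hard input; note that $\Phi$ is a homeomorphism but not in general a group isomorphism.

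For the second part, I would prove that $SU(n)$ is simply connected by induction on $n$. The base case $SU(1)=\{1\}$ is trivial. For the induction step, I would use the standard evaluation map
\begin{equation*}
p\colon SU(n)\longrightarrow S^{2n-1},\qquad p(A)=A e_n,
\end{equation*}
where $e_n$ is the last vector of the standard basis of $\C^n$. Surjectivity is immediate (any unit vector can be completed to a unitary basis of determinant $1$), and the fiber $p^{-1}(e_n)$ is precisely the stabilizer of $e_n$, which is identified with $SU(n-1)$ acting on the orthogonal complement.

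The key technical point, and the main obstacle, is to verify that $p$ is a fiber bundle rather than merely a surjection with nice fibers. I would establish local triviality by constructing local sections via Gram--Schmidt: on a small neighborhood $U$ of any point $v_0\in S^{2n-1}$, one can continuously choose an orthonormal frame $(w_1(v),\ldots,w_{n-1}(v),v)$ of determinant $1$ depending continuously on $v\in U$, giving a continuous section of $p$ over $U$ and thereby a local trivialization $p^{-1}(U)\cong U\times SU(n-1)$. Once this is in hand, the long exact sequence in homotopy for the fibration $SU(n-1)\hookrightarrow SU(n)\xrightarrow{p}S^{2n-1}$ yields
\begin{equation*}
\pi_1(SU(n-1))\longrightarrow \pi_1(SU(n))\longrightarrow \pi_1(S^{2n-1}).
\end{equation*}
For $n\ge 2$ one has $2n-1\ge 3$, so $\pi_1(S^{2n-1})=0$; combined with $\pi_1(SU(n-1))=0$ from the inductive hypothesis, this forces $\pi_1(SU(n))=0$, completing the induction. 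Path-connectedness of $SU(n)$, which is also needed implicitly, follows from the same sequence together with the path-connectedness of $SU(n-1)$ and $S^{2n-1}$.
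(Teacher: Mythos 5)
Your proof is correct and follows essentially the same route as the paper: an explicit diagonal section trivializing $\det\colon U(n)\to S^1$, and an induction on $n$ via the fibration $SU(n-1)\hookrightarrow SU(n)\to S^{2n-1}$ together with the long exact homotopy sequence and $\pi_1(S^{2n-1})=0$ for $n\ge 2$. The only difference is that you verify local triviality of the evaluation map explicitly via Gram--Schmidt, whereas the paper invokes the homogeneous-space fibration $SU(n)/SU(n-1)=S^{2n-1}$ without elaboration -- a reasonable extra step of rigor but not a different argument.
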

\begin{proof}
	The homeomorphism between $S^1\times SU(n)$ and $U(n)$ is given by:
	\begin{equation*}
		S^1\times SU(n)\rightarrow U(n) :~(e^{i\theta},U)\mapsto \textrm{diag} (e^{i\theta},1,\ldots,1)U.
	\end{equation*}
	Its inverse associates to $O\in U(n)$ the element of $S^1$ defined by its determinant $e^{i\theta}=\det O$ and the element $U= \textrm{diag} (e^{-i\theta},1,\ldots,1)O$ which is unitary and of
	determinant $1.$
	
	To show that $SU(n)$ is simply connected, we use the action of $SU(n)$ on the sphere $S^{2n-1} = \{ \ z \in \C^n\ \vert \ \vert z\vert^2=1\ \}$.
	This action is clearly transitive and the isotropy group at $(1,0,\ldots,0)$ is isomorphic to $SU(n-1)$. This yields a fibration
	\begin{equation*}
		\raisebox{.2ex}{$SU(n)$}/\raisebox{-.2ex}{$SU(n-1)$}=S^{2n-1}.
	\end{equation*}
	The  long exact sequence in homotopy corresponding to this fibration gives
	\begin{equation*}
		\ldots\rightarrow\pi_2(S^{2n-1})\rightarrow \pi_1 \bigl( SU(n-1) \bigr) \rightarrow \pi_1 \bigl( SU(n) \bigr) \rightarrow \pi_1(S^{2n-1}).
	\end{equation*}
	If $n\ge 2,$ we have $\pi_2(S^{2n-1})=0$ and $\pi_1(S^{2n-1})=0$ thus
	\begin{equation*}
		\pi_1 \bigl(SU(n) \bigr) \simeq \pi_1 \bigl( SU(n-1) \bigr) \simeq \ldots \simeq \pi_1\bigl( SU(1) \bigr) = \pi_1\bigl( \{1\} \bigr)=0.
	\end{equation*}
\end{proof}
\begin{cor}
	The symplectic group $\Sp(\R^{2n},\Omega_0)$ is homeomorphic to the topological product of the circle $S^1,$ of the group $SU(n)$ and of the vector space of real symplectic
	$2n\times 2n$ matrices  in the symplectic Lie algebra $\textrm{sp}(\R^{2n},\Omega_0).$
	The fundamental group of $\Sp(\R^{2n},\Omega_0)$ is isomorphic to $\Z.$
	Every continuous map from $\Sp(\R^{2n},\Omega_0)$ to $S^1$ which coincides  with the determinant on $U(n)$ induces an isomorphism of the fundamental groups.
\end{cor}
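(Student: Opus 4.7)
The plan is to assemble the corollary from the preceding theorem and lemma, treating each of the three assertions in turn. The first assertion is essentially bookkeeping: the theorem exhibits a homeomorphism $\Sp(\R^{2n},\Omega_0)\cong U(n)\times V$, where $V$ denotes the real vector space of symmetric matrices in $\textrm{sp}(\R^{2n},\Omega_0)$, and the lemma gives a homeomorphism $U(n)\cong S^1\times SU(n)$. Composing these yields the desired product decomposition
\begin{equation*}
\Sp(\R^{2n},\Omega_0)\;\cong\;S^1\times SU(n)\times V.
\end{equation*}

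For the second assertion, I would simply apply the functor $\pi_1$ to this decomposition. Since $\pi_1$ of a finite product is the product of the $\pi_1$'s, and since $\pi_1(SU(n))=0$ by the lemma and $\pi_1(V)=0$ because $V$ is a vector space (hence contractible), we obtain
\begin{equation*}
\pi_1\bigl(\Sp(\R^{2n},\Omega_0)\bigr)\;\cong\;\pi_1(S^1)\times 0\times 0\;\cong\;\Z.
\end{equation*}

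The third assertion is the one requiring a bit of extra thought. Let $f:\Sp(\R^{2n},\Omega_0)\to S^1$ be continuous and suppose $f|_{U(n)}=\det_{\C}$. Denote the inclusion by $\iota:U(n)\hookrightarrow\Sp(\R^{2n},\Omega_0)$. Because $V$ is contractible, the product decomposition shows that $\iota$ is a homotopy equivalence, so $\iota_*:\pi_1(U(n))\to\pi_1(\Sp(\R^{2n},\Omega_0))$ is an isomorphism. Next, the lemma's homeomorphism $U(n)\cong S^1\times SU(n)$ combined with the fact that $SU(n)$ is simply connected shows that $(\det_{\C})_*:\pi_1(U(n))\to\pi_1(S^1)$ is an isomorphism: indeed, under the homeomorphism the determinant is projection onto the $S^1$ factor. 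The hypothesis $f\circ\iota=\det_{\C}$ then gives $f_*\circ\iota_*=(\det_{\C})_*$, so $f_*=(\det_{\C})_*\circ\iota_*^{-1}$ is a composition of two isomorphisms.

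The main subtlety, and the only point beyond straightforward application of previously established results, is recognizing that the inclusion $U(n)\hookrightarrow\Sp(\R^{2n},\Omega_0)$ is a homotopy equivalence; but this is immediate from the homeomorphism in the theorem since $V$ is a vector space and the factor $U(n)$ in that decomposition is exactly $\iota(U(n))$. Everything else reduces to functoriality of $\pi_1$ and the already proved triviality of $\pi_1(SU(n))$.
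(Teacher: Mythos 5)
Your proposal is correct and fills in, in a natural way, exactly the argument the paper leaves implicit when it remarks that the corollary "follows directly from the two previous lemmas and from the fact that $\pi_1(S^1)=\Z$." In particular, you correctly identify the one slightly non-trivial point — that the inclusion $U(n)\hookrightarrow\Sp(\R^{2n},\Omega_0)$ really does land as the $U(n)$ factor of the polar-decomposition homeomorphism, so it is a homotopy equivalence — and then the third assertion follows by functoriality of $\pi_1$.
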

Those results follow directly from the two previous lemmas and from the fact that $\pi_1(S^1)=\Z.$
We shall construct in the next section a continuous map $\rho$  from $\Sp(\R^{2n},\Omega_0)$ to $S^1$ which coincides with the determinant on $U(n)$ (inducing thus an isomorphism of the
fundamental groups).

%%%%%%%%%%%%%%%%%%%%%%%%%%%%%%%%%%%%%%%%%%%%
\section{The  rotation map $\rho$ }
\begin{theorem}[\cite{SalamonZehnder}]\label{existingrho}
	There exists a unique family of continuous maps  
	\begin{equation*}
		\rho : \Sp(\R^{2n},\Omega_0) \rightarrow \textrm{S}^1
	\end{equation*}
	(one for each integer $n\ge 1$) with the following properties:
	\begin{enumerate}
		\item\label{rho1}[determinant]   $\rho$ coincides with  $\det_\C$ on the unitary subgroup 
			\begin{equation*}
				\rho(A) = {\det}_\C A\textrm{ if } A \in \textrm{Sp}(2n) \cap \textrm{O}(2n) = \textrm{U}(n);
			\end{equation*}
		\item\label{rho2}[invariance] $\rho$ is invariant under conjugation :
			\begin{equation*}
				\rho(kAk^{-1})=\rho(A)\ \forall k \in \textrm{Sp}(\R^{2n},\Omega_0);
			\end{equation*}
		\item\label{rho3}[normalisation] $\rho(A)=\pm1$ for matrices which  have no eigenvalue on the unit circle;
		\item\label{rho4}[multiplicativity] $\rho$ behaves multiplicatively with respect to direct sums : if   $(\R^{2n},\Omega_0) = (\R^{2m},\Omega_0)\oplus(\R^{2(n-m)},\Omega_0) ,$ and if
			$A=
			\left(\begin{array}{c|c}
				A' & 0 \\
				\hline 0 & A''
			\end{array}\right)$
			with $A'\in \Sp(\R^{2m},\Omega_0)$ and $A''\in \Sp(\R^{2(n-m)},\Omega_0),$ then 
			\begin{equation*}
				\rho(A)=\rho(A')\rho(A'').
			\end{equation*}
	\end{enumerate}
\end{theorem}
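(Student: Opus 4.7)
The plan is to prove existence by explicit construction from spectral data and uniqueness by reduction to elementary symplectic blocks.

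For existence, I would define $\rho(A)$ directly from the spectrum of $A$. Since $A\in\Sp(\R^{2n},\Omega_0)$ satisfies $A\tr\Omega_0 A=\Omega_0$, its characteristic polynomial is reciprocal: $\sigma(A)$ is stable under $\lambda\mapsto 1/\lambda$, and of course also under $\lambda\mapsto\bar{\lambda}$, so eigenvalues fall into orbits of size one, two, or four. I assign to each orbit a contribution: a conjugate pair $\{e^{i\theta},e^{-i\theta}\}$ on the unit circle, in which $e^{i\theta}$ has generalized-eigenspace multiplicity $k$, contributes $e^{ik\theta}$; the eigenvalue $\lambda=\pm 1$ of (necessarily even) multiplicity $2k$ contributes $\lambda^k$; a genuine quadruple $\{\lambda,\bar{\lambda},\lambda^{-1},\bar{\lambda}^{-1}\}$ off the unit circle contributes $+1$; a real reciprocal pair $\{\lambda,\lambda^{-1}\}$ of multiplicity $k$ contributes $+1$ when $\lambda>0$ and $(-1)^k$ when $\lambda<0$. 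Define $\rho(A)$ to be the product of these orbit contributions. Each of the four properties follows directly from the formula: on $U(n)$ the spectrum lies on $S^1$ and the product equals $\prod_j e^{i\theta_j}={\det}_{\C}A$; conjugation invariance holds because the spectrum with multiplicities is a conjugation invariant; normalization is immediate since only $\pm 1$ contributions arise when $\sigma(A)\cap S^1=\emptyset$; multiplicativity holds because $\sigma(A'\oplus A'')=\sigma(A')\cup\sigma(A'')$ with multiplicities.

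The principal obstacle in the existence step is continuity, since the orbit structure of $\sigma(A)$ jumps as eigenvalues collide and leave or enter $S^1$. I would check continuity transition by transition: as a conjugate pair $e^{\pm i\theta}$ approaches $1$ (resp.\ $-1$) and splits into a real reciprocal pair $\lambda,\lambda^{-1}$, the contribution $e^{ik\theta}$ limits to $1$ (resp.\ $(-1)^k$), matching the reciprocal-pair contribution for $\lambda>0$ (resp.\ $\lambda<0$); and as a quadruple off $S^1$ coalesces onto the unit circle into a multiplicity-$2k$ conjugate pair at $e^{\pm i\theta_0}$, the contribution $+1$ matches the limiting $e^{ik\cdot 0}=1$ of the merged orbit. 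The symplectic symmetry of the spectrum makes each such collision a prescribed matching, provided the sign conventions for the real-reciprocal contributions are chosen coherently, which is the only point requiring care.

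For uniqueness, suppose $\rho_1,\rho_2$ both satisfy the four properties and set $\mu:=\rho_1\overline{\rho_2}:\Sp(\R^{2n},\Omega_0)\to S^1$. Then $\mu$ is continuous, equals $1$ on $U(n)$, is conjugation-invariant, multiplicative under symplectic direct sums, and takes values in $\{\pm 1\}$ on the open set where $\sigma(A)\cap S^1=\emptyset$. Using the symplectic normal forms (established later in the paper) I would decompose any $A$, up to symplectic conjugation, into a symplectic direct sum of two- and four-dimensional elementary blocks. On elliptic blocks (eigenvalues on $S^1\setminus\{\pm 1\}$), conjugation brings the block into $U(1)\subset\Sp(\R^2,\Omega_0)$, where $\mu=1$ by the determinant property; on hyperbolic blocks $\mu$ is locally constant with values $\pm 1$, and a deformation inside the hyperbolic component combined with multiplicativity and one reference computation pins the value to $+1$; blocks with eigenvalue $\pm 1$ are treated by continuity from neighbouring elliptic or hyperbolic perturbations, using density of matrices with simple spectrum. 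Multiplicativity then forces $\mu\equiv 1$, proving uniqueness.

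The hard part will be the continuity verification in the existence step, especially at matrices where several Jordan blocks on $S^1$ are present at once, and ensuring that the sign conventions in the explicit formula are consistent across every possible degeneration of the orbit structure; the uniqueness argument is largely formal once the normal-form decomposition is in hand.
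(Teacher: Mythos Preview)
Your existence construction has a genuine gap: $\rho$ is \emph{not} a function of the spectrum with multiplicities alone, so your formula is ill-defined. Concretely, take $n=1$ and the rotations $R_\theta=\left(\begin{smallmatrix}\cos\theta&-\sin\theta\\\sin\theta&\cos\theta\end{smallmatrix}\right)$ and $R_{-\theta}$ in $\Sp(\R^2,\Omega_0)=\mathrm{SL}(2,\R)$. Both have spectrum $\{e^{i\theta},e^{-i\theta}\}$ with multiplicity one, yet ${\det}_\C R_\theta=e^{i\theta}$ while ${\det}_\C R_{-\theta}=e^{-i\theta}$, so property~(1) forces different values of $\rho$. (These two rotations are not conjugate in $\mathrm{SL}(2,\R)$, only in $\mathrm{GL}(2,\R)$.) Your rule ``the pair $\{e^{i\theta},e^{-i\theta}\}$ of multiplicity $k$ contributes $e^{ik\theta}$'' has no way to choose the sign of $\theta$. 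The missing ingredient is the \emph{Krein type}: on each generalized eigenspace $E_\lambda$ with $\lambda\in S^1\setminus\{\pm1\}$ one considers the Hermitian form $Q(z,z')=\Imag\Omega_0(z,\overline{z'})$, and the correct contribution is $\lambda^{\frac12 m^+(\lambda)}\overline{\lambda}^{\,\frac12 m^+(\overline{\lambda})}$ where $m^+(\lambda)$ is the number of positive eigenvalues of $Q$. This is exactly what the paper does; the Krein signature is what distinguishes $R_\theta$ from $R_{-\theta}$.

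This also repairs your continuity argument, which as written is incorrect: when a quadruple $\{\lambda,\overline\lambda,\lambda^{-1},\overline\lambda^{-1}\}$ coalesces onto $S^1$ at $e^{\pm i\theta_0}$ with $\theta_0\neq 0$, your formula would give a contribution $e^{i(2k)\theta_0}\neq 1$ for the limiting pair of multiplicity $2k$, not the $+1$ you need to match. With the Krein refinement the limiting form $Q$ on $E_{e^{i\theta_0}}$ has signature $(k,k)$, so the contribution is $e^{ik\theta_0}e^{-ik\theta_0}=1$ and continuity holds. Your uniqueness sketch is essentially sound once the normal forms and density of semisimple elements are available (which the paper establishes), but the existence half needs the Krein data to even get off the ground.
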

%%%%%%%%%%%%%
\subsection{Construction of  $\rho$ in dimension $2$}

We use as before the identification $\R^2 \cong \C;$ the matrix of the rotation in  $\R^2$ by an angle $\varphi$ is identified with the multiplication by  $e^{i\varphi}$ in $\C$ :
\begin{equation*}
	\left(\begin{array}{cc}
		\cos \varphi & -\sin \varphi  \\
		\sin \varphi & \cos \varphi
	\end{array}\right) 
	\leftrightarrow e^{i\varphi} \textrm{~ thus~} \rho \Biggl(
	\left(\begin{array}{cc}
		\cos \varphi & -	\sin \varphi \\
		\sin \varphi & \cos \varphi
	\end{array}\right)
	\Biggr)=e^{i\varphi}.
\end{equation*}
This gives in particular: $\rho\left(\Id\right)=1$ and  $\rho\left(-\Id\right)= -1 .$\\

In dimension $2$ we have
\begin{equation*}
	\Sp(\R^2,\Omega_0)=\textrm{Sl}(2,\R) = \left\{ 
	\left(\begin{array}{cc}
		a+d & -b+c \\
		b+c & a-d
	\end{array}\right)
	\vert \ a^2+b^2-c^2-d^2 = 1\right\}.
\end{equation*}
The eigenvalues of $A \in \Sp(\R^2,\Omega_0)$ are the roots of $\det (A-\lambda \Id),$ thus the solutions of $\lambda^2 -2a\lambda+1=0;$ so the eigenvalues are $\lambda = a \pm \sqrt{a^2-1}.$
There are three possible cases:
\begin{enumerate}
	\item{$a^2 > 1$\ :\ } In this case the two eigenvalues are real and distinct (their product is equal to $1$ but they differ from $1$ and $-1);$ so $\exists k \in \textrm{Sl}(2,\R)$ such that
		$kAk^{-1} = 
		\left(\begin{array}{cc}
			\lambda_1 & 0 \\
			0 & \frac{1}{\lambda_1}
		\end{array}\right).$
		By the invariance condition $\rho(A)=\rho (kAk^{-1}).$ By continuity of $\rho$ and by the normalisation condition, since we have seen that  $\rho\left(\Id\right)=1$ and  $\rho\left(-\Id\right)= -1$, we have
		\begin{equation*} 
			\rho(A)=1 ~\textrm{~if~} a>1 \textrm{~and~}  \rho(A)=-1 \textrm{~if~} a<-1.
		\end{equation*}
	\item $a^2<1$\ :\ There are no real eigenvalues; in the complexified space  $\C^2,$ the two eigenvalues are complex conjugate and their product is $1;$
		denote them by $~e^{i\varphi} , e^{-i\varphi}.$
		
		If $z=v-iw ~~ (z_1=v_1-iw_1,z_2=v_2-iw_2)$ is an eigenvector for $A$ in $\C^2$ of eigenvalue $e^{i\varphi},$ the vector $\overline{z}=v+iw=(\overline{z_1}, \overline{z_2})$ is an
		eigenvector of eigenvalue $e^{-i\varphi}:$\\
		$\left(\begin{array}{cc}
			a' & b' \\
			c' & d'
		\end{array}\right)
		\left(\begin{array}{c}
			z_1 \\
			z_2
		\end{array}\right)
		=
		\left(\begin{array}{c}
			e^{i\varphi}z_1 \\
			e^{i\varphi}z_2
		\end{array}\right)
		\ ,\
		\left(\begin{array}{cc}
			a' & b' \\
			c' & d'
		\end{array}\right)
		\left(\begin{array}{c}
			\overline{z_1} \\
			\overline{z_2}
		\end{array}\right)
		=
		\left(\begin{array}{c}
			e^{-i\varphi}\overline{z_1}\\
			e^{-i	\varphi}\overline{z_2}
		\end{array}\right).$
		Hence we have
		\begin{eqnarray*}
			Av & = & A \bigl( \thalf (z+\overline{z}) \bigr)\\
			& = & \thalf (\cos \varphi + i\sin \varphi ) (v-iw) + \thalf(\cos\varphi-i\sin\varphi)(v+iw)\\
			& = & \cos\varphi v  + \sin\varphi w\\
			Aw & = & A \bigl( \tfrac{i}{2} (z - \overline{z} ) \bigr)\\
			&=&\tfrac{i}{2}(\cos\varphi +i\sin\varphi)(v-iw)-\tfrac{i}{2}(\cos \varphi - i\sin \varphi)(v+iw)\\
 			&=& -\sin\varphi v + \cos \varphi w.
 		\end{eqnarray*}
		so that in the basis  $\{v,w\}$ of $\R^2$ the matrix associated to $A$ is  given by 
		\begin{equation*}
			\left(\begin{array}{cc}
				\cos\varphi & -\sin\varphi \\
				\sin\varphi & \cos\varphi
			\end{array}\right).
		\end{equation*}
		\begin{remarque}
			Denote by $\lambda$ ( and call Krein positive) the eigenvalue $e^{i\varphi}$ or $e^{-i\varphi}$  for which $\Omega_0(v_\lambda,w_\lambda)>0$ if
			$z_\lambda=v_\lambda-iw_\lambda$ is an eigenvector of $A$ in  $\C^2$ of eigenvalue $\lambda.$
			Another eigenvector corresponding to the same eigenvalue $\lambda$ has the form
			$z'=\alpha z_\lambda = (\tilde{a}+i\tilde{b})(v_\lambda-iw_\lambda) = (\tilde{a}v_\lambda+\tilde{b}w_\lambda)-i(-\tilde{b}v_\lambda+\tilde{a}w_\lambda)$ so that
			$\Omega_0(\tilde{a}v_\lambda+\tilde{b}w_\lambda,-\tilde{b}v_\lambda+\tilde{a}w_\lambda)=(\tilde{a}^2+\tilde{b}^2)\Omega_0(v_\lambda,w_\lambda)$ and the sign of
			$\Omega_0(v,w)$ does not depend on the choice of the eigenvector $z=v-iw.$
			On the other hand  $\overline{z}=v-(-iw)$ is an eigenvector corresponding to the conjugate eigenvalue and will correspond clearly to the opposite sign.
			Remark also that $\Omega_0(z,\overline{z})= 2i \Omega_0(v,w)$ so that $\Imc\Omega_0(z_\lambda,\overline{z_\lambda})>0$.
		\end{remarque}
		
		In $\R^{2}$, $\Omega_0$ is 
		 the matrix $
		\left( \begin{array}{cc}
			0 & 1 \\
			-1& 0
		\end{array} \right) .$ We have $\det\left(\begin{array}{cc}
				v_1 & w_1 \\
				v_2 & w_2
			\end{array}\right)=\Omega(v,w)$.
		Defining
		\begin{equation*}
			k=\frac{1}{\sqrt{\Omega_0(v_\lambda,w_\lambda)}}
			\left(\begin{array}{cc}
				v_1 & w_1 \\
				v_2 & w_2
			\end{array}\right) ,
			~~\textrm{where } (v_1,v_2) + i(w_1,w_2) = v_{\lambda} + iw_{\lambda} = z_{\lambda},
		\end{equation*}
		the matrix $k$ is of determinant $1$, thus $k\in \Sp(\R^2,\Omega_0),$
		and we have 
		$ \ k
		\left(\begin{array}{c}
			1 \\
			0
		\end{array}\right)
		=\frac{1}{\sqrt{\Omega_0(v_\lambda,w_\lambda)}}v $ and $k
		\left(\begin{array}{c}
			0 \\
			1
		\end{array}\right)
		=\frac{1}{\sqrt{\Omega_0(v_\lambda,w_\lambda)}}w$ so that
		\begin{equation*}
			(k^{-1}Ak) =
			\left(\begin{array}{cc}
				\cos\varphi & -\sin\varphi \\
				\sin\varphi & \cos\varphi
			\end{array}\right).
		\end{equation*}
		Thus, if $a^2<1$ and if $e^{i\varphi}$ is an eigenvalue such that $Az = e^{i\varphi}z$ where $z=v-iw$ with $\Omega_0\left(v,w\right) >0,$ we get,
		using the invariance and the determinant conditions for  $\rho$,  
		\begin{equation*}
			\rho(A) = e^{i\varphi}.
		\end{equation*}	
	\item  $a^2=1 : $
		We already know that $\rho(\Id) = 1$ and $\rho(-\Id) = -1.$
		
		Let $v$ be an eigenvector of $A$ of eigenvalue $a$ and suppose that there doesn't exist another eigenvector linearly independent from $v.$
		For any $w$ linearly independent from $v,$ we have
		\begin{equation*}
			\left\{\begin{array}{ccc}
				Av & = & av \\
				Aw & = & \tilde{c}v+aw.
			\end{array}\right. 
		\end{equation*}
		Take $w$ such that $\Omega_0(v,w)=1.$
		Then $k =
		\left(\begin{array}{cc}
			v_1 & w_1 \\
			v_2 & w_2
		\end{array}\right)
		\in \Sp(\R^2,\Omega_0)$ and
		\begin{equation*}
			k^{-1}Ak =
			\left(\begin{array}{cc}
				a & \tilde{c} \\
				0 & a
			\end{array}\right).
		\end{equation*}
		This matrix is the limit, for $t$ tending to $0$,
		of the path of sympletic matrices $\left(\begin{array}{cc}
				e^ta & e^t\tilde{c} \\
				0 &e^{-t} a
			\end{array}\right)$. For $t\neq 0$, these matrices have two distinct real eigenvalues equal to
			$e^ta$ and $e^{-t} a$; so their image under $\rho$ is equal to $a$ (cf case 1).
			By invariance and continuity of $\rho,$ we have
		\begin{equation*}
			 \rho(A) = a.
		\end{equation*}
\end{enumerate}
To summarize:
\begin{proposition}
	Conditions (\ref{rho1}), (\ref{rho2}) et (\ref{rho3}) determine a unique  continuous map
	\begin{equation*}
		\rho : \textrm{Sl}(2,\R)  \rightarrow  \textrm{S}^1: ~ A  \mapsto  \rho(A)
	\end{equation*}
	defined by
	 \begin{equation*}\rho(A)=
	 \left\{ \begin{array}{rl}
 		1 & \textrm{if } \half \operatorname{Tr}(A) =: a\geqslant 1 \\
      		& \quad \textrm{ i.e if the eigenvalues of } A \textrm{ are real positive};\\ 
 		-1 & \textrm{if } a\leqslant -1\\
  		& \quad  \textrm{ i.e if the eigenvalues of } A \textrm{ are real negative};\\ 
 		e^{i\varphi} & \textrm{if } e^{i\varphi} \textrm{ is an eigenvalue of } A \textrm{ such that } \Omega_0(v,w)>0 \\
		&~~~\textrm{~when~} z=v-iw \textrm{ is a corresponding eigenvector}.
	\end{array}\right .
	\end{equation*}
	\end{proposition}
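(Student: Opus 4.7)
The plan is to establish the proposition in two stages — first uniqueness (any continuous $\rho$ satisfying axioms (\ref{rho1}), (\ref{rho2}), (\ref{rho3}) must be given by the displayed formula), then existence (the formula does define a continuous map with all three properties). The whole argument is driven by the trichotomy of $A\in\textrm{Sl}(2,\R)$ according to $a:=\thalf\operatorname{Tr}(A)$: the hyperbolic regime $a^2>1$, the elliptic regime $a^2<1$, and the parabolic regime $a^2=1$; this is exhaustive since the characteristic polynomial of $A$ is $\lambda^2-2a\lambda+1$.

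Uniqueness is essentially already done in the case analysis preceding the proposition statement, and my plan is to simply assemble it. In the hyperbolic case the eigenvalues of $A$ are real with product~$1$, neither equal to $\pm 1$, so $A$ has no eigenvalue on the unit circle and axiom (\ref{rho3}) forces $\rho(A)\in\{\pm 1\}$; the sign is then fixed by symplectic conjugation to a diagonal form, together with axiom (\ref{rho2}), continuity, and the values $\rho(\pm\Id)=\pm 1$ (themselves imposed by axiom (\ref{rho1})). In the elliptic case, axiom (\ref{rho2}) reduces $A$ to the rotation of some angle $\varphi$, and axiom (\ref{rho1}) then pins $\rho(A)=e^{i\varphi}$; the sign of $\varphi$ is selected by the Krein-positive eigenvalue, and the Remark preceding the proposition shows that this selection is intrinsic so the formula is unambiguous. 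In the parabolic case, either $A=\pm\Id$ (handled by axiom (\ref{rho1})) or $A$ is conjugate to a nontrivial shear, which is the $t\to 0$ limit of a symplectic path of matrices with two distinct real eigenvalues of sign $a$; continuity combined with case~1 then gives $\rho(A)=a$.

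The remaining task — and the main obstacle — is to verify that the piecewise formula actually defines a continuous map, since the three branches of the definition are of very different natures. The three axioms themselves are immediate from the construction, and continuity in the interior of each regime is clear, so the delicate point is continuity at the parabolic stratum $a=\pm 1$. Approaching $a=1$ from the hyperbolic side one has $\rho=1$; from the elliptic side the angle $\varphi$ of the complex eigenvalues tends to $0$, so $e^{i\varphi}\to 1$; and on the parabolic stratum itself $\rho=1$. The analogous check at $a=-1$ yields the value $-1$ from all three sides. Both convergences follow at once from continuous dependence of the roots of $\lambda^2-2a\lambda+1$ on $a$, which gives the continuity of the entire map and completes the proof.
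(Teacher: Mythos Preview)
Your proof is correct and follows essentially the same trichotomy-based case analysis as the paper's construction preceding the proposition. You are in fact somewhat more explicit than the paper in separating the existence direction and verifying continuity of the resulting formula across the parabolic stratum, which the paper leaves largely implicit in the running discussion.
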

The condition (\ref{rho4}) is of course empty in dimension $2.$

%%%%%%%%%%%%%%%%%
\subsection{Construction of $\rho$ in any dimension.}
We extend  $\Omega_0~\C$-linearly to  $V\otimes_\R\C$ and represent it by the same matrix.
We also extend any $A\in \Sp(V=\R^{2n},\Omega_0)~\C$-linearly to  $V\otimes_\R\C.$ 
If $v_\lambda$ denotes an eigenvector of $A\in \Sp(V=\R^{2n},\Omega_0)$ in $V\otimes_\R\C$ of eigenvalue $\lambda$ then
$\Omega_0(Av_\lambda,Av_\mu)=\Omega_0(\lambda v_\lambda,\mu v_\mu)=\lambda\mu \Omega_0(v_\lambda,v_\mu),$ thus $\Omega_0(v_\lambda,v_\mu)=0$ unless $\mu=\frac{1}{\lambda}.$
Hence the eigenvalues of $A$ arise in ``quadruples''
\begin{equation}
	[\lambda]:=\left\{\lambda,\frac{1}{\lambda},\overline{\lambda},\frac{1}{\overline\lambda}\right\}.
\end{equation}
%%%%%%%%%%%%%
\subsubsection{$\rho(A)$ for a semisimple element $A$}
\begin{definition}
An element $A$ is semisimple if $V\otimes_\R\C$ is the direct sum of its eigenspaces.
\end{definition}
Denote by $E_\lambda$ the eigenspace corresponding to the eigenvalue $\lambda$ in $V\otimes_\R\C.$
Remark that  if $v= u+iu'$ is in $E_\lambda$ with $u$ and $u'$ in $V$ then
$\overline{v}:=u-iu'$ is in $E_{\overline{\lambda}}$ so that  $E_\lambda \oplus E_{\overline{\lambda}}$ is the complexification of a real subspace of $V$.
The space
\begin{equation}
	W_{[\lambda]}:=E_\lambda\oplus E_{\frac{1}{\lambda}}\oplus E_{\overline\lambda}\oplus E_{\frac{1}{\overline\lambda}}
\end{equation}
is the complexification  of a real  symplectic subspace $V_{[\lambda]}$ and
\begin{equation}
	\R^{2n}=V_{[\lambda_1]}\oplus V_{[\lambda_2]}\oplus\ldots\oplus V _{[\lambda_K]}
\end{equation}
where the direct sum is symplectic orthogonal and where $[\lambda_1],\ldots [\lambda_K]$ are the distinct quadruples exhausting the eigenvalues of $A$.
Hence, by multiplicativity of $\rho$ 
\begin{equation}
	\rho(A)=\rho(A_{[\lambda_1]})\cdot\rho(A_{[\lambda_2]})\cdot\ldots \cdot \rho(A_{[\lambda_K]})
\end{equation}
where $A_{[\lambda_i]}$ is the restriction of $A$ to $V_{[\lambda_i]}.$
\begin{itemize}
	\item	If $\lambda= \pm 1,$ we have  $A_{[\lambda]}=\pm\Id;$ thus by the  determinant condition, we have
		\begin{equation}\label{lambda+-1}
			\rho(A_{[\lambda]})=\rho(\pm\Id)=
			\left\{\begin{array}{ll}
				1&~\textrm {if } \lambda=1\\
				-1^{\half\dim  V_{[-1]}}&~\textrm {if } \lambda=-1
			\end{array}\right\}
			=\left(\frac{\lambda}{\vert \lambda \vert}\right)^{\half\dim V_{[\lambda]}}.
		\end{equation}
	\item If $\lambda\in \R\setminus\{ \pm 1\},$  no eigenvalue is on $S^1;$ by the normalisation condition and continuity, we have
		\begin{equation}\label{lambdainR}
			\rho(A_{[\lambda]})=
			\left\{\begin{array}{ll}
				1&~\textrm {if } \lambda>0\\
				-1^{\half\dim  V_{[\lambda]}}&~\textrm {if } \lambda<0
			\end{array}\right\}
			=\left(\frac{\lambda}{\vert \lambda \vert}\right)^{\half\dim V_{[\lambda]}}.
		\end{equation}
	\item If $\lambda\in \C\setminus S^1\cup \R,$ no eigenvalue is on $S^1 ;$ by the normalisation condition and continuity, we have
		\begin{equation}\label{lambdainC}
			\rho(A_{[\lambda]})=1.
		\end{equation}
		Indeed, we can  bring continuously $\lambda,\frac{1}{\lambda},\overline{\lambda},\frac{1}{\overline\lambda}$  simultaneously all to $1$ (or all to $-1)$ and in both cases
		$\rho(A_{[\lambda]})=1$ because $\dim V_{[\lambda]} = 4\dim_{\C}E_{\lambda} .$
	\item If $\lambda=e^{i\varphi}\in  S^1\setminus \{\pm 1\},  W_{[\lambda]}:=E_\lambda\oplus  E_{\overline\lambda};$ we define
		\begin{equation}
			Q: E_\lambda \times E_\lambda \rightarrow \R :~(z,z') \mapsto Q(z,z'):=\Imc \Omega_0(z,\overline{z'});
		\end{equation} 
		where $\Imc a$ denotes the imaginary part of a complex number $a .$
		It is a nondegenerate symmetric $2$-form on the vector space $E_\lambda$ viewed as a real vector space.
		It is indeed symmetric because
		\begin{equation*}
			\Imc \Omega_0(z,\overline{z'}) = -\Imc \Omega_0(\overline{z'},z) = \Imc \overline{\Omega_0(\overline{z'},z)}=\Imc \Omega_0(z',\overline{z})
		\end{equation*}
		and it is nondegenerate because 
		\begin{equation*}
			Q(z,z') = \Imc \Omega_0(z,\overline{z'}) = \frac{1}{2i}\bigl(\Omega_0(z,\overline{z'}) - \Omega_0(\overline{z},z')\bigr)
		\end{equation*}
		so that $Q(z,z') =0 ~\forall z \in E_{\lambda}$ iff	 $\Omega_0(z,\overline{z'}) - \Omega_0(\overline{z},z') = 0 \ \forall z \in E_{\lambda}.$
		Replacing $z$ by $iz$ this implies $i\Omega_0(z,\overline{z'}) + i\Omega_0(\overline{z},z') = 0 \ \forall z \in E_{\lambda}$ 
		hence $\Omega_0(z,\overline{z'}) = 0 \ \forall z \in E_{\lambda}$ and this implies $ \overline{z'}=0 .$
		
		We can thus find a vector $z_1\in E_\lambda$ such that $Q(z_1,z_1)=2a_1\ne 0$.
		Writing $z_1=u_1-iv_1,$ the subspace of $V_{[\lambda]}$ generated by $u_1$ and $v_1$ is
		symplectic and stable by $A ;$ we have $\Omega_0 (u_1,v_1)=a_1$ and the restriction $A_1$ of $A$ to this subspace in the basis $\{ u_1,v_1\}$ has the form
		\begin{equation*}
			A_1=
			\left(\begin{array}{cc}
				\cos\varphi & -\sin\varphi \\
				\sin\varphi & \cos\varphi
			\end{array}\right).
		\end{equation*}
		If $a_1>0,$ we have as before (cf the case where $a^2 < 1$), $\rho(A_1)=e^{i\varphi}.$
		If $a_1<0,$ we permute the vectors $u_1$ and $v_1$ and we have $\rho(A_1)=e^{-i\varphi}.$
		Indeed $v_1 - iu_1 = -i(u_1+iv_1) = -i\overline{(u_1-iv_1)}$ which is of eigenvalue $\overline{\lambda} = e^{-i\varphi} .$
		In conclusion as $V_{[\lambda]}$ is the direct sum of $\langle u_1,v_1\rangle$ and its symplectic orthogonal, we have
		\begin{equation}\label{a1>0}
			\rho(A_{[\lambda]})=e^{\frac{i}{2}\varphi \sign(Q)}
		\end{equation}
		where $\sign(Q)$ denotes the signature of $Q$ (the number of positive eigenvalues minus the number of negative eigenvalues of $Q$).
		The $\half$ factor comes from the fact that $Q(iz,iz')=Q(z,z')$ and the vectors $z$ and $iz$ define the same real vector space $\textrm{Span}\{u_1,v_1\}$ of $V_{[\lambda]}.$
\end{itemize}
The map $\rho$ is  continuous as can be viewed by considering  possible variations of the ``quadruples'' of eigenvalues; for example, in a continuous variation of the matrix $A$, a quadruple of eigenvalues corresponding to
$\lambda\notin S^1\cup\R$ can only degenerate  into a pair of real eigenvalues counted twice  or  into a pair of eigenvalues on the circle counted twice but with opposite signs.
In this last case, $[\lambda] = \left\{ \lambda, \frac{1}{\lambda},\overline{\lambda},\frac{1}{\overline{\lambda}} \right\}$ degenerates into $\{ e^{i\varphi},e^{-i\varphi},e^{-i\varphi},e^{i\varphi}\}$ and
the image by $\rho$ of the corresponding matrix is $1.$
Indeed, if $\lambda=re^{i\varphi}\notin S^1\cup \R$ is an eigenvalue and if $z=u-iv$ is an eigenvector of $A$ of eigenvalue $\lambda$ and $z'=u'+iv'$ an eigenvector of eigenvalue $1/\lambda$
such that $\Omega_0(z,z')=2,$ then  $ \Omega_0(z,\overline{z'})=0; \Omega_0(z,\overline{z})=0; \Omega_0(\overline{z},z')=0; \Omega_0(\overline{z},\overline{z'})=2;
\Omega_0(z',\overline{z'})=0,$ 
 so that the $2$-form $\Omega$ and the matrix $A$ are written, in the basis $\{ u,v,u',v'\},$
\begin{equation*}
	\Omega_0=
	\left(\begin{array}{cccc}
		0&0&1&0\\
		0&0&0&1\\
		-1&0&0&0\\
		0&-1&0&0
	\end{array}\right)
	~A=
	\left(\begin{array}{cccc}
		r\cos\varphi&-r\sin\varphi&0&0\\
		r\sin\varphi&r\cos\varphi&0&0\\
		0&0&\frac{1}{r}\cos\varphi&-\frac{1}{r}\sin\varphi\\
		0&0&\frac{1}{r}\sin \varphi&\frac{1}{r}\cos\varphi
	\end{array}\right).
\end{equation*}
and the limit when $r$ equals $1$ give $\rho(A) = e^{i\varphi}e^{-i\varphi}=1$ because in the basis $\{ z, \overline{z'},iz, i\overline{z'}\}$ of $E_{e^{i\varphi}}$ the matrix of $Q$ is
$\left( \begin{array}{cccc}
	0&0&0&-2\\
	0&0&2&0\\
	0&2&0&0\\
	-2&0&0&0
\end{array}\right) .$
%%%%%%%%%%%%%%%%%%%%%%%
\subsubsection{$\rho(A)$ for any $A$}

\begin{lemma}\label{lem:orthokerim}
	Consider $A\in\Sp(V,\Omega)$ and let $\lambda$ be an eigenvalue of $A$ in $V\otimes_\R\C.$
	Then $\Ker(A-\lambda\Id)^j$ is the symplectic orthogonal of $\im(A- \tfrac{1}{\lambda}\Id)^j.$
\end{lemma}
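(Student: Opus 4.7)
The plan is to exploit the fact that the $\Omega$-adjoint of a symplectic operator is its inverse, and then to massage the generalized eigenspace equations by a simple algebraic identity.

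First, I observe that for any $A\in\Sp(V,\Omega)$ and $v,w\in V\otimes_\R\C$ one has $\Omega(Av,w)=\Omega(Av,AA^{-1}w)=\Omega(v,A^{-1}w)$, so the $\Omega$-adjoint of $A$ (defined by $\Omega(Tv,w)=\Omega(v,T^{*_\Omega}w)$, which exists and is unique since $\Omega$ is nondegenerate) is $A^{*_\Omega}=A^{-1}$. Consequently
\begin{equation*}
\bigl((A-\tfrac{1}{\lambda}\Id)^j\bigr)^{*_\Omega}=(A^{-1}-\tfrac{1}{\lambda}\Id)^j,
\end{equation*}
and I note the general identity, valid for any linear $T$, that $(\im T)^{\perp_\Omega}=\Ker(T^{*_\Omega})$: indeed $w\in(\im T)^{\perp_\Omega}$ iff $\Omega(Tv,w)=0$ for all $v$ iff $\Omega(v,T^{*_\Omega}w)=0$ for all $v$ iff $T^{*_\Omega}w=0$ by nondegeneracy.

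The key algebraic step is to factor
\begin{equation*}
A^{-1}-\tfrac{1}{\lambda}\Id=-\tfrac{1}{\lambda}A^{-1}(A-\lambda\Id),
\end{equation*}
which is immediate by expansion. Since $A^{-1}$ commutes with $A-\lambda\Id$, raising to the $j$-th power yields
\begin{equation*}
(A^{-1}-\tfrac{1}{\lambda}\Id)^j=\bigl(-\tfrac{1}{\lambda}\bigr)^j A^{-j}(A-\lambda\Id)^j.
\end{equation*}
Because $A^{-j}$ and the scalar $(-\tfrac{1}{\lambda})^j$ are invertible, the kernel of the left-hand side coincides with $\Ker(A-\lambda\Id)^j$.

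Combining these two observations gives
\begin{equation*}
\bigl(\im(A-\tfrac{1}{\lambda}\Id)^j\bigr)^{\perp_\Omega}=\Ker\bigl((A-\tfrac{1}{\lambda}\Id)^j\bigr)^{*_\Omega}=\Ker(A-\lambda\Id)^j,
\end{equation*}
which is the claim. There is no real obstacle here: the content is the factorisation $A^{-1}-\tfrac{1}{\lambda}\Id=-\tfrac{1}{\lambda}A^{-1}(A-\lambda\Id)$ together with the fact that under a symplectic automorphism the $\Omega$-adjoint and $A^{-1}$ coincide; everything else is bookkeeping. The only mild subtlety is to remember that $\Omega$ is being extended $\C$-bilinearly to $V\otimes_\R\C$ so that the eigenvalue $\lambda$ may be complex, but this causes no issue because all the identities above are purely algebraic.
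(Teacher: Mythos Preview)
Your proof is correct and is essentially the same argument as the paper's, recast in the language of the $\Omega$-adjoint: the paper establishes directly by induction the identity $\Omega\bigl((A-\lambda\Id)^j u, A^j v\bigr)=(-\lambda)^j\Omega\bigl(u,(A-\tfrac{1}{\lambda}\Id)^j v\bigr)$, which encodes exactly your factorisation $(A^{-1}-\tfrac{1}{\lambda}\Id)^j=(-\tfrac{1}{\lambda})^jA^{-j}(A-\lambda\Id)^j$ together with $A^{*_\Omega}=A^{-1}$. The only practical difference is that the paper records this identity explicitly (and reuses it later in the normal-form analysis), whereas your packaging via the adjoint absorbs it into the general fact $(\im T)^{\perp_\Omega}=\Ker T^{*_\Omega}$.
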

\begin{proof}
	\begin{eqnarray*}
		\Omega \bigl( (A - \lambda \Id ) u, Av \bigr) & = & \Omega(Au,Av)-\lambda\Omega(u,Av)
		=\Omega(u,v)-\lambda\Omega(u,Av)\\
		&=&-\lambda\Omega \Bigl( u, \bigl(A-\tfrac{1}{\lambda} \Id \bigr) v \Bigr)
	\end{eqnarray*}
	and by induction
	\begin{equation}\label{eq:omegaA}
		\Omega \bigl( (A-\lambda\Id)^ju, A^jv \bigr) = (-\lambda)^j\Omega \Bigl( u, \bigl( A-\tfrac{1}{\lambda} \Id \bigr)^jv \Bigr).
	\end{equation}
	The result follows from the fact that $A$ is invertible.
\end{proof}
\begin{cor}\label{lem:orthovalpro}
	If $E_\lambda$ denotes the generalized eigenspace of eigenvalue $\lambda,$ i.e $E_\lambda := \bigl\{ v\in V\otimes_\R\C \ \vert \ (A-\lambda\Id)^jv=0 \textrm{ for an integer } j >0 \bigr\}$, we have
	\begin{equation*}
		\Omega(E_\lambda,E_\mu)=0 \quad \textrm{ when }~\lambda\mu\ne 1 .
	\end{equation*}
\end{cor}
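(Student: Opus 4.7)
The plan is to deduce the corollary directly from Lemma \ref{lem:orthokerim} by showing that every $v\in E_\mu$ lies in the image of $(A-\tfrac{1}{\lambda}\Id)^j$ for a suitable $j$, whenever $\lambda\mu\neq 1$.

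First, I would fix $u\in E_\lambda$ and $v\in E_\mu$ and choose an integer $j$ large enough that $(A-\lambda\Id)^j u=0$ (possible because $E_\lambda$ is a finite-dimensional generalized eigenspace). So $u\in\Ker(A-\lambda\Id)^j$. The substantive step is to argue that $v$ belongs to $\im(A-\tfrac{1}{\lambda}\Id)^j$. For this, observe that $A$ preserves $E_\mu$ (since $A$ commutes with any polynomial in $A$, and $E_\mu=\Ker(A-\mu\Id)^N$ for $N$ large), so the operator $(A-\tfrac{1}{\lambda}\Id)^j$ restricts to an endomorphism of $E_\mu$. The only eigenvalue of $A\vert_{E_\mu}$ is $\mu$, and the hypothesis $\lambda\mu\neq 1$ says $\mu\neq \tfrac{1}{\lambda}$, so $\tfrac{1}{\lambda}$ is not an eigenvalue of $A\vert_{E_\mu}$. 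Hence $(A-\tfrac{1}{\lambda}\Id)$, and therefore $(A-\tfrac{1}{\lambda}\Id)^j$, is invertible on $E_\mu$. In particular there exists $w\in E_\mu$ with $(A-\tfrac{1}{\lambda}\Id)^j w=v$, so $v\in \im(A-\tfrac{1}{\lambda}\Id)^j$.

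Applying Lemma \ref{lem:orthokerim}, which asserts that $\Ker(A-\lambda\Id)^j$ is the symplectic orthogonal of $\im(A-\tfrac{1}{\lambda}\Id)^j$, yields $\Omega(u,v)=0$. Since $u,v$ were arbitrary, this gives $\Omega(E_\lambda,E_\mu)=0$.

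The only mild subtlety is keeping the integer $j$ uniform: if one wants a single $j$ that works for all $u\in E_\lambda$ and all $v\in E_\mu$ simultaneously, one simply takes $j=\max(\dim E_\lambda,\dim E_\mu)$, after which both $\Ker(A-\lambda\Id)^j\supseteq E_\lambda$ and $(A-\tfrac{1}{\lambda}\Id)^j$ is still invertible on $E_\mu$. No computation beyond this is needed; the heart of the argument is the simple spectral remark that $\tfrac{1}{\lambda}$ is not an eigenvalue of $A\vert_{E_\mu}$ under the assumption $\lambda\mu\neq 1$.
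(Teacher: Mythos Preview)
Your argument is correct and follows essentially the same route as the paper: both deduce the result from Lemma~\ref{lem:orthokerim} together with the spectral observation that $E_\mu$ lies in $\im\bigl(A-\tfrac{1}{\lambda}\Id\bigr)^j$ whenever $\mu\neq\tfrac{1}{\lambda}$. The paper phrases this last point globally---identifying $\bigcap_j\im\bigl(A-\tfrac{1}{\lambda}\Id\bigr)^j$ via Jordan normal form as the sum of the generalized eigenspaces for eigenvalues $\neq\tfrac{1}{\lambda}$---whereas you argue it pointwise by noting that $\bigl(A-\tfrac{1}{\lambda}\Id\bigr)\vert_{E_\mu}$ is invertible; these are two ways of saying the same thing.
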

Indeed the symplectic orthogonal of $E_\lambda=\cup_j \Ker (A-\lambda\Id)^j$ is the intersection of the $\im(A-\tfrac{1}{\lambda}\Id)^j.$ By Jordan normal form, this intersection is
the sum of the generalized eigenspaces corresponding to the eigenvalues which are not  $\frac{1}{\lambda}.$\\

\noindent Lemma \ref{lem:orthokerim}  will be used in the next section to establish  normal forms of symplectic matrices.
Its corollary is well-known and usually proven by induction.\\

Recall that $\dim_\C E_\lambda=m_\lambda$ is the algebraic multiplicity of $\lambda$, that is the exponent of $t-\lambda$ in the factorisation of $\det(A-t\Id)$.

Note that if $v= u+iu'$ is in $\Ker (A-\lambda\Id)^j$ with $u$ and $u'$ in $V$ then
$\overline{v}=u-iu'$ is in $\Ker (A-\overline{\lambda}\Id)^j$ so that  $E_\lambda \oplus E_{\overline{\lambda}}$ is the complexification of a real subspace of $V$.
From this remark and lemma \ref{lem:orthovalpro} the space
\begin{equation}
	W_{[\lambda]}:=E_\lambda\oplus E_{\frac{1}{\lambda}}\oplus E_{\overline\lambda}\oplus E_{\frac{1}{\overline\lambda}}
\end{equation}
is again the complexification of a real and symplectic subspace $V_{[\lambda]}$ and
\begin{equation}
	\R^{2n}=V_{[\lambda_1]}\oplus V_{[\lambda_2]}\oplus\ldots\oplus V _{[\lambda_K]}.
\end{equation}
We have again
\begin{equation}
	\rho(A)=\rho(A_{[\lambda_1]})\cdot\rho(A_{[\lambda_2]})\cdot\ldots \cdot \rho(A_{[\lambda_K]})
\end{equation}
where $A_{[\lambda_i]}$ is the restriction of $A$ to $V_{[\lambda_i]}.$

Every symplectic matrix can be approached as closely as we want by a semisimple symplectic matrix.
To be complete we give a proof of this property in the next section.
By the continuity hypothesis, the map $\rho$ is thus necessarilly defined as follows:
\begin{theorem}[\cite{SalamonZehnder, Audin}]
	Let $A\in \Sp(\R^{2n},\Omega)$.
	We consider the eigenvalues $\{ \lambda_i\}$ of $A.$
	For an eigenvalue $\lambda=e^{i\varphi}\in S^1\setminus\{\pm 1\},$ we consider the number $m^+(\lambda)$ of positive eigenvalues of the symmetric  non degenerate $2$-form $Q$
	defined on the generalized eigenspace $E_\lambda$ by
	\begin{equation*}
		Q: E_\lambda \times E_\lambda \rightarrow \R :~ (z,z') \mapsto Q(z,z'):=\Imc \Omega_0(z,\overline{z'}).
	\end{equation*}
	Then
	\begin{equation}\label{eqrho}
		\rho(A)= (-1)^{\half m^-} \prod_{\lambda\in S^1\setminus\{\pm1\}}\lambda^{\half m^+(\lambda)}
	\end{equation}
	where $m^-$ is the sum of the algebraic multiplicities $m_\lambda=\dim_\C E_\lambda$ of the real negative eigenvalues.
\end{theorem}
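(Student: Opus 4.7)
The plan is to establish the formula first on the dense subset of semisimple matrices, and then extend to all symplectic matrices by continuity, using the quadruple decomposition $\R^{2n}=\bigoplus_i V_{[\lambda_i]}$ and multiplicativity of $\rho$.

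For semisimple $A$, multiplicativity (property (\ref{rho4})) applied to the symplectic orthogonal decomposition into the $V_{[\lambda_i]}$ reduces the computation to $\rho(A_{[\lambda]})$ for each quadruple type, and the explicit formulas (\ref{lambda+-1}), (\ref{lambdainR}), (\ref{lambdainC}) and (\ref{a1>0}) are already derived in the preceding paragraphs. What remains is algebraic bookkeeping: for real negative eigenvalues one gathers each pair $\{\lambda,1/\lambda\}$ with $\lambda<-1$ (both contributing dimension $m_\lambda$) together with $\lambda=-1$, and checks that the total exponent of $-1$ equals $\half m^-$; for on-circle eigenvalues, conjugation shows that the form $Q$ on $E_{\bar\lambda}$ is the negative of $Q$ on $E_\lambda$, so $m^+(\bar\lambda)=m^-(\lambda)$, and the paired product $\lambda^{m^+(\lambda)/2}\bar\lambda^{m^+(\bar\lambda)/2}$ simplifies to $e^{i\varphi\sign(Q)/2}$, matching (\ref{a1>0}); off-circle complex quadruples contribute $1$ trivially since $\dim V_{[\lambda]}$ is divisible by $4$.

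For a general symplectic $A$, I would again invoke Corollary~\ref{lem:orthovalpro} to decompose $\R^{2n}=\bigoplus V_{[\lambda_i]}$ and apply multiplicativity, now treating each $A_{[\lambda]}$ separately. When $\lambda\notin S^1$, $A_{[\lambda]}$ has no eigenvalue on the unit circle, so $\rho(A_{[\lambda]})=\pm 1$ by the normalisation property (\ref{rho3}), and the sign is pinned down by deforming within the connected set of such matrices to a semisimple representative where the formula is already known; the cases $\lambda=\pm1$ are handled analogously by deforming the unipotent (or $-\Id$ times unipotent) matrix $A_{[\lambda]}$ to $\pm\Id$. The essential case is $\lambda\in S^1\setminus\{\pm 1\}$ with $A_{[\lambda]}$ possibly non-semisimple. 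One first checks that $Q$ remains non-degenerate on the generalized eigenspace $E_\lambda$ (using Lemma~\ref{lem:orthokerim} and the fact that $\Omega_0$ pairs $E_\lambda$ with $E_{\bar\lambda}$), so $m^+(\lambda)$ is well-defined, and then approximates $A_{[\lambda]}$ by semisimple symplectic matrices $A_n$ (whose existence is the density result invoked just before the theorem), passing to the limit by continuity of $\rho$.

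The main obstacle is showing that the right-hand side of (\ref{eqrho}) is preserved under this semisimple approximation in the on-circle case: one must check that the contribution of the $S^1$-eigenvalues of $A_n$ near $\lambda$ converges to $e^{i\varphi\sign(Q|_{E_\lambda(A)})/2}$ while the off-circle eigenvalues of $A_n$ contribute~$1$. This amounts to the Krein-signature stability principle, stating that in a symplectic perturbation eigenvalues leave $S^1$ only in pairs of opposite Krein sign, so the signature of $Q$ on the sum of eigenspaces of $A_n$ near $\lambda$ on $S^1$ equals the signature on $E_\lambda(A)$. A clean proof of this requires the symplectic Jordan normal form analysis announced for the next section; it is the only non-routine ingredient of the argument.
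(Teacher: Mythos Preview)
Your proposal follows essentially the same route as the paper: derive the formula on semisimple matrices from the case-by-case computations \eqref{lambda+-1}--\eqref{a1>0}, do the bookkeeping that converts $e^{\frac{i}{2}\varphi\,\sign Q}$ into the paired product $\lambda^{\frac12 m^+(\lambda)}\bar\lambda^{\frac12 m^+(\bar\lambda)}$ via the relation $m^+(\bar\lambda)=m^-(\lambda)$, and then extend to arbitrary $A$ by density of semisimple matrices together with the assumed continuity of $\rho$. The paper's own proof is actually terser than yours on the final point: where you correctly isolate the continuity of the right-hand side (Krein--signature stability under perturbation) as the one non-routine ingredient, the paper simply asserts that the formula ``satisfies clearly the hypotheses of continuity'' and moves on, leaving the justification implicit in the earlier discussion of how quadruples of eigenvalues can degenerate and in the normal-form analysis of the following section.
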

\begin{proof}
	We have seen that it is necessarilly of that form.
	The first term in formula \eqref{eqrho}, $(-1)^{\half m^-}$, corresponds to the conditions \eqref{lambda+-1}, \eqref{lambdainR} and \eqref{lambdainC}.
	The second term comes from the fact that if $\lambda = e^{i\varphi}$  and of $A_{[\lambda]}$ is the restriction of $A$ to $V_{[\lambda]}$ then $\rho(A_{[\lambda]}) = e^{\half i\varphi\sign Q} .$
	In formula \eqref{eqrho}, we count one term for $\lambda$ and one term for $\overline{\lambda}$.
	Remark that $\Imc \Omega_0(z,\overline{z'})=-\Imc \Omega_0(\overline{z},{z'})$ so that the number of negative eigenvalues of $Q$ on $E_\lambda \times E_\lambda$
	is equal to the number of positive eigenvalues of $Q$ on $E_{\overline{\lambda}}\times E_{\overline{\lambda}}.$ 
	Thus the signature of $Q$ on $E_\lambda \times E_\lambda$ is equal to $m^+(\lambda)-m^+(\overline{\lambda})$.
	Hence $e^{\half i\varphi\sign Q} =\lambda^{\half m^+(\lambda)}{\overline{\lambda}}^{\half m^+(\overline{\lambda})}$.
	
	This map $\rho$ satisfies clearly the hypotheses of continuity, invariance, normalization and multiplicativity.
	It is an extension of the map determinant on $U(n)$ because every element of $U(n)$ is semisimple.
\end{proof}

\begin{definition}
	Let $\lambda$ be an eigenvalue of $A \in \Sp(\R^{2n},\Omega_0)$ with $\vert \lambda \vert = 1$.
	When $\lambda =1 (\textrm{ or } -1)$, the corresponding generalized eigenspace is of even dimension $2m_1$ (or $2m_{-1})$ and we count $m_1$ times the eigenvalue $1$
	(or $m_{-1}$ times the eigenvalue $-1$) as Krein positive.
	For the pair of eigenvalues $\lambda = e^{i\varphi}$ et $\bar{\lambda} = e^{-i\varphi}$, when the quadratic form
	\begin{equation*}
		Q:E_{\lambda}\times E_{\lambda} \longrightarrow \C :~ (v,w) \longmapsto \Imag\Omega(v,\bar{w})
	\end{equation*}
	is of signature $(2r,2s)$ we count $r$ times the eigenvalue $e^{i\varphi}$ and $s$ times the eigenvalue $e^{-i\varphi}$ as Krein positive.
\end{definition}
\begin{remarque}[\cite{SalamonZehnder}]
	Another expression of $\rho$ on a matrix $A \in \Sp(\R^{2n},\Omega_0)$ is obtained as follows: we consider the eigenvalues of $A,$ $\lambda_1,\ldots,\lambda_{2n},$ repeated
	accordingly to their algebraic multiplicities.
	We say that an eigenvalue $\lambda_i$ is of the first kind if $\vert \lambda_i\vert<1$ or if $\vert \lambda_i\vert =1$ and if it is ``positive in the sense of Krein''.
	Then
	\begin{equation}\label{eq:rhopremespece}
		\rho(A) = \prod_{\lambda_i \textrm{ of first kind}} \frac{\lambda_i}{\vert \lambda_i\vert}.
	\end{equation}
\end{remarque}
\begin{proposition}\label{prop:rhocarre}
	The map $\rho$ is not a group homomorphism but we always have
	\begin{equation}
		\rho(A^N) = \bigl( \rho(A) \bigr)^N \quad \quad \forall A\in \Sp(\R^{2n},\Omega_0) \textrm{ and }\forall N \in \Z .
	\end{equation}
\end{proposition}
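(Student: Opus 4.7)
The plan is to reduce to a semisimple matrix by continuity, apply the multiplicativity of $\rho$ to the symplectic-orthogonal decomposition of $\R^{2n}$ along the quadruples of eigenvalues, and then verify the identity on each block by case analysis using the explicit formula \eqref{eqrho}.

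Both maps $A\mapsto\rho(A^N)$ and $A\mapsto\rho(A)^N$ are continuous on $\Sp(\R^{2n},\Omega_0)$, and semisimple symplectic matrices are dense (as will be proved in the next section), so it suffices to treat semisimple $A$. For such $A$ the decomposition $\R^{2n}=\bigoplus_i V_{[\lambda_i]}$ into quadruple-subspaces is $A$-invariant, hence also $A^N$-invariant, and by multiplicativity the statement reduces to the single-block identity $\rho(A_{[\lambda]}^N)=\rho(A_{[\lambda]})^N$. The case analysis runs as follows: for $\lambda\in\R$ both sides equal $(\operatorname{sgn}\lambda)^{Nm_\lambda}$ by \eqref{lambda+-1} and \eqref{lambdainR}; for $\lambda\in\C\setminus(\R\cup S^1)$ the matrix $A_{[\lambda]}^N$ still has no eigenvalue on $S^1$ and both sides equal $1$ by \eqref{lambdainC}; for $\lambda=e^{i\varphi}\in S^1\setminus\{\pm1\}$ with $e^{iN\varphi}\ne\pm1$, the Krein form $Q(z,z')=\Imc\Omega_0(z,\overline{z'})$ on $E_\lambda$ is intrinsic to the eigenspace and so coincides for $A$ and $A^N$, and \eqref{a1>0} directly yields
\begin{equation*}
\rho \bigl( A_{[\lambda]}^N \bigr) = e^{\frac{i}{2}N\varphi\,\sign Q} = \rho \bigl( A_{[\lambda]} \bigr)^N.
\end{equation*}

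The main technical subtlety is the boundary case $\lambda=e^{i\varphi}\in S^1\setminus\{\pm1\}$ with $e^{iN\varphi}\in\{\pm1\}$, where $A_{[\lambda]}^N=\pm\Id_{V_{[\lambda]}}$ and $\rho(A_{[\lambda]}^N)$ must now be computed via \eqref{lambda+-1} rather than \eqref{a1>0}. The two descriptions match because $Q(iz,iz')=Q(z,z')$, so the eigenspaces of $Q$ are stable under the complex structure $z\mapsto iz$; writing the signature as $(r_+,r_-)=(2a,2b)$ with $a+b=m_\lambda=\dim_\C E_\lambda$ and $\sign Q=2(a-b)$, one obtains for $e^{iN\varphi}=-1$
\begin{equation*}
e^{\frac{i}{2}N\varphi\,\sign Q}=(-1)^{a-b}=(-1)^{a+b}=(-1)^{m_\lambda}=\rho(-\Id_{V_{[\lambda]}}),
\end{equation*}
and the analogous identity gives $1$ when $e^{iN\varphi}=1$. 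Finally, the case $N<0$ is obtained by applying the $N>0$ result to $A^{-1}$ together with $\rho(A^{-1})=\rho(A)^{-1}$, which itself follows from the same block-by-block analysis, since inversion exchanges $e^{i\varphi}$ with $e^{-i\varphi}$ on each circle-quadruple while leaving the Krein form $Q$ unchanged.
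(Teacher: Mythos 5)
Your proof is correct and supplies the details behind the paper's one‑line justification (the paper merely remarks that the proposition ``results directly from the construction of $\rho$ and from the fact that the eigenvalues of $A^N$ are the $N$th powers of the eigenvalues of $A$''). In particular you correctly isolate and handle the only genuinely delicate point, the boundary case $\lambda=e^{i\varphi}\in S^1\setminus\{\pm1\}$ with $e^{iN\varphi}=\pm1$, by observing that $Q(iz,iz')=Q(z,z')$ forces the index and coindex of $Q$ to be even, so that $\sign Q\equiv 2m_\lambda \pmod 4$ and $e^{\frac{i}{2}N\varphi\,\sign Q}$ reduces correctly to $(-1)^{m_\lambda}=\rho(-\Id_{V_{[\lambda]}})$. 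One small imprecision: in the case $\lambda\in\C\setminus(\R\cup S^1)$ you invoke \eqref{lambdainC} for $\rho(A_{[\lambda]}^N)=1$, but $\lambda^N$ may become real (e.g.\ $\lambda=2e^{i\pi/4}$, $N=4$), in which case one should instead appeal to \eqref{lambdainR}; the conclusion still holds because $\dim V_{[\lambda]}=4m_\lambda$, so $\half\dim V_{[\lambda]}=2m_\lambda$ is even and $\bigl(\lambda^N/|\lambda^N|\bigr)^{2m_\lambda}=1$. Likewise your expression $(\operatorname{sgn}\lambda)^{Nm_\lambda}$ is only the correct exponent for $\lambda\in\R\setminus\{\pm1\}$; for $\lambda=-1$ the exponent should be $N\cdot\tfrac{1}{2}m_{-1}$, though this does not affect the equality of the two sides.
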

This results directly from the construction of $\rho$ and from the fact that the eigenvalues of $A^N$ are equal to the $N$th powers of the eigenvalues of $A.$
%%%%%%%%%%%%%%%%%%%%%%%%%%%%%%%%%%%
%%%%%%%%%%%%%%%%%%%%%%%%%%%%%%%%%%%

\section{Normal forms and density of  semisimple elements in the symplectic group}
%%%%%%%%%%%%%%%
\begin{definition}
	A \emph{symplectic basis} of a symplectic vector space of dimension $2m$ is a basis $\{ e_1, \ldots ,e_{2m} \}$ in which the matrix associated to the symplectic form is $\Omega_0 = 
	\left(\begin{array}{cc}\
		0 & \Id\\
		-\Id & 0
	\end{array}\right)$.
\end{definition}
We consider the standard symplectic $2n-$dimensional real vector space $(\R^{2n},\Omega_0)$ and the group of its linear symplectic transformations
\[
	\Sp(\R^{2n},\Omega_0) = \left\{ \, A:\R^{2n} \rightarrow \R^{2n}\,\vert\, A \mbox{ linear and }\Omega_0(Au,Av)=\Omega_0(u,v) \, \forall u,v \,\right\}.
\]
We  show that semisimple symplectic matrices with distinct eigenvalues are dense in the set of all symplectic matrices:
we  approach an element $A\in \Sp(\R^{2n},\Omega_0)$ by symplectic matrices which are diagonalizable on $\C^{2n}$.
For this, we shall determine normal forms for symplectic matrices.

As we have seen before, if $\lambda$ is an eigenvalue of $A$ (on $\C^{2n}$) then so are $\overline\lambda, \frac{1}{\lambda}$ and $\frac{1}{\overline\lambda}$ and we  denote by
$\left[ \lambda\right]$ the set  $\{ \lambda, \overline\lambda, \frac{1}{\lambda},\frac{1}{\overline\lambda}\}$ and by $\left[\lambda_1\right], \ldots,\left[ \lambda_K\right] $
the distinct such sets exhausting the eigenvalues of $A$.

If $\lambda$ is an eigenvalue of $A$ and  $E_\lambda$ the corresponding generalized eigenspace, 
\begin{equation}
	W_{[\lambda]}:=E_\lambda\oplus E_{\frac{1}{\lambda}}\oplus E_{\overline\lambda}\oplus E_{\frac{1}{\overline\lambda}}
\end{equation}
is the complexification of a real symplectic subspace $V_{[\lambda]}\subset \R^{2n}$ and 
\begin{equation}
	\R^{2n}=V_{[\lambda_1]}\oplus V_{[\lambda_2]}\oplus\ldots\oplus V _{[\lambda_K]}.
\end{equation}
Since $A$ stabilizes each $V_{[\lambda_i]}$,  it is enough to prove the property for the restriction of $A$ to $V_{[\lambda]}$.

Let $(V,\Omega)$ be a symplectic vector space and let $A \in \Sp(V,\Omega)$.
We want to construct  a symplectic basis of $V$ in which $A$ has a ``simple'' form.
Assume that $V=V_1\oplus V_2$ where $V_1$ and $V_2$ are $\Omega$-orthogonal vector subspaces invariant under $A$.
Suppose that $\{e_1,\ldots,e_{2k}\}$ is a symplectic basis of $V_1$ in which the matrix associated to $A\vert_{V_1}$ is
$\left(\begin{array}{cc}
	A'_1 & A'_2\\
	A'_3 & A'_4
\end{array}\right)$.
Suppose also that $\{f_1,\ldots,f_{2l}\}$ is a symplectic basis of $V_2$ in which the matrix associated to $A\vert_{V_2}$ is
$\left(\begin{array}{cc}
	A''_1 & A''_2\\
	A''_3 & A''_4
\end{array}\right)$.
Then $\{e_1,\ldots,e_k,f_1,\ldots,f_l,e_{k+1},\ldots,e_{2k},f_{l+1},\ldots,f_{2l}\}$ is a symplectic basis of $V$ and the matrix associated to $A$ in this basis is
$$
	\left(\begin{array}{cccc}
		A'_1 & 0 & A'_2 & 0\\
		0 & A''_1 & 0 & A''_2\\
		A'_3 & 0 & A'_4 & 0\\
		0 & A''_3 & 0 & A''_4
	\end{array}\right).
$$
The notation $A'\diamond A''$ is used in Long \cite{Long} for this matrix.  It is ``a direct sum of matrices with obvious identifications".
We call it the \emph{symplectic direct sum} of the matrices $A'$ and $A''$.\\

In general, one cannot find a symplectic basis of the complexified vector space for which the matrix associated to $A$ has Jordan normal form.
Normal forms for  symplectic matrices have been described in particular in \cite{Long}.
The presentation of  normal forms for symplectic matrices that we give here is short and elementary; it is based on the following lemmas.

%%%%%%%%%%%%%%%%%%%%%%%%%%%%%%%%%
\subsection{Two technical lemmas}\label{appendicesemisimple}
%%%%%%%%%%%%%%%%%%%%
Let $(V,\Omega)$ be a real symplectic vector space. Consider $A\in\Sp(V,\Omega)$ and let $\lambda$ be an eigenvalue of $A$ in $V\otimes_\R\C.$
We have seen in lemma  \ref{lem:orthokerim}
that the space  $\Ker(A-\lambda\Id)^j$ is the symplectic orthogonal for $\Omega$ of $\im\bigl(A-\tfrac{1}{\lambda}\Id\bigr)^j,$ so that if $E_\lambda$ denotes the generalized eigenspace of eigenvalue $\lambda,$ we have
\begin{equation*}
	\Omega(E_\lambda,E_\mu)=0 \quad \textrm{ if }~\lambda\mu\ne 1 .
\end{equation*}
We had seen precisely that (equation \eqref{eq:omegaA})
\begin{equation*}%\label{eq:omegaA}
	\Omega \bigl( (A-\lambda\Id)^ju,A^jv \bigr) = (-\lambda)^j \Omega \Bigl( u, \bigl( A-\tfrac{1}{\lambda}\Id \bigr)^jv \Bigr).
\end{equation*}
	
Let $p\ge 0$ be the largest integer such  that the restriction to $E_\lambda$ of $(A-\lambda\Id)^{p}$ is not identically zero [and the restriction to  $E_\lambda$ of $(A-\lambda\Id)^{p+1}$ is zero]. Since $A$ is real,
this integer $p$ is the same for $\overline{\lambda}$.

By lemma \ref{lem:orthokerim}, $\Ker (A-\lambda\Id)^j$ is the symplectic orthogonal of $\im\bigl(A-\tfrac{1}{\lambda}\Id\bigr)^j$ for all $j,$ thus $\dim\Ker (A-\lambda\Id)^j=\dim \Ker \bigl(A-\tfrac{1}{\lambda}\Id\bigr)^j$;
hence the integer $p$ is the same for $\lambda$ and $\frac{1}{\lambda}$.
\begin{lemma}\label{lem:tech}
	For any positive integer $j$, the bilinear map
	\begin{equation*}
		\widetilde{Q} : \raisebox{.2ex}{$E_\lambda$}\,/\raisebox{-.2ex}{$\Ker(A-\lambda\Id)^j$}\times \raisebox{.2ex}{$E_{\frac{1}{\lambda}}$}/\raisebox{-.2ex}{$\Ker\bigl(A-{\tfrac{1}{\lambda}} \Id\bigr)^j$}\rightarrow \C 
	\end{equation*}
	\begin{equation}
		([v],[w])\mapsto  \widetilde{Q} ([v],[w]):=\Omega \bigl( (A-\lambda\Id)^jv,w \bigr) \qquad v\in E_\lambda, w\in E_{\frac{1}{\lambda}}
	\end{equation}
	is well defined and non degenerate.
	In the formula $[v]$ denotes the  class containing $v$ in the appropriate quotient.
\end{lemma}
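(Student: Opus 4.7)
The plan is to handle well-definedness first and then non-degeneracy, with identity \eqref{eq:omegaA} and Corollary \ref{lem:orthovalpro} (the $\Omega$-orthogonality of generalized eigenspaces of $A$ corresponding to non-reciprocal eigenvalues) as the two main tools.

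For well-definedness, replacing $v$ by $v + v_0$ with $v_0 \in \Ker(A-\lambda\Id)^j$ is harmless since $(A-\lambda\Id)^j v_0 = 0$. The substantive check is that the value is unchanged when $w$ is altered by an element of $\Ker(A - \tfrac{1}{\lambda}\Id)^j$; here a direct expansion is awkward because the operator $(A-\lambda\Id)^j$ sits on the wrong side. I would resolve this by writing $w = A^j w'$ with $w' := A^{-j} w$, noting that $A$ (hence $A^{-j}$) preserves the generalized eigenspace $E_{\frac{1}{\lambda}}$, and applying \eqref{eq:omegaA} to obtain
\[
  \Omega\bigl((A-\lambda\Id)^j v,\, w\bigr) \;=\; (-\lambda)^j\,\Omega\bigl(v,\,(A-\tfrac{1}{\lambda}\Id)^j w'\bigr).
\]
Since $A^{-j}$ commutes with $(A-\tfrac{1}{\lambda}\Id)^j$, it preserves $\Ker(A-\tfrac{1}{\lambda}\Id)^j$, so $w \in \Ker(A-\tfrac{1}{\lambda}\Id)^j$ implies $w' \in \Ker(A-\tfrac{1}{\lambda}\Id)^j$, which forces the right-hand side to vanish.

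For non-degeneracy in the first argument, suppose $\widetilde{Q}([v],[w]) = 0$ for every $[w]$. Then $(A-\lambda\Id)^j v$ lies in $E_\lambda$ (which is $(A-\lambda\Id)$-stable) and is $\Omega$-orthogonal to $E_{\frac{1}{\lambda}}$. By Corollary \ref{lem:orthovalpro} it is already $\Omega$-orthogonal to every other generalized eigenspace $E_\mu$ with $\mu \neq \tfrac{1}{\lambda}$, and since the generalized eigenspaces span $V \otimes_{\R} \C$, non-degeneracy of $\Omega$ forces $(A-\lambda\Id)^j v = 0$, i.e.\ $[v] = 0$. For the second argument, the same rewriting via \eqref{eq:omegaA} converts the hypothesis $\widetilde{Q}([v],[w]) = 0$ for all $[v]$ into $\Omega\bigl(v,\, A^{-j}(A-\tfrac{1}{\lambda}\Id)^j w\bigr) = 0$ for every $v \in E_\lambda$; the vector $u := A^{-j}(A-\tfrac{1}{\lambda}\Id)^j w$ belongs to $E_{\frac{1}{\lambda}}$ and, again by Corollary \ref{lem:orthovalpro}, is automatically $\Omega$-orthogonal to every $E_\mu$ with $\mu \neq \lambda$. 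Hence $u$ is $\Omega$-orthogonal to all of $V \otimes_{\R} \C$, giving $u = 0$; since $A^{-j}$ is invertible, $w \in \Ker(A-\tfrac{1}{\lambda}\Id)^j$.

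The step I expect to require the most care is the second half of well-definedness: one has to resist writing $\Omega((A-\lambda\Id)^j v, w + w_0) - \Omega((A-\lambda\Id)^j v, w)$ and instead pass the $(A-\lambda\Id)^j$ across via \eqref{eq:omegaA}, which is the only place the commutation of $A$ with $(A-\tfrac{1}{\lambda}\Id)^j$ and the $A$-invariance of the kernel are really used. Once that is in hand, non-degeneracy is just a clean application of the eigenspace orthogonality plus non-degeneracy of $\Omega$ on the whole space.
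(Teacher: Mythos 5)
Your proof is correct and follows essentially the same route as the paper: both use the rewritten form of identity \eqref{eq:omegaA} (passing $(A-\lambda\Id)^j$ to the other side via $A^j$ or $A^{-j}$) for well-definedness, and both reduce non-degeneracy to the fact that $\Omega$ pairs $E_\lambda$ and $E_{\frac{1}{\lambda}}$ non-degenerately. The only cosmetic difference is that for non-degeneracy in the second argument the paper invokes Lemma \ref{lem:orthokerim} directly ($w$ orthogonal to $\im(A-\lambda\Id)^j$ iff $w\in\Ker(A-\tfrac{1}{\lambda}\Id)^j$), whereas you symmetrize by applying the same eq.\ \eqref{eq:omegaA} rewriting plus Corollary \ref{lem:orthovalpro}; both are valid and rely on the same underlying computation.
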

\begin{proof}
 The fact that $\widetilde{Q} $ is well defined follows from equation \eqref{eq:omegaA};
 indeed, for any integer $j$, we have 
 \begin{equation}
	\Omega \bigl( (A-\lambda\Id)^j u,v \bigr) = (-\lambda)^j \Omega \Bigl( A^j u, \bigl( A-\tfrac{1}{\lambda}\Id \bigr)^jv \Bigr).
\end{equation}
The map is non degenerate because $\widetilde{Q} ([v],[w])=0 ~\forall w$ iff $(A-\lambda\Id)^j v=0$ since $\Omega$ is a non degenerate pairing between
$E_\lambda$ and $E_{\frac{1}{\lambda}}$ thus iff $[v]=0.$
	Similarly, $ \widetilde{Q} ([v],[w])=0 ~\forall v $ if and only if $w$ is $\Omega$-orthogonal to $\im(A-\lambda\Id)^j,$ thus iff
	$w\in \Ker(A-\tfrac{1}{\lambda}\Id)^j$ hence $[w]=0.$
\end{proof}
\begin{lemma}
For any $v,w\in V$, any $\lambda\in \C\setminus\{0\}$ and any integers $i\ge 0,~j>0$ we have:
\begin{eqnarray}
 \Omega \Bigl( (A-\lambda\Id)^{i}v,\bigl(A-\tfrac{1}{\lambda}\Id\bigr)^{j}w \Bigr)&=& -\frac{1}{\lambda}
  \Omega \Bigl( (A-\lambda\Id)^{i+1}v,\bigl(A-\tfrac{1}{\lambda}\Id\bigr)^{j}w \Bigr)\label{eq:Aij}\\
  &&-\frac{1}{\lambda^2}  \Omega \Bigl( (A-\lambda\Id)^{i+1}v,\bigl(A-\tfrac{1}{\lambda}\Id\bigr)^{j-1}w \Bigr).\nonumber
\end{eqnarray}
In particular, if $\lambda$ is an eigenvalue of $A$, if $p\ge 0$ is the largest integer such  that the restriction to $E_\lambda$ of $(A-\lambda\Id)^{p}$ is not identically zero  and if $v,w$ belong to $E_\lambda$, we have  for any integer $k\ge 0$:
	\begin{equation}\label{eq:p}
		\Omega \bigl( (A-\lambda\Id)^{p+k}v,w \bigr) = (-\lambda^2)^j \Omega \Bigl( (A-\lambda\Id)^{p+k-j}v, \bigl(A-{\tfrac{1}{\lambda}}\Id \bigr)^jw \Bigr)
	\end{equation}
so that
	\begin{equation}\label{eq:Qbiendef}
		\Omega \bigl( (A-\lambda\Id)^pv,w \bigr)=(-\lambda^2)^p \Omega \Bigl(v, \bigl( A - {\tfrac{1}{\lambda}}\Id \bigr)^pw \Bigr)
	\end{equation}
and
	\begin{equation}\label{eq:zero}
		\Omega \Bigl( (A-\lambda\Id)^{k}v, \bigl( A-{\tfrac{1}{\lambda}}\Id \bigr)^jw\Bigr)=0 ~\textrm{if }~k+j>p.
	\end{equation}
\end{lemma}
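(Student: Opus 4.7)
The plan is to derive \eqref{eq:Aij} directly from the symplectic invariance $\Omega(Au,Av)=\Omega(u,v)$, and then to bootstrap \eqref{eq:zero}, \eqref{eq:p} and \eqref{eq:Qbiendef} from it by successive inductions.

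For \eqref{eq:Aij}, I take arbitrary $u,v'\in V$ and expand $\Omega\bigl((A-\lambda\Id)u,(A-\tfrac{1}{\lambda}\Id)v'\bigr)$ using bilinearity together with $\Omega(Au,Av')=\Omega(u,v')$. Writing $\Omega(Au,v')=\Omega((A-\lambda\Id)u,v')+\lambda\Omega(u,v')$ and $\Omega(u,Av')=\Omega(u,(A-\tfrac{1}{\lambda}\Id)v')+\tfrac{1}{\lambda}\Omega(u,v')$, the $\Omega(u,v')$ contributions cancel and I obtain
\begin{equation*}
\Omega\bigl((A-\lambda\Id)u,(A-\tfrac{1}{\lambda}\Id)v'\bigr)=-\tfrac{1}{\lambda}\Omega\bigl((A-\lambda\Id)u,v'\bigr)-\lambda\Omega\bigl(u,(A-\tfrac{1}{\lambda}\Id)v'\bigr).
\end{equation*}
Solving for $\Omega\bigl(u,(A-\tfrac{1}{\lambda}\Id)v'\bigr)$ and specialising to $u=(A-\lambda\Id)^{i}v$, $v'=(A-\tfrac{1}{\lambda}\Id)^{j-1}w$ then yields \eqref{eq:Aij}.

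Next I prove \eqref{eq:zero} by induction on $j$ with $v,w\in E_\lambda$ fixed. The base case $j=0,\ k>p$ is immediate since $(A-\lambda\Id)^{p+1}v=0$. For the step with $k+j>p$ and $k\le p$, a single application of \eqref{eq:Aij} splits the left-hand side into a term of exponents $(k+1,j-1)$, whose total exponent is still $>p$ and which vanishes by the inductive hypothesis, plus a term of exponents $(k+1,j)$ to which I reapply \eqref{eq:Aij}; after at most $p-k+1$ iterations the surviving first factor becomes $(A-\lambda\Id)^{p+1}v=0$, killing everything. Then \eqref{eq:p} follows by induction on $j$: applying \eqref{eq:Aij} with $i=p+k-j$ writes $(-\lambda^{2})^{j}\Omega\bigl((A-\lambda\Id)^{p+k-j}v,(A-\tfrac{1}{\lambda}\Id)^{j}w\bigr)$ as a good piece $(-\lambda^{2})^{j-1}\Omega\bigl((A-\lambda\Id)^{p+k-j+1}v,(A-\tfrac{1}{\lambda}\Id)^{j-1}w\bigr)$, which the inductive hypothesis identifies with $\Omega\bigl((A-\lambda\Id)^{p+k}v,w\bigr)$, plus a residual multiple of $\Omega\bigl((A-\lambda\Id)^{p+k-j+1}v,(A-\tfrac{1}{\lambda}\Id)^{j}w\bigr)$ whose total exponent $p+k+1$ exceeds $p$ and hence vanishes by \eqref{eq:zero}. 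Finally \eqref{eq:Qbiendef} is the case $k=0,\ j=p$ of \eqref{eq:p}.

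The main obstacle is precisely this bookkeeping: the naive induction for \eqref{eq:p} leaves a leftover term that does not obviously cancel, and only the total-exponent vanishing \eqref{eq:zero} disposes of it. This forces the order of proof to be \eqref{eq:Aij}, then \eqref{eq:zero}, then \eqref{eq:p}, even though the statements are displayed in a different order in the lemma.
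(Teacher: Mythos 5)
Your proof is correct; every step I checked goes through. The route differs from the paper's in two small but genuine ways. For \eqref{eq:Aij}, you expand $\Omega\bigl((A-\lambda\Id)u,(A-\tfrac{1}{\lambda}\Id)v'\bigr)$ symmetrically and then solve for $\Omega\bigl(u,(A-\tfrac{1}{\lambda}\Id)v'\bigr)$; the paper instead inserts $\Id=-\tfrac{1}{\lambda}(A-\lambda\Id-A)$ into the left slot and re-expands in two passes. Both are one-line algebra on top of $\Omega(Au,Av')=\Omega(u,v')$, so this difference is cosmetic. The more substantive divergence is in how the residual term in the induction for \eqref{eq:p} is disposed of. The paper invokes the earlier identity \eqref{eq:omegaA}, $\Omega\bigl((A-\lambda\Id)^{m}u,A^{m}v'\bigr)=(-\lambda)^{m}\Omega\bigl(u,(A-\tfrac{1}{\lambda}\Id)^{m}v'\bigr)$, together with $(A-\lambda\Id)^{p+1}\big|_{E_\lambda}=0$, to see in one stroke that the offending term vanishes, and then records \eqref{eq:zero} and \eqref{eq:Qbiendef} as consequences of \eqref{eq:p}. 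You instead first prove \eqref{eq:zero} directly from \eqref{eq:Aij} by a two-level induction (outer induction on $j$, inner iteration pushing up the exponent on $(A-\lambda\Id)$ until it hits $p+1$), and only then feed \eqref{eq:zero} into a single induction on $j$ for \eqref{eq:p}. Your version is a little longer but self-contained in that it never appeals to \eqref{eq:omegaA}; the paper's is shorter because it recycles that already-available formula. Both arrive at \eqref{eq:Qbiendef} as the specialisation $k=0$, $j=p$ of \eqref{eq:p}, and your explicit remark that the logical order must be \eqref{eq:Aij}, \eqref{eq:zero}, \eqref{eq:p} (regardless of the display order) is a fair observation under your organisation, though the paper avoids that inversion by using \eqref{eq:omegaA} in place of \eqref{eq:zero}.
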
	
\begin{proof}
We have:
\begin{eqnarray*}
	&&\Omega\Bigl((A-\lambda\Id)^{i} v,\bigl(A-\tfrac{1}{\lambda}\Id\bigr)^jw\Bigr)\\
	&&\ \ \ =-\frac{1}{\lambda}\Omega\Bigl(\bigl(A-{\lambda}\Id-A\bigr)(A-\lambda\Id)^{i} v,\bigl(A-\tfrac{1}{\lambda}\Id\bigr)^jw\Bigr)\\
	&&\ \ \ =-\frac{1}{\lambda}\Omega\Bigl((A-\lambda\Id)^{i+1} v,\bigl(A-\tfrac{1}{\lambda}\Id\bigr)^jw\Bigr)\\
	&&\ \ \ \quad+\frac{1}\lambda \Omega\Bigl(A(A-\lambda\Id)^{i} v,\bigl(A-\tfrac{1}{\lambda}\Id\bigr)\bigl(A-\tfrac{1}{\lambda}\Id\bigr)^{j-1}w\Bigr)\\
	&&\ \ \ =-\frac{1}{\lambda}\Omega\Bigl((A-\lambda\Id)^{i+1} v,\bigl(A-\tfrac{1}{\lambda}\Id\bigr)^jw\Bigr)\\
	&&\ \ \  \quad+\frac{1}{\lambda}\Omega\Bigl((A-\lambda\Id)^{i} v,\bigl(A-\tfrac{1}{\lambda}\Id\bigr)^{j-1}w\Bigr)\\
	&&\ \ \ \quad-\frac{1}{\lambda^2}\Omega\Bigl(A(A-\lambda\Id)^{i} v,\bigl(A-\tfrac{1}{\lambda}\Id\bigr)^{j-1}w\Bigr)
\end{eqnarray*}
and  formula \eqref{eq:Aij} follows.\\
Observe now that, for any integer $k\ge 0$ by (\ref{eq:omegaA}) and the definition of $p$
	\[
		(-\lambda)^{j+1} \Omega \Bigl( (A-\lambda\Id)^{p+k-j}v, \bigl(A-{\tfrac{1}{\lambda}}\Id \bigr)^{j+1} w \Bigr)=\Omega \bigl( (A-\lambda\Id)^{p+k+1}v,A^jw \bigr)=0.
	\]
	Hence, applying  formula \eqref{eq:Aij} with		
an induction on $j\le p+k$, we get formula \eqref{eq:p}. The other formulas follow readily.
	\end{proof}
%%%%%%%%%%%%%%%%%%%%%%%%%%%%%%%%%%%%
%%%%%%%%%%%%%%%%%%%%%%%%%%%%%%%%%
\subsection{Normal forms  for  $A_{\vert V_{[\lambda]}}$}\label{subsecnormalforms}
%%%%%%%%%%%%%%%%%%%%

We construct here a symplectic basis of $W_{[\lambda]}$ (and of $V_{[\lambda]}$) adapted to $A$ for a given eigenvalue $\lambda$ of $A$.
We shall decompose $W_{[\lambda]}$ (and $V_{[\lambda]}$) into a direct sum of symplectic subspaces stable by $A$.
On any subspace of  $W_{[\lambda]}$,  the only eigenvalues of $A$ are of the form $\lambda,1/\lambda,\overline{\lambda}$ and $1/\overline{\lambda}$. 
We shall assume as above that $(A-\lambda\Id)^{p+1}=0$ and $(A-\lambda\Id)^{p}\ne 0$ on the generalized eigenspace $E_\lambda;$ recall that we have the same integer $p$ for the other eigenvalues of the ``quadruple''.
We shall distinguish three cases; first $\lambda\notin S^1$ then $\lambda=\pm 1$ and
finally $\lambda\in S^1\setminus \pm 1.$

%%%%%%%%%%%%%%%%%%%%%%%%%%%
\subsubsection{Case 1:   $A_{\vert V_{[\lambda]}}$ for $\lambda\notin S^1 .$}

Choose an element  $v\in E_\lambda$ and an element $w\in E_{\frac{1}{\lambda}}$ such that 
$$
	\widetilde{Q} ([v],[w])=\Omega \bigl( (A-\lambda\Id)^pv,w \bigr)\ne 0.
	$$
Consider the smallest subspace $E^v_\lambda$ of $E_\lambda$ stable by $A$ and containing $v$; it is of dimension $p+1$ and is generated by 
\begin{equation*}
	\left\{e_1:=(A-\lambda\Id)^pv,\ldots, e_i:=(A-\lambda\Id)^{p+1-i}v,\ldots, e_{p+1}:=v\right\}.
\end{equation*}
Observe that $Ae_i=(A-\lambda\Id)e_i+\lambda e_i$ so that $Ae_i=\lambda e_i+ e_{i-1}$ for $i>1$ and $Ae_1=e_1$.\\
Similarly consider the smallest subspace $E^w_{\frac{1}{\lambda}}$ of $E_{\frac{1}{\lambda}}$ stable by $A$ and containing $w$; it is also of dimension $p+1$  and is generated by
\begin{equation*}
	\left\{e'_1:=w, \ldots, e'_{j}:=\left(A-\tfrac{1}{\lambda}\Id\right)^{j-1}w,\ldots e'_{p+1}:=\left(A-\tfrac{1}{\lambda}\Id\right)^pw\right\} .
\end{equation*}
One has
\begin{itemize}
	\item $\Omega(e_i,e_j)=0$ and $\Omega(e'_i,e'_j)=0$ because $\Omega(E_\lambda,E_\mu)=0$ if $\lambda\mu\ne 1$
	\item $\Omega(e_i,e'_j)=0$ if $i<j$  because the equation (\ref{eq:zero}) implies that
		\begin{equation*}
			\Omega \Bigl( (A-\lambda\Id)^{p+1-i}v, \bigl( A-{\tfrac{1}{\lambda}}\Id \bigr)^{j-1}w \Bigr)=0 \textrm{ if } p+j-i>p
		\end{equation*}
	\item $\Omega(e_i,e'_i)=\bigl(\frac{-1}{\lambda^2}\bigr)^{i-1}\Omega \Bigl(\bigl( A-{\frac{1}{\lambda}}\Id\bigr)^pv,w\Bigr)$ by equation (\ref{eq:p}) and is non zero by the choice of $v,w .$
\end{itemize}
The matrix associated to $\Omega$	in the basis  $\{ e_1,\ldots,e_{p+1}, e'_1,\ldots,e'_{p+1}\}$ is thus of the form
$$
\left(\begin{array}{ccc}\begin{array}{ccc}
	0&&0\\
	&\ddots&\\
	0&&0\\
\end{array} &\vline&\begin{array}{ccc}
	\overline{\ast}&&0\\
	&\ddots&\\
	\ast&&\overline{\ast}\\
\end{array}\\\hline
\begin{array}{ccc}
	\overline{\ast}&&0\\
	&\ddots&\\
	\ast&&\overline{\ast}
\end{array}&\vline&\begin{array}{ccc}
	0&&0\\
	&\ddots&\\
	0&&0\\
\end{array}\end{array}
\right)$$
with non vanishing $\overline{\ast}$.
Hence $\Omega$ is non degenerate on  $E^v_\lambda\oplus E^w_{\frac{1}{\lambda}}$ which is thus a symplectic subspace stable by $A$.
Remark that the symplectic orthogonal $\bigl(E^v_\lambda\oplus E^w_{\frac{1}{\lambda}}\bigr)^\perp $ to this subspace in $E_\lambda\oplus E_{\frac{1}{\lambda}}$
is also symplectic, stable under $A$ and that $(A-\lambda\Id)^{p+1}=0$ on $\bigl(E^v_\lambda\oplus E^w_{\frac{1}{\lambda}}\bigr)^\perp\cap E_\lambda$, so that the integer $p'$ relative to this subspace is $\le p$.
One obtains by induction a decomposition of $E_\lambda\oplus  E_{\frac{1}{\lambda}}$ into a sum of $A$-stable $\Omega$-orthogonal subspaces of the form
$E^{v^j}_\lambda\oplus E^{w^j}_{\frac{1}{\lambda}}.$\\

We now construct a  basis $\left\{e_1,\ldots, e_{p+1},f_1,\ldots,f_{p+1}\right\}$ of $E^v_\lambda\oplus E^w_{\frac{1}{\lambda}}$ in which the symplectic form has the standard form
$\Omega_0=
\left( \begin{array}{cc}
	0 & \Id \\
	-\Id & 0
\end{array} \right)$
and which gives a normal form for $A$.
From the remark above, this will induce a normal form for $A$ on $E_\lambda\oplus  E_{\frac{1}{\lambda}}$.
If $\lambda$ is real, we  take $v, w $ in  the real generalized eigenspaces $E^{\R}_\lambda$ and $E^{\R}_{\frac{1}{\lambda}}$ and we obtain a symplectic basis of the real symplectic vector space stable by $A$,
$E^{\R v}_\lambda\oplus E^{\R w}_{\frac{1}{\lambda}}$. 
If $\lambda$ is not real, one considers the basis of $E^{\overline{v}}_{\overline\lambda}\oplus E^{\overline{w}}_{\frac{1}{\overline{\lambda}}}$ defined by the conjugate vectors
$\{\overline{e_1},\ldots,\overline{e_{p+1}},\overline{f_1},\ldots, \overline{f_{p+1}}\}.$ 
and this  yields a conjugate normal form on $E_{\overline{\lambda}}\oplus  E_{\frac{1}{\overline{\lambda}}}$ hence a normal form on $W_{[\lambda]} $ and this will induce a real normal form on
$V_{[\lambda]}$.\\

We choose $v$ and $w$ such that $\Omega \Bigl( v,\bigl(A-{\frac{1}{\lambda}}\Id\bigr)^pw \Bigr) = 1.$
We obtain a symplectic basis of $E^v_\lambda\oplus E^w_{\frac{1}{\lambda}}$ by taking the $e_i=(A-\lambda\Id)^{p+1-i}v$ and applying an analogue of Gram-Schmidt procedure to the $e'_i$ thus setting
\subitem $f_{p+1}= e'_{p+1} = \bigl( A-{\frac{1}{\lambda}}\Id \bigr)^pw$
\subitem $f_{p}=\frac{1}{\Omega(e_p,e'_p)} \bigl( e'_{p}-\Omega(e_{p+1}, e'_{p})f_{p+1} \bigr)$
\subitem  and by decreasing induction on $j,$
\subitem $f_j=\frac{1}{\Omega(e_j,e'_j)} \bigl( e'_{j}-\sum_{k>j}\Omega(e_{k}, e'_{j})f_{k}\bigr),$\\
so that any $f_j$ is a linear combination of the $e'_k$ with $k\ge j$.
	
In the basis $\{e_1,\ldots,e_{p+1},f_1,\ldots, f_{p+1}\}$ the matrix associated to $A$ is
\begin{equation*}
	\left(\begin{array}{cc}
		J(\lambda,p+1)&0\\
		0&B
	\end{array}\right)
	\end{equation*}
where
\begin{equation} \label{eq:J}
	J(\lambda,m)=
	\left(\begin{array}{ccccccc}
		\lambda & 1 & 0 & 0 & \ldots & 0 & 0\\
		0 & \lambda & 1 & 0 & \ldots & 0 & 0\\
		0 & 0 & \lambda & 1 & \ldots & 0 & 0\\
		\vdots & \vdots & \vdots & \ddots & \ddots & \vdots & \vdots\\
		0 & 0 & 0 & \ldots & \lambda & 1 & 0\\
		0 & 0 & 0 & \ldots & 0 & \lambda & 1\\
		0 & 0 & 0 & \ldots & 0 & 0 & \lambda
	\end{array}\right)
\end{equation}
 is the elementary $m\times m$ Jordan matrix associated to $\lambda$.
 Since $A$ is symplectic, $B$ is the transpose of the inverse of  $J(\lambda,p+1)$, $B=  \bigl(J(\lambda,p+1)^{-1}\bigr)^{\tau}$.\\This is the normal form  for $A$ restricted to $E^v_\lambda\oplus E^w_{\frac{1}{\lambda}}$. \\
If $\lambda=re^{i\phi}\notin \R$ we consider the symplectic basis $\{e_1,\ldots,e_{p+1},f_1,\ldots, f_{p+1}\}$ of $E^v_\lambda\oplus E^w_{\frac{1}{\lambda}}$ as above and the conjugate symplectic basis $\{\overline{e_1},\ldots,\overline{e_{p+1}},\overline{f_1},\ldots, \overline{f_{p+1}}\}$ of $E^{\overline{v}}_{\overline{\lambda}}\oplus E^{\overline{w}}_{\frac{1}{\overline\lambda}}.$ Writing $e_j=\frac{1}{\sqrt{2}}(u_j-iv_j)$ and 
$f_j=\frac{1}{\sqrt{2}}(w_j+ix_j)$
for all $1\le j\le p+1$ with the vectors $u_j,v_j,w_j, x_j$ in the real vector space $V$, we get a symplectic basis $\{u_1,\ldots,u_{p+1},v_1,\ldots, v_{p+1},w_1,\ldots,w_{p+1},x_1,\ldots, x_{p+1}\}$ of the real subspace of $V$ whose complexification is $E^v_\lambda\oplus E^w_{\frac{1}{\lambda}}\oplus E^{\overline{v}}_{\overline{\lambda}}\oplus E^{\overline{w}}_{\frac{1}{\overline\lambda}}$. In this basis, the matrix associated to $A$ is
\begin{equation*}
	\left(\begin{array}{cc}
		J_\R\bigr(re^{i\phi},2(p+1)\bigl)&0\\
		0&\Bigr(J_\R\bigr(re^{i\phi},2(p+1)\bigl)^{-1}\Bigl)^\tau
	\end{array}\right)
\end{equation*}
where $J_\R(re^{i\phi},2m)$ is the $2m\times 2m$ matrix written in terms of $2\times 2$ matrices as
\begin{equation}\label{eq:JR}
	\left(\begin{array}{ccccccc}
		R(re^{i\phi}) & \Id & 0 & 0 & \ldots & 0 & 0\\
		0 &R(re^{i\phi}) & \Id & 0 & \ldots & 0 & 0\\
		0 & 0 & R(re^{i\phi})& \Id & \ldots & 0 & 0\\
		\vdots & \vdots & \vdots & \ddots & \ddots & \vdots & \vdots\\
		0 & 0 & 0 & \ldots & R(re^{i\phi}) &  \Id & 0\\
		0 & 0 & 0 & \ldots & 0 &R(re^{i\phi}) &  \Id \\
		0 & 0 & 0 & \ldots & 0 & 0 & R(re^{i\phi})
	\end{array}\right)
\end{equation}
 with $R(re^{i\phi})=\left(\begin{array}{cc}
		r\cos \phi&-r\sin \phi\\
		r\sin \phi&r\cos \phi
	\end{array}\right)$.
By induction, we get
\begin{theorem}[Normal form for $A_{\vert V_{[\lambda]}}$ for $\lambda\notin S^1.$]
Let $\lambda\notin S^1$ be an eigenvalue of $A$. Denote $k:=\dim_\C \Ker (A-\lambda\Id)$ (on $V^\C$) and $p$  the smallest integer so that $(A-\lambda\Id)^{p+1}$ is identically zero on the generalized eigenspace $E_\lambda$.\\
If $\lambda\neq \pm1$ is a real eigenvalue of $A$,
there exists a symplectic basis of $V_{[\lambda]}$
in which the matrix associated to the restriction of $A$ to $V_{[\lambda]}$ is a symplectic direct sum of $k$
matrices of the form 
$$
        \left(\begin{array}{cc}
		J(\lambda,p_j+1)&0\\
		0& \bigl(J(\lambda,p_j+1)^{-1}\bigr)^{\tau}
	\end{array}\right)
$$
	 with $p=p_1\ge p_2\ge \dots \ge p_k.$  
If $\lambda=re^{i\phi}\notin(S^1\cup \R)$ is a complex eigenvalue of $A$, there exists a symplectic basis of $V_{[\lambda]}$ in which the matrix associated to the restriction of $A$ to $V_{[\lambda]}$ is a symplectic direct sum of $k$
matrices of the form
$$
	\left(\begin{array}{cc}
		J_\R\bigl(re^{i\phi},2(p_j+1)\bigr)&0\\
		0&\Bigl(J_\R\bigl(re^{i\phi},2(p_j+1)\bigr)^{-1}\Bigr)^\tau
	\end{array}\right)
$$
with $p=p_1\ge p_2\ge \dots \ge p_k.$
\end{theorem}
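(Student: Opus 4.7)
The plan is to prove the theorem by induction on $\dim V_{[\lambda]}$ (equivalently on $k=\dim_\C \Ker(A-\lambda\Id)$), at each stage splitting off one cyclic symplectic block of the advertised form and applying the construction already developed in subsection \ref{subsecnormalforms} to the symplectic orthogonal complement. The key input is the non-degeneracy of the pairing $\widetilde{Q}$ of Lemma \ref{lem:tech}, which produces, for the maximal integer $p$ with $(A-\lambda\Id)^p\not\equiv 0$ on $E_\lambda$, vectors $v\in E_\lambda$ and $w\in E_{1/\lambda}$ with $\Omega\bigl((A-\lambda\Id)^p v,w\bigr)\ne 0$.

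For the real case $\lambda\in\R\setminus\{\pm 1\}$, since $A$ is real, $E_\lambda$ is the complexification of a real generalized eigenspace $E^\R_\lambda$ and the form $\widetilde Q$ descends to a non-degenerate real pairing on the corresponding real quotients, so $v,w$ may be chosen in $E^\R_\lambda$ and $E^\R_{1/\lambda}$, and rescaled so that $\Omega\bigl(v,(A-\tfrac{1}{\lambda}\Id)^p w\bigr)=1$. The preceding subsection then delivers a real symplectic basis $\{e_1,\dots,e_{p+1},f_1,\dots,f_{p+1}\}$ of the cyclic subspace $E^v_\lambda\oplus E^w_{1/\lambda}$ in which $A$ has the block form with diagonal entries $J(\lambda,p+1)$ and $\bigl(J(\lambda,p+1)^{-1}\bigr)^\tr$. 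Since this cyclic subspace is symplectic and $A$-stable, so is its symplectic orthogonal complement in $V_{[\lambda]}$, and the maximal nilpotency integer $p'$ on the $\lambda$-part of the complement satisfies $p'\le p$, while $\dim_\C\Ker(A-\lambda\Id)$ drops by exactly one. The induction hypothesis applied to this complement provides $k-1$ further blocks with $p\ge p_2\ge\dots\ge p_k$, which concatenate with the first block into the desired symplectic basis of $V_{[\lambda]}$.

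For the complex case $\lambda=re^{i\phi}\notin S^1\cup\R$, I would perform the same cyclic construction over $\C$ on $E^v_\lambda\oplus E^w_{1/\lambda}$, then take the conjugate basis $\{\overline{e_i},\overline{f_i}\}$ of $E^{\overline v}_{\overline\lambda}\oplus E^{\overline w}_{1/\overline\lambda}$; writing $e_j=\tfrac{1}{\sqrt 2}(u_j-iv_j)$ and $f_j=\tfrac{1}{\sqrt 2}(w_j+ix_j)$ with $u_j,v_j,w_j,x_j\in V$ gives a real symplectic basis of the real subspace of $V$ whose complexification is $W_{[\lambda]}\cap(E^v_\lambda\oplus E^w_{1/\lambda}\oplus\overline{\phantom{.}\cdot\phantom{.}})$, and a direct computation (already recorded in \eqref{eq:JR}) shows $A$ acts on this basis by the block $\operatorname{diag}\bigl(J_\R(re^{i\phi},2(p+1)),\,(J_\R(re^{i\phi},2(p+1))^{-1})^\tr\bigr)$. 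Induction on the symplectic orthogonal complement in $V_{[\lambda]}$ closes this case identically. The only real obstacle is the combinatorial bookkeeping: one must check at each inductive step that $\dim_\C\Ker(A-\lambda\Id)$ drops by exactly one (so that the total block count is $k$) and that the new nilpotency index is bounded by the old one (so that the $p_j$ emerge in non-increasing order). Both facts follow from the explicit description of $\Ker(A-\lambda\Id)$ on a single cyclic block together with Lemma \ref{lem:orthokerim}.
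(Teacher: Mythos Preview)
Your proposal is correct and follows essentially the same approach as the paper: use the non-degeneracy of $\widetilde{Q}$ from Lemma~\ref{lem:tech} to pick $v\in E_\lambda$, $w\in E_{1/\lambda}$ generating a cyclic symplectic block, build the symplectic basis via the Gram--Schmidt procedure already carried out in the subsection preceding the theorem, then induct on the symplectic orthogonal complement. Your explicit remark that $\dim_\C\Ker(A-\lambda\Id)$ drops by exactly one at each step (because the kernel on a single cyclic block $E^v_\lambda$ is the line spanned by $e_1=(A-\lambda\Id)^p v$) is a useful justification of the block count $k$ that the paper leaves implicit.
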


%%%%%%%%%
\subsubsection{Case 2:  $A_{\vert V_{[\lambda]}}$ for $\lambda=\pm 1.$}

In this situation $[\lambda]=\{ \lambda\}$ and  $V_{[\lambda]}$ is  the generalized real eigenspace of eigenvalue $\lambda$, still denoted --with a slight abuse of notation-- $E_\lambda$.
We consider $\widetilde{Q} :\raisebox{.2ex}{$E_\lambda$}/\raisebox{-.2ex}{$\Ker(A-\lambda\Id)^p$}\times 	\raisebox{.2ex}{$E_\lambda$}/\raisebox{-.2ex}{$\Ker(A-\lambda\Id)^p$}\rightarrow \R$
the non degenerate form defined by $\widetilde{Q} ([v],[w])=\Omega \bigl( (A-\lambda\Id)^p v,w \bigr).$
We see directly from equation (\ref{eq:Qbiendef}) that $\widetilde{Q}$ is symmetric if $p$ is odd and antisymmetric if $p$ is even.\\
	
{\bf{If $p=2k-1$ is odd}}, we choose $v\in E_\lambda$ such that
\begin{equation*}
	\widetilde{Q}([v],[v])=\Omega \bigl( (A-\lambda\Id)^p v,v \bigr) \ne 0
\end{equation*}
and consider the smallest subspace $E^v_\lambda$ of $E_\lambda$ stable by $A$ and containing $v$; it is  generated by
\begin{equation*}
	\bigl\{ e_1:=(A-\lambda\Id)^pv,\ldots, e_i:=(A-\lambda\Id)^{p+1-i}v,\ldots, e_{p+1=2k}:= v \bigr\}.
\end{equation*}
We have
\begin{itemize}
	\item $\Omega(e_i,e_j)=0$ if $i+j\le p+1(=2k)$ by equation (\ref{eq:zero}) because\\ 
		$\Omega \bigl((A-\lambda\Id)^{p+1-i}v,(A-\lambda\Id)^{p+1-j}v \bigr) = 0$ if $p+1-j+p+1-i\ge p+1$
	\item $\Omega(e_i,e_{p+2-i})\ne0;$ by equation(\ref{eq:p}) because\\
		$\Omega \bigl( (A-\lambda\Id)^{p+1-i}v,(A-\lambda\Id)^{i-1}v \bigr) = (-1)^{i-1}  \Omega \bigl( (A-\lambda\Id)^{p}v,v \bigr) \ne 0$ by the choice of $v .$
\end{itemize}

\noindent Hence $E^v_\lambda$ is a symplectic subspace because,  in the  basis defined by the $e_i $'s, $\Omega$ has the triangular form
$\left(\begin{array}{ccc}
	0&&\overline{\ast}\\
	&\adots&\\
	\overline{\ast}&&\ast
\end{array}\right)$
and has a non-zero determinant.\\
We can choose $v$ so that $\Omega \bigl((A-\lambda\Id)^{k}v,(A-\lambda\Id)^{k-1}v \bigr) = d=\pm 1$ by rescaling the vector.
One may further assume that 
$$
T_{i,j}(v):=\Omega \bigl((A-\lambda\Id)^{i}v,(A-\lambda\Id)^{j}v \bigr) = 0\quad \textrm{for all }~0\le i,j\le k-1.
$$
Indeed, by formula \eqref{eq:Aij}  we have $T_{i,j}(v)=-\lambda T_{i+1,j}(v)-T_{i+1,j-1}(v),~ T_{i,i}(v)=0$ and we proceed by decreasing induction (observing that $T_{i,j}(v)=-T_{j,i}(v)$) as follows:
\begin{itemize}
\item if $T_{{k-2},{k-1}}(v)= \alpha_1$, we replace $v$ by $v-\frac{\alpha_1}{2d}(A-\lambda\Id)^{2}v$; it spans the same subspace
and the quantities $T_{i,j}(v)$ do not vary for $i+j\ge 2k-1$ but now $T_{{k-2},{k-1}}(v)= 0$, hence $T_{{k-3},{k-1}}(v)=-\lambda T_{{k-2},{k-1}}(v)-T_{{k-2},{k-2}}(v)= 0$;
\item if $T_{{k-3},{k-2}}(v)= \alpha_2$, we replace $v$ by $v+\frac{\alpha_2}{2d}(A-\lambda\Id)^{4}v$; it spans the same subspace
and the quantities $T_{i,j}(v)$ do not vary for $i+j\ge 2k-1$, for $k-2\le i \le j\le k-1$ and for $i=k-3,\, j= k-1$; but
now $T_{{k-3},{k-2}}(v)= 0$, hence also 
$T_{{k-4},{k-2}}(v)= 0$, $T_{{k-4},{k-1}}(v)= 0$;
\item if $T_{{k-4},{k-3}}(v)= \alpha_3$, we replace $v$ by $v-\frac{\alpha_3}{2d}(A-\lambda\Id)^{6}v$; it spans the same subspace
and the quantities $T_{i,j}(v)$ do not vary for $i+j\ge 2k-1$, for $k-3\le i \le j\le k-1$ and for $i=k-4, \, k-2\le j\le k-1$;  now $T_{{k-4},{k-3}}(v)= 0$, hence also 
$T_{{k-5},{k-3}}(v)= 0,~T_{{k-5},{k-2}}(v)= 0,~T_{{k-5},{k-1}}(v)= 0$;
\item assume by induction on increasing $r$  that all $T_{i,j}(v)$ vanish  for $k-r\le i \le j\le k-1$ and for $i=k-(r+1), \, k-(r-1)\le j\le k-1$;  if  $T_{{k-(r+1)},{k-r}}(v)= \alpha_r$, , we replace $v$ by $v+(-1)^r\frac{\alpha_r}{2d}(A-\lambda\Id)^{2r}v$; it spans the same subspace
and the quantities $T_{i,j}(v)$ do not vary for $i+j\ge 2k-1$, for $k-r\le i \le j\le k-1$ and for $i=k-(r+1), \, k-(r-1)\ j\le k-1$;  now 
$T_{{k-(r+1)},{k-r}}(v)= 0$, hence also 
$T_{{k-(r+2)},j(v)}$ for $j\ge k-(r)$;
\item we proceed by induction until all $T_{i,j}$ vanish for $1\le i \le  j\le  k-1$ and for $i=0,\, 1\le j \le k-1$; if  $T_{0,1}(v)=\alpha_{k-1}$,
we replace $v$ by $v+(-1)^{k-1}\frac{\alpha_{k-1}}{2d}(A-\lambda\Id)^{2k-2}v$; it spans the same subspace
and the quantities $T_{i,j}(v)$ do not vary for $i+j\ge 2k-1$, for $1\le i \le j\le k-1$ and for $i=0, \, 2\le j\le k-1$;  now $T_{{0},{1}}(v)= 0$
so that all $T_{i,j}(v)$ vanish for $0\le i,j\le k-1$.
\end{itemize}
We extend $\{e_1,\ldots,e_{k}\}$ into a symplectic basis $\{e_1,\ldots,e_{k}, f_1,\ldots, f_{k}\}$ of our subspace, using again an analogue of the Gram-Schmidt procedure and the fact that $\Omega(e_{k+i},e_{k+j})=T_{k-i,k-j}(v)=0$ for all $1\le i,j\le k$:
\subitem $f'_{k}=  e_{k+1}$ and $f_k:=\tfrac{1}{\Omega(e_k,f'_k)}f'_k=\tfrac{1}{d}e_{k+1}=\tfrac{1}{d}(A-\lambda\Id)^{k-1}v$;
\subitem $f'_{k-1}= e_{k+2}-\Omega(e_{k}, e_{k+2})f_{k}+\Omega(f_{k}, e_{k+2})e_{k}=e_{k+2}-\Omega(e_{k}, e_{k+2})f_{k}$ and $f_{k-1}:=\frac{1}{\Omega(e_{k-1},f'_{k-1})}f'_{k-1}$ is a linear combinaison of $e_{k+1}$ and $e_{k+2}$;
\subitem  and by induction on $j$ 
\subitem $f'_{k-j}=e_{k+j+1}-\sum_{r>k-j}\Omega(e_{r}, e_{k+j+1})f_{r}+\sum_{r>k-j}\Omega(f_{r}, e_{k+j+1})e_{r}=e_{k+j+1}-\sum_{r>k-j}\Omega(e_{r}, e_{k+j+1})f_{r}$ and
$f_{k-j}:=\frac{1}{\Omega(e_{k-j},f'_{k-j})}f'_{k-j}$, \\
so that  $f_{k-j}$ is a linear combination of the $e_i$'s for $k+1\le i \le k+j+1$ .\\

Since $Ae_{j}=\lambda e_{j}+e_{j-1}$ for all $j>1$, the matrix associated to $A$ in the basis constructed above is
\begin{equation*}
	A'=
	\left(\begin{array}{cc}
		J(\lambda,k)&C\\
		0&B
	\end{array}\right)
\end{equation*}
with $C$ identically zero except for the last lign and with the coefficient $C^k_k=\tfrac{1}{d}=d$.
Since it is symplectic we have $J(\lambda,k)^{\tau}B=\Id$ so that $B = \bigl( J(\lambda,k)^{-1}\bigr)^{\tau}$ and
$CJ(\lambda,k)^{\tau}$ is symmetric, hence diagonal of the form $\operatorname{diag} \bigl(0,\ldots,0,d\bigr)$
so that $C$ is of the form
$C(k,d,\lambda)=\operatorname{diag} \bigl(0,\ldots,0,d\bigr)\bigl( J(\lambda,k)^{-1}\bigr)^{\tau}$ so
\begin{equation}\label{eq:C}
	C(k,d,\lambda):=
	\left(\begin{array}{ccccc}
		0&\ldots &0&0&0\\
		\vdots&&&&\vdots\\
		0&\ldots &0&0&0\\
	         (-\lambda)^{k-1} d&\ldots&(-\lambda)^2d&(-\lambda)d&d
	\end{array}\right).
\end{equation}
The matrix
$\left(\begin{array}{cc}
	J(\lambda,k)&C(k,d,\lambda)\\
	0& \bigl( J(\lambda,k)^{-1}\bigr)^{\tau}
\end{array}\right)$
with $d=\pm 1$ is the normal form of $A$ restricted to $E^v_\lambda$.\\

{\bf{If $p=2k$ is even}}, we choose $v$ and $w$ in $E_\lambda$ such that
\begin{equation*}
	\widetilde{Q}([v],[w])=\Omega\bigl((A-\lambda\Id)^p v,w\bigr)=1
\end{equation*}
and we consider the smallest subspace $E^v_\lambda\oplus E^w_\lambda$ of $E_\lambda$ stable by $A$ and containing $v$ and $w.$
It is of dimension $4k+2.$
Remark that $\Omega\bigl((A-\lambda\Id)^p v,v\bigr)=0$ since $\Omega\bigl((A-\lambda\Id)^p v,w\bigr)=(-1)^p \Omega\Bigl( v, \bigl(A-\frac{1}{\lambda}\Id\bigr)^pw\Bigr)$
for any $v,w\in E_{\lambda}$ and $\lambda=\frac{1}{\lambda}$.
So $\Omega\Bigl((A-\lambda\Id)^{p-r} v, \bigl(A-\frac{1}{\lambda}\Id\bigr)^{r}v\Bigr)=0$ for all $0\le r\le p$.\\

We can assume inductively on decreasing $J$ that  for all $j\ge J$ and for all $0\le r\le j$ we have 
$\Omega\Bigl((A-\lambda\Id)^{j-r} v,\bigl(A-\frac{1}{\lambda}\Id\bigr)^{r}v\Bigr)=0$.
Then, rewritting formula \ref{eq:Aij} permuting $\lambda$ and $\frac{1}{\lambda}$ we have
\begin{eqnarray*}
	&&\Omega\Bigl((A-\lambda\Id)^{J-1-s} v,\bigl(A-\tfrac{1}{\lambda}\Id\bigr)^sv\Bigr)\\
	&&\ \ \ =-\lambda  \Omega\Bigl((A-\lambda\Id)^{J-1-s} v,\bigl(A-\tfrac{1}{\lambda}\Id\bigr)^{s+1}v\Bigr)\\
	&&\ \ \ \quad -\lambda^2\Omega\Bigl((A-\lambda\Id)^{J-2-s} v,\bigl(A-\tfrac{1}{\lambda}\Id\bigr)^{s+1}v\Bigr).
\end{eqnarray*}
and the first term on the righthand side of this equation vanishes by induction.
If $\Omega\bigl((A-\lambda\Id)^{J-1} v,v\bigr)=d\neq 0$, $J$ is even and we replace $v$ by 
$$
	v'=v+\tfrac{d}{2}(A-\lambda\Id)^{p-J+1}w;
$$
we have 
$v'\in E^v_\lambda\oplus E^w_\lambda$ and  $\Omega\bigl((A-\lambda\Id)^p v',w\bigr)=1$. Furthermore\\ 
$\Omega\bigl((A-\lambda\Id)^{j-r} v',(A-\frac{1}{\lambda}\Id)^{r}v'\bigr)=0$ for all $0\le r\le j$ and $j\ge J$ and now
\begin{eqnarray*}
	&&\Omega\bigl((A-\lambda\Id)^{J-1} v',v'\bigr)\\
	&&\ \ \ =\Omega\Bigl((A-\lambda\Id)^{J-1} \bigl(v+\tfrac{d}{2}(A-\lambda\Id)^{p-J+1}w\bigr),v+\tfrac{d}{2}(A-\lambda\Id)^{p-J+1}w\Bigr)\\
	&&\ \ \ = \Omega\bigl((A-\lambda\Id)^{J-1} v,v\bigr)+\tfrac{d}{2}\Omega\bigl((A-\lambda\Id)^p w,v\bigr)\\
	&&\quad\quad  +\tfrac{d}{2}\Omega\bigl((A-\lambda\Id)^{J-1} v,(A-\lambda\Id)^{p-J+1}w\bigr) \\
	&&\quad\quad\quad+(\tfrac{d}{2})^2\Omega\bigl((A-\lambda\Id)^p w,(A-\lambda\Id)^{p-J+1}w\bigr)\\
	&&\ \ \ = d-\frac{d}{2}-\frac{d}{2}=0
\end{eqnarray*}
 so that $\Omega\bigl((A-\lambda\Id)^{J-1-r} v',(A-\frac{1}{\lambda}\Id)^{r}v'\bigr)=0$
for all $0\le r\le J-1$.\\

We assume from now on that we have chosen  $v$ and $w$ in $E_\lambda$ so that\\
$\Omega\bigl((A-\lambda\Id)^p v,w\bigr)=1$ and $\Omega\bigl((A-\lambda\Id)^r v,(A-\frac{1}{\lambda}\Id)^{s}v\bigr)=0$
for all $r,s$.\\ We can proceed similarly with $w$;  we can thus furthermore assume  that \\
$\Omega\Bigl((A-\lambda\Id)^j w,\bigl(A-\frac{1}{\lambda}\Id\bigr)^{k}w\Bigr)=0$ for all $j,k$.

We choose for basis of $E^v_\lambda\oplus E^w_\lambda$:
$\{e_1=(A-\lambda\Id)^pv, \ldots,e_i=(A-\lambda\Id)^{p+1-i}v $ $
\ldots e_{p+1}=v , g_1=(A-\lambda\Id)^pw, \ldots,g_i=(A-\lambda\Id)^{p+1-i}w \ldots g_{p+1}=w\}.$
We have
$\Omega(e_i,e_j)=0,~\forall i,j\le 2k+1, ~\Omega(e_i,g_{p+1-i})\ne0 ~\forall 1\leqslant i \leqslant p+1 ,~
\Omega(e_i,g_j)=0$ when $i+j<p+1$ and $\Omega(g_i,g_j)=0$ for all  $i,j \le 2k+1.$
The matrix associated to $\Omega$ has a triangular form
$\left(\begin{array}{ccc}
0&\vline&\begin{array}{ccc}
	0&&\overline{\ast}\\
	&\adots&\\
	\overline{\ast}&&\ast
\end{array}\\\hline
\begin{array}{ccc}
	0&&\overline{\ast}\\
	&\adots&\\
	\overline{\ast}&&\ast
\end{array}&\vline&0\\
\end{array}\right)$
so that $\Omega$ is  non singular and the subspace $E^v_\lambda\oplus E^w_\lambda$ is symplectic.\\

We extend $\{e_1,\ldots,e_{p+1}\}$ into a symplectic basis of this subspace $\{e_1,\ldots,e_{p+1},$
$f_1,\ldots, f_{p+1}\}$
constructing inductively $f_{p+1}, f_p, \ldots, f_1$
by a  Gram Schmidt procedure:
\subitem $f_{p+1}:=\frac{1}{\Omega(e_{p+1}, g_1)}g_1$; 
\subitem $f_p:=\frac{1}{\Omega(e_{p}, g'_2)}g'_2$ with $g'_2:=g_2-\Omega(e_{p+1}, g_2)f_{p+1}$;
\subitem $f_{p+2-j}:=\frac{1}{\Omega(e_{p+2-j}, g'_j)}g'_j$ with $g'_j:=g_j-\sum_{r<j}\Omega(e_{p+2-r}, g_j)f_{p+2-r};$  so that, inductively, each $f_{p+2-j}$ is a linear combination of the $g_k$'s for $k\le j$.\\

In this basis, the matrix associated to $A$ is
\begin{equation*}
	\left(\begin{array}{cc}
		J(\lambda,p+1)&0\\
		0&B\\
			\end{array}\right).
\end{equation*}
Hence, again, the matrix 
$$
\left(\begin{array}{cc}
		J(\lambda,p+1)&0\\
		0& \bigl( J(\lambda,p+1)^{-1}\bigr)^{\tau}
	\end{array}\right)
$$
is a normal form for $A$ restricted to $E^v_\lambda\oplus E^w_\lambda$.

\begin{theorem}[Normal form for $A_{\vert V_{[\lambda]}}$ for $\lambda=\pm 1.$]\label{normalforms1}
Let $\lambda=\pm 1$ be an eigenvalue of $A$. 
There exists a symplectic basis of $V_{[\lambda]}$
in which the matrix associated to the restriction of $A$ to $V_{[\lambda]}$ is a symplectic direct sum of 
matrices of the form 
$$
\left(\begin{array}{cc}
J(\lambda,r_j)&C(r_j,d_j,\lambda)\\
		0& \bigl( J(\lambda,r_j)^{-1}\bigr)^{\tau}
		\end{array}\right)
$$
where $C(r_j,d_j,\lambda)=\operatorname{diag}\bigl(0,\ldots,0,d_j\bigr)\bigl( J(\lambda,r_j)^{-1}\bigr)^{\tau}$ as in \eqref{eq:C} with $d_j\in \{0,1,-1\}$.
If $d_j=0$ then $r_j$ is odd.
The dimension of the eigenspace of eigenvalue $1$ is given by $2\card \{j \,\vert\, d_j=0\}+\card\{j \,\vert \,d_j\neq0\}$.
\end{theorem}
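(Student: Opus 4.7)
The plan is to proceed by induction on $\dim V_{[\lambda]}$. Since $\lambda=\pm 1$ is real and $\lambda=\tfrac{1}{\lambda}$, the quadruple $[\lambda]$ reduces to $\{\lambda\}$ and $V_{[\lambda]}$ equals the real generalized eigenspace $E_\lambda$. Let $p\ge 0$ be the largest integer such that $(A-\lambda\Id)^p\not\equiv 0$ on $E_\lambda$. By Lemma~\ref{lem:tech} the form $\widetilde Q$ on $E_\lambda/\Ker(A-\lambda\Id)^p$ is non-degenerate, and equation \eqref{eq:Qbiendef}, using $\lambda^2=1$ and $\tfrac{1}{\lambda}=\lambda$, shows that $\widetilde Q$ is symmetric when $p$ is odd and antisymmetric when $p$ is even. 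The induction step consists in producing an $A$-stable symplectic subspace $W\subset V_{[\lambda]}$ on which $A$ has the desired normal form: then the symplectic orthogonal $W^\perp$ is automatically $A$-stable and symplectic, and the nilpotency index of $A-\lambda\Id$ on $W^\perp$ is at most $p$, so the inductive hypothesis applies.

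In the odd case $p=2k-1$ I would pick $v$ with $\widetilde Q([v],[v])\ne 0$ and take $W:=E_\lambda^v$, the $A$-cyclic subspace of dimension $p+1=2k$. The computations reproduced in the excerpt show that $W$ is symplectic (the Gram matrix of $\Omega$ in the basis $\{e_i:=(A-\lambda\Id)^{p+1-i}v\}$ is anti-diagonal with non-zero entries). After rescaling so that $d:=\Omega\bigl((A-\lambda\Id)^k v,(A-\lambda\Id)^{k-1}v\bigr)=\pm 1$, and after the inductive adjustments $v\mapsto v\pm\tfrac{\alpha_r}{2d}(A-\lambda\Id)^{2r}v$ detailed in the excerpt which force all $T_{i,j}(v)=0$ for $0\le i,j\le k-1$, the Gram--Schmidt-type construction of $f_1,\ldots,f_k$ yields the normal form $\left(\begin{array}{cc}J(\lambda,k)&C(k,d,\lambda)\\ 0&\bigl(J(\lambda,k)^{-1}\bigr)^{\tau}\end{array}\right)$ with $r_j=k$ and $d_j=\pm 1$.

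In the even case $p=2k$, since $\widetilde Q$ is antisymmetric I would find $v,w$ with $\widetilde Q([v],[w])=1$ (automatically $\widetilde Q([v],[v])=\widetilde Q([w],[w])=0$) and perform the successive substitutions from the excerpt to ensure $\Omega\bigl((A-\lambda\Id)^i v,(A-\lambda\Id)^{j}v\bigr)=0$ for all $i,j$, and the analogous vanishing for $w$. Taking $W:=E_\lambda^v\oplus E_\lambda^w$ of dimension $2(p+1)=4k+2$ and applying the analogous Gram--Schmidt procedure produces a symplectic basis in which $A|_W$ has matrix $\operatorname{diag}\bigl(J(\lambda,2k+1),\bigl(J(\lambda,2k+1)^{-1}\bigr)^{\tau}\bigr)$, the prescribed normal form with $r_j=p+1=2k+1$ odd and $d_j=0$.

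To finish, a direct computation on a single elementary block shows that $\Ker(A-\lambda\Id)$ restricted to that block is one-dimensional when $d_j\ne 0$, since the off-diagonal coupling $C(r_j,d_j,\lambda)$ forces the ``$f$-component'' of any eigenvector to vanish and only multiples of $e_1$ survive, and is two-dimensional when $d_j=0$, spanned by $e_1$ in the first half and by the last basis vector of the second half. Summing over blocks gives $2\,\card\{j\mid d_j=0\}+\card\{j\mid d_j\ne 0\}$. The main obstacle is the delicate modification of $v$ in the odd case (and of both $v$ and $w$ in the even case) needed to enforce the vanishing of the off-diagonal $T_{i,j}$ quantities; once these are zero, the reduction of $C$ to the clean shape with $d\in\{-1,0,+1\}$ and the remaining Gram--Schmidt steps are routine consequences of the identities \eqref{eq:p}--\eqref{eq:zero}.
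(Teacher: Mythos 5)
Your proposal reproduces the paper's argument faithfully: it reduces to the real generalized eigenspace, distinguishes the odd case $p=2k-1$ (symmetric $\widetilde Q$, one cyclic generator $v$, rescaled to $d=\pm 1$, off-diagonal $T_{i,j}$'s killed by the $v\mapsto v\pm\tfrac{\alpha_r}{2d}(A-\lambda\Id)^{2r}v$ adjustments, yielding a $2k\times 2k$ block with $r_j=k$ and $d_j=\pm1$) from the even case $p=2k$ (antisymmetric $\widetilde Q$, a hyperbolic pair $v,w$, resulting in a $(4k+2)\times(4k+2)$ block with $r_j=p+1$ odd and $d_j=0$), and then inducts on the symplectic orthogonal complement. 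Your block-by-block computation of $\ker(A-\lambda\Id)$ at the end, which the paper states but does not spell out, is a correct and welcome addition to the same scheme.
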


%%%%%%%%%%
\subsubsection{Case 3:   $A_{\vert W_{[\lambda]}}$ for $\lambda\in S^1\setminus \pm 1.$}

We consider $\widehat{Q} :\raisebox{.2ex}{$E_\lambda$}/\raisebox{-.2ex}{$\Ker(A-\lambda\Id)^p$}\times \raisebox{.2ex}{$E_{{\lambda}}$}/\raisebox{-.2ex}{$\Ker(A-\lambda\Id)^p$}\rightarrow \C$
the sesquilinear non degenerate form defined by
\begin{equation*}
	\widehat{Q} ([v],[w])=\overline{{\lambda}^p}\Omega \bigl( (A-\lambda\Id)^p v,\overline{w} \bigr).
\end{equation*}
Since $\widehat{Q}$ is non degenerate, we can choose $v\in E_\lambda$ such that $\widehat{Q}([v],[v])\ne 0$ thus $\Omega \bigl( (A-\lambda\Id)^p v,\overline{v} \bigr) \ne 0$ and we consider the smallest subspace
$E^v_\lambda\oplus E^{\overline{v}}_{\overline{\lambda}}$ of $E_\lambda\oplus E_{\overline{\lambda}}$ stable by $A,$ complexification of a real subspace and containing $v$ and $\overline{v}.$
It is thus generated by
\begin{equation*}
	\bigl\{ u_{i}:=(A-\lambda\Id)^{p+1-i}v, v_{i}:=(A-\overline{\lambda}\Id)^{p+1-i}\overline{v}; 1\le i\le p+1 \bigr\}.
\end{equation*}
We have $u_{i}=\overline{v_{i}}$ and, as before
\begin{itemize}
	\item $\Omega(u_{i},u_{j})=0,~\Omega(v_{i}, v_{j})=0$ because $\Omega(E_\lambda,E_\lambda)=0;$
	\item $\Omega(u_{i},v_{k})=0$ if $p+1-k+p+1-i\ge p+1$ i.e. $i+k\le p+1$ by equation (\ref{eq:zero});
	\item $\Omega(u_{i},v_{k})\ne 0$ if $p+2=i+k$ by equation (\ref{eq:p}) and by the choice of $v.$
\end{itemize}
\noindent We conclude that $E_\lambda\oplus E_{\overline{\lambda}}$ is a symplectic subspace.\\

{\bf {Subcase : p=2k-1}}\\
We consider the basis 
$\{u_1,\ldots,u_{k},v_1,\ldots,v_k, v_{p+1},\ldots v_{k+1}, u_{p+1},\ldots u_{k+1}\}$ 
and we transform it by a Gram-Schmidt method into a symplectic basis composed of pairs of conjugate vectors, extending
$b=\{u_1,\ldots,u_{k}, v_1,\ldots,v_k\}$ on which $\Omega$ identically vanishes. Recall that $u_j=\overline{v_j}$ for all $ j$.
We proceed as follows.
We start by the pair of conjugate vectors $v_{k+1}$ and $u_{k+1}$ which are $\Omega$-orthogonal to every element of $b$ except respectively to ${u_k}$ and to $v_k$ and we set
\begin{equation*}
	v'_{k+1} =\frac{1}{\Omega(u_k,v_{k+1})}(v_{k+1}+rv_k),~u'_{k+1} =\frac{1}{\Omega(v_k,u_{k+1})}(u_{k+1}+\overline{r}u_k)=\overline{v'_{k+1}},
\end{equation*}
where $r\in \C$ is chosen such that $\Omega(u'_{k+1},v'_{k+1})=0,$ which gives
\begin{equation*}
	\overline{r}\Omega(u_k,v_{k+1})+r\Omega(u_{k+1},v_k)=-\Omega(u_{k+1},v_{k+1}).
\end{equation*}
This is equivalent to $2i\Imc \bigl( r\Omega(u_{k+1},v_k) \bigr) = - \Omega(u_{k+1},v_{k+1})$ and may be solved since $\Omega(u_{k+1},v_{k+1})=\Omega(\overline{v_{k+1}},v_{k+1})$ is purely imaginary.

We then proceed by induction on $j$, $(1<j\le k)$, starting from the pair of vectors $v_{k+j}, u_{k+j}$ which are $\Omega$-orthogonal to all $\{u_1,\ldots,u_{k-j},v_1,\ldots,v_{k-j}\}$ and defining a
pair of vectors  which are $\Omega$-orthogonal to all previous ones except $\{u_{k+1-j},v_{k+1-j}\}$:
\begin{equation*}
	v''_{k+j}=v_{k+j}-\sum_{i<j}\Omega(u_{k+1-i},v_{k+j})v'_{k+i}+ \sum_{i<j}\Omega(u'_{k+i},v_{k+j})v_{k+1-i}
\end{equation*}
\begin{equation*}
	u''_{k+j}=\overline{v''_{k+j}}
\end{equation*}
then normalizing and $\Omega$-orthogonalizing them
\begin{equation*}
	v'_{k+j} =\frac{1}{\Omega(u_{k+1-j},v''_{k+j})}(v''_{k+j} +r_j v_{k+1-j}), \textrm{and}
\end{equation*}
\begin{equation*}
	u'_{k+j} =\frac{1}{\Omega(v_{k+1-j},u''_{k+j})}(u''_{k+j} +\overline{r_j }u_{k+1-j})=\overline{v'_{k+j} }
\end{equation*}
with $r_j$ such that $2i\Imc \bigl( \overline{r_j}\Omega(u_{k+1-j},v_{k+j}) \bigr)=-\Omega(u''_{k+j},v''_{k+j}).$\\

 In the symplectic basis
$\{u_1,\ldots,u_{k},v_1,\ldots,v_k, v'_{p+1},\ldots v'_{k+1},u'_{p+1},\ldots u'_{k+1}\},$
the matrix associated to $A$ is of the form
\begin{equation*}
	\left(\begin{array}{cccc}
		J(\lambda,k) & 0 & 0 & C\\
		0 & J(\overline{\lambda},k) & \overline{C} & 0\\
		0 & 0 & \bigl( J(\lambda,k)^{-1}\bigr)^\tau & 0\\
		0 & 0 & 0 & \bigl( J(\overline{\lambda},k )^{-1} \bigr)^\tau
	\end{array}\right)
\end{equation*}
where $C$ is a $k\times k$ matrix such that $C^i_{\,j}=0$ when $j>i+1$. The fact that the matrix is symplectic implies
that $S:=CJ(\overline{\lambda},k)^\tau$ is hermitean; since $S^i_{\,j}=0$ when $j>i+1$, we have, for real $s_i$'s and complex $\alpha_i$'s
\[
	C= \left(\begin{array}{ccccccc}s_1& \alpha_1&0&&&\ldots&0\\
	                                                  \overline{\alpha_1}&s_2&\alpha_2&0&&\ldots&0\\
						 0&\overline{\alpha_2}&s_3&\alpha_3&0&\ldots&0\\
						\vdots &0&\ddots&\ddots&\ddots&0&0\\
						\vdots &\ldots&0&\ddots&\ddots&\ddots&0\\
						0&\ldots&&0&\overline{\alpha_{k-2}}&s_{k-1}&\alpha_{k-1}\\
						0&\ldots&&&0&\overline{\alpha_{k-1}}&s_{k}
						\end{array}\right)                      \bigl( J(\overline{\lambda},k )^{-1} \bigr)^\tau.
\]
						Writing $u_j=\frac{1}{\sqrt 2}(x_{2j-1}-ix_{2j}),~v_j=\overline{u_j}=\frac{1}{\sqrt 2}(x_{2j-1}+ix_{2j})$, as well as 
$u'_{2k+1-j}=\frac{1}{\sqrt 2}(y_{2j-1}-i y_{2j})$ and $v'_{2k+1-j}=\frac{1}{\sqrt 2}(y_{2j-1}+i y_{2j})$ for $1\le j\le k$,
the vectors $x_i,y_i$ all belong to the real subspace of $V$ whose complexification is $E^v_\lambda\oplus  E^{\overline{v}}_{\overline{\lambda}}$ and we get a symplectic basis $$\{x_1,\ldots,x_{p+1},y_1,\ldots,y_{p+1} \}$$ of this real subspace of $V$. In this basis, the matrix associated to $A$ is :  
\begin{equation}\label{eqref:3-1}
	\left(\begin{array}{cc}
		J_\R(e^{i\phi},2k) & C_\R\\
		0 & \bigl(J_\R(e^{i\phi},2k)^{-1}\bigr)^\tau
	\end{array}\right)
\end{equation}
where $J_\R(e^{i\phi},2k)$ is defined as in \eqref{eq:JR}
and where $C_\R$ is the $(p+1)\times (p+1)$ matrix written in terms of two by two matrices $C(i,j)$ as
\begin{equation}\label{eq:CR}
C_\R=\left(\begin{array}{cccc}
		C(1,1)&C(1,2)&\ldots &C(1,k)\\
		C(2,1)&C(2,2)&\ldots &C(2,k)\\
		\vdots&&&\\
		C(k,1)&C(k,2)&\ldots &C(k,k)
	\end{array}\right)
\textrm{ with } C(i,j)=0 \textrm{ when } j>i+1.
\end{equation}

%%%%%%%%

{\bf {Subcase : p=2k}}\\
We consider the basis $
	\{u_1,\ldots,u_{k},v_1,\ldots,v_{k+1}, v_{p+1},\ldots v_{k+2}, u_{p+1},\ldots u_{k+1}\}.$
Remark that $\Omega(v_{k+1}, u_{k+1}) = \Omega \bigl( (A-\overline{\lambda}\Id)^{k}\overline{v},(A-\lambda\Id)^{k}v \bigr)$ is purely imaginary; we choose $v$ so that it is $\pm  i$ and we choose
(between $\lambda$ and $\overline{\lambda})$ so that it is $-i.$
We extend the subset $
	b=\{e_1=u_1,\ldots,e_k=u_{k},e_{k+1}=v_1,\ldots,e_{2k+1}=v_{k+1}\}
$ on which $\Omega$ vanishes
into  a symplectic basis $$\{u_1,\ldots,u_{k},v_1,\ldots,v_{k+1}, v'_{p+1},\ldots v'_{k+2}, u'_{p+1},\ldots u'_{k+1}\}$$
via a Gram-Schmidt process. We set $u'_{k+1}= i u_{k+1}$ and we construct  pairs of ``conjugate'' vectors in the following way.
We start by the pair of conjugate vectors $v_{k+2}=(A-\overline{\lambda}\Id)^{k-1}\overline{v}$ et $u_{k+2}=\overline{v_{k+2}}$ which are $\Omega$-orthogonal to all elements of
$b\setminus \{v_{k+1}\}$ except respectively to ${u_k}$ and to $v_k$ and we set
\begin{equation*}
	v''_{k+2} := \bigl( v_{k+2}+i\Omega(u_{k+1},v_{k+2})v_{k+1}+rv_k \bigr),
\end{equation*}
\begin{equation*}
	u''_{k+2} := \bigl( u_{k+2}-i\Omega(v_{k+1},u_{k+2})u_{k+1}+\overline{r}u_k \bigr) = \overline{v'_{k+2}},
\end{equation*}
where $r$ is chosen to have $\Omega(u''_{k+2},v''_{k+2})=0,$ which gives
\begin{equation*}
	\overline{r}\Omega(u_k,v_{k+2})+r\Omega(u_{k+2},v_k) =
\end{equation*}
\begin{equation*}
	-\Omega \bigl( u_{k+2}-i\Omega(v_{k+1},u_{k+2})u_{k+1},v_{k+2}+i\Omega(u_{k+1},v_{k+2})v_{k+1} \bigr) ;
\end{equation*}
The left hand side is $2i\Imc \bigl( r\Omega(u_{k+2},v_k) \bigr)$ and one can find a solution $r\in \C$ because the right hand side is purely imaginary.

Then we set $$v'_{k+2}:=\frac{1}{\Omega(u_k,v_{k+2})}v''_{k+2} \textrm{ and }u'_{k+2} :=\frac{1}{\Omega(v_k,u_{k+2})}u''_{k+2}=\overline{v'_{k+2}}.$$

We then proceed by induction as in the case where $p$ is odd with increasing $j$ $(1<j\le k)$ starting from the pair $v_{k+1+j}, u_{k+1+j}$ which are $\Omega$-orthogonal to all
$\{u_1,\ldots,u_{k-j},v_1,\ldots,v_{k-j}\}$ and defining a pair of vectors $\Omega$-orthogonal to all previous ones.
\begin{equation*}
	v''_{k+1+j}=v_{k+1+j}+i\Omega(u_{k+1},v_{k+1+j})v_{k+1}-\sum_{i<j}\Omega(u_{k+1-i},v_{k+1+j})v'_{k+1+i}
\end{equation*}
\begin{equation*}
	+ \sum_{i<j}\Omega(u'_{k+1+i},v_{k+1+j})v_{k+1-i},\qquad \textrm{and}
\end{equation*}
\begin{equation*}
	u''_{k+j}=\overline{v''_{k+j}}
\end{equation*}
then normalizing and $\Omega$-orthogonalizing them
\begin{equation*}
	v'_{k+1+j} =\frac{1}{\Omega(u_{k+1-j},v_{k+1+j})}(v''_{k+1+j} +r_j v_{k+1-j}), \textrm{and}
\end{equation*}
\begin{equation*}
	u'_{k+1+j} =\frac{1}{\Omega(v_{k+1-j},u_{k+1+j})}(u''_{k+1+j} +\overline{r_j }u_{k+1-j})=\overline{v'_{k+1+j} }
\end{equation*}
with $r_j$ such that $2i\Imc \bigl( r_j\Omega(u''_{k+1-j},v_{k+1-j}) \bigr) = -\Omega(u''_{k+1-j},v''_{k+1-j}).$

In the basis $\{u_1,\ldots,u_{k},v_1,\ldots,v_{k+1}, v'_{p+1},\ldots v'_{k+2}, u'_{p+1},\ldots u'_{k+1}\}$ the matrix associated to $A$  is
of the form
\begin{equation*}
	\left(\begin{array}{cccc}
		J(\lambda,k)&0&{0}&\left(\begin{array}{cc} D&d\end{array}\right)\\
		0&J(\overline{\lambda},k+1)&\left(\begin{array}{c}\overline{ D}\\d'\end{array}\right)&0\\
		0&0&\bigl(J(\lambda,k)^{-1}\bigr)^\tau&0\\
		0&0&0&\bigl(J(\overline{\lambda},k+1)^{-1}\bigr)^\tau
	\end{array}\right)
\end{equation*}
Where $D$ is a $k\times k$ matrix such that $D^i_{\, j}=0$ when $j>i+1$, where $d^\tau=(0,\ldots,0,i)$
and $d'$ is a $1\times k$ matrix.\\
Writing $u_j=\frac{1}{\sqrt{2}}(x_{2j-1}-ix_{2j}),~v_j=\frac{1}{\sqrt{2}}(x_{2j-1}+ix_{2j})$ for all $j\le k$,\\
 $~\qquad v_{k+1}=\frac{1}{\sqrt 2}(x_{2k+1}+iy_{2k+1}); u'_{k+1}=i\frac{1}{\sqrt{2}}(x_{2k+1}-iy_{2k+1})$\\
$~\qquad u'_{2k+2-j}=\frac{1}{\sqrt 2}(y_{2j-1}-i y_{2j})$ and $v'_{2k+2-j}=\frac{1}{\sqrt 2}(y_{2j-1}+i y_{2j})$ for $1\le j\le k$,\\
the vectors $x_i,y_i$ all belong to the real subspace of $V$ whose complexification is $E^v_\lambda\oplus  E^{\overline{v}}_{\overline{\lambda}}$ and we get a symplectic basis $$\{x_1,\ldots,x_{p+1},y_1,\ldots,y_{p+1} \}$$ of this real subspace of $V$. In this basis, the matrix associated to $A$ is
\begin{equation}\label{eqref:3-2}
	\left(\begin{array}{cccc}
		J_\R(e^{i\phi},2k)&d_{2k-1}&C_\R&d_{2k}\\
		0&\cos \phi &d'_\R& -\sin\phi\\
		0&0&\bigl(J_\R(e^{i\phi},2k)^{-1}\bigr)^\tau&0\\
		0&\sin\phi&d''_\R&\cos\phi
	\end{array}\right)
\end{equation}
where $J_\R(e^{i\phi},2k)$ is defined as in \eqref{eq:JR},
where $C_\R$ is the $(p+1)\times (p+1)$ matrix written in terms of two by two matrices $C(i,j)$ as in \eqref{eq:CR}
with $C(i,j)=0$ for $j>i+1,$  where $d_j=(0,\ldots,0,1,0,\ldots,0)^\tau$ is the column matrix with a $1$ in the $j$th position
and where $d'_\R$ and $d''_\R$ are $1\times 2k$ matrices.
%%%%%A Formuler en prŽcisant plus les matrices (qui sont symplectiques)
\begin{theorem}[Normal form for $A_{\vert V_{[\lambda]}}$ for $\lambda\in S^1\setminus \pm 1.$] 
Let $\lambda=e^{i\phi}$ be an eigenvalue of $A$ on $S^1$ and not equal to $\pm 1$. Denote $k:=\dim_\C \Ker (A-\lambda\Id)$ (on $V^\C$) and $p$  the smallest integer so that $(A-\lambda\Id)^{p+1}$ is identically zero on the generalized eigenspace $E_\lambda$.\\
There exists a symplectic basis of $V_{[\lambda]}$
in which the matrix associated to the restriction of $A$ to $V_{[\lambda]}$ is a symplectic direct sum of $\half k$
matrices of dimension $2s_j\times 2s_j$ with $s_1=p+1\ge s_2,\ldots \ge s_{\frac{k}{2}}$, which are   of the two following types:

-if $s_j=2k$ is even 
	$\left(\begin{array}{cc}
		J_\R(e^{i\phi},2k)&C_\R\\
		0 & \bigl(J_\R(e^{i\phi},2k)^{-1}\bigr)^\tau
	\end{array}\right)$
	as in \eqref{eqref:3-1}
\vspace{0,5cm}

 -if $s_j=2k+1$ is odd
	$\left(\begin{array}{cccc}
		J_\R(e^{i\phi},2k) & d_{2k} & C_\R & d_{s_j}\\
		0 & \cos \phi & d'_\R & -\sin\phi\\
		0 & 0 & \bigl(J_\R(e^{i\phi},2k)^{-1}\bigr)^\tau & 0\\
		0 & \sin\phi & d''_\R & \cos\phi
	\end{array}\right)$
	as in \eqref{eqref:3-2}.
\end{theorem}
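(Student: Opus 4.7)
The plan is to reproduce, for $\lambda=e^{i\phi}\in S^1\setminus\{\pm 1\}$, the geometric strategy already carried out in the earlier cases: construct a first symplectic block attached to a carefully chosen pair of conjugate vectors $v,\overline v$, then pass to the symplectic orthogonal in $V_{[\lambda]}$ and iterate. The computations needed for a single block are the ``Subcase $p=2k-1$'' and ``Subcase $p=2k$'' discussions already given in the excerpt; what remains is to set them in an inductive framework and count blocks.

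First I would reuse the nondegenerate sesquilinear form
$\widehat Q([v],[w])=\overline{\lambda^p}\,\Omega\!\bigl((A-\lambda\Id)^pv,\overline w\bigr)$
on $E_\lambda/\Ker(A-\lambda\Id)^p$; its nondegeneracy follows from Lemma \ref{lem:tech} applied with $\mu=\overline\lambda=1/\lambda$. Since $\widehat Q$ is Hermitean (up to a fixed unimodular scalar, by equation \eqref{eq:Qbiendef}), its diagonal takes nonzero values, so one may pick $v\in E_\lambda$ with $\widehat Q([v],[v])\neq 0$. Then $E^v_\lambda\oplus E^{\overline v}_{\overline\lambda}$, the smallest $A$-stable subspace of $W_{[\lambda]}$ containing $v$ and $\overline v$, has complex dimension $2(p+1)$ and is stable under complex conjugation, hence is the complexification of a real subspace $U\subset V_{[\lambda]}$ which is $A$-stable. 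The explicit pairings
$\Omega(u_i,u_j)=\Omega(v_i,v_j)=0$, $\Omega(u_i,v_k)=0$ for $i+k\le p+1$ and $\Omega(u_i,v_k)\neq 0$ for $i+k=p+2$ (consequences of \eqref{eq:zero} and \eqref{eq:p}) show that $\Omega$ is nondegenerate on $U$; hence $U$ is a symplectic subspace.

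Next I would split on the parity of $p$ and carry out the two Gram--Schmidt constructions already detailed in the excerpt. When $p=2k-1$ I would complete $\{u_1,\dots,u_k,v_1,\dots,v_k\}$ (on which $\Omega$ vanishes) by successively normalising and $\Omega$-orthogonalising pairs $(v'_{k+j},u'_{k+j})$ of conjugate vectors; the existence of the required scalar $r_j\in\mathbb C$ at each step reduces to solving a real equation because $\Omega(u_{k+j},v_{k+j})=\Omega(\overline{v_{k+j}},v_{k+j})$ is purely imaginary. Passing to a real symplectic basis via $u_j=\tfrac1{\sqrt2}(x_{2j-1}-ix_{2j})$, $v_j=\overline{u_j}$ (and analogously for the primed vectors) yields the block \eqref{eqref:3-1}. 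When $p=2k$ I would first rescale $v$ so that $\Omega(v_{k+1},u_{k+1})=-i$, then run the analogous procedure keeping the extra middle pair $v_{k+1},u_{k+1}=iu'_{k+1}$; the real basis then gives the block \eqref{eqref:3-2}, in which the $2\times 2$ central rotation by angle $\phi$ exactly reflects the normalisation $-i$ chosen above.

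To finish the proof I would invoke induction on $\dim_\C E_\lambda$. The symplectic orthogonal $U^{\perp_\Omega}\cap V_{[\lambda]}$ is real, symplectic and $A$-stable, with eigenvalues still in $\{\lambda,\overline\lambda\}$ and with $\dim_\C\Ker(A-\lambda\Id)$ strictly smaller by $1$ on the complex side (equivalently by $2$ counting the conjugate pair); the new largest index $p'$ satisfies $p'\le p$ because $(A-\lambda\Id)^{p+1}=0$ on this complement as well. Induction then yields a symplectic direct sum of $k/2$ blocks of the two advertised types, with sizes $s_1=p+1\ge s_2\ge\cdots\ge s_{k/2}$, each block contributing exactly one complex dimension to $\Ker(A-\lambda\Id)$ on each of the two conjugate generalised eigenspaces. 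The main obstacle is bookkeeping in the Gram--Schmidt step: one must check at each stage that the auxiliary equation for $r_j$ is solvable (which reduces, as above, to the imaginary part of a specific pairing) and that the off-diagonal entries remaining in the matrix of $A$ assemble into the block $C_\mathbb R$ with the triangularity pattern \eqref{eq:CR}; once these verifications are done the induction and the block count are immediate.
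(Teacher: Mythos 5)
Your proof follows the same route as the paper's construction: the same sesquilinear form $\widehat{Q}$, the same choice of a vector $v$ with $\widehat{Q}([v],[v])\neq 0$, the same two parity-dependent Gram--Schmidt constructions yielding the blocks \eqref{eqref:3-1} and \eqref{eqref:3-2}, and the same inductive descent to the symplectic orthocomplement $\bigl(E^v_\lambda\oplus E^{\overline v}_{\overline\lambda}\bigr)^{\perp_\Omega}$; you are in fact a bit more careful than the paper in justifying the choice of $v$, since nondegeneracy of $\widehat{Q}$ alone does not rule out an identically vanishing diagonal, but the $\pm$-Hermitean relation $\widehat{Q}([w],[v])=(-1)^{p+1}\,\overline{\widehat{Q}([v],[w])}$ (which follows from \eqref{eq:Qbiendef} together with $\vert\lambda\vert=1$) does.

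Do reconcile the final block count, though: as you yourself observe, each block $E^v_\lambda\oplus E^{\overline v}_{\overline\lambda}$ contributes exactly one complex dimension to $\Ker(A-\lambda\Id)$, so the induction produces $k=\dim_\C\Ker(A-\lambda\Id)$ blocks, not $\tfrac{1}{2}k$. The $\tfrac{1}{2}k$ in the theorem's statement (which you repeat) is most naturally read as a slip; compare the $\lambda\notin S^1$ normal-form theorem, where the same $k$ gives $k$ blocks, and note also that $\dim_\C\Ker(A-\lambda\Id)$ need not be even (take $A$ a planar rotation). As stated, your own bookkeeping forces $k$ blocks, so the closing sentence is internally inconsistent and should simply read $k$ blocks with $s_1\ge\dots\ge s_k$.
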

%%%%%%%%%%%%

\subsection{Density of semisimple elements in the symplectic group}

\begin{proposition}
	Semisimple symplectic matrices (with distinct eigenvalues) are dense in the group of all symplectic matrices
\end{proposition}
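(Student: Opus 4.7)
The plan is to use the normal forms established in the previous subsection, perturbing each block separately while preserving the symplectic structure, and then arguing that the perturbations can be made small and chosen so that all resulting eigenvalues across all blocks are distinct.

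First, I would reduce the problem to a single block. By the symplectic orthogonal decomposition $\R^{2n}=V_{[\lambda_1]}\oplus\cdots\oplus V_{[\lambda_K]}$, and by the fact that the symplectic direct sum of symplectic perturbations is again a symplectic perturbation, it suffices to approximate the restriction $A_{\vert V_{[\lambda]}}$ by semisimple symplectic matrices. Moreover, in its normal form, this restriction is itself a symplectic direct sum of elementary blocks of the three types described; so it suffices to approximate each elementary block. A global perturbation is then obtained by assembling sufficiently small block-wise perturbations and, if necessary, conjugating by a small symplectic matrix to spread the eigenvalues and ensure they are all distinct.

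Next, I would treat each type of block. For a block of the form $\operatorname{diag}\bigl(J(\lambda,m),(J(\lambda,m)^{-1})^\tau\bigr)$ with $\lambda\notin S^1\cup\{\pm1\}$ (real) or its analogue $\operatorname{diag}\bigl(J_\R(re^{i\phi},2m),(J_\R(re^{i\phi},2m)^{-1})^\tau\bigr)$ for $\lambda\notin S^1\cup\R$, I would replace $J(\lambda,m)$ by a nearby diagonal matrix $D_\varepsilon=\operatorname{diag}(\lambda+\varepsilon_1,\ldots,\lambda+\varepsilon_m)$ with the $\varepsilon_i$ small, distinct, and chosen so that the $\lambda+\varepsilon_i$ remain off the unit circle and pairwise distinct from their inverses and conjugates. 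Then $\operatorname{diag}(D_\varepsilon,(D_\varepsilon^{-1})^\tau)$ is symplectic (in the chosen symplectic basis) and semisimple with distinct eigenvalues, and converges to the original block as $\varepsilon\to 0$. For the block at $\lambda=\pm1$ one perturbs the eigenvalue slightly off the unit circle (along the real axis, turning the pair $(\lambda,\lambda)$ into a real quadruple $\{\lambda',1/\lambda'\}$), again in such a way that the $2m\times 2m$ block becomes diagonalizable; this works because any matrix of the form $\operatorname{diag}(J(\lambda,m),(J(\lambda,m)^{-1})^\tau)+\delta C(\lambda,m)$ with $C(\lambda,m)$ a suitable off-diagonal symplectic correction has $2m$ distinct eigenvalues for small $\delta\neq0$. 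The residual $C(r_j,d_j,\lambda)$ terms appearing in Theorem~\ref{normalforms1} are absorbed as small perturbations as well.

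Finally, for a block with $\lambda=e^{i\phi}\in S^1\setminus\{\pm1\}$, the same strategy applies: within the symplectic normal form, one perturbs $J_\R(e^{i\phi},2k)$ to a block-diagonal matrix whose eigenvalues are $k$ distinct points of $S^1$ close to $e^{i\phi}$ (so that the quadruple structure $\{\lambda,\overline\lambda\}$ is preserved) and whose transpose-inverse pair gives the other half of the block. The off-diagonal matrices $C_\R$ and the extra columns/rows indexed by $d_{2k-1},d_{2k},d'_\R,d''_\R$ can be scaled down to zero continuously; by the implicit function theorem (or an explicit computation) the resulting matrix remains symplectic and, for a generic small perturbation, is semisimple with all $2k$ eigenvalues distinct and lying on $S^1$.

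The main obstacle is bookkeeping: one must ensure that the constructed perturbation both lies in $\Sp(\R^{2n},\Omega_0)$ and has $2n$ pairwise distinct eigenvalues across all blocks simultaneously. Staying in $\Sp$ is handled by the ``transpose-inverse paired block'' template, which is automatically symplectic in the symplectic basis given by the normal form. Distinctness across blocks is handled by choosing the small parameters $\varepsilon$ generically; since the locus where two eigenvalues coincide is a proper algebraic subset, avoiding it imposes only open dense conditions on the perturbation parameters. Hence arbitrarily small perturbations yielding semisimple symplectic matrices with distinct eigenvalues exist, proving density.
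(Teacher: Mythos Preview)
There is a genuine gap in your central perturbation step. You write that you ``replace $J(\lambda,m)$ by a nearby diagonal matrix $D_\varepsilon=\operatorname{diag}(\lambda+\varepsilon_1,\ldots,\lambda+\varepsilon_m)$'' and that $\operatorname{diag}(D_\varepsilon,(D_\varepsilon^{-1})^\tau)$ ``converges to the original block as $\varepsilon\to 0$.'' But $D_\varepsilon$ is \emph{not} close to the Jordan block $J(\lambda,m)$: the off-diagonal $1$'s do not go away, and your diagonal matrix converges to $\lambda\Id$, not to $J(\lambda,m)$. So the proposed approximant does not approach the original block at all. The same issue infects the $S^1$ case: ``scaling down $C_\R$ to zero'' and perturbing $J_\R(e^{i\phi},2k)$ to a block-diagonal matrix does not produce a matrix close to the normal form, nor does such scaling stay inside the symplectic group without further justification.

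The paper avoids this by a different and much cleaner device: in each normal-form block it \emph{multiplies} $A$ on the right by a symplectic matrix $S$ close to the identity, namely $S=\operatorname{diag}(\widetilde{S},\widetilde{S}^{-1})$ with $\widetilde{S}=\operatorname{diag}(1+\epsilon_1,\ldots,1+\epsilon_k)$ (and the obvious conjugate or $e^{i\epsilon}$ variants for complex eigenvalues and the odd $S^1$ block). Then $AS\to A$ as $\epsilon\to 0$, and the upper-left block of $AS$ is $J(\lambda,k)\widetilde{S}$, which is upper triangular with the \emph{distinct} diagonal entries $\lambda(1+\epsilon_1),\ldots,\lambda(1+\epsilon_k)$, hence semisimple; the $C$-terms are carried along automatically and cause no trouble because semisimplicity is read off the triangular structure. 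This multiplicative perturbation is what makes the argument work; your additive replacement by a diagonal matrix does not.
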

\begin{proof}
To see that semisimple elements are dense in the symplectic group, we show that the restriction of $A$ 
to any $V_{[\lambda]}$ (and $W_{[\lambda]}$) can be approached by semisimple elements.
We have decomposed each $V_{[\lambda]}$ into a sum of symplectic subspaces which are mutually
symplectically orthogonal and stable by $A$; we shall approach the restriction of $A$ to each of those
subspaces by semisimple elements with distinct eigenvalues.\\

For $\lambda\notin S^1\setminus \pm 1$,  when such a   subspace (of $V_{[\lambda]}$ or  $W_{[\lambda]}$ depending whether  $\lambda$ is real or not)
 is spanned by a symplectic basis $\{e_1,\ldots, e_{k}, f_1,\ldots,f_{k}\}$
in which the matrix associated to $A$ has the form $\left(\begin{array}{cc}
J(\lambda,k)&C(k,d,\lambda)\\
		0& \bigl( J(\lambda,k)^{-1}\bigr)^{\tau}
		\end{array}\right)$, we define a symplectic transformation $S$  on that subspace defined in the given basis by the matrix
\begin{equation*}
	\left(\begin{array}{cc}
		\widetilde{S}&0\\
		0 &{\widetilde{S}}^{-1}
	\end{array}\right)
	\textrm{where} ~~ \widetilde{S}=\textrm{diag}(1+\epsilon_1,\ldots,1+\epsilon_{k}).
\end{equation*}
The transformation $AS$ on that subspace is semisimple because it admits the eigenvalues $(1+\epsilon_1)\lambda,\ldots,(1+\epsilon_{k})\lambda$ and their inverses.\\
If $\lambda$ is not real, the transformation $S$ is defined by the conjugate matrix on the conjugate basis.
It is then clear that $S$ induces a real and symplectic transformation of the corresponding subspace of  $V_{[\lambda]} ;$ it is
semisimple because it admits the quadruples of eigenvalues corresponding to $\lambda(1+\epsilon_1),\ldots,\lambda(1+\epsilon_{k}).$\\

In the third case considered, for $\lambda \in S^1\setminus \pm 1$, on a subspace of $W_{[\lambda]}$ spanned by a symplectic basis
$\{u_1,...,u_k,v_1,...,v_{k},v'_{2k},\ldots, v'_{k+1},u'_{2k},\ldots, u'_{k+1}\}$ with $v_j=\overline{u_j}$ for $j\le k$,  $v'_j=\overline{u'_j}$ for $j\ge k+1$ in which
the matrix associated to $A$ is of the form
\begin{equation*}
	\left(\begin{array}{cccc}
		J(\lambda,k) & 0 & 0 & C\\
		0 & J(\overline{\lambda},k) & \overline{C} & 0\\
		0 & 0 & \bigl( J(\lambda,k)^{-1}\bigr)^\tau & 0\\
		0 & 0 & 0 & \bigl( J(\overline{\lambda},k )^{-1} \bigr)^\tau
	\end{array}\right)
\end{equation*}
we define  a transformation $S$ given in the basis considered by the matrix
\begin{equation*}
	\left(\begin{array}{cccc}
		D&0&0&0\\
		0&\overline{D}&0&0\\
		0&0&D^{-1}&0\\
		0&0&0 &{\overline{D}}^{-1}
	\end{array}\right)
	\textrm{where} ~~ D=\textrm{diag}(1+\epsilon_1,\ldots,1+\epsilon_{k}).
\end{equation*}
It induces a real symplectic transformation of the corresponding subspace of $V_{[\lambda]}.$
The transformation $AS$ on that subspace is semisimple because it admits the quadruples of eigenvalues associated to
$(1+\epsilon_1)\lambda,\ldots,(1+\epsilon_{k})\lambda.$ \\

On a subspace of $W_{[\lambda]}$ spanned by a symplectic basis
$\{u_1,\ldots,u_{k},v_1,\ldots,v_{k+1},$
$ v'_{p+1},\ldots v'_{k+2}, u'_{p+1},\ldots u'_{k+1}\}$
with $v_j=\overline{u_j}$ for $j\le k$,  $v'_j=\overline{u'_j}$ for $j\ge k+2$ and $u'_{k+1}=i \overline{v_{k+1}}$ in which 
the matrix associated to $A$ is of the form
$$
	\left(\begin{array}{cccc}
		J(\lambda,k)&0&{0}&\left(\begin{array}{cc} D&d\end{array}\right)\\
		0&J(\overline{\lambda},k+1)&\left(\begin{array}{c}\overline{ D}\\d'\end{array}\right)&0\\
		0&0&\bigl(J(\lambda,k)^{-1}\bigr)^\tau&0\\
		0&0&0&\bigl(J(\overline{\lambda},k+1)^{-1}\bigr)^\tau
	\end{array}\right)
	$$
we define  a transformation $S$ given in that basis  by the matrix
\begin{equation*}
	\left(\begin{array}{cccccc}
		D&0&0&0&0&0\\
		0&\overline{D}&0&0&0&0\\
		0&0&e^{i\epsilon}&0&0&0\\
		0&0&0&D^{-1}&0&0\\
		0&0&0&0 &{\overline{D}}^{-1}&0\\
		0&0&0&0&0&e^{-i\epsilon}
	\end{array}\right)
	\textrm{where} ~~ D=\textrm{diag}(1+\epsilon_1,\ldots,1+\epsilon_{k}).
\end{equation*}
It induces a real symplectic transformation of the corresponding subspace of $V_{[\lambda]}.$\\
The transformation $AS$  is semisimple on that subspace because it admits the quadruples of eigenvalues associated to
$(1+\epsilon_1)\lambda,\ldots,(1+\epsilon_{k})\lambda$  and the double associated to $e^{i\epsilon}\lambda.$
\end{proof}

%%%%%%%%%%%%%
%%%%%%%%%%%%%%%%%%%%%%%%%%%%%%%%%%%%%%%%%%%%%%%%%%%%%%%%%%%%%%%%%%%%%%%%%%%%%%%%%%%
\section{The set $\Sp^{\star}(\R^{2n},\Omega_0)$}
We would like to associate an integer to a path in the symplectic group starting from the identity and ending at an element which does not admit $1$ as eigenvalue.

This paragraph is a development of the page $1317$ of \cite{SalamonZehnder}.
\begin{definition}
	We denote by $\textrm{Sp}^{\star}(\R^{2n},\Omega_0)$ the set of symplectic matrices which do not have $1$ as eigenvalue:
	\begin{equation*}
		\textrm{Sp}^{\star}(\R^{2n},\Omega_0) := \left\{A\in \textrm{Sp}(\R^{2n},\Omega_0) \vert \det (A-\Id) \neq 0\right\}.
	\end{equation*}
\end{definition}
\begin{remarque}
	The set $\textrm{Sp}^{\star}(\R^{2},\Omega_0)$ has two connected components:\\
	$\left\{A\in \textrm{Sp}(\R^{2},\Omega_0)\vert \det (A-\Id) > 0\right\}$ and $\left\{A\in \textrm{Sp}(\R^{2},\Omega_0)\vert \det (A-\Id) < 0\right\} .$
\end{remarque}
Indeed $\Sp(\R^2,\Omega_0)=Sl(2,\R)$ and we have:
\begin{eqnarray*}
	\det \Biggl(
	\left(\begin{array}{cc}
		a+d&-b+c\\
		b+c & a-d
	\end{array}\right)
	-\Id \Biggr)&=&(a+d-1)(a-d-1)+(b-c)(b+c)\\
	&=&1+a^2+b^2-c^2-d^2-2a=2(1-a)
\end{eqnarray*}
thus $\Sp^{\star}(\R^2,\Omega_0)$ is the disjoint union of the two connected components.
\begin{eqnarray*}
	\Sp^{\star}(\R^2,\Omega_0) &=&
	\left\{ \,\left(\begin{array}{cc}
		a+d&-b+c\\
		b+c&a-d
	\end{array}\right)
	\,\vert\, a<1 ~\textrm{~and~} a^2+b^2-c^2-d^2=1\,\right\} \\
	&& \cup
	\left\{ \, \left(\begin{array}{cc}
		a+d&-b+c\\
		b+c&a-d
	\end{array}\right)\,\vert\, a>1 ~\textrm{~and~} a^2+b^2-c^2-d^2=1\,\right\}
\end{eqnarray*}
The property stated above generalizes for all dimensions:
\begin{theorem}[\cite{SalamonZehnder}]\label{prop:connexite}
	The group $\Sp(\R^{2n},\Omega_0)$ is connected. The subset $\Sp^*(\R^{2n},\Omega_0)$
	has  two connected components defined by
	\begin{equation*}
		\Sp^\pm(\R^{2n},\Omega_0) = \left\{ A \in \Sp(\R^{2n},\Omega_0)\ \vert\ \pm \det(A-\Id) 
		>		0\right\}
	\end{equation*}
	and every loop in  $\Sp^\pm$
	is contractible in $\Sp(\R^{2n},\Omega_0) .$
\end{theorem}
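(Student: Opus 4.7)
The connectedness of $\Sp(\R^{2n},\Omega_0)$ is immediate from the earlier Corollary, which gives a homeomorphism $\Sp(\R^{2n},\Omega_0) \cong S^1 \times SU(n) \times W$ with $W$ a real vector space; each of these three factors is path-connected.

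For the decomposition of $\Sp^*$, I observe that $A \mapsto \det(A-\Id)$ is a continuous real-valued function on $\Sp(\R^{2n},\Omega_0)$ which is non-zero precisely on $\Sp^*$. Its sign is therefore locally constant on $\Sp^*$, giving the disjoint decomposition $\Sp^* = \Sp^+ \sqcup \Sp^-$ into open non-empty subsets (for instance $-\Id \in \Sp^+$ since $\det(-2\Id) = 2^{2n} > 0$, while $\operatorname{diag}(2,\half) \oplus (-\Id_{2n-2}) \in \Sp^-$). To prove each $\Sp^\pm$ is connected, I use the normal forms established in the previous section. Given $A \in \Sp^*$, the decomposition $\R^{2n} = \bigoplus_i V_{[\lambda_i]}$ is indexed by quadruples of eigenvalues, none of which contains $1$. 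On each $V_{[\lambda_i]}$ I construct an explicit path in $\Sp^*$ deforming $A\vert_{V_{[\lambda_i]}}$ to a simple canonical block: real eigenvalues are rescaled toward fixed values $\neq 1$, complex eigenvalues are rotated to prescribed angles, and Jordan-type off-diagonal entries are sent smoothly to zero. Since the spectrum stays bounded away from $1$ throughout, we remain in $\Sp^*$ and the sign of $\det(A-\Id)$ is preserved. Finally, two canonical forms with the same sign of $\det(A-\Id)$ are connected in $\Sp^*$ by further explicit paths to a single model element in each component.

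For the contractibility assertion, I use that $\pi_1\bigl(\Sp(\R^{2n},\Omega_0)\bigr) \cong \Z$ is detected by $\rho$ (earlier Corollary), so it suffices to show that for every loop $\gamma : S^1 \to \Sp^\pm$, the composition $\rho\circ\gamma : S^1 \to S^1$ has winding number zero; equivalently $\rho\vert_{\Sp^\pm}$ lifts continuously to the universal cover $\R$ of $S^1$. Using the explicit formula derived earlier,
\[
\rho(A)^2 = (-1)^{m^-} \prod_{\lambda \in S^1\setminus\{\pm 1\}} \lambda^{m^+(\lambda)},
\]
I would track the Krein-positive eigenvalues on $S^1$ as $\gamma(t)$ varies. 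Since $1 \notin \mathrm{spec}(\gamma(t))$ for all $t$, eigenvalues on $S^1$ cannot transit through $1$; combined with the fact that eigenvalues enter or leave $S^1$ only in Krein-balanced conjugate pairs, this forces the net winding contribution to $\rho$ to vanish.

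The main obstacle is the last step. The connectedness claims reduce to block-by-block bookkeeping with the normal forms, but establishing the contractibility of loops in $\Sp^\pm$ requires carefully tracking the full spectrum of $\gamma(t)$ along the loop --- eigenvalues may enter or leave $S^1$, cross $-1$, merge into larger Jordan blocks, or split --- and showing algebraic cancellation of all contributions to the winding of $\rho$, using in an essential way that the singular hypersurface $\{A : \det(A-\Id) = 0\}$ is avoided.
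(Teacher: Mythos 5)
Your plan correctly identifies the three things to prove and gets the easy first one right by a slightly different route: instead of connecting every matrix to $-\Id$ by eigenvalue paths (as the paper does), you invoke the homeomorphism $\Sp(\R^{2n},\Omega_0)\cong S^1\times SU(n)\times W$ from the polar-decomposition corollary. That is a legitimate shortcut and arguably cleaner.

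For the connectedness of $\Sp^\pm$ your sketch is in the right spirit, but it would be clumsy if executed as described, because you propose to move Jordan blocks and all. The paper first reduces to the semisimple case using the density of semisimple elements established in the normal-forms section (this is exactly why that density result was proved), and only then moves eigenvalue quadruples. You should make that reduction explicit, and you should also be careful with the positive real eigenvalues: one cannot send a single pair $\{a,1/a\}$ continuously to $\{-1,-1\}$ without passing through $1$, so the paper pairs them up (two pairs $\{a,1/a,b,1/b\}$ at a time) and rotates them off the real axis as a quadruple. An element of $\Sp^+$ has an even number of real positive eigenvalue pairs and $\Sp^-$ an odd number, which is precisely what makes $W^+=-\Id$ and $W^-=\operatorname{diag}(2,-1,\dots,-1,\tfrac12,-1,\dots,-1)$ the correct models. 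This parity point is missing from your sketch.

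The serious gap, which you yourself flag as "the main obstacle," is the contractibility of loops in $\Sp^\pm$. Your statement that avoidance of the eigenvalue $1$ together with Krein-balanced pair creation "forces the net winding contribution to $\rho$ to vanish" is a conclusion, not an argument, and in fact the naive version of it fails: an eigenvalue of first kind can have its argument tend to $0$ or to $2\pi$ as a quadruple collapses onto the positive real axis, so a continuous choice of argument cannot in general be made with values in $\R$ (or in a fixed half-open interval). The device the paper uses is to write $\rho(\gamma(t))=\prod_{i=1}^n \Lambda_i(t)/|\Lambda_i(t)|$ with continuous first-kind eigenvalue branches $\Lambda_i$, and then to construct continuous functions $\alpha_i\colon[0,1]\to[0,2\pi]$ with $e^{i\alpha_i(t)}=\Lambda_i(t)/|\Lambda_i(t)|$. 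The point is that for a loop staying in $\Sp^+$ (resp.\ $\Sp^-$) the number of indices $i$ with $\Lambda_i(t)/|\Lambda_i(t)|=1$ is even, say $2k$ (resp.\ odd, $2k+1$), for every $t$, and one assigns $\alpha_i(t)=0$ to $k$ of them and $\alpha_i(t)=2\pi$ to the other $k$ (resp.\ $k+1$), consistently with the local collision behaviour: when a quadruple degenerates to two positive real pairs, the two first-kind arguments are $\varphi$ and $2\pi-\varphi$ tending respectively to $0$ and $2\pi$, so the assignment can be made continuous. Since each $\alpha_i$ is a loop with values in the contractible space $[0,2\pi]$, the sum $\sum\alpha_i$ is a contractible loop in $\R$, hence $\rho\circ\gamma=e^{i\sum\alpha_i}$ is contractible in $S^1$, and since $\rho_*$ is an isomorphism on $\pi_1$, the loop $\gamma$ is contractible in $\Sp(\R^{2n},\Omega_0)$. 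Without this concrete lifting construction (compact target interval, parity-based boundary assignment), your proposal does not establish the contractibility.
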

\begin{remarque}
	In the connected component where $\det(A-\Id)>0,$ we have the matrix
	\begin{equation} \label{W+}
		W^+:=-\Id \textrm{~~~and~~~} \rho(-\Id)=\Biggl(\rho
		\left(\begin{array}{cc}
			-1&0\\
			0 & -1
		\end{array}\right)
		\Biggr)^n=(-1)^n;
	\end{equation}
	In the component where $\det(A-\Id)<0$, we have the diagonal matrix
	\begin{eqnarray} \label{W-} 
		W^-&:=&\textrm{diag}(2,-1,\ldots, -1,\half,-1,\ldots,-1)\\ 
		&\textrm{and}&\rho(W^-)=\rho
			\left(\begin{array}{cc}
				2&0\\
				0 & \half
			\end{array}\right)
			\Biggl(\rho
			\left(\begin{array}{cc}
				-1&0\\
				0 & -1
			\end{array}\right)
			\Biggr)^{n-1}= (-1)^{n-1}.
	\end{eqnarray}
\end{remarque}
%%%%%%%%%%%%%%%%%%%%%

\begin{proof}[ of the connectedness]
	We have seen in paragraph \ref{appendicesemisimple} that every element $A \in \Sp(\R^{2n},\Omega_0)$  may be connected to an element $A'\in \Sp(\R^{2n},\Omega_0)$ as close
	as we want and with all eigenvalues distinct.
	
	If $A \in \Sp^\pm(\R^{2n},\Omega_0) ,$ the element $A'$ may be chosen in the same set since the map $A\mapsto \det(A-\Id)$ is a continuous map on $\Sp(\R^{2n},\Omega_0) .$
	We now show that  every element $A' \in \Sp^+(\R^{2n},\Omega_0)$ (and similarly every element $A' \in\Sp^-(\R^{2n},\Omega_0))$ with  distinct eigenvalues may be connected to
	an element with all eigenvalues equal to $-1$ (respectively to an element with a pair of eigenvalues $(2,\half)$ of multiplicity $1$ and all other eigenvalues equals to $-1).$
	
	Indeed, we consider a basis (of $\C^{2n}$) in which $A'$ is diagonal and we modify $A'$ on the symplectic subspace
	\begin{equation*}
		V_{\left[\lambda\right]}\otimes_{\R}\C = E_{\lambda}\oplus E_{\frac{1}{\lambda}}\oplus E_{\bar{\lambda}}\oplus E_{\frac{1}{\bar{\lambda}}}
	\end{equation*}
	by considering $\lambda(t) , \frac{1}{\lambda(t)} , \bar{\lambda}(t) ,  \frac{1}{\bar{\lambda}(t)}$ where $\lambda : \left[0,1\right]\rightarrow\C : t\mapsto \lambda(t)$ is defined by:
	\begin{eqnarray}
		\lambda(t) &=& (1-t)\lambda -t ~\textrm{ if  } \lambda \notin S^1\cup \R\\
				&&\textrm{choosing between }\lambda \textrm{ and }\frac{1}{\lambda}\textrm{ so that } \vert\lambda\vert<1\nonumber \\
		\lambda(t) &=& e^{it\pi}e^{i(1-t)\varphi} ~\textrm{ if } \lambda = e^{i\varphi} \in S^1 \setminus \left\{ \pm 1\right\}\\
				&& \textrm{choosing between } \lambda \textrm{ and }\overline{\lambda}\textrm{ so that } 0<\varphi<\pi\nonumber \\
		\lambda(t) &=& (1-t)\lambda -t ~\textrm{if } \lambda \textrm{ is real negative and } \lambda>-1.
	\end{eqnarray}
	For the real positive eigenvalues we consider two pairs of eigenvalues $\left\{ a , \frac{1}{a} , b , \frac{1}{b} \right\}$ with $a>1$ and $b>1 .$
	We bring them  to $\left\{ \frac{a+b}{2} , \frac{2}{a+b} , \frac{a+b}{2} , \frac{2}{a+b}\right\}$ without passing trough $1$ via
	$\left\{ \bigl(1-\frac{t}{2}\bigr)a+\frac{t}{2}b,\frac{1}{\left(1-\frac{t}{2}\right)a+\frac{t}{2}b}, \bigl(1-\frac{t}{2}\bigr)b+\frac{t}{2}a,\frac{1}{\left(1-\frac{t}{2}\right)b+\frac{t}{2}a}\right\} .$
	Then we go to $\left\{ \left(\frac{a+b}{2}\right)e^{i\theta} , \left(\frac{2}{a+b}\right)e^{-i\theta} , \left(\frac{a+b}{2}\right)e^{-i\theta} , \left(\frac{2}{a+b}\right)e^{i\theta}\right\}$ and
	finally we proceed as for $\left\{ \lambda , \frac{1}{\lambda} , \bar{\lambda} , \frac{1}{\bar{\lambda}} \right\}$ with $\lambda \notin S^1 \cup \R .$
	
	A semisimple element of $\Sp^+(\R^{2n},\Omega_0)$ must have an even number of real positive eigenvalues since $\det
	\left(\begin{array}{cc}
		a-1&0\\
		0&\frac{1}{a}-1
	\end{array}\right)
	<0 \textrm{ if }a>1.$
	
	The elements considered  above exhaust all possibilities for $\Sp(\R^{2n},\Omega_0) .$
	We have thus connected every element of $\Sp^+(\R^{2n},\Omega_0)$ to a semisimple element with all eigenvalues equal to $-1$, i.e. to $W^+=-\Id .$
	This implies that $\Sp^+(\R^{2n},\Omega_0)$ is connected.
	
	Note that we have shown also that every matrix in $\Sp(\R^{2n},\Omega_0)$ may be connected to $-\Id$ since we can connect two eigenvalues $1$ on a symplectic space of dimension
	$2,$ to two eigenvalues equals to $-1$ by a rotation
	$\left(\begin{array}{cc}
		\cos t\pi & -\sin t\pi\\
		\sin t\pi & \cos t\pi
	\end{array}\right) .$
	We deduce that $\Sp(\R^{2n},\Omega_0)$ is connected.
	
	In a similar way, since a pair of positive real eigenvalues $\{b,\frac{1}{b}\}$ may be connected to $\{2,\half\}$ without going through $1,$ we have connected every element of
	$\Sp^-(\R^{2n},\Omega_0)$ to a semisimple element $A''$ whose eigenvalues are either $2$ and  $\half$ (each with multiplicity $1$) or $-1.$
	We have thus
	\begin{equation*}
		A'' = kW^-k^{-1} ~~\textrm{ where  } k \in \Sp(\R^{2n},\Omega_0).
	\end{equation*}
	Since $\Sp(\R^{2n},\Omega_0)$ is connected, we can find a path $k(t)$ in the group $\Sp(\R^{2n},\Omega_0)$ which links $k$ to $\Id .$
	So $k(t)W^-k^{-1}(t)$ will link $A''$ to $W-$ staying in $\Sp^-(\R^{2n},\Omega_0)$ as the eigenvalues remained unchanged.
\end{proof}
\begin{remarque}
We have shown above that an element $A\in \Sp^*(\R^{2n},\Omega_0)$ is in $\Sp^+(\R^{2n},\Omega_0)$ if and only if  the number of real positive eigenvalues is a multiple of four.\\
\end{remarque}
\begin{proof}[ of the contractibility of every loop in $\Sp^*(\R^{2n},\Omega_0)$]\\
	Let $\gamma : \left[0,1\right] \rightarrow \Sp^\pm(\R^{2n},\Omega_0) : t \mapsto \gamma(t)$ be a loop; i.e a continuous map such that $\gamma(0) = \gamma(1) .$
	
	Recall that a loop is contractible in $\Sp(\R^{2n},\Omega_0)$ if and only if its image by $\rho$ is contractible.
	Indeed:
	\begin{equation*}
		\rho : \Sp(\R^{2n},\Omega_0) \longrightarrow S^1
	\end{equation*}
	is a continuous map wich induces an isomorphism on the fundamental groups, because $\det_{\C} : U(n)\rightarrow S^1$ induces an isomorphism on the fundamental groups
	(see section \ref{preliminaries}).
	
	Recall (\ref{eq:rhopremespece}) that the value of $\rho$ on a matrix $A \in \Sp(\R^{2n},\Omega_0)$ may be obtained in the following way: we consider the eigenvalues
	$\lambda_1,\ldots,\lambda_{2n}$ repeated according to  their algebraic multiplicity and we look at the eigenvalues $\lambda_1,\ldots,\lambda_{n}$ of first kind (thus all eigenvalues
	such that $\vert \lambda_i\vert<1,$ half of those who equal $1,$ half of those who equal $-1,$ and $r$ times the value $\lambda=e^{i\varphi}\ne \pm 1$ if the quadratic form
	$Q:E_{\lambda}\times E_{\lambda} \longrightarrow \C :~ (v,w) \longmapsto \Imag\Omega(v,\bar{w})$ has signature $(2r,2s)).$
	Then
	\begin{equation*}
		\rho(A) = \prod_{\lambda_i \textrm{ of first kind}} \frac{\lambda_i}{\vert \lambda_i\vert}
	\end{equation*}
	The map  
	\begin{equation*}
		\Sp(\R^{2n},\Omega_0) \rightarrow \Lambda(n) := {\raisebox{.2ex}{$\C^n$}} / {\raisebox{-.2ex}{permutations of the elements}},
	\end{equation*}
	which sends a matrix $A$ on the set
	$(\lambda_1(A),\ldots,\lambda_n(A))$ of its eigenvalues of the first kind is continuous.
	More precisely, given a path $\gamma : \left[0,1\right] \rightarrow \Sp(\R^{2n},\Omega_0),$ we can choose continuous functions $\Lambda_1,\ldots,\Lambda_n : \left[0,1\right] \rightarrow \C$
	such that $\Lambda_1(t),\ldots,\Lambda_n(t)$ are the eigenvalues of first kind of $\gamma(t) .$
	
	If the path $\gamma$ takes its values in $\Sp^\pm(\R^{2n},\Omega_0)$ there is an even number (respectively odd) of the $\frac{\Lambda_i}{\vert \Lambda_i\vert}$ which equal $1$ for every value of $t.$
	We define functions
	\begin{equation*}
		\alpha_i : \left[0,1\right]\longrightarrow \left[0,2\pi\right] ~(i=1\ldots n)~ \textrm{such that }e^{i\alpha_i(t)} = \frac{\Lambda_i(t)}{\vert\Lambda_i(t)\vert}.
	\end{equation*}
	They are uniquely determined if  $\frac{\Lambda_i(t)}{\vert\Lambda_i(t)\vert}\ne 1.$
	Since there are $2k\, (\textrm{resp }2k+1)$ numbers $\frac{\Lambda_i(t)}{\vert\Lambda_i(t)\vert} =1$, we define for $k\,  (\textrm{resp }k+1)$ of them the value $\alpha_i(t) = 2\pi$ and for the other $k$ of
	them the value $\alpha_i(t) =0$; we can thus make the functions $\alpha_i$ continuous.
	It is indeed continuous because if a quadruple $\lambda,\frac{1}{\lambda},\overline{\lambda},\frac{1}{\overline{\lambda}}$ degenerates into two pairs of real positive eigenvalues, the two eigenvalues
	of first kind have an argument equal to $\varphi$ and to $(2\pi-\varphi)$ tending one to zero and the other to $2\pi .$
	
	If we start from a loop $\gamma$ which lies in $\Sp^\pm(\R^{2n},\Omega_0)$, its image by each $\alpha_i$  is a loop $\subset \left[0,2\pi\right]$ and is thus contractible.
	
	Hence the  map $[0,1]\rightarrow \R : t \mapsto \sum_{i=1}^n \alpha_i(t)$ is contractible and thus the image by $\rho$  of the loop
	$[0,1]\rightarrow S^1 : t\mapsto \rho\bigl(\gamma(t)\bigr) = e^{i\sum_{i=1}^n \alpha_i(t)}$ is contractible in $S^1 .$
	
	Thus the loop $\gamma$ is contractible in $\Sp(\R^{2n},\Omega_0) .$
\end{proof}
%%%%%%%%%%%%%%%%%%%%%%%%%%%%%%%
%%%%%%%%%%%%%%%%%%%%%%%%%%%%%%%%%
%%%%%%%%%%%%%%%%%%%%%%%%%%%%%%%%%%%
\section{Conley-Zehnder index for a path of  matrices in $\textrm{SP}(n)$}
\begin{definition}[\cite{Salamon, SalamonZehnder}]
	We consider the set $\textrm{SP}(n)$ of continuous paths of matrices in $\Sp(\R^{2n},\Omega_0)$ linking the matrix $\Id$ to a matrix in $\Sp^\star(\R^{2n},\Omega_0) :$
	\begin{equation*}
		\textrm{SP}(n) := \Biggl\{ \psi : \left[0,1\right] \rightarrow \Sp(\R^{2n},\Omega_0)\, \left\vert
		\begin{array}{l}
			 \psi(0)=\Id \textrm{ and }\\
 			1\textrm{ is not an eigenvalue of } \psi(1)
		\end{array}
		\right. \Biggr\}.
	\end{equation*}
\end{definition}
From the decomposition of $\textrm{Sp}^{\star}(\R^{2n},\Omega_0)$ into two connected components, every path $\psi \in \textrm{SP}(n)$ may be extended to a continuous path
$\widetilde{\psi} : \left[0,2\right] \rightarrow \textrm{Sp}(\R^{2n},\Omega_0)$ such that $\widetilde{\psi}$ coincides with $\psi$ on the interval $\left[0,1\right],$ such that
$\widetilde{\psi}(s) \in \textrm{Sp}^{\star}(\R^{2n},\Omega_0)$ for all $s\geqslant 1$ and  such that the path ends either in $\widetilde{\psi}(2) = -\Id=W^+$ either in
$\widetilde{\psi}(2) = W^- := \operatorname{diag}(2,-1,\ldots,-1,\half,-1,\ldots,-1).$
Remark that $\rho(W^{-} )= (-1)^{n-1}$ and $\rho(-\Id)=(-1)^{n} ; $ thus $\rho^2 \circ \widetilde{\psi} : \left[0,2\right] \rightarrow \textrm{S}^1$ is a loop.
\begin{definition}[\cite{Salamon, SalamonZehnder}]\label{def1}
	The Conley-Zehnder index of a path $\psi \in \textrm{SP}(n)$ is the integer given by the degree of the map $\rho^2 \circ \tilde{\psi} : \left[0,2\right] \rightarrow \textrm{S}	^1:$
	\begin{equation}
		\mu_{\textrm{CZ}}(\psi) := \operatorname{deg}(\rho^2\circ \widetilde{\psi})
	\end{equation}
	where $\widetilde{\psi} : \left[0,2\right] \rightarrow \textrm{Sp}(\R^{2n},\Omega_0)$ is a continuous  extension of the path $\psi$ such that
	\begin{itemize}
		\item  $\widetilde{\psi}$ coincides with $\psi$ on the interval $\left[0,1\right];$
		\item $\widetilde{\psi}(s) \in \textrm{Sp}^{\star}(\R^{2n},\Omega_0)$ for all $s\geqslant 1;$
		\item $\widetilde{\psi}(2) = -\Id=:W^+$ or $\widetilde{\psi}(2) = W^- := \operatorname{diag}\bigl(2,-1,\ldots,-1,\half,-1,\ldots,-1\bigr).$
	\end{itemize}
\end{definition}
\begin{lemma}[\cite{Salamon, SalamonZehnder}]
	The Conley-Zehnder index of a path $\psi \in \textrm{SP}(n)$ does not depend on the choice of the chosen extension $\tilde{\psi}$.
\end{lemma}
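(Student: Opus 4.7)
The plan is to reduce the claim to Theorem \ref{prop:connexite}, which guarantees that every loop in $\Sp^\pm(\R^{2n},\Omega_0)$ is contractible in the ambient group $\Sp(\R^{2n},\Omega_0)$. Throughout, one uses the obvious fact that $\rho^2(\Id)=\rho^2(W^+)=\rho^2(W^-)=1$, so that for any admissible extension $\tilde\psi$ the composition $\rho^2\circ\tilde\psi:[0,2]\to S^1$ is genuinely a loop whose degree is defined.

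First I would fix two admissible extensions $\tilde\psi_1$ and $\tilde\psi_2$ of $\psi$ and observe that their restrictions to $[1,2]$ are paths in $\Sp^\star(\R^{2n},\Omega_0)$ starting at the common point $\psi(1)$. Since $\Sp^\star(\R^{2n},\Omega_0) = \Sp^+\sqcup \Sp^-$ are the two connected components, a continuous path starting at $\psi(1)$ and remaining in $\Sp^\star$ must stay in whichever component contains $\psi(1)$. Consequently both extensions necessarily end at the same canonical point $W\in\{W^+,W^-\}$ attached to that component; the dichotomy in the definition of $\mu_{CZ}$ is forced by $\psi$ itself.

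Next I would concatenate $\tilde\psi_1\big|_{[1,2]}$ with the time-reversal of $\tilde\psi_2\big|_{[1,2]}$ to obtain a loop $\gamma$ based at $\psi(1)$ and lying entirely in the connected component $\Sp^\pm$ determined above. By Theorem \ref{prop:connexite}, $\gamma$ is contractible as a loop in $\Sp(\R^{2n},\Omega_0)$, hence $\rho^2\circ\gamma$ is null-homotopic in $S^1$ and
\begin{equation*}
\deg(\rho^2\circ\gamma)=0.
\end{equation*}

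Finally, to relate this to the index itself, I would form the loop $\rho^2\circ\tilde\psi_1$ concatenated with the reverse of $\rho^2\circ\tilde\psi_2$; the common piece $\rho^2\circ\psi$ on $[0,1]$ cancels (traversed forward then backward), so the resulting loop is homotopic to $\rho^2\circ\gamma$. Taking degrees and using the additivity of degree under concatenation,
\begin{equation*}
\mu_{CZ}(\tilde\psi_1)-\mu_{CZ}(\tilde\psi_2)=\deg(\rho^2\circ\tilde\psi_1)-\deg(\rho^2\circ\tilde\psi_2)=\deg(\rho^2\circ\gamma)=0,
\end{equation*}
which gives the independence. There is no serious obstacle: the only nontrivial input is Theorem \ref{prop:connexite}; the main thing to be careful about is the observation that the two extensions are necessarily forced to land at the \emph{same} element of $\{W^+,W^-\}$, so that the comparison loop $\gamma$ indeed lies in a single component of $\Sp^\star$ where the contractibility statement applies.
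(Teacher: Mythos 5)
Your proof is correct and follows essentially the same route as the paper: both arguments first observe that the two admissible extensions must end at the same $W^\pm$ (since a path in $\Sp^\star$ starting at $\psi(1)$ stays in the connected component of $\psi(1)$), and both then invoke Theorem~\ref{prop:connexite} to see that the comparison loop in $\Sp^\pm$ is contractible in $\Sp(\R^{2n},\Omega_0)$, whence the degree identity. One small phrasing slip worth noting: the two traversals of $\rho^2\circ\psi$ are not adjacent in your final concatenation, which has the form $\alpha\diamond\beta\diamond\alpha^{-1}$ with $\beta$ the image of $\gamma$, so the ``cancellation'' is really the conjugation-invariance of winding number in $S^1$ (equivalently, additivity of lifts to $\R$), which of course still yields $\mu_{CZ}(\tilde\psi_1)-\mu_{CZ}(\tilde\psi_2)=\deg(\rho^2\circ\gamma)=0$.
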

\begin{proof}
	If $\widetilde{\psi_1}$ and $\widetilde{\psi_2}$ are two extensions of $\psi$, their restrictions to $[1,2]$,
$\widehat{\psi_1}$ and $\widehat{\psi_2}$,  are paths in $\Sp^*(\R^{2n},\Omega_0)$ starting at $\psi(1)$ and ending at $W^\pm.$
	They end at the same element since this element is in the same connected component of $\Sp^*(\R^{2n},\Omega_0)$ as $\psi(1).$
	The two paths $\widehat{\psi_1} $ and $\widehat{\psi_2}$ are homotopic since any loop in $\Sp^*(\R^{2n},\Omega_0)$ is contractible in $\Sp(\R^{2n},\Omega_0)$. Hence the two paths $\widetilde{\psi_1}$ and $\widetilde{\psi_2}$ are homotopic.
	Their images under $\rho^2 ,$  which is a continuous map, are thus homotopic.
	We have seen that these images are loops; they have thus the same degree.
\end{proof}
%%%%%%%%%%%%%%%%
\begin{proposition}[\cite{Salamon}]\label{proprietescz}
	The Conley-Zehnder index
	\begin{equation*}
		\mu_{\textrm{CZ}} :   \textrm{SP}(n) \rightarrow \Z :~\psi \mapsto    \mu_{\textrm{CZ}}(\psi)	
	\end{equation*}
	has the following properties:
	\begin{enumerate}
		\item\label{cznaturalite} \emph{(Naturality)} For all path $\phi : \left[0,1\right] \rightarrow \textrm{Sp}(\R^{2n},\Omega_0)$\ we have
		 	\begin{equation*}
				\mu_{\textrm{CZ}}(\phi\psi\phi^{-1}) = \mu_{\textrm{CZ}}(\psi);
			\end{equation*}
		\item\label{czhomotopy} \emph{(Homotopy)} The Conley-Zehnder index is constant on the components of $ \textrm{SP}(n);$
		\item \emph{(Zero)} If $\psi(s)$ has no eigenvalue on the unit circle for $s>0$ then
			\begin{equation*}
				\mu_{\textrm{CZ}}(\psi) =0;
			\end{equation*}
		\item \emph{(Product)} If $n'+n''=n,$ we identify $\Sp(\R^{2n'},\Omega_0)\times \Sp(\R^{2n"},\Omega_0)$ with a subgroup of  $\Sp(\R^{2n},\Omega_0)$ in the obvious way.
			Then 
			\begin{equation*}
				\mu_{\textrm{CZ}}(\psi' \oplus \psi'') =  \mu_{\textrm{CZ}}(\psi') +  \mu_{\textrm{CZ}}(\psi'');
			\end{equation*}
		\item\label{czlacet} \emph{(Loop)} If $\phi : \left[0,1\right] \rightarrow \Sp(\R^{2n},\Omega_0)$ is a loop with $\phi(0) = \phi(1) = \Id ,$ then
			\begin{equation*}
				\mu_{\textrm{CZ}}(\phi\psi) = \mu_{\textrm{CZ}}(\psi) + 2\mu(\phi)
			\end{equation*}
			where $ \mu(\phi)$ is the Maslov index of the loop $\phi$, i.e.  $\mu(\phi) = \operatorname{deg}(\rho \circ \phi);$
		\item\label{czsignature} \emph{(Signature)} If $S=S\tr \in \R^{2n\times 2n}$ is a symmetric non degenerate matrix with all eigenvalues of absolute value $<2\pi \ (\|S\| < 2\pi)$ and if 
			$\psi(t) = \operatorname{exp}(J_0St)$ for $t\in \left[0,1\right],$ then $\mu_{\textrm{CZ}}(\psi) = \half \sign(S)$ $(\textrm{where } \sign(S) \textrm{ is the signature of }S) .$
		\item\label{determinant} \emph{(Determinant)} $(-1)^{n- \mu_{CZ}(\psi)} = \operatorname{sign}\det\bigr(\Id - \psi(1)\bigl)$
		\item\label{inverse} \emph{(Inverse)} $\mu_{CZ}(\psi^{-1}) = \mu_{CZ}(\psi\tr) = -\mu_{CZ}(\psi)$
	\end{enumerate}
\end{proposition}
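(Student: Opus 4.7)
The plan is to verify each of the eight properties in turn, exploiting the characterization of $\rho$ in Theorem \ref{existingrho} together with the independence of the Conley-Zehnder index on the choice of extension. \textbf{Naturality} is immediate from the conjugation-invariance of $\rho$: we have $\rho^2\bigl(\phi(s)\psi(s)\phi(s)^{-1}\bigr)=\rho^2(\psi(s))$ on $[0,1]$, while $\phi(1)\psi(1)\phi(1)^{-1}$ lies in the same component of $\Sp^*(\R^{2n},\Omega_0)$ as $\psi(1)$, so extensions may be chosen ending at the same $W^\pm$. \textbf{Homotopy} is standard: a continuous family of admissible paths admits a continuous family of extensions (via path-connectedness of $\Sp^\pm$), and degree is a homotopy invariant. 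For \textbf{Zero}, the normalization of $\rho$ forces $\rho(\psi(s))=\pm 1$ when $\psi(s)$ has no eigenvalue on $S^1$; continuity from $\psi(0)=\Id$ gives $\rho\circ\psi\equiv 1$ on $[0,1]$. One then chooses an extension keeping $\rho=\pm 1$ throughout, which is possible because $\psi(1)$ can be deformed to $W^\pm$ through matrices whose eigenvalues avoid $S^1\setminus\{\pm 1\}$ (first rotating off-circle complex eigenvalues onto $\R$, then moving along reciprocal pairs in $\R$ to the eigenvalues of $W^\pm$); this makes $\rho^2\circ\widetilde\psi\equiv 1$, of degree $0$. \textbf{Product} follows from the multiplicativity of $\rho$ under symplectic direct sums, combined with the additivity of degree under pointwise products of $S^1$-valued maps.

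The \textbf{Loop} property is the most delicate because $\rho$ is not a group homomorphism. The strategy exploits the deformation retraction of $\Sp(\R^{2n},\Omega_0)$ onto $U(n)$ from the polar decomposition: since $\phi$ is a loop based at $\Id\in U(n)$, it homotopes into $U(n)$ without changing $\mu(\phi)=\deg(\rho\circ\phi)$. For $\phi(s)\in U(n)$, the polar decomposition $\psi(s)=O(s)P(s)$ gives $\phi(s)\psi(s)=\bigl(\phi(s)O(s)\bigr)P(s)$ with $\phi(s)O(s)\in U(n)$; replacing $\rho$ by the homotopic map ${\det}_\C\circ u$ (with $u$ the unitary part in the polar decomposition) preserves the degree of any loop, and the multiplicativity of ${\det}_\C$ on $U(n)$ gives $\rho^2(\phi(s)\psi(s))=\bigl({\det}_\C\phi(s)\bigr)^2\rho^2(\psi(s))$ up to homotopy. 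Consequently, on $[0,1]$ the loop $\rho^2\circ\widetilde{\phi\psi}$ differs from $\rho^2\circ\widetilde\psi$ by the factor $({\det}_\C\phi)^2$, while on $[1,2]$ (where $\phi$ is not multiplied in) the two loops coincide. Additivity of degree yields $\mu_{\textrm{CZ}}(\phi\psi)=\mu_{\textrm{CZ}}(\psi)+2\mu(\phi)$.

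For \textbf{Signature}, first check $J_0 S\in\textrm{sp}(\R^{2n},\Omega_0)$ so that $\psi(t)=\exp(tJ_0S)\in\Sp$; the condition $\|S\|<2\pi$ ensures that no eigenvalue of $J_0S$ reaches a nonzero element of $2\pi i\Z$ on the imaginary axis, hence $\psi(1)\in\Sp^*$. The subset $\{S=S\tr:\det S\ne 0,\|S\|<2\pi,\sign S=\sigma\}$ is connected (using the transitive action of $GL_+(2n,\R)$ by congruence, combined with positive scaling to stay within the norm constraint), so by homotopy invariance one reduces to a standard block-diagonal $S_0=\bigoplus_{i=1}^n S_i$, each $S_i$ a $2\times 2$ symmetric non-degenerate block of signature in $\{-2,0,2\}$ aligned with the symplectic subspace $\textrm{Span}\{e_i,f_i\}$. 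The product property reduces further to dimension $2$: for $S_i=\pm\epsilon\Id$, $\psi_i(t)=\exp(\pm t\epsilon J_0)$ is a rotation by $\pm t\epsilon$, direct computation of $\rho(\psi_i(t))=e^{\pm it\epsilon}$ and the extension through rotations to $W^+=-\Id$ gives $\mu_{\textrm{CZ}}(\psi_i)=\pm 1=\half\sign S_i$; for signature-zero blocks the zero property gives $\mu_{\textrm{CZ}}(\psi_i)=0$. Summing yields $\mu_{\textrm{CZ}}(\psi)=\half\sign(S)$.

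For \textbf{Determinant}, the parity of $\mu_{\textrm{CZ}}(\psi)=\deg(\rho^2\circ\widetilde\psi)$ equals the parity of twice the winding of $\rho\circ\widetilde\psi$: this is a path in $S^1$ from $\rho(\Id)=1$ to $\rho(W^\pm)$, and its square has even (respectively odd) winding when $\rho(W^\pm)=1$ (respectively $-1$). Since $\rho(W^+)=(-1)^n$ and $\rho(W^-)=(-1)^{n-1}$, one gets $\mu_{\textrm{CZ}}\equiv n\pmod 2$ iff $\widetilde\psi(2)=W^+$ iff $\psi(1)\in\Sp^+$ iff $\sign\det(\Id-\psi(1))=+1$, which is the stated formula. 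For \textbf{Inverse}, the explicit formula \eqref{eqrho} yields $\rho(A^{-1})=\rho(A)^{-1}$: inverting $A$ preserves generalized eigenspaces while reciprocating eigenvalues, and the Krein form $Q$ depends only on $\Omega_0$, so $m^-$ is unchanged and each factor $\lambda^{m^+(\lambda)/2}$ becomes $(\lambda^{-1})^{m^+(\lambda)/2}$. Consequently $\rho^2\circ\widetilde{\psi^{-1}}$ is the pointwise inverse of $\rho^2\circ\widetilde\psi$, giving $\mu_{\textrm{CZ}}(\psi^{-1})=-\mu_{\textrm{CZ}}(\psi)$. The identity $\psi\tr=\Omega_0\psi^{-1}\Omega_0^{-1}$ together with naturality applied to the constant path $\phi\equiv\Omega_0$ gives $\mu_{\textrm{CZ}}(\psi\tr)=\mu_{\textrm{CZ}}(\psi^{-1})$. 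The main obstacle throughout is the loop property, where the failure of multiplicativity of $\rho$ forces the retraction-into-$U(n)$ argument to replace $\rho$ by the genuinely multiplicative $\det_\C$.
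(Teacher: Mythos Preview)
Your proof is correct and tracks the paper closely on properties 1--4, 6--8. The paper's arguments for naturality, homotopy, zero, product, signature, determinant, and inverse are the same as yours in substance (for signature the paper uses orthogonal diagonalization $S\mapsto P_sSP_s^{-1}$ along a path $P_s$ in $O(2n)$, which preserves $\|\cdot\|$ exactly and so keeps the homotopy inside $\textrm{SP}(n)$ more transparently than your $GL_+$-congruence-plus-rescaling argument, but both work).

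The genuine difference is the \textbf{loop} property. The paper proves a short catenation lemma (Lemma~\ref{lem:prod}): for paths $\varphi,\psi$ starting at $\Id$, the pointwise product $\psi\varphi$ is homotopic with fixed endpoints to the catenation of $\varphi$ followed by $t\mapsto\psi(t)\varphi(1)$. When $\phi$ is a loop this makes $\phi\psi$ homotopic to $\phi$ followed by $\psi$, so $\widetilde{\phi\psi}$ is homotopic to $\phi$ followed by $\widetilde\psi$, and additivity of degree under catenation gives $\mu_{CZ}(\phi\psi)=\deg(\rho^2\circ\phi)+\mu_{CZ}(\psi)=2\mu(\phi)+\mu_{CZ}(\psi)$ immediately. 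This bypasses the non-multiplicativity of $\rho$ altogether---no retraction to $U(n)$, no auxiliary map $\det_\C\circ u$.

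Your $U(n)$-retraction route is also valid, but the phrase ``$\rho^2(\phi\psi)=(\det_\C\phi)^2\rho^2(\psi)$ up to homotopy'' hides a step: the exact identity holds only for $\tilde\rho:=\det_\C\circ u$, and one must then argue that the correction $\rho^2/\tilde\rho^2:\Sp\to S^1$ is nullhomotopic (it is constant on the deformation retract $U(n)$), hence contributes zero degree when composed with the genuine loop $\widetilde{\phi\psi}*\widetilde\psi^{-1}$ in $\Sp$. The paper's catenation lemma is both shorter and reusable---it reappears in the characterization Proposition~\ref{caract} and again in the Robbin--Salamon comparison---which is the main reason to prefer it.
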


%%%%%%%%%%%
\begin{proof}[ of the naturality]\\
	Naturality comes from the invariance of $\rho;$
	\begin{equation*}
		\rho(kAk^{-1})=\rho(A) ~\forall A,k\in \Sp(\R^{2n},\Omega_0).
	\end{equation*}
	If $\widetilde{\psi} : [0,2]\rightarrow \Sp(\R^{2n},\Omega_0)$ is an extension of the path $\psi$ as before, we choose as extension of the path $\phi\psi\phi^{-1}$ the path $\widetilde{\psi'} $
	defined by $\widetilde{\psi'} (t)=\phi'(t)\widetilde{\psi}(t)\phi'(t)^{-1}$ for $t\ge 1,$ where $\phi':[1,2] \rightarrow \Sp(\R^{2n},\Omega_0)$ is a path linking $\phi(1)$ to $\Id.$
	Since $\rho\bigl(\phi(t)\psi(t)\phi^{-1}(t) \bigr) = \rho \bigl( \psi(t) \bigr)$ for $t\in [0,1]$ and $\rho\bigl( \widetilde{\psi'} (t) \bigr) = \rho \bigl( \phi'(t)\widetilde{\psi}(t)\phi'(t)^{-1} \bigr) = \rho\bigl( \widetilde{\psi}(t) \bigr)$
	for $t\in [1,2]$ we have $\rho^2\circ \widetilde{\psi}=\rho^2\circ \widetilde{\psi'}$ thus $\mu_{\textrm{CZ}}(\phi\psi\phi^{-1}) = \mu_{\textrm{CZ}}(\psi).$
\end{proof}

%%%%%%%%
\begin{proof}[ of the homotopy property]\\
	Two paths $\psi_0$ and $\psi_1\in  \textrm{SP}(n)$ are in the same component if and only if there exists a continuous map
	\begin{equation*}
		{\psi} : [0,1] \times [0,1]\rightarrow \Sp(\R^{2n},\Omega_0)~(s,t)\mapsto \psi(s,t)=:\psi_s(t)
	\end{equation*}
	such that $\psi_s(0)=\Id ~\forall s$ and $\psi_s(1)\in \Sp^*(\R^{2n},\Omega_0)~\forall s .$
	Thus $\psi_s\in  \textrm{SP}(n)\, \forall s.$
	Consider an extension $\widetilde{\psi_0}$ of $\psi_0$ and define an extension $\widetilde{\psi_s}$ of $\psi_s$ by 
	$$
		\widetilde{\psi_s}(t)=
		\left\{ \begin{array}{ll}
			\psi_s(t) & 0\le t\le 1\\
			\psi_{s(3-2t)}(1) & 1\le t\le \frac{3}{2}\\
			\widetilde{\psi_0}(2t-2) & \frac{3}{2}\le t\le 2 \ .
		\end{array}\right.
	$$
	
	The map $\widetilde{\psi} : [0,1] \times [0,2]\rightarrow \Sp(\R^{2n},\Omega_0)~(s,t)\mapsto \widetilde{\psi}(s,t)=\widetilde{\psi}_s(t)$ is continuous and defines a homotopy between
	$\widetilde{\psi_0}$ and $\widetilde{\psi_1}.$
	Since $\rho^2$ is continuous and since the degree of a map from $[0,2]$ in $S^1$ is invariant by homotopy, we have
	$\mu_{\textrm{CZ}}(\psi_0)= \operatorname{deg}(\rho^2\circ \widetilde{\psi_0})=\operatorname{deg}(\rho^2\circ \widetilde{\psi_1})  =\mu_{\textrm{CZ}}(\psi_1). $
\end{proof}

%%%%%%%%%%
\begin{proof}[ of the zero property]\\
	If $\psi(s)$ has no eigenvalue on the unit circle, we have $\rho\bigl(\psi(t)\bigr) = 1 ~\forall t\in [0,1]$ by the property of normalisation of $\rho,$ the fact that $\rho$ is continuous and the fact that
	$\rho(\Id)=1.$
	
	We have seen in the proof of  Theorem \ref{prop:connexite} that we can find a path of matrices in $\Sp^*(\R^{2n},\Omega_0)$ which brings back the eigenvalues of
	$\psi(1)$ which are not real and negative to $-1$ by groups of $4$ without going through a point of $S^1\setminus \ \{- 1\}.$
	We thus find an extension $\widetilde{\psi}$ of $\psi$ such that $\rho\bigl(\widetilde{\psi}(t)\bigr) = \rho\bigl({\psi}(1)\bigr) = 1~\forall t\ge 1$ since we have a continuous path of matrices with no eigenvalues on
	$S^1\setminus \{\pm1\}$ and with an even number of pairs of negative eigenvalues [Recall indeed  formula (\ref{eqrho})].
	A fortiori $\rho^2\bigl(\widetilde{\psi}(t)\bigr) = 1~\forall t \in [0,2]$ and thus $\mu_{\textrm{CZ}}(\psi)= \operatorname{deg}(\rho^2\circ \widetilde{\psi})=0.$
\end{proof}

%%%%%%%%%%
\begin{proof}[ of the product]\\
	If $\psi'$ is a path in $\textrm{SP}(n')$ and $\psi''$ a path in $ \textrm{SP}(n'')$ we can find extensions
	$\widetilde{\psi'} :[0,2]\rightarrow \Sp(\R^{2n'},\Omega_0)$ et $\widetilde{\psi''} :[0,2]\rightarrow \Sp(\R^{2n''},\Omega_0)$ as before.
	With the obvious identifications
	\begin{equation*}
		\widetilde{\psi'}\oplus \widetilde{\psi''} :[0,2]\rightarrow \Sp(\R^{2n},\Omega_0)
	\end{equation*}
	is such that:\\
	${{\bullet}}(\widetilde{\psi'}\oplus \widetilde{\psi''})(t)=({\psi'}\oplus {\psi''})(t) \quad \forall t\in [0,1];$\\
	${{\bullet}}(\widetilde{\psi'}\oplus \widetilde{\psi''})(t) \in  \Sp^*(\R^{2n},\Omega_0) \quad \forall t\in [1,2];$ and\\
	${{\bullet}}(\widetilde{\psi'}\oplus \widetilde{\psi''})(2)$ is equal either to $W^{\pm}$ either to a diagonal matrix $W'$ with twice the value $2$, twice the value $\frac{1}{2}$ and $2n-4$ times
	the value $-1 .$
	
	If it is equal to $W^{\pm},$ then $\widetilde{\psi'}\oplus \widetilde{\psi''}$ is an extension of ${\psi'}\oplus {\psi''}.$
	If it is equal to $W'$ we obtain an extension of ${\psi'}\oplus {\psi''}$ considering $\widetilde{\psi'}\oplus \widetilde{\psi''}$ followed by a path of matrices bringing the two pairs of positive
	eigenvalues back to $-1$ without going trough another point of $S^1$ as before; since the value of $\rho$ does not change along this last path we have in all cases:
	\begin{eqnarray*}
		\mu_{\textrm{CZ}}({\psi'}\oplus {\psi''})&=& \operatorname{deg}\bigl(\rho^2\circ(\widetilde{\psi'}\oplus \widetilde{\psi''})\bigr)=
		\operatorname{deg}\bigl((\rho^2\circ\widetilde{\psi'})\oplus\rho^2\circ \widetilde{\psi''})\bigr)\\
		&&~\qquad\quad\textrm{due to the multiplicativity property of } \rho\\
		&=& \operatorname{deg}\bigl(\rho^2\circ\widetilde{\psi'}\bigr)+\operatorname{deg}\bigl(\rho^2\circ\widetilde{\psi''}\bigr)=\mu_{\textrm{CZ}}({\psi'})+\mu_{\textrm{CZ}}( {\psi''}).
	\end{eqnarray*}
\end{proof}
Before proving the loop property, we indicate a lemma that will be useful in the computations of Conley-Zehnder indices of iterates of closed orbits.
\begin{lemma}\label{lem:prod}
	Let $\varphi$ and $\psi$ be two paths in $\Sp(\R^{2n},\Omega_0)$
	\begin{equation*}
		\varphi , \psi : \left[0,T\right] \longrightarrow \Sp(\R^{2n},\Omega_0)
	\end{equation*}
	such that $\varphi(0) = \psi(0) = \Id .$
	We consider on one hand the path $\psi \varphi$ obtained as the  product of the two paths:
	\begin{equation*}
		\psi \varphi : \left[0,T\right] \longrightarrow \Sp(\R^{2n},\Omega_0) : t \longmapsto \psi(t)\varphi(t)
	\end{equation*}
	and on the other hand the catenation of the path $\varphi$ with the translation
	of the path $\psi$ so that it starts from $\varphi(T) :$
	\begin{equation*}
		\psi \diamond \varphi : \left[0,T\right] \longrightarrow \Sp(\R^{2n},\Omega_0) :
		\left\{\begin{array}{cc}
			\varphi(2t)&t\leqslant\frac{T}{2}\\
			\psi\Bigl(2\bigl(t-\frac{T}{2}\bigr)\Bigr)\varphi(T)&t\geqslant\frac{T}{2}
		\end{array}\right. .
	\end{equation*}
	Then those two paths are homotopic.
\end{lemma}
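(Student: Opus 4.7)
The idea is to exhibit an explicit continuous homotopy between $\psi\varphi$ and $\psi\diamond\varphi$. First observe that both paths start at $\Id$ (since $\psi(0)=\varphi(0)=\Id$) and end at $\psi(T)\varphi(T)$, so it is sensible to seek a homotopy with fixed endpoints.

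The strategy is to interpolate between the two paths by simultaneously speeding up $\varphi$ and delaying $\psi$. For $s\in[0,1]$, define reparameterizations
\begin{equation*}
\varphi_s(t):=\varphi\bigl(\min((1+s)t,T)\bigr),\qquad
\psi_s(t):=\psi\bigl(\max(0,(1+s)t-sT)\bigr),
\end{equation*}
and set
\begin{equation*}
H(s,t):=\psi_s(t)\,\varphi_s(t)\qquad (s,t)\in[0,1]\times[0,T].
\end{equation*}
This map is continuous and takes values in $\Sp(\R^{2n},\Omega_0)$. At $s=0$ the reparameterizations are the identity so $H(0,t)=\psi(t)\varphi(t)=(\psi\varphi)(t)$. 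At $s=1$, the function $(1+s)t=2t$ reaches $T$ precisely at $t=T/2$, so for $t\le T/2$ we have $\psi_1(t)=\psi(0)=\Id$ and $\varphi_1(t)=\varphi(2t)$, giving $H(1,t)=\varphi(2t)$; for $t\ge T/2$ we have $\varphi_1(t)=\varphi(T)$ and $\psi_1(t)=\psi(2t-T)$, giving $H(1,t)=\psi(2t-T)\varphi(T)$. Thus $H(1,\cdot)=\psi\diamond\varphi$.

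It remains to check that the endpoints are fixed during the homotopy. At $t=0$ one has $(1+s)t-sT=-sT\le 0$ and $(1+s)t=0$, so $\psi_s(0)=\psi(0)=\Id$ and $\varphi_s(0)=\varphi(0)=\Id$, whence $H(s,0)=\Id$. At $t=T$ one has $(1+s)T-sT=T$ and $\min((1+s)T,T)=T$, so $\psi_s(T)=\psi(T)$ and $\varphi_s(T)=\varphi(T)$, whence $H(s,T)=\psi(T)\varphi(T)$. Therefore $H$ is a homotopy with fixed endpoints between $\psi\varphi$ and $\psi\diamond\varphi$.

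No serious obstacle arises: the construction is just the standard Eckmann--Hilton type interchange that compares the pointwise product and the concatenation of two paths based at the identity in a topological group. The only thing to be careful about is writing the reparameterizations so that $\psi_s$ and $\varphi_s$ remain well-defined paths in $[0,T]$ for every $s$, which is ensured by the $\max$ and $\min$ above.
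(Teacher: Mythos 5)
Your proof is correct and follows essentially the same strategy as the paper's — both exhibit an explicit homotopy between the pointwise product and the catenation. Your version is arguably slightly cleaner in that the homotopy factors as $H(s,t)=\psi_s(t)\varphi_s(t)$ with $\psi_s,\varphi_s$ obtained from $\psi,\varphi$ by clamped affine reparameterizations (via $\max$ and $\min$), whereas the paper writes a two-regime piecewise formula, but the underlying idea is the same.
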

\begin{proof}
	We consider the homotopy $\chi : \left[0,1\right]\times\left[0,T\right] \longrightarrow \Sp(\R^{2n},\Omega_0)$ defined by
	\begin{equation*}
		\chi_s(t) =
		\left\{\begin{array}{cc}
			\varphi(2t) & t\leqslant\frac{sT}{2}\\
			\psi\left(\frac{2}{2-s}\left(t-\frac{sT}{2}\right)\right)\varphi \left(sT + \frac{2(1-s)}{2-s}\left(t-\frac{sT}{2}\right)\right) & t\geqslant \frac{sT}{2}
		\end{array}\right.
	\end{equation*}
	It is continuous $\left(\chi_s\left(\frac{sT}{2}\right) = \varphi(sT)~ \forall s \right)$ and
	\begin{equation*}
		 \left\{\begin{array}{cccc}
	 		\chi_0(t) &=& \psi(t)\varphi(t) & \forall t \in \left[0,T\right]\\
			\chi_1(t) &= & \left\{\begin{array}{c}
				\varphi(2t)\\
				\psi\Bigl(2\bigl(t-\frac{T}{2}\bigr)\Bigr)\varphi(T)
			\end{array}\right.&
			\left.\begin{array}{c}
				t\leqslant\frac{T}{2}\\
				t\geqslant \frac{T}{2}
			\end{array}\right.
		\end{array}\right.
	\end{equation*}
\end{proof}

%%%%%%%%%%%%%%
\begin{proof}[ of the loop property]\\
	If $\phi : \left[0,1\right] \rightarrow \Sp(\R^{2n},\Omega_0)$ is a loop with $\phi(0) = \phi(1) = \Id ,$ and if $\psi \in \textrm{SP}(n),$ then by the previous lemma, the product path $\phi\psi$ is
	homotopic to the catenation of $\phi$ and $\psi .$
	Thus, by the invariance of the degree by homotopy we have
	\begin{equation*}
		\mu_{\textrm{CZ}}(\widetilde{\phi\psi}) =  \operatorname{deg}\bigl(\rho^2 \circ(\widetilde{ \psi} \circ \phi)\bigr) =
		\operatorname{deg}(\rho^2 \circ\widetilde{\psi})+ \operatorname{deg}(\rho^2 \circ \phi)=\mu_{\textrm{CZ}}(\psi) + 2\mu(\phi)
	\end{equation*}
	where $\mu(\phi) = \operatorname{deg}(\rho \circ \phi).$
\end{proof}

%%%%%%%
\begin{proof}[ of the signature (\cite{SalamonZehnder})]\\
	Since $S$ is a symmetric matrix, there exists an orthogonal matrix $P$ of determinant equal to $1$ such that $PSP^{-1} =\operatorname{diag}(a_1,\ldots,a_{2n})$ with all $a_i$ non zero
	since $S$ is non degenerate.
	The condition $\|S\| < 2\pi$ implies that $\|J_0S\| < 2\pi$ thus the eigenvalues of $J_0S$ are all smaller in norm than $2\pi$ and $\operatorname{exp}(J_0S)$ doesn't admit $1$
	as an eigenvalue thus is in $\Sp^*(\R^{2n},\Omega_0).$
	We consider a path of orthogonal matrices $P_s$ starting at $P$ and ending at the identity.
	The norm of $P_sSP_s^{-1} $ is always smaller than $2\pi$ and thus $\operatorname{exp}(J_0P_sSP_s^{-1})$ doesn't admit the eigenvalue $1$ for any $s.$
	This shows that the path $\operatorname{exp}(J_0St)$ is in the same connected component of $\operatorname{SP}(n)$ as the path $\operatorname{exp}(J_0PSP^{-1}t)$ and we can thus
	assume, in view of the homotopy property for the computation of the Conley-Zehnder index that $S$ is diagonal.
	
	We have then $J_0S=
	\left(\begin{array}{cc}
		0&\operatorname{diag}(-a_{n+1},\ldots,-a_{2n})\\
		\operatorname{diag}(a_{1},\ldots,a_{n})&0
	\end{array}\right)$ with all the $\vert a_i\vert <2\pi.$
	Since $\psi(t) = \operatorname{exp}(J_0St),$ we can decompose $(\R^{2n},\Omega_0)$ in a sum of $n$ symplectic $2$-planes and the Conley-Zehnder index of the path $\psi$ is the sum of the
	indices of the paths $\psi_i(t):= \exp t
	\left(\begin{array}{cc}
		0&-a_{n+i}\\
		a_i&0
	\end{array}\right)$
	by the  product property.
	Those paths are
	\begin{equation*}
		\psi_i(t):=
		\left(\begin{array}{cc}
			\cos(\sqrt{a_ia_{n+i}}t)&-\sqrt{\frac{a_{n+i}}{a_i}}\sin(\sqrt{a_ia_{n+i}}t)\\
			\sqrt{\frac{a_i}{a_{n+i}}}\sin(\sqrt{a_ia_{n+i}}t)&  \cos(\sqrt{a_ia_{n+i}}t)
		\end{array}\right) 
	\end{equation*} 
	if $a_ia_{n+i}>0$ and $a_i>0;$
	\begin{equation*}
		\psi_i(t):=
		\left(\begin{array}{cc}
			\cos(\sqrt{a_ia_{n+i}}t)&\sqrt{\frac{a_{n+i}}{a_i}}\sin(\sqrt{a_ia_{n+i}}t)\\
			-\sqrt{\frac{a_i}{a_{n+i}}}\sin(\sqrt{a_ia_{n+i}}t)&  \cos(\sqrt{a_ia_{n+i}}t)
		\end{array}\right) 
	\end{equation*} 
	if $a_ia_{n+i}>0$ and $a_i<0;$
	\begin{equation*}
		\psi_i(t):=
		\left(\begin{array}{cc}
			\cosh(\sqrt{-a_ia_{n+i}}t)&\pm\sqrt{\frac{-a_{n+i}}{a_i}}\sinh(\sqrt{-a_ia_{n+i}}t)\\
			\pm\sqrt{\frac{-a_i}{a_{n+i}}}\sinh(\sqrt{-a_ia_{n+i}}t)&  \cosh(\sqrt{-a_ia_{n+i}}t)
		\end{array}\right)
	\end{equation*}
	if $a_ia_{n+i}<0.$
	In the third case, there are no eigenvalues on the circle and the contribution to the Conley-Zehnder index is thus zero by the zero property.
	In the two first cases, the eigenvalues are on the circle, they are equal to $\cos(\sqrt{a_ia_{n+i}}t)\pm \sin(\sqrt{a_ia_{n+i}}t);$ we can change the eigenvalues, staying in the same component of
	$\operatorname{SP}(n)$, to have $a_i=a_{n+i};$ we obtain then the path $\psi_i'(t)$ which consists in turning in the $2$-plane; we have $\rho(\psi'_i(t))=e^{i\sqrt{a_ia_{n+i}}t}$ in the
	first case and $\rho(\psi'_i(t))=e^{-i\sqrt{a_ia_{n+i}}t}$ in the second case.
	An extension $\widetilde{\psi'_i}$ consists in going further or coming  back turning in the $2$-plane to reach the matrix $-\Id .$
	Thus $\rho(\widetilde{\psi'_i})$ varies from $e^0$ to $e^{i\pi}$ in the first case ($a_i>0$) and from $e^0$ to $e^{-i\pi}$ in the second case ($a_i<0$).
	Hence $\mu_{\textrm{CZ}}(\psi_i)=\operatorname{deg}(\rho^2\circ \widetilde{\psi'_i})=\textrm{sign}(a_i).$
	Thus 
	\begin{eqnarray*}
		\mu_{\textrm{CZ}}(\psi)& =&\card \{i\le n\, \vert \,a_i>0~a_ia_{n+i}>0\}\\
		&&~~\qquad\qquad-\card \{i\le n\, \vert \,a_i<0~a_ia_{n+i}>0\}\\
		&=& \half \sign(S).
	\end{eqnarray*}
\end{proof}

%%%%%%%%%%%%%%%%
\begin{proof}[ of the determinant property (\cite{SalamonZehnder})]\\
	If $\det\bigl(\Id - \psi(1)\bigr) > 0$, it is in the connected component of $W^{+}$ so $\widetilde{\psi}(2) = W^{+}$ for a prolongation $\widetilde{\psi}$
	and $\rho(W^{+}) = (-1)^{n}$.
	If $\det\bigl(\Id - \psi(1)\bigr) < 0$, it is in the connected component of $W^{-}$ so $\widetilde{\psi}(2) = W^{-}$ for a prolongation $\widetilde{\psi}$
	and $\rho(W^{-}) = (-1)^{n-1}$.
	Since the degree of the map $\rho^{2} \circ \widetilde{\psi}$ is even when $\rho\bigl(\widetilde{\psi}(2)\bigr)=1$ and odd when $\rho\bigl(\widetilde{\psi}(2)\bigr)=-1$,
	we have $(-1)^{n- \mu_{CZ}(\psi)} = \operatorname{sign}\det\bigr(\Id - \psi(1)\bigl)$.
\end{proof}

%%%%%%%%%%%%%%%%
\begin{proof}[ of the inverse property]\\
	If $\psi \in SP(n)$ we define $\psi^{-1}$ and $\psi\tr \in SP(n)$ by
	$$
		\psi^{-1}(t) = \bigl(\psi(t)\bigr)^{-1} \hspace{1cm} \psi\tr(t) = \bigl(\psi(t)\bigr)\tr
	$$
	Since, for any symplectic matrix $A\tr=\left(\begin{array}{cc}O&\Id\\ 
	-\Id & 0\end{array}\right)A^{-1}\left(\begin{array}{cc}O&-\Id\\ 
	\Id & 0\end{array}\right)$, those two paths have the same
	Conley-Zehnder index in view of the naturality. Furthermore,
	we have seen in proposition \ref{prop:rhocarre} that $\rho(A^{-1}) = \bigl(\rho(A)\bigr)^{-1}$ for any symplectic matrix $A$.
	Since the inverse of a prolongation $\widetilde{\psi}$ is a prolongation of the inverse, we have
	\begin{eqnarray*}
		\mu_{CZ}(\psi^{-1}) & = & \deg(\rho^2\circ \widetilde{\psi^{-1}}) = \deg(\rho^2\circ \widetilde{\psi}^{-1}) = \deg\bigl((\rho\circ \widetilde{\psi})^{-2}\bigr)\\
		& = & -\deg\bigl((\rho\circ \widetilde{\psi})^{2}\bigr) = -\deg(\rho^2\circ \widetilde{\psi})\\
		& = & -\mu_{CZ}(\psi)
	\end{eqnarray*}
\end{proof}
%%%%%%%%%%%%%%%%
\begin{proposition}\label{caract}
	The properties \ref{czhomotopy}, \ref{czlacet} and \ref{czsignature} of homotopy, loop and signature characterize the Conley-Zehnder index.
\end{proposition}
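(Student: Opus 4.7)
The plan is to show that any integer-valued function $\mu'$ on $\textrm{SP}(n)$ which satisfies the homotopy, loop and signature properties must equal $\mu_{\textrm{CZ}}$. The main step is to exhibit, in each of the two connected components of $\Sp^*(\R^{2n},\Omega_0)$, a reference path of the form $\psi_\varepsilon(t) = \exp(tJ_0S_\varepsilon)$ with $S_\varepsilon$ symmetric, non-degenerate and of operator norm strictly smaller than $2\pi$, so that the signature axiom alone determines the value $\mu'(\psi_\varepsilon)$.

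For this I would take $S_+ := \pi\,\Id_{2n}$, which gives $\exp(J_0 S_+) = -\Id = W^+\in\Sp^+(\R^{2n},\Omega_0)$ and $\sign(S_+) = 2n$, and, on $\R^{2n} = \R^2 \oplus \R^{2(n-1)}$,
\[
	S_- := \ln 2\,\left(\begin{array}{cc} 0 & -1\\ -1 & 0\end{array}\right) \oplus \pi\,\Id_{2(n-1)},
\]
which gives $\exp(J_0 S_-) = \operatorname{diag}(2,\thalf)\oplus(-\Id_{2(n-1)}) \in \Sp^-(\R^{2n},\Omega_0)$ and $\sign(S_-) = 2(n-1)$. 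Both $S_\pm$ are symmetric, non-degenerate, with $\|S_\pm\|=\pi<2\pi$; since $J_0S_\pm$ is then invertible of norm $<2\pi$, its exponential has no $1$-eigenvalue, so $\psi_\pm$ genuinely lie in $\textrm{SP}(n)$. The signature axiom therefore forces $\mu'(\psi_+) = n$ and $\mu'(\psi_-) = n-1$, independently of the particular index.

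Next, given an arbitrary $\psi\in \textrm{SP}(n)$, let $\varepsilon\in\{+,-\}$ be such that $\psi(1)\in \Sp^\varepsilon(\R^{2n},\Omega_0)$. By Theorem~\ref{prop:connexite} the set $\Sp^\varepsilon$ is path-connected; pick a path in $\Sp^\varepsilon$ from $\psi(1)$ to $\psi_\varepsilon(1)$ and reparameterise its catenation with $\psi$ to obtain a path $\tilde\psi\in\textrm{SP}(n)$ with $\tilde\psi(1)=\psi_\varepsilon(1)$. Shrinking the catenated portion yields a homotopy inside $\textrm{SP}(n)$ from $\psi$ to $\tilde\psi$, so $\mu'(\psi)=\mu'(\tilde\psi)$ by the homotopy axiom. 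Now define
\[
	\phi(t) := \tilde\psi(t)\,\psi_\varepsilon(t)^{-1}.
\]
Since $\tilde\psi$ and $\psi_\varepsilon$ coincide at $t=0$ and $t=1$, the map $\phi$ is a loop based at $\Id$, and $\tilde\psi = \phi\cdot\psi_\varepsilon$ as a pointwise product of matrix-valued paths. The loop axiom then yields $\mu'(\tilde\psi) = \mu'(\psi_\varepsilon) + 2\mu(\phi)$, where $\mu(\phi) = \deg(\rho\circ\phi)$ depends only on $\phi$. Putting everything together,
\[
	\mu'(\psi) = \mu'(\psi_\varepsilon) + 2\mu(\phi),
\]
a quantity manifestly independent of $\mu'$. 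Applying the same identity to both $\mu_{\textrm{CZ}}$ and any other index satisfying the three axioms gives the claimed equality.

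I expect the main obstacle to be the bookkeeping in the reduction step: one must check carefully that the endpoint deformation really happens inside $\textrm{SP}(n)$ (this is where connectedness of $\Sp^\varepsilon$, and not merely of $\Sp$, is essential) and that the hyperbolic $2\times 2$ block in $S_-$ is genuinely admissible for the signature axiom, i.e.\ satisfies both non-degeneracy and the norm bound $<2\pi$. Once these points are secured, the three axioms determine $\mu'$ on an arbitrary path via the explicit formula above.
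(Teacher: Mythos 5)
Your proof is correct and follows the same overall strategy as the paper: deform $\psi$ inside $\textrm{SP}(n)$ so its endpoint agrees with a reference path $\psi_\varepsilon = \exp(tJ_0S_\varepsilon)$ (with $S_\varepsilon$ admissible for the signature axiom and $\exp(J_0S_\varepsilon) = W^\pm$), then use the loop axiom to express $\mu'(\psi)$ as $\mu'(\psi_\varepsilon) + 2\mu(\phi)$ with $\mu(\phi)$ independent of $\mu'$. The one place where your route diverges, and improves slightly on the paper's, is in the extraction of the loop: the paper defines $\phi$ as the catenation of the extended path with the reverse of $\psi_2$ and must then invoke Lemma \ref{lem:prod} to replace this catenation by the pointwise product $\phi\psi_2$; you instead define the loop directly by $\phi(t) = \tilde\psi(t)\psi_\varepsilon(t)^{-1}$, so the factorization $\tilde\psi = \phi\psi_\varepsilon$ holds identically and the loop axiom applies with no intermediate homotopy. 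Your choice of reference generators is the same up to a rescaling ($S_\pm = \pi S^\pm$ in the paper's notation), and your checks that $S_-$ is nondegenerate of norm $\pi<2\pi$, signature $2(n-1)$, with $\exp(J_0S_-) = W^-$ are correct.
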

\begin{proof}
	Assume $\mu' : SP(n) \rightarrow \Z$ is a map satisfying those properties.
	Let $\psi : [0,1] \rightarrow \Sp(\R^{2n},\Omega_0)$ be an element of $SP(n)$ (i.e $\psi(0) = \Id ,\ \psi(1) \in \Sp^*(\R^{2n},\Omega_0)).$
	Then $\psi$ is in the same component of $SP(n)$ as its prolongation $\tilde{\psi} : [0,2] \rightarrow \Sp(\R^{2n},\Omega_0)$ with $\tilde{\psi}(s) \in \Sp^*(\R^{2n},\Omega_0) \ \forall s \ge 1$
	and $\tilde{\psi}(2)$ either equal to $W^+$,
	either equal to $W^- $.
	So $\mu'(\psi) = \mu'(\tilde{\psi}).$\\

Observe that 	$W^+ = \exp \pi (J_0 S^+)$ with $S^+ = \Id$ and $W^- = \exp \pi (J_0 S^-)$ with
$$
	S^- =
	\left(\begin{array}{cccc}
		0 & 0 & -\frac{\log 2}{\pi} & 0\\
		0 & \Id_{n-1} & 0 & 0\\
		-\frac{\log 2}{\pi} & 0 & 0 & 0\\
		0 & 0 & 0 & \Id_{n-1}
	\end{array}\right).
$$
          The catenation of $\tilde{\psi}$ and $\psi_2^-$ (the path $\psi_2$ in the reverse order, i.e followed from end to beginning) when
	$\psi_2 : [0,1] \rightarrow \Sp(\R^{2n},\Omega_0) \ t \mapsto \exp t \pi J_0 S^{\pm}$ is a loop $\phi .$
	Hence $\tilde{\psi}$ is homotopic to the catenation of $\phi$ and $\psi_2 ,$ which, by lemma \ref{lem:prod}, is homotopic to the product $\phi \psi_2 .$
	
	We thus have $\mu'(\psi) = \mu'(\phi \psi_2) .$
	By the loop condition $\mu'(\phi \psi_2) = \mu'(\psi_2) + 2\mu(\phi)$ and by the signature condition $\mu'(\psi_2) = \half \sign (S^{\pm}).$
	Thus 
$$
        \mu'(\psi) = 2\mu(\phi) + \half \sign (S^{\pm}).
$$ 
          Since the same is true for $\mu_{CZ}(\psi)$, this proves uniqueness.
\end{proof}
%%%%%%%NOUVEAU
Remark that we have only used the signature property to know the value of the Conley-Zehnder index on the paths $\psi_{2\pm}:\ t\in [0,1]  \mapsto  \
\exp t \pi J_0 S^{\pm}$.
Hence we  have : 
\begin{proposition}\label{compdef}
Let $\psi\in \textrm{SP}(n)$ be a  continuous path of matrices in $\Sp(\R^{2n},\Omega_0)$ linking the matrix $\Id$ to a matrix in $\Sp^\star(\R^{2n},\Omega_0)$
and let  $\widetilde{\psi} : \left[0,2\right] \rightarrow \Sp(\R^{2n},\Omega_0)$ be an extension such that $\widetilde{\psi}$ coincides with $\psi$ on the interval $\left[0,1\right],$ such that
$\widetilde{\psi}(s) \in \Sp^{\star}(\R^{2n},\Omega_0)$ for all $s\geqslant 1$ and  such that the path ends either in $\widetilde{\psi}(2) = -\Id=W^+$ either in
$\widetilde{\psi}(2) = W^- := \operatorname{diag}(2,-1,\ldots,-1,\half,-1,\ldots,-1).$
The Conley-Zehnder index of $\psi$ is equal to the integer given by the degree of the map ${\tilde{\rho}}^2 \circ \tilde{\psi} : \left[0,2\right] \rightarrow \textrm{S}^1:$
	\begin{equation}
		\mu_{\textrm{CZ}}(\psi) := \operatorname{deg}({\tilde{\rho}}^2\circ \widetilde{\psi})
	\end{equation}
	for any continuous map ${\tilde{\rho}}: \Sp(\R^{2n},\Omega_0)\rightarrow S^1$ which coincide with the (complex) determinant $\det_\C$ on $\textrm{U}\left(n\right) = \textrm{O}\left(\R^{2n}\right) \cap \textrm{Sp}\left(\R^{2n},\Omega_0\right)$, such that ${\tilde{\rho}}(W^-)=\pm 1$, and such that
	\[
		\operatorname{deg}\, ({\tilde{\rho}}^2\circ \psi_{2-})= n-1  \quad \textrm{ for }\,  \psi_{2-} : t \in [0,1]\mapsto \exp t \pi J_0 S^-.
	\]		\end{proposition}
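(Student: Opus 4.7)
The plan is to set $\mu'(\psi) := \deg(\tilde\rho^2\circ\widetilde\psi)$ and to show $\mu'=\mu_{CZ}$ by verifying the three axioms of Proposition~\ref{caract}: homotopy invariance, the loop property, and the prescribed values on the paths $\psi_{2\pm}$.

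I would first check that $\mu'$ is well defined. The path $\tilde\rho^2\circ\widetilde\psi$ starts at $\tilde\rho^2(\Id)=1$ and ends either at $\tilde\rho^2(W^+)=\bigl(\det_\C(-\Id)\bigr)^2=1$ or at $\tilde\rho^2(W^-)=(\pm 1)^2=1$ by the second hypothesis, so it is a genuine loop in $S^1$. Two admissible extensions $\widetilde\psi_1,\widetilde\psi_2$ must end at the same $W^\pm$ (that of the connected component of $\Sp^*(\R^{2n},\Omega_0)$ containing $\psi(1)$), and their concatenation on $[1,2]$ is a loop in $\Sp^*(\R^{2n},\Omega_0)$, hence contractible in $\Sp(\R^{2n},\Omega_0)$ by Theorem~\ref{prop:connexite}. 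Postcomposition with the continuous map $\tilde\rho^2$ yields a homotopy of loops in $S^1$, so the degree is independent of the chosen extension.

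Next I would transfer the three properties to $\mu'$. Homotopy invariance is proved verbatim as for $\mu_{CZ}$, replacing $\rho$ by $\tilde\rho$ throughout. For the loop property, if $\phi$ is a loop at $\Id$, Lemma~\ref{lem:prod} identifies $\phi\psi$ up to homotopy with the catenation $\phi\diamond\psi$, and additivity of degree under catenation gives $\mu'(\phi\psi) = \mu'(\psi)+2\deg(\tilde\rho\circ\phi)$. The substantive point is that $\deg(\tilde\rho\circ\phi)$ coincides with the Maslov index $\mu(\phi)=\deg(\rho\circ\phi)$ used in the definition of $\mu_{CZ}$: both $\tilde\rho$ and $\rho$ restrict to $\det_\C$ on $U(n)$, and since $U(n)\hookrightarrow\Sp(\R^{2n},\Omega_0)$ is a deformation retract by the polar decomposition, $\tilde\rho$ and $\rho$ induce the same map on $\pi_1$, and hence the same degree on every loop.

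For the normalisation values, $\psi_{2+}(t)=\exp t\pi J_0=\cos(t\pi)\Id+\sin(t\pi)J_0$ lies in $U(n)$, so $\tilde\rho\bigl(\psi_{2+}(t)\bigr)=e^{int\pi}$ and $\deg(\tilde\rho^2\circ\psi_{2+})=n=\mu_{CZ}(\psi_{2+})$; the value $\mu'(\psi_{2-})=n-1=\mu_{CZ}(\psi_{2-})$ is exactly the third hypothesis. With all three properties in hand, the argument of Proposition~\ref{caract} applies without change: any extension $\widetilde\psi$ is homotopic to a product $\phi\,\psi_{2\pm}$ for a suitable loop $\phi$ at $\Id$, and one concludes $\mu'(\psi)=2\mu(\phi)+\mu'(\psi_{2\pm})=\mu_{CZ}(\psi)$. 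The only step I expect to require genuine care is the identification $\deg(\tilde\rho\circ\phi)=\mu(\phi)$; everything else is bookkeeping with machinery already established earlier in the paper.
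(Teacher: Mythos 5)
Your proposal is correct and follows essentially the same route as the paper: both verify that $\mu'(\psi) := \deg(\tilde\rho^2\circ\widetilde\psi)$ satisfies homotopy invariance, the loop property (by retracting any loop into $U(n)$ where $\tilde\rho$ and $\rho$ agree with $\det_\C$), and the two normalization values $\deg(\tilde\rho^2\circ\psi_{2+})=n$ (by direct computation, since $\psi_{2+}$ is unitary) and $\deg(\tilde\rho^2\circ\psi_{2-})=n-1$ (by hypothesis), then invoke the characterization of Proposition~\ref{caract}. Your explicit check that the hypothesis $\tilde\rho(W^-)=\pm1$ is exactly what makes $\tilde\rho^2\circ\widetilde\psi$ a loop, and that the degree is independent of the chosen extension, is a small amount of bookkeeping the paper leaves implicit but is entirely in the spirit of its argument.
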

\begin{proof}
This is a direct consequence of the fact that the map defined by $ \operatorname{deg}({\tilde{\rho}}^2\circ \widetilde{\psi})$
has the homotopy property, the loop property (since any loop is homotopic to a loop of unitary matrices where $\rho$ and $\det_{\C}$
coincide) and we have added what we need of the signature property to characterize the Conley-Zehnder index. Indeed  $\half \sign S^-=n-1, S^+=\Id_{2n}, \half \sign S^+=n$ and\\
$\exp t \pi J_0S^+=\exp t \pi \left(\begin{array}{cc}
		0 & -\Id_n\\
		\Id_n&0
                \end{array}\right)=\left(\begin{array}{cc}
		\cos \pi t \Id_n& -\sin \pi t\Id_n\\
		\sin \pi t \Id_n& \cos \pi t \Id_n
                \end{array}\right)$ is in $\textrm{U}\left(n\right)$ 
so that
  ${\tilde{\rho}}^2\left(\exp t \pi \left(\begin{array}{cc}
		0 & -\Id_n\\
		\Id_n&0
                \end{array}\right)\right)= e^{2\pi int}$ and
$\operatorname{deg}({\tilde{\rho}}^2\circ \psi_{2+})=n$.
\end{proof}
\begin{cor}
The Conley-Zehnder index of a path $\psi\in \textrm{SP}(n)$ is given by
\begin{equation}\label{defdetunit}
		\mu_{\textrm{CZ}}(\psi) := \operatorname{deg}({{\det}_\C}^2\circ U\circ \widetilde{\psi})
\end{equation}
	where $U:  \Sp(\R^{2n},\Omega_0)\rightarrow \textrm{U}\left(n\right)$ is the projection defined by the polar
	decomposition $U(A=OP)=O=AP^{-1}$ with $P$ the unique symmetric positive definite matrix such that $P^2=A\tr A.$
\end{cor}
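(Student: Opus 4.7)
The plan is to apply Proposition \ref{compdef} directly, with $\tilde\rho := \det_\C \circ U$. The work reduces to checking the four hypotheses required there: continuity, agreement with $\det_\C$ on $\mathrm{U}(n)$, the value $\tilde\rho(W^-) = \pm 1$, and the normalization $\deg(\tilde\rho^{\,2}\circ \psi_{2-}) = n-1$.

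The first two properties are immediate. Continuity of $U$ follows from the polar decomposition theorem (the positive-definite square root $P = (A\tr A)^{1/2}$ depends continuously on $A$), and $\det_\C$ is continuous on $\mathrm{U}(n)$. If $A\in \mathrm{U}(n)$ then $A\tr A = \Id$, so $P=\Id$, $O=A$ and therefore $\tilde\rho(A) = \det_\C A$. For the value on $W^-$: since $W^-$ is diagonal, one computes directly $(W^-)\tr W^- = \operatorname{diag}(4,1,\dots,1,\tfrac14,1,\dots,1)$, so $P = \operatorname{diag}(2,1,\dots,1,\tfrac12,1,\dots,1)$ and $U(W^-) = W^- P^{-1} = \operatorname{diag}(1,-1,\dots,-1,1,-1,\dots,-1)$. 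Under the $\R^{2n}\cong \C^n$ identification this is the $\C$-linear map $(z_1,\dots,z_n)\mapsto (z_1,-z_2,\dots,-z_n)$, hence $\tilde\rho(W^-) = (-1)^{n-1}\in\{\pm1\}$.

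The main computation is the fourth condition, which I would carry out explicitly. Writing $S^- = \left(\begin{smallmatrix} A & B \\ B & A\end{smallmatrix}\right)$ with $A = \operatorname{diag}(0,1,\dots,1)$ and $B = \operatorname{diag}(-\tfrac{\log 2}{\pi},0,\dots,0)$, one finds $J_0 S^- = \left(\begin{smallmatrix} -B & -A \\ A & B\end{smallmatrix}\right)$, which decouples into $n$ blocks acting on the pairs $(x_j,y_j)$. The first block is $\operatorname{diag}(\tfrac{\log 2}{\pi},-\tfrac{\log 2}{\pi})$ and the remaining $n-1$ blocks are the standard rotation generator $\left(\begin{smallmatrix} 0 & -1 \\ 1 & 0 \end{smallmatrix}\right)$. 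Exponentiating, $\psi_{2-}(t)$ restricts on the first pair to $\operatorname{diag}(2^t,2^{-t})$ and on each subsequent pair to the rotation by angle $\pi t$. Taking polar decomposition pair by pair, $U\bigl(\psi_{2-}(t)\bigr)$ is the identity on the first complex coordinate and the rotation $e^{i\pi t}$ on each of the other $n-1$ complex coordinates, so
\[
\tilde\rho^{\,2}\bigl(\psi_{2-}(t)\bigr) \;=\; \bigl(e^{i\pi t (n-1)}\bigr)^2 \;=\; e^{2i\pi (n-1) t},
\]
whose degree on $[0,1]$ is $n-1$.

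The only slightly delicate point is the explicit pair-by-pair diagonalization of $J_0 S^-$ and the verification that the polar decomposition respects this block structure; everything else is a packaging of previously established facts. Once the four hypotheses are verified, Proposition \ref{compdef} yields $\mu_{\mathrm{CZ}}(\psi) = \deg(\det_\C^{\,2}\circ U\circ \widetilde\psi)$, which is the statement of the corollary.
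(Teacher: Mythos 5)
Your proof is correct and follows essentially the same route as the paper: invoke Proposition \ref{compdef} with $\tilde\rho=\det_\C\circ U$, observe the continuity and $\mathrm{U}(n)$-agreement are immediate, and compute $U(\psi_{2-}(t))$ explicitly via the block structure of $J_0S^-$ to get $\deg(\tilde\rho^{2}\circ\psi_{2-})=n-1$. The only small difference is that you also verify $\tilde\rho(W^-)=(-1)^{n-1}$ separately, whereas the paper leaves this implicit in the $\psi_{2-}$ computation (it is the $t=1$ value); both are fine.
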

\begin{proof} The map $\tilde{\rho}:={{\det}_\C}\circ U$ satisfies all the  properties stated in proposition \ref{compdef};
it is indeed continuous, coincides obviously with $\det_{\C}$ on $\textrm{U}\left(n\right)$ and we have that 
  $\exp t \pi \left(\begin{array}{cc}
		0 & -\frac{\log 2}{\pi}\\
		-\frac{\log 2}{\pi}&0
                \end{array}\right) =\left(\begin{array}{cc}
		2^t&  0\\
	            0& 2^{-t} 
                \end{array}\right)$   is a positive symmetic matrix so that
      $ U(\exp t \pi J_0 S^-)=  \left(\begin{array}{cccc}
		1&0&0&0\\
		0&\cos \pi t \Id_{\tiny{n-1}}& 0&-\sin \pi t\Id_{\tiny{n-1}}\\
		0&0&1&0\\
		0&\sin \pi t \Id_{\tiny{n-1}}& 0& \cos \pi t \Id_{\tiny{n-1}}\\
		 \end{array}\right)$; hence
		  ${\det}_{\C}^2\circ U(\exp t \pi J_0 S^-)=e^{2\pi i(n-1)t}$
		  and $\operatorname{deg}({{\det}_\C}^2\circ U\circ \psi_{2-})=n-1$.
		  \end{proof}
		  
Formula \eqref{defdetunit} is the definition of the Conley-Zehnder index used in \cite{DeGosson, HWZ}.
Another formula is obtained using the parametrization of the symplectic group introduced in \cite{refs:RobRaw}:
\begin{cor}
The Conley-Zehnder index of a path $\psi\in \textrm{SP}(n)$ is given by
\begin{equation}\label{newdef}
		\mu_{\textrm{CZ}}(\psi) := \operatorname{deg}(\hat\rho^2\circ \widetilde{\psi})
\end{equation}
	where $\hat\rho:  \Sp(\R^{2n},\Omega_0)\rightarrow S^1$ is the normalized complex determinant
	of the $\C$-linear part of the matrix:
	\begin{equation}
\hat\rho(g)=\frac{{\det}_{\C} \left(\half(g-J_0gJ_0)\right)}{\left\vert{\det}_{\C} \left(\half(g-J_0gJ_0)\right)\right\vert}.	
	\end{equation}
\end{cor}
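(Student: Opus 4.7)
The plan is to apply Proposition~\ref{compdef} with $\tilde\rho := \hat\rho$. This reduces the problem to four items: (a) $\hat\rho$ is well-defined and continuous as a map $\Sp(\R^{2n},\Omega_0)\to S^1$; (b) $\hat\rho|_{U(n)} = \det_\C$; (c) $\hat\rho(W^-) = \pm 1$; and (d) $\deg(\hat\rho^2 \circ \psi_{2-}) = n-1$, where $\psi_{2-}(t) := \exp(t\pi J_0 S^-)$.

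The substantive step, which I would handle first, is (a). The operator $\Phi(g) := \thalf(g - J_0 g J_0)$ commutes with $J_0$ and hence is $\C$-linear; the real content is to show $\det_\C \Phi(g) \ne 0$ for every symplectic $g$. The crucial algebraic observation is that the symplectic identity $g\tr J_0 g = J_0$ rewrites as $(g\tr)^{-1} = -J_0 g J_0$, so that
\[
\Phi(g) = \thalf\bigl(g + (g\tr)^{-1}\bigr) = \thalf\, g\, (g\tr g)^{-1}\bigl(g\tr g + \Id\bigr).
\]
Since $g\tr g$ is symmetric positive definite, $g\tr g + \Id$ is invertible; this forces $\Phi(g)$ to be invertible as a real, hence also as a $\C$-linear, operator. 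Continuity of $\hat\rho$ then follows from continuity of $g \mapsto \Phi(g)$, $A \mapsto \det_\C A$, and $z \mapsto z/|z|$ on $\C^*$.

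For (b), any $g \in U(n)$ satisfies $g J_0 = J_0 g$, hence $J_0 g J_0 = -g$ and $\Phi(g) = g$; since $|\det_\C g| = 1$, $\hat\rho(g) = \det_\C g$. For (c), using the block identity $J_0 \mat J_0 = \left(\begin{array}{cc}-D & C \\ B & -F \end{array}\right)$ applied to $W^-$ (whose off-diagonal blocks vanish and whose diagonal blocks are $F = \operatorname{diag}(2,-1,\ldots,-1)$ and $D = \operatorname{diag}(\half,-1,\ldots,-1)$), one finds $\Phi(W^-)$ corresponds to the real diagonal $\C$-linear matrix $\tfrac{F+D}{2} = \operatorname{diag}(\tfrac{5}{4},-1,\ldots,-1)$, so $\det_\C \Phi(W^-) = \tfrac{5}{4}(-1)^{n-1} \in \R$ and $\hat\rho(W^-) = (-1)^{n-1}$.

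For (d), I would use the symplectic decomposition of $\psi_{2-}(t)$ as the direct sum of $\operatorname{diag}(2^t,2^{-t})$ on the plane spanned by the first and $(n+1)$-st coordinates and of the rotation $\exp(t\pi J_0)$ on the complementary $\R^{2(n-1)}$. The first factor is symmetric, and a short computation gives its $\C$-linear part as the real scalar $\cosh(t\log 2)$; the second factor is already unitary, so $\Phi$ leaves it unchanged and contributes $e^{i\pi(n-1)t}$ to $\det_\C$. Multiplying and normalizing, $\hat\rho(\psi_{2-}(t)) = e^{i\pi(n-1)t}$, so $\hat\rho^2 \circ \psi_{2-}$ is the loop $t\mapsto e^{2\pi i(n-1)t}$, of degree $n-1$. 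The main obstacle throughout is really (a): the identity $(g\tr)^{-1} = -J_0 g J_0$ is the one non-obvious trick, after which everything else reduces to routine block bookkeeping.
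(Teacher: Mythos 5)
Your proof is correct and follows the same overall scaffolding as the paper's: verify the hypotheses of Proposition~\ref{compdef} for $\tilde\rho = \hat\rho$. The one genuinely different step is the proof that $C_g = \thalf(g - J_0 g J_0)$ is invertible. The paper argues via positivity of the compatible metric: it shows $4\Omega_0(C_g v, J_0 C_g v) = 2\Omega_0(v,J_0v) + \Omega_0(gv, J_0 gv) + \Omega_0(gJ_0 v, J_0 g J_0 v) > 0$ for $v \neq 0$, i.e. $\|C_g v\|^2 > 0$. You instead invoke the symplectic identity $(g\tr)^{-1} = -J_0 g J_0$ to rewrite $C_g = \thalf\bigl(g + (g\tr)^{-1}\bigr) = \thalf\, g\, (g\tr g)^{-1}\bigl(g\tr g + \Id\bigr)$, a product of manifestly invertible matrices. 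Both are valid; your route is more algebraic and arguably a bit cleaner once the key identity is noticed, while the paper's positivity estimate is more in the spirit of the compatible-triple geometry used throughout. The remaining checks — $\hat\rho = \det_\C$ on $U(n)$, the computation of $\hat\rho(W^-)$, and $\deg(\hat\rho^2\circ\psi_{2-}) = n-1$ using the symplectic direct-sum decomposition of $\exp(t\pi J_0 S^-)$ into a hyperbolic $2\times 2$ block with $\C$-linear part $\cosh(t\log 2)$ and a unitary rotation block — match the paper's computations (the paper only verifies $\hat\rho(W^-)=\pm 1$ implicitly via $W^- = \psi_{2-}(1)$, whereas you do it directly; both are fine).
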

\begin{proof}
Remark that for any $g \in \Sp(\R^{2n},\Omega_0)$ the element $C_g:=\half(g-J_0gJ_0)$, which clearly defines
a complex linear endomorphism of $\C^{n}$ since it commutes with $J_0$, is always invertible.
Indeed for any non-zero $v \in V$ 
\[
 4\Omega_0 (C_gv, J_0C_gv) \, = \, 2\Omega_0 (v,J_0v) + \Omega_0 (gv, J_0gv) +
\Omega_0 (gJ_0v, J_0gJ_0v) > 0.
\]
If $g\in \textrm{U}\left(n\right)$, then $C_g=g$ so that $\hat\rho(g)={\det}_{\C}(g)$ hence
$\hat\rho$ is a continuous map which coincide with $\det_{\C}$ on $\textrm{U}\left(n\right)$.
Furthermore\\
  $\half \left(\left(\begin{array}{cc}
		2^t&  0\\
	            0& 2^{-t} 
                \end{array}\right) - J_0 \left(\begin{array}{cc}
		2^t&  0\\
	            0& 2^{-t} 
                \end{array}\right)J_0\right)=\half \left(\begin{array}{cc}
		2^t+2^{-t}&  0\\
	            0& 2^t2^{-t} 
                \end{array}\right)$ hence its complex determinant is equal to $2^t+2^{-t}$ and its normalized complex determinant is equal to $1$ 
                so that $\hat\rho(\exp t \pi J_0 S^-)=e^{\pi i (n-1)t}$ and $\operatorname{deg}(\hat\rho^2\circ \psi_{2-})=n-1.$
\end{proof}
%%%%%%%%%%%%%%%%%%%%%%%%%%%%%%%%%%%%
%%%%%%%%%%%%%%%%%%%%%%%%%%%%%%%%%%%%
\section{Example in dimension $2:$ the index of the path $ \exp t J_0S $}\label{sectionexptJS}

\begin{proposition}
	We consider the path of symplectic matrices in dimension $2$ defined by
	\begin{equation*}
		\psi: [0,T]\rightarrow \Sp(\R^2,\Omega_0) :~ t\mapsto \exp (tJ_0 S)
	\end{equation*}
	where $S$ is a symmetric non degenerate matrix and $\exp (TJ_0 S)\ne \Id.$
	We have
	\begin{equation}
		\mu_{CZ}(\psi)=
		\left\{\begin{array}{ll}
			\left(\half +\left\lfloor \frac{\sqrt{a_1 a_2}T}{2\pi}\right\rfloor\right) \sign S &\textrm{ if } \sign(S)\ne0\\ 
			0&\textrm{ if } \sign(S)=0
		\end{array}\right.
	\end{equation}
	where $a_1$ and $a_2$ are the eigenvalues of $S,$ where $\sign S$ is the signature of $S$ and where $\left\lfloor b\right\rfloor$ denote the greatest integer $\le b.$
\end{proposition}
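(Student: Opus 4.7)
The strategy is to split on $\sign S$ and reduce each case to a known property of $\mu_{CZ}$. First I use naturality to diagonalize $S$, handle the hyperbolic case via the zero property, then in the elliptic case compute $\rho\circ\psi$ directly and build an explicit extension whose winding I can read off.

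Since $S$ is symmetric it is diagonalized by some $P\in SO(2)\subset SL(2,\R)=\Sp(\R^2,\Omega_0)$; as $SO(2)=U(1)$ commutes with $J_0$, one has $P\exp(tJ_0S)P^{-1}=\exp(tJ_0PSP^{-1})$, and the naturality property in Proposition~\ref{proprietescz} lets us assume $S=\operatorname{diag}(a_1,a_2)$ with $a_1a_2\ne 0$. Using $(J_0S)^2=-a_1a_2\Id$: when $\sign S=0$, i.e.\ $a_1a_2<0$, setting $\mu=\sqrt{-a_1a_2}$ gives $\psi(t)=\cosh(\mu t)\Id+\mu^{-1}\sinh(\mu t)J_0S$, whose eigenvalues $e^{\pm\mu t}$ lie off the unit circle for every $t>0$; the zero property then yields $\mu_{CZ}(\psi)=0$.

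For $\sign S=\pm 2$ (so $a_1a_2>0$) put $\omega:=\sqrt{a_1a_2}>0$, so $\psi(t)=\cos(\omega t)\Id+\omega^{-1}\sin(\omega t)J_0S$ has eigenvalues $e^{\pm i\omega t}$. Solving $J_0Sv=i\omega v$ with $v_2=1$ forces $v_1=ia_2/\omega$; splitting $v=v'-iw'$ into real and imaginary parts gives $v'=(0,1)$, $w'=(-a_2/\omega,0)$, so $\Omega_0(v',w')=a_2/\omega$ has sign $\sign(a_2)=\sign(S)/2$. The dimension-$2$ construction of $\rho$ therefore yields
\begin{equation*}
\rho(\psi(t))=e^{i\omega t\sign(S)/2}.
\end{equation*}
To compute $\mu_{CZ}$ I extend $\psi$ explicitly. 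A short calculation gives $\det(\Id-\psi(T))=2(1-\cos\omega T)$; the hypothesis $\psi(T)\ne\Id$ is equivalent to $\omega T\notin 2\pi\Z$, in which case the determinant is strictly positive, placing $\psi(T)\in\Sp^+$ so that the extension must terminate at $W^+=-\Id$. Writing $N:=\lfloor\omega T/(2\pi)\rfloor$ and $T^*:=(2N+1)\pi/\omega$ one has $\exp(T^*J_0S)=-\Id$; define $\tilde\psi(s):=\exp(t(s)J_0S)$ with $t(s)=sT$ on $[0,1]$ and $t(s)=T+(s-1)(T^*-T)$ on $[1,2]$. On the second interval $\omega t(s)$ varies linearly between $\omega T$ and $(2N+1)\pi$, both inside the open interval $(2N\pi,2(N+1)\pi)$, so $\tilde\psi(s)\in\Sp^{\star}$ throughout. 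The continuous lift of $\rho^2\circ\tilde\psi$ to $\R$ then travels from $0$ at $s=0$ to $\omega T^*\sign S=(2N+1)\pi\sign S$ at $s=2$, and dividing by $2\pi$ produces
\begin{equation*}
\mu_{CZ}(\psi)=\tfrac{2N+1}{2}\sign S=\bigl(\half+\lfloor\omega T/(2\pi)\rfloor\bigr)\sign S,
\end{equation*}
as claimed.

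The main obstacle is the Krein-positivity computation that selects the correct eigenvalue among $e^{\pm i\omega t}$: a sign error in tracking $\Omega_0(v',w')$ would flip the entire answer, so that step deserves the most care. The remaining ingredients—the naturality reduction, the zero property for the hyperbolic case, and the explicit degree computation on the extension—are then essentially bookkeeping.
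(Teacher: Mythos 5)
Your proof is correct and follows essentially the same strategy as the paper: diagonalize $S$ via the naturality property, dispose of the hyperbolic case with the zero property, determine the Krein-positive eigenvalue to get $\rho(\psi(t)) = e^{i\omega t\,\sign(S)/2}$, and then read off the degree of $\rho^2$ along an explicit extension continuing the rotation to $-\Id$. Your treatment of the extension (explicitly parametrizing $t(s)$ and lifting $\rho^2\circ\tilde\psi$ to $\mathbb{R}$) is slightly more detailed than the paper's homotopy-to-$\widetilde{\psi'}$ argument, but the content is the same.
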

\begin{proof}
	Since $S$ is symmetric, we diagonalize it in the orthogonal group and we  get a symplectic basis of $\R^2$ in which the matrices read:
	\begin{equation*}
		S=
		\left(\begin{array}{cc}
			a_1 &0 \\
			0 & a_2
		\end{array}\right)
		\qquad\qquad J_0S=
		\left(\begin{array}{cc}
			0&-a_2\\
			a_1 &0
		\end{array}\right)
	\end{equation*}
	thus, as in the former section,
	\begin{equation*}
		\psi(t) =
		\left\{\begin{array}{l}
			\left(\begin{array}{cc}
				\cos\sqrt{a_1a_2}{t} & -\sqrt\frac{a_2}{a_1}\sin\sqrt{a_1a_2}{t} \\
				\sqrt\frac{a_1}{a_2}\sin\sqrt{a_1a_2}{t} & \cos\sqrt{a_1a_2}{t}
			\end{array}\right)
			\textrm{ if } a_1>0 \textrm{ and } a_2>0\\
			\left(\begin{array}{cc}
				\cos\sqrt{a_1a_2}{t} & \sqrt\frac{a_2}{a_1}\sin\sqrt{a_1a_2}{t} \\
				-\sqrt\frac{a_1}{a_2}\sin\sqrt{a_1a_2}{t} & \cos\sqrt{a_1a_2}{t}
			\end{array}\right)
			\textrm{ if } a_1<0 \textrm{ and } a_2<0\\
			\left(\begin{array}{cc}
				\cosh\sqrt{-a_1a_2}{t} & \sqrt\frac{-a_2}{a_1}\sinh\sqrt{-a_1a_2}{t} \\
				\sqrt\frac{-a_1}{a_2}\sinh\sqrt{-a_1a_2}{t} & \cosh\sqrt{-a_1a_2}{t}
			\end{array}\right) \\
			\qquad\qquad\qquad\qquad\qquad\qquad\qquad\qquad\qquad~\textrm{ if } a_1>0 \textrm{ and } a_2<0\\
			\left(\begin{array}{cc}
				\cosh\sqrt{-a_1a_2}{t} &- \sqrt\frac{-a_2}{a_1}\sinh\sqrt{-a_1a_2}{t} \\
				-\sqrt\frac{-a_1}{a_2}\sinh\sqrt{-a_1a_2}{t} & \cosh\sqrt{-a_1a_2}{t}
			\end{array}\right) \\
			\qquad\qquad\qquad\qquad\qquad\qquad\qquad\qquad\qquad~~~\textrm{ if }a_1<0 \textrm{ et } a_2>0\\
		\end{array}\right.	
	\end{equation*}
	In the third and fourth case, $\psi(t)$ has no eigenvalues on the circle and thus
	\begin{equation*}
		\mu_{CZ}(\psi)=0 \textrm{ when } \sign(S)=0.
	\end{equation*}
	In the first two cases, to compute the Conley-Zehnder index of the path $\psi,$ we extend $\psi$ to $\widetilde{\psi}$ and we compute the degree of $\rho^2\circ \widetilde\psi.$
	Note that the eigenvalues of $\psi(t)$ are equal to $\cos\sqrt{a_1a_2}{t} \pm i\sin\sqrt{a_1a_2}{t} ,$ and that an eigenvector in $\C^2$ associated to
	$\cos\sqrt{a_1a_2}{t} + i\sin\sqrt{a_1a_2}{t}$ is given by $z=(\sqrt{\frac{a_2}{a_1}},-i)$ in the first case and by $z=\bigl(\sqrt{\frac{a_2}{a_1}},i\bigr)$ in the second case.
	Since $\Omega_0\left((\sqrt{\frac{a_2}{a_1}},0),(0,1)\right)>0$, $ \, \cos\sqrt{a_1a_2}{t} + i\sin\sqrt{a_1a_2}{t} $ is the eigenvalue of the first kind in the first case and 
	$\cos\sqrt{a_1a_2}{t} - i\sin\sqrt{a_1a_2}{t}$ is the eigenvalue of the first kind in the second case.
	We extend $\psi$ to $\widetilde\psi$ going (or going back) to $-\Id$ without going through $\Id.$
	Since the period in $t$ to go back to the identity with $\psi$ is $\frac{2\pi}{\sqrt{a_1a_2}},$ and since $T=\frac{2\pi}{\sqrt{a_1a_2}}\left(\left\lfloor \frac{\sqrt{a_1a_2}T}{2\pi} \right\rfloor+b\right)$
	with $0<b<1$, the extension $\widetilde\psi$ is homotopic to the path
	\begin{equation*}
		\widetilde{\psi'}:\left[0, \frac{2\pi}{\sqrt{a_1a_2}}\left(\left\lfloor \frac{\sqrt{a_1a_2}T}{2\pi} \right\rfloor+\half\right)\right]\rightarrow \Sp(\R^{2n},\Omega_0)
	\end{equation*}
	\begin{equation*}
		t\mapsto \widetilde{\psi'}(t):=
		\left(\begin{array}{cc}
			\cos\sqrt{a_1a_2}{t} &
				\pm
				\sqrt\frac{a_2}{a_1}\sin\sqrt{a_1a_2}{t}\\
			\pm
			\sqrt\frac{a_1}{a_2}\sin\sqrt{a_1a_2}{t} & \cos\sqrt{a_1a_2}{t}
		\end{array}\right).
	\end{equation*}
	Since $\rho \bigl(\psi'(t)\bigr)=e^{
	\pm
	i\sqrt{a_1a_2}{t} }$, $\rho^2 \bigl(\psi'(t)\bigr)=e^{{
	\pm
	2i\sqrt{a_1a_2}{t} }}$ and the degree of the map $\rho^2\circ \widetilde{\psi'}$ is
	$\pm	
	2\Bigl(\left\lfloor \frac{\sqrt{a_1a_2}T}{2\pi} \right\rfloor+\half \Bigr)$ we have 
	\begin{equation*}
		\mu_{\textrm{CZ}}(\psi) = \biggl(\left\lfloor \frac{\sqrt{a_1a_2}T}{2\pi} \right\rfloor+\half \biggr)\sign(S).
	\end{equation*}
\end{proof}

%%%%%%%%%%%%%%%%%%%%%%%%%%%%%%%%%%%%
%%%%%%%%%%%%%%%%%%%%%%%%%%%%%%%%%%%%
\section{Generalized definition of the  Conley-Zehnder index}
In \cite{RobbinSalamon}, Robbin and Salamon define a Maslov-type index for a continuous path of Lagrangians in a symplectic vector space $(\R^{2n},\Omega)$ and they give a definition of a generalization of the Conley-Zehnder index
defined for any  path of symplectic matrices.

%%%%%%%%%%%%%%%%
\subsection{The space of Lagrangians in $(\R^{2n},\Omega)$}
A \emph{Lagrangian} in $(\R^{2n},\Omega)$ is a subspace $V$ of $\R^{2n}$ of dimension $n$ such that $\left. \Omega \right\vert_{V \times V} = 0 .$
Given any Lagrangian $V$ in $\R^{2n} ,$ there exists a Lagrangian $W$ (not unique!) such that $V \oplus W = \R^{2n} .$
With the choice of such a supplementary $W$ any Lagrangian $V'$ in a neighborhood of $V$ (any Lagrangian supplementary to $W$) can be identified to a linear map $\alpha : V \rightarrow W$ through
$V' = \{ v + \alpha (v) \vert v \in V \} ,$ with $\alpha$ such that $\Omega\bigl(\alpha(v),w\bigr) + \Omega\bigl(v,\alpha(w)\bigr) = 0 \ \forall v,w \in V$.
Hence it can be identified to a symmetric bilinear form $\underline{\alpha} : V \times V \rightarrow \R : (v,v') \mapsto \Omega\bigl(v,\alpha(v')\bigr) .$
In particular the tangent space at a point $V$ to the space $\mathcal{L}_n$ of Lagrangians in $(\R^{2n},\Omega)$ can be identified to the space of symmetric bilinear forms on $V .$

If $\Lambda : [a,b] \rightarrow \mathcal{L}_n : t \mapsto \Lambda_t$ is a smooth curve of Lagrangian subspaces, we define $Q(\Lambda_{t_0}, \dot{\Lambda}_{t_0})$
to be the symmetric bilinear form on $\Lambda_{t_0}$ defined by
\begin{equation}
	Q(\Lambda_{t_0}, \dot{\Lambda}_{t_0})(v,v') = \left.\frac{d}{dt} \underline{\alpha}_t (v,v')\right\vert_{t_0} = \left.\frac{d}{dt} \Omega \bigl(v,\alpha_t(v')\bigr) \right\vert_{t_0}
\end{equation}
where $\alpha_t : \Lambda_{t_0} \rightarrow W$ is the map corresponding to $\Lambda_t$ for a decomposition $\R^{2n} = \Lambda_{t_0} \oplus W$ with $W$ Lagrangian.
\begin{proposition}
	The symmetric bilinear form $Q(\Lambda_{t_0}, \dot{\Lambda}_{t_0}) : \Lambda_{t_0} \times \Lambda_{t_0} \rightarrow \R$ is independent of the choice of the supplementary Lagrangian $W$ to $\Lambda_{t_0} .$
\end{proposition}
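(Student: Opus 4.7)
The plan is to compare the two symmetric forms obtained from two different choices of supplementary Lagrangian and show that their difference vanishes because $\Lambda_{t_0}$ is itself Lagrangian.

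Fix two Lagrangians $W_1$ and $W_2$ supplementary to $\Lambda_{t_0}$. For $t$ close to $t_0$, the Lagrangian $\Lambda_t$ is still supplementary to both $W_1$ and $W_2$, so it may be described as the graph of a linear map $\alpha_t^{(i)}:\Lambda_{t_0}\to W_i$ for $i=1,2$, with $\alpha_{t_0}^{(i)}=0$. Let $Q_i(v,w):=\Omega\bigl(v,\dot\alpha^{(i)}(w)\bigr)$ where $\dot\alpha^{(i)}:=\tfrac{d}{dt}\alpha_t^{(i)}\big|_{t_0}$; these are the two candidate values of $Q(\Lambda_{t_0},\dot\Lambda_{t_0})$.

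The key step is to relate the two descriptions. Given $w\in\Lambda_{t_0}$, the vector $w+\alpha_t^{(1)}(w)$ lies in $\Lambda_t$, so, using the decomposition $\R^{2n}=\Lambda_{t_0}\oplus W_2$, there exists a smooth curve $w'(t)\in\Lambda_{t_0}$ with $w'(t_0)=w$ such that
\begin{equation*}
w+\alpha_t^{(1)}(w)=w'(t)+\alpha_t^{(2)}\bigl(w'(t)\bigr).
\end{equation*}
Rearranging gives $w'(t)-w=\alpha_t^{(1)}(w)-\alpha_t^{(2)}\bigl(w'(t)\bigr)$. Differentiating at $t=t_0$, using $\alpha_{t_0}^{(i)}=0$ and $w'(t_0)=w$, yields
\begin{equation*}
\dot\alpha^{(1)}(w)=\dot{w}'+\dot\alpha^{(2)}(w),\qquad\text{with }\dot w':=\tfrac{d}{dt}w'(t)\big|_{t_0}\in\Lambda_{t_0}.
\end{equation*}

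Now for any $v\in\Lambda_{t_0}$, apply $\Omega(v,\cdot)$:
\begin{equation*}
Q_1(v,w)-Q_2(v,w)=\Omega\bigl(v,\dot\alpha^{(1)}(w)-\dot\alpha^{(2)}(w)\bigr)=\Omega(v,\dot w').
\end{equation*}
Since both $v$ and $\dot w'$ belong to $\Lambda_{t_0}$ and $\Lambda_{t_0}$ is Lagrangian, $\Omega(v,\dot w')=0$, so $Q_1=Q_2$.

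No real obstacle: the only care needed is to differentiate the implicit relation defining $w'(t)$ at $t_0$ and to observe that $\dot w'$ is tangent to $\Lambda_{t_0}$, which is Lagrangian. Symmetry of $Q_i$ has already been recorded before the statement, so we do not need to reprove it here.
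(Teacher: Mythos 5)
Your proof is correct and takes essentially the same approach as the paper's. The paper computes the explicit transformation law $\alpha'_t = (\Id+\beta)\circ\alpha_t\circ(\Id-\beta\alpha_t)^{-1}$ after writing $W_2$ as the graph of a map $\beta\colon W_1\to\Lambda_{t_0}$, whereas you implicitly differentiate the matching relation defining $w'(t)$; both routes use $\alpha_{t_0}=0$ and reduce to the same final observation, that the discrepancy between the two derivatives lies in $\Lambda_{t_0}$ and is therefore annihilated by $\Omega(v,\cdot)$ for $v\in\Lambda_{t_0}$.
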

\begin{proof}
	Indeed, if $\R^{2n} = \Lambda_{t_0} \oplus W = \Lambda_{t_0} \oplus W'$ then
	\begin{equation*}
		W' = \left\{ w + \beta(w) \vert w \in W \right\}
	\end{equation*}
	where $\beta : W \rightarrow \Lambda_{t_0}$ is a linear map such that $\Omega\bigl(\beta(w),w'\big) + \Omega\bigl(w,\beta(w')\bigr) = 0 \ \forall w,w' \in W .$
	If $\alpha_t : \Lambda_{t_0} \rightarrow W$ is the linear map defining $\Lambda_t$
	\begin{equation*}
		\Lambda_t = \left\{ v + \alpha_t(v) \vert v \in \Lambda_{t_0} \right\} = \left\{ \bigl(v - \beta \alpha_t (v)\bigr) + \bigl(\alpha_t (v) + \beta \alpha_t (v) \bigr) \vert v \in \Lambda_{t_0} \right\}
	\end{equation*}
	so that the linear map $\alpha'_t : \Lambda_{t_0} \rightarrow W'$ defining $\Lambda_t$ is given by
	\begin{eqnarray*}
		\alpha'_t \bigl(v - \beta \alpha_t (v) \bigr) = \alpha_t (v) + \beta \alpha_t (v) \ \textrm{or}\\
		\alpha'_t = (\Id + \beta) \circ \alpha_t \circ (\Id - \beta \alpha_t)^{-1}.
	\end{eqnarray*}
	Since $\alpha_{t_0} = 0, \left. \frac{d}{dt} \alpha'_t \right\vert_{t_0} = (\Id + \beta)\circ \left. \frac{d}{dt} \alpha_t \right\vert_{t_0}$ so that
	\begin{eqnarray*}
		\left. \frac{d}{dt} \Omega \bigl(v,\alpha'_t(v')\bigr) \right\vert_{t_0} &=& \left. \frac{d}{dt} \Omega \bigl(v, \alpha_t(v') + \beta \alpha_t(v')\bigr) \right\vert_{t_0} \\
		&=& \left. \frac{d}{dt} \Omega \bigl(v, \alpha_t (v')\bigr) \right\vert_{t_0}.
	\end{eqnarray*}
\end{proof}
\begin{lemma}\label{lem:naturalitedeQ}
	If $\psi \in \Sp(\R^{2n},\Omega_0)$ then
	\begin{equation*}
		Q(\psi \Lambda_{t_{0}}, \psi \dot{\Lambda}_{t_{0}})(\psi v, \psi v') = Q(\Lambda_{t_{0}}, \dot{\Lambda}_{t_{0}})(v,v') \quad \forall v,v' \in \Lambda_{t_{0}} .
	\end{equation*}
\end{lemma}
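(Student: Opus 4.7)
The plan is to prove this by direct computation from the definition, exploiting the previous proposition to make a convenient choice of Lagrangian complement.

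First I would fix any Lagrangian complement $W$ to $\Lambda_{t_0}$, so that $\R^{2n} = \Lambda_{t_0} \oplus W$, and represent the curve $\Lambda_t$ near $t_0$ as the graph of a linear map $\alpha_t : \Lambda_{t_0} \to W$ with $\alpha_{t_0} = 0$. Since $\psi \in \Sp(\R^{2n},\Omega_0)$ sends Lagrangians to Lagrangians, $\psi W$ is a Lagrangian complement to $\psi \Lambda_{t_0}$, and by the previous proposition I am free to use $\psi W$ to compute $Q(\psi \Lambda_{t_0}, \psi \dot \Lambda_{t_0})$.

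Next I would identify the transported generating map. Applying $\psi$ to $\Lambda_t = \{v + \alpha_t(v) \mid v \in \Lambda_{t_0}\}$ gives
\begin{equation*}
\psi \Lambda_t = \{\psi v + \psi \alpha_t(v) \mid v \in \Lambda_{t_0}\} = \{\psi v + \alpha'_t(\psi v) \mid v \in \Lambda_{t_0}\},
\end{equation*}
so the map $\alpha'_t : \psi \Lambda_{t_0} \to \psi W$ encoding $\psi \Lambda_t$ is simply $\alpha'_t = \psi \circ \alpha_t \circ \psi^{-1}$. In particular $\alpha'_{t_0} = 0$, as required.

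Then I would plug this into the definition of $Q$ and use the symplecticity of $\psi$:
\begin{equation*}
Q(\psi \Lambda_{t_0}, \psi \dot\Lambda_{t_0})(\psi v, \psi v') = \left.\frac{d}{dt}\right|_{t_0} \Omega\bigl(\psi v, \alpha'_t(\psi v')\bigr) = \left.\frac{d}{dt}\right|_{t_0} \Omega\bigl(\psi v, \psi \alpha_t(v')\bigr).
\end{equation*}
Since $\Omega(\psi u, \psi w) = \Omega(u,w)$ for all $u,w$, the right-hand side equals $\frac{d}{dt}|_{t_0} \Omega(v, \alpha_t(v')) = Q(\Lambda_{t_0}, \dot\Lambda_{t_0})(v,v')$, which is the desired identity. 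There is no real obstacle here; the only subtlety is the freedom in the choice of complement, and that has already been settled by the independence statement proved just before.
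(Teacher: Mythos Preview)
Your proof is correct and follows essentially the same approach as the paper: choose a Lagrangian complement $W$ to $\Lambda_{t_0}$, use $\psi W$ as complement to $\psi\Lambda_{t_0}$, identify the transported map as $\alpha'_t = \psi\,\alpha_t\,\psi^{-1}$, and conclude by the symplecticity of $\psi$. The paper's argument is line-for-line the same, with the independence-of-complement proposition (which you invoke explicitly) used implicitly to justify computing with $\psi W$.
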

\begin{proof}
	Write $\R^{2n} = \Lambda_{t_{0}} \oplus W$ with $W$ Lagrangian and let $\alpha_t : \Lambda_{t_{0}} \rightarrow W$ be the linear map corresponding to $\Lambda_t$
	(i.e $\Lambda_t = \{ v+ \alpha_t(v) \vert v \in \Lambda_{t_{0}}\}$).
	
	Similarly, write $\R^{2n} = \psi\Lambda_{t_{0}} \oplus \psi W$ and $\psi W$ is Lagrangian since $\psi \in \Sp(\R^{2n},\Omega_0)$.
	The linear map $\alpha'_t : \psi\Lambda_{t_{0}} \rightarrow \psi W$ corresponding to $\psi \Lambda_t$ is given by	$\alpha'_t = \psi \alpha_t \psi^{-1}$
	since $\psi\Lambda_t = \{\psi v + \psi \alpha_t v \vert v \in \Lambda_{t_{0}}\}$.
	
	Hence
	\begin{eqnarray*}
		Q(\psi\Lambda_{t_{0}}, \psi \dot{\Lambda}_{t_{0}})(\psi v, \psi v') & = & \left.\frac{d}{dt} \Omega_0 (\psi v, \alpha'_t \psi v') \right\vert_{t_{0}} \\
		& = & \left. \frac{d}{dt}\Omega_0 (\psi v, \psi\alpha_t  v') \right\vert_{t_{0}} \\
		& = & \left.\frac{d}{dt} \Omega_0(v, \alpha_t v')\right\vert_{t_{0}} \\
		& = & Q(\Lambda_{t_{0}}, \dot{\Lambda}_{t_{0}})(v,v') .
	\end{eqnarray*}
\end{proof}

%%%%%%%%%%%%%%%
\subsection{The Robbin-Salamon index for a path of Lagrangians}
This index defined by Robbin and Salamon is invariant under homotopy with fixed endpoints and is additive for catenation of paths.
The definition depends on the choice of a reference Lagrangian $V \subset (\R^{2n},\Omega)$ and goes as follows.

Consider a smooth path of Lagrangians $\Lambda : [a,b] \rightarrow \mathcal{L}_n$.
A \emph{crossing} for $\Lambda$ is a number $t \in [a,b]$ for which $\dim \Lambda_t \cap V \neq 0 .$
At each crossing time $t\in [a,b]$ one defines the \emph{crossing form}
\begin{equation}
	\Gamma (\Lambda , V , t) = \left. Q \bigl( \Lambda_t , \dot{\Lambda}_t \bigr) \right\vert_{\Lambda_t \cap V} .
\end{equation}
A crossing $t$ is called \emph{regular} if the crossing form $\Gamma (\Lambda , V , t)$ is nondegenerate.
In that case $\Lambda_s \cap V = \{0\}$ for $s\neq t$ in a neighborhood of $t.$
\begin{definition}[\cite{RobbinSalamon}]
	For a curve $\Lambda : [a,b] \rightarrow \mathcal{L}_n$ with only regular crossings the \emph{Robbin-Salamon index} is defined as
	\begin{equation}\label{Maslov}
		\mu_{RS}(\Lambda , V) = \half \sign \Gamma(\Lambda, V, a) + \sum_{{\stackrel{a<t<b}{\mbox{\tiny $t$ crossing}}}} \sign \Gamma(\Lambda, V, t) + \half \sign \Gamma(\Lambda, V, b) .
	\end{equation}
\end{definition}
Robbin and Salamon show (Lemmas $2.1$ and $2.2$  in \cite{RobbinSalamon}) that two paths with only regular crossings which are homotopic with fixed endpoints have the same Robbin-Salamon index
and that every continuous path of Lagrangians is homotopic with fixed endpoints to one having only regular crossings.
These two properties allow to define the Robbin-Salamon index for every continuous path of Lagrangians and this index is clearly invariant under homotopies with fixed endpoints.
It depends on the choice of the reference Lagrangian $V$.
Robbin and Salamon show (\cite{RobbinSalamon}, Theorem $2.3$):
\begin{theorem}[\cite{RobbinSalamon}]\label{proplag}
	The index $\mu_{RS}$ has the following properties:
	\begin{enumerate}
		\item(Naturality) For $\psi \in \Sp(\R^{2n},\Omega) ~~$
			$\mu_{RS}(\psi \Lambda,\psi V ) = \mu_{RS}(\Lambda, V )$.
		\item(Catenation) For $a < c < b,~
			\mu_{RS}(\Lambda, V) = \mu_{RS}(\Lambda_{\vert_{[a,c]}}, V ) + \mu_{RS}(\Lambda_{\vert_{[c,b]}}, V )$.
		\item(Product) If $n'+n''= n$, identify $\mathcal{L}(n')\times \mathcal{L}(n'')$ as a submanifold of $\mathcal{L}(n)$ in the obvious way.
			Then $\mu_{RS}(\Lambda'\oplus\Lambda'' , V'\oplus V'') = \mu_{RS}(\Lambda', V') + \mu_{RS}(\Lambda'', V'')$.
		\item(Localization) If $V = R^n \times \{0\}$ and $\Lambda(t) = \Gr(A(t))$ where $A(t)$ is a path of symmetric matrices, then the Maslov index
			of  $\Lambda$ is given by $\mu_{RS}(\Lambda, V ) = \half \sign A(b) - \half \sign A(a)$.
		\item(Homotopy) Two paths $\Lambda_0, \Lambda_1 : [a, b] \rightarrow \mathcal{L}(n)$ with $\Lambda_0(a) =\Lambda_1(a)$ and $\Lambda_0(b) = \Lambda_1(b)$ are homotopic with fixed endpoints if and only if they
			have the same Maslov index.
		\item(Zero) Every path $\Lambda : [a, b] \rightarrow \Sigma_k(V )$, with $\Sigma_k(V )=\{\,W\in \mathcal{L}(n)\,\vert\, \dim W\cap V=k\,\}$, has Maslov index $\mu_{RS}(\Lambda, V ) = 0$.
	\end{enumerate}
\end{theorem}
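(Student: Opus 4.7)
The plan is to verify each of the six properties by reducing to paths with only regular crossings, which is legitimate by the two Robbin-Salamon lemmas cited just above the theorem: every continuous path is homotopic to one with only regular crossings, and on such paths the index depends only on the homotopy class with fixed endpoints. For such paths the index is given by the explicit formula \eqref{Maslov}, and we can exploit this closed form directly. \emph{Naturality} is immediate from Lemma \ref{lem:naturalitedeQ}: $\psi$ identifies $\Lambda(t)\cap V$ with $\psi\Lambda(t)\cap \psi V$, so crossings match, and the crossing form pulls back under $\psi$ to a linearly isomorphic form with the same signature. \emph{Catenation} is built into the definition: the sum in \eqref{Maslov} splits as $[a,b]=[a,c]\cup[c,b]$, with the interior point $c$ contributing $\half\sign\Gamma$ as an endpoint to each subsum (whether or not $c$ is a crossing). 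The \emph{product} property follows because $\Lambda(t)\cap V = (\Lambda'(t)\cap V')\oplus(\Lambda''(t)\cap V'')$ is an $\Omega$-orthogonal direct sum and the crossing form decomposes block-diagonally, whose signature is additive.

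For \emph{localization}, I would use the Lagrangian complement $W=\{0\}\times\R^n$, which is transverse to every graph $\Gr\bigl(A(t)\bigr)$. A direct computation shows that the map $\alpha_s: \Lambda_t\to W$ defining $\Lambda_s$ is given by $\alpha_s\bigl(x,A(t)x\bigr) = \bigl(0,(A(s)-A(t))x\bigr)$, so under the identification $\Lambda_t\cong\R^n$ via the first projection, one finds $Q(\Lambda_t,\dot\Lambda_t) = \dot A(t)$. Hence the crossing form at a regular crossing $t$ is $\dot A(t)|_{\Ker A(t)}$. A standard perturbation-of-eigenvalues argument (eigenvalues of $A(s)$ through $0$ move with velocity given by $\dot A(t)|_{\Ker A(t)}$) shows $\sign A(\cdot)$ jumps by $2\sign\dot A(t)|_{\Ker A(t)}$ at an interior crossing, by $\sign\dot A(a)|_{\Ker A(a)}$ as one leaves $a$, and by $\sign\dot A(b)|_{\Ker A(b)}$ as one approaches $b$. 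Telescoping these jumps gives $\sign A(b)-\sign A(a) = 2\,\mu_{RS}(\Lambda,V)$, which is the desired formula.

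For the \emph{zero} property, I would first prove the identity $Q(\Lambda_{t_0},\dot\Lambda_{t_0})(v,v') = \Omega\bigl(v,\dot v'(t_0)\bigr)$ valid for any smooth section $v'(t)\in\Lambda(t)$ with $v'(t_0)=v'$ (the right-hand side is well-defined modulo $\Omega(v,\Lambda_{t_0})=0$ since $\Lambda_{t_0}$ is Lagrangian). Then if $\dim\Lambda(t)\cap V$ is constant, one can choose $v'(t)\in\Lambda(t)\cap V$ smooth, so $\dot v'(t_0)\in V$, and since $v\in V$ with $V$ Lagrangian, $\Omega(v,\dot v'(t_0))=0$. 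Hence $\Gamma(\Lambda,V,t)\equiv 0$ and $\mu_{RS}(\Lambda,V)=0$. Finally, the \emph{homotopy} property: the ``only if'' direction is the very invariance used to extend $\mu_{RS}$ from regular-crossing paths to all paths. For ``if'', catenation yields $\mu_{RS}(\Lambda_0\diamond\overline{\Lambda_1})=\mu_{RS}(\Lambda_0)-\mu_{RS}(\Lambda_1)=0$, so one must show that a null-index loop in $\mathcal{L}_n$ based at $\Lambda_0(a)$ is contractible.

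The main obstacle is this last step, which rests on the classical topology of the Lagrangian Grassmannian: one needs to know that $\pi_1(\mathcal{L}_n)\cong\Z$ and that $\mu_{RS}$ restricted to loops realizes this isomorphism. Either route---via the universal cover of $\mathcal{L}_n$ or via the identification $\mathcal{L}_n\cong U(n)/O(n)$ and the squared determinant map---requires independent development; it is carried out in detail by Robbin and Salamon \cite{RobbinSalamon}. Modulo that input, the other five properties reduce to bookkeeping with the crossing form together with the naturality Lemma \ref{lem:naturalitedeQ}.
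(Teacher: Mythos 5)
The paper does not actually prove this theorem; it simply cites Robbin and Salamon (\cite{RobbinSalamon}, Theorem 2.3) and uses the result as a black box. So there is no proof in the paper to compare against, and you are supplying substance where the text offers none. That said, your outline faithfully tracks the route Robbin and Salamon take, and for naturality, catenation, product, and localization your reductions are essentially right: naturality via Lemma \ref{lem:naturalitedeQ}, catenation by splitting the sum in \eqref{Maslov} with the interior point $c$ contributing $\tfrac{1}{2}+\tfrac{1}{2}=1$, product via the block-diagonal decomposition of the crossing form, and localization via the intrinsic computation $Q(\Gr A(t),\tfrac{d}{dt}\Gr A(t))\cong\dot A(t)$ together with the spectral-flow telescoping argument. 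Your intrinsic identity $Q(\Lambda_{t_0},\dot\Lambda_{t_0})(v,v')=\Omega\bigl(v,\dot v'(t_0)\bigr)$ is a nice observation that deserves to be recorded: it eliminates the complementary Lagrangian $W$ and makes the zero and localization computations transparent.

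There is however a gap in the argument for the zero axiom. You correctly compute that for a path $\Lambda\colon[a,b]\to\Sigma_k(V)$ with $k>0$ the crossing form $\Gamma(\Lambda,V,t)$ vanishes identically on the $k$-dimensional space $\Lambda_t\cap V$. But then every crossing is \emph{non-regular} -- the zero form on a positive-dimensional space is degenerate -- so formula \eqref{Maslov} does not apply, and you cannot simply read off $\mu_{RS}=0$ from $\Gamma\equiv 0$. To compute the index you must first homotope (with fixed endpoints) to a path with only regular crossings, and the perturbed path \emph{will} have nondegenerate crossing forms at $a$ and $b$ (with nonzero signature of the same parity as $k$); one then has to argue that the resulting half-integers and interior contributions cancel. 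Robbin and Salamon handle this carefully (one clean mechanism is symplectic reduction by the constant-rank isotropic bundle $\Lambda(t)\cap V\subset V$, which lowers to a path with no crossings at all). Your observation about $\Gamma\equiv 0$ is the right starting point, but it is not yet the conclusion.

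You are forthright that the ``if'' direction of the homotopy axiom rests on the topology of $\mathcal{L}_n$, namely $\pi_1(\mathcal{L}_n)\cong\Z$ with $\mu_{RS}$ realizing the isomorphism on loops; that is indeed the nontrivial input and it is fair to point to \cite{RobbinSalamon} for it. Given the paper itself cites the whole theorem, I would either do the same, or, if you want to write out a proof, fill in the zero-axiom step honestly along one of the lines above rather than eliding the passage from ``crossing form vanishes'' to ``index vanishes.''
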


%%%%%%%%%%%%%%
\subsection{Generalized Conley-Zehnder index for a path of symplectic matrices}
Consider the symplectic vector space $(\R^{2n}\times\R^{2n} , \overline{\Omega} = -\Omega_0 \times \Omega_0).$
Given any linear map $\psi: \R^{2n} \rightarrow \R^{2n} ,$ its graph
\begin{equation*}
	\Gr\psi = \{ (x , \psi x) \vert x \in \R^{2n} \}
\end{equation*}
is a $2n$-dimensional subspace of $\R^{2n}\times\R^{2n}$ which is Lagrangian
if and only if $\psi$ is symplectic $\bigl(\psi \in \Sp(\R^{2n},\Omega_0)\bigr) .$

A particular Lagrangian is given by the diagonal
\begin{equation}
	\Delta = \Gr\Id = \{ (x,x) \vert x \in \R^{2n} \} .
\end{equation}
Remark that $\Gr (-\psi)$ is a Lagrangian subspace which is always supplementary to $\Gr\psi$ for $\psi \in \Sp(\R^{2n},\Omega_0) .$
In fact $\Gr \phi$ and $\Gr\psi$ are supplementary if and only if $\phi-\psi$ is invertible.
\begin{definition}\label{RS}
	The \emph{Robbin-Salamon index} of a continuous path of symplectic matrices $\psi :  [0,1] \rightarrow \Sp(\R^{2n},\Omega_0) : t\mapsto \psi_t$ is defined as the Robbin-Salamon index of the path of Lagrangians
	in $(\R^{2n}\times\R^{2n} , \overline{\Omega})$, 
	\begin{equation*}
		\Lambda= \Gr\psi : [0,1] \rightarrow \mathcal{L}_{2n} : t \mapsto \Gr\psi_t
	\end{equation*}
	when the fixed Lagrangian is the diagonal $\Delta$:
	\begin{equation}\label{hungry}
		\mu_{RS}(\psi):=\mu_{RS}( \Gr\psi  , \Delta).
	\end{equation}
\end{definition}
Note that this index is defined for any continuous path of symplectic matrices  but can have half integer values.\\

Note that a crossing for a smooth path $\Gr\psi$ is a number $t\in [0,1]$ for which $1$ is an eigenvalue of $\psi_t$ and 
\begin{equation*}
	\Gr\psi_t\cap \Delta=\{\, (x,x)\,\vert\, \psi_tx=x\,\}
\end{equation*}
is in bijection with $\Ker(\psi_t-\Id).$\\

The properties of homotopy, catenation and product of theorem \ref{proplag} imply that \cite{RobbinSalamon}
\begin{itemize}
	\item $\mu_{RS}$ is invariant under homotopies with fixed endpoints,
	\item $\mu_{RS}$ is additive under catenation of paths and
	\item $\mu_{RS}$ has the product property $\mu_{RS}(\psi'\oplus\psi'') = \mu_{RS}(\psi')+\mu_{RS}(\psi'')$ as in proposition \ref{proprietescz}.
\end{itemize}
The zero property of the Robbin-Salamon index of a path of Lagrangians becomes:
\begin{proposition}
If $\psi:[a,b]\rightarrow\Sp(\R^{2n},\Omega)$ is a path of matrices such that $\dim\Ker (\psi(t)-\Id) =k$ for all $t\in [a,b]$
then $\mu_{RS}(\psi)=0$.
\end{proposition}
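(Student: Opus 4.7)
The plan is to reduce this statement to the Zero property of the Robbin-Salamon index for paths of Lagrangians, which is part~6 of Theorem \ref{proplag}. By Definition \ref{RS}, we have $\mu_{RS}(\psi) = \mu_{RS}(\Gr\psi, \Delta)$, where $\Gr\psi: [a,b] \to \mathcal{L}_{2n}$ is the path of Lagrangians in $(\R^{2n}\times\R^{2n}, \overline{\Omega})$ sending $t$ to $\Gr\psi(t)$. So the task is to show that this path of Lagrangians lies entirely inside one of the sets $\Sigma_j(\Delta)$, and then invoke the Zero property.

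First I would observe the elementary bijection
\[
\Gr\psi(t) \cap \Delta \;=\; \{(x,x) \in \R^{2n}\times\R^{2n} \mid \psi(t)x = x\} \;\longleftrightarrow\; \Ker(\psi(t) - \Id),
\]
given by $(x,x) \mapsto x$. This is a linear isomorphism, so
\[
\dim\bigl(\Gr\psi(t) \cap \Delta\bigr) \;=\; \dim \Ker(\psi(t) - \Id) \;=\; k \qquad \forall t \in [a,b],
\]
where the last equality is the hypothesis. Hence the path $\Gr\psi$ takes values in $\Sigma_k(\Delta) = \{W \in \mathcal{L}_{2n} \mid \dim(W \cap \Delta) = k\}$.

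Finally, applying property~6 (Zero) of Theorem \ref{proplag} to the reference Lagrangian $V = \Delta$ and to $\Lambda = \Gr\psi$, which stays in $\Sigma_k(\Delta)$, we conclude
\[
\mu_{RS}(\psi) \;=\; \mu_{RS}(\Gr\psi, \Delta) \;=\; 0.
\]
There is no real obstacle here: the whole content is packaged into Theorem \ref{proplag}, and the only thing to check is the kernel/intersection bijection, which is immediate. The proof is essentially one line once one unravels the definitions.
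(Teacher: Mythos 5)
Your proof is correct and follows exactly the paper's argument: the paper also observes the identification $\Gr\psi_t \cap \Delta \cong \Ker(\psi_t - \Id)$ to conclude that $\Gr\psi$ stays in the stratum $\Sigma_k(\Delta)$ and then invokes the Zero property of Theorem \ref{proplag}.
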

Indeed, $\Gr\psi_t\cap\Delta = \{ v\in\R^{2n} \vert \psi_tv=v\}$ so
$\dim (\Gr\psi_t \cap \Delta) = k$ if and only if $\dim\Ker (\psi(t)-\Id) =k$.
\begin{proposition}[Naturality]\label{prop:invariance}
	Consider two continuous paths of symplectic matrices $\psi , \phi :  [0,1] \rightarrow \Sp(\R^{2n},\Omega_0)$ and define $\psi' = \phi\psi\phi^{-1}$.
	Then
	\begin{equation*}
		\mu_{RS}(\psi')=\mu_{RS}(\psi)
	\end{equation*}
\end{proposition}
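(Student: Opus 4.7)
The plan is to exploit the key algebraic fact that for any symplectic $\phi$, the map $\phi\times\phi$ on $(\R^{2n}\times\R^{2n}, \overline{\Omega})$ is symplectic and preserves the diagonal Lagrangian: $(\phi\times\phi)\Delta = \Delta$. Consequently, conjugation by a \emph{fixed} symplectic matrix is a symmetry of $\mu_{RS}$ by the naturality of the Lagrangian Robbin-Salamon index (Theorem \ref{proplag}, item 1). To upgrade from a fixed conjugation to a time-varying one, I would use a square-homotopy argument combined with the catenation additivity, homotopy invariance, and zero property of $\mu_{RS}$.

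Concretely, I would introduce the continuous family $H : [0,1]\times[0,1] \to \mathcal{L}_{2n}$ defined by $H(s,t) := \Gr\bigl(\phi_{st}\psi_t\phi_{st}^{-1}\bigr)$, and identify the Robbin-Salamon indices of its four boundary arcs. The bottom edge $s\mapsto H(s,0) = \Gr(\phi_0\psi_0\phi_0^{-1})$ is constant, hence has index $0$. The left edge $t\mapsto H(0,t) = (\phi_0\times\phi_0)\Gr\psi_t$ has index $\mu_{RS}(\psi)$ by naturality applied to the fixed symplectic map $\phi_0\times\phi_0$ together with $(\phi_0\times\phi_0)\Delta = \Delta$. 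The right edge $t\mapsto H(1,t) = \Gr(\phi_t\psi_t\phi_t^{-1})$ is by definition the path computing $\mu_{RS}(\phi\psi\phi^{-1})$. The top edge is $s\mapsto \Gr(\phi_s\psi_1\phi_s^{-1})$. Since $[0,1]^2$ is simply connected, the two boundary routes from $H(0,0)$ to $H(1,1)$ --- ``left then top'' versus ``bottom then right'' --- are homotopic with fixed endpoints via $H$, so by catenation additivity and homotopy invariance,
\[
    \mu_{RS}(\psi) + \mu_{RS}(\text{top}) = 0 + \mu_{RS}(\phi\psi\phi^{-1}).
\]

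It then remains to show that the top edge has vanishing index. Here I would compute
\[
    \Gr(\phi_s\psi_1\phi_s^{-1})\cap\Delta = \{(\phi_s y,\phi_s y) : \psi_1 y = y\},
\]
so its dimension equals $\dim\Ker(\psi_1-\Id)$ independently of $s$, and then invoke the zero property for the Robbin-Salamon index (established just before the present proposition), which says that a path with constant $\dim\Ker(\psi(t)-\Id)$ has vanishing $\mu_{RS}$. The main obstacle, in my view, is not technical but expository: one must carefully justify the square-homotopy step, namely that the two boundary concatenations from $H(0,0)$ to $H(1,1)$ are genuinely homotopic with fixed endpoints inside $\mathcal{L}_{2n}$; this is automatic, being the image under the continuous map $H$ of a standard homotopy in the simply connected square $[0,1]^2$, but it is the one place where a naive reader might want more detail.
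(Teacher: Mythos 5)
Your proof is correct, and it takes a genuinely different route from the paper. The paper argues directly at the level of crossing forms: it observes that $\Gr\psi'_t = (\phi_t\times\phi_t)\Gr\psi_t$, reduces by homotopy with fixed endpoints to the case where $\Lambda = \Gr\psi$ has only regular crossings and $\phi$ is locally constant near each crossing, and then uses the naturality of the form $Q$ (Lemma \ref{lem:naturalitedeQ}) to conclude that $\sign\Gamma(\Gr\psi',\Delta,t) = \sign\Gamma(\Gr\psi,\Delta,t)$ at each crossing. Your square-homotopy argument is purely axiomatic: it uses only the already-established formal properties of $\mu_{RS}$ (homotopy invariance with fixed endpoints, additivity under catenation, the zero property, and naturality under a \emph{fixed} symplectic map, which is item 1 of Theorem \ref{proplag} combined with $(\phi_0\times\phi_0)\Delta=\Delta$). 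This avoids entirely the technical need to perturb $\phi$ to be locally constant near crossings, and it avoids computing crossing forms. The trade-off is that your argument relies on the zero property for paths of symplectic matrices, which the paper happens to state just before the present proposition (so it is available), whereas the paper's proof is self-contained modulo the $Q$-naturality lemma and the generic regularity of crossings. Both arguments are valid; yours is arguably cleaner, while the paper's is more explicit about the underlying pointwise equality of crossing forms.
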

\begin{proof}
	One has
	\begin{eqnarray*}
		\Lambda'_t := \Gr \psi'_t &=& \{ (x,\phi_t\psi_t\phi_t^{-1}x) \,\vert \,x\in \R^{2n}\}\\
		&=& \{ (\phi_ty,\phi_t\psi_ty) \,\vert\, y\in \R^{2n}\}\\
		&=& (\phi_t\times\phi_t) \Gr \psi_t \\
		&=& (\phi_t\times\phi_t)\Lambda_t
	\end{eqnarray*}
	and $(\phi_t\times\phi_t)\Delta = \Delta$.
	Furthermore $(\phi_t\times\phi_t) \in \Sp(\R^{2n}\times\R^{2n} , \overline{\Omega}).$\\	
	Hence $t \in [0,1]$ is a crossing for the path of Lagrangians $\Lambda' = \Gr\psi'$ if and only if
	$\dim \Gr\psi'_t \cap \Delta \neq 0$ if and only if $\dim (\phi_t \times \phi_t)(\Gr\psi_t \cap \Delta) \neq 0$
	if and only if $t$ is a crossing for the path of Lagrangian $\Lambda = \Gr\psi$.
	
	By homotopy with fixed endpoints, we can assume that $\Lambda$ has only regular crossings and
	$\phi$ is locally constant around each crossing $t$ so that
	\begin{equation*}
		\frac{d}{dt}({\phi\psi\phi^{-1}})(t) = \phi_t\dot{\psi}_t\phi^{-1}_t .
	\end{equation*}
	Then at each crossing
	\begin{eqnarray*}
		\Gamma(\Gr\psi',\Delta,t) &=& Q(\Lambda'_t , \dot{\Lambda'}_t)\vert_{\Gr\psi'_t\cap\Delta}\\
		&=& Q((\phi_t \times \phi_t)\Lambda_t , (\phi_t \times \phi_t)\dot{\Lambda}_t)\vert_{(\phi_t \times \phi_t)\Gr\psi_t\cap\Delta}\\
		&=& Q(\Lambda_t , \dot{\Lambda}_t)\vert_{\Gr\psi_t\cap\Delta} \circ (\phi^{-1}_t \times \phi^{-1}_t)\otimes(\phi^{-1}_t \times \phi^{-1}_t)
	\end{eqnarray*}
	in view of Lemma \ref{lem:naturalitedeQ}, so that
	\begin{equation*}
		\sign \Gamma(\Gr\psi',\Delta,t) = \sign \Gamma(\Gr\psi,\Delta,t).
	\end{equation*}
\end{proof}
\begin{definition}
	For any smooth path $\psi$ of symplectic matrices, define a path of symmetric matrices $S$ through
	\begin{equation*}
		\dot{\psi}_t = J_0S_t\psi_t.
	\end{equation*}
	This is  indeed  possible since $\psi_t\in\Sp(\R^{2n},\Omega_0)\, \forall t$, thus $\psi_t ^{-1}\dot{\psi}_t$ is in the Lie algebra $sp(\R^{2n},\Omega_0)$
	and every element of this Lie algebra may be written in the form $J_0S$ with $S$ symmetric.
\end{definition}
The symmetric bilinear form $Q\bigl(\Gr\psi,\frac{d}{dt}{\Gr\psi}\bigr)$ is given as follows.
For any $t_0\in[0,1],$ write $\R^{2n}\times\R^{2n}=\Gr\psi_{t_0}\oplus \Gr(-\psi_{t_0}).$
The linear map $\alpha_t : \Gr\psi_{t_0}\rightarrow \Gr(-\psi_{t_0})$ corresponding to $\Gr\psi_t$ is obtained from:
\begin{equation*}
	(x,\psi_tx)=(y,\psi_{t_0}y) + \alpha_t(y,\psi_{t_0}y)=(y,\psi_{t_0}y)+(\widetilde{\alpha}_t y,-\psi_{t_0}\widetilde{\alpha}_t y)
\end{equation*}
if and only if $(\Id+\widetilde{\alpha}_t )y=x$ and $\psi_{t_0}(\Id-\widetilde{\alpha}_t )y=\psi_tx$, hence $\psi_{t_0}^{-1}\psi_t(\Id +\widetilde{\alpha}_t)=\Id - \widetilde{\alpha}_t$ and
\begin{equation*}
	\widetilde{\alpha}_t=(\Id +\psi_{t_0}^{-1}\psi_t)^{-1}(\Id-\psi_{t_0}^{-1}\psi_t)\quad\qquad \left. \frac{d}{dt}\widetilde{\alpha}_t \right\vert_{t_0}=-\half\psi_{t_0}^{-1}\dot\psi_{t_0}.
\end{equation*}
Thus
\begin{eqnarray*}
	\lefteqn{Q \Bigl( \Gr\psi_{t_0},\frac{d}{dt}\Gr\psi_{t_0} \Bigr) \bigl( ( v,\psi_{t_0}v),(v',\psi_{t_0}v' ) \bigr) }\\
	& = & \left. \frac{d}{dt} \overline{\Omega} \bigl( (v,\psi_{t_0}v), \alpha_t(v',\psi_{t_0}v') \bigr) \right\vert_{t_0}\\
	& = & \left.\frac{d}{dt} \overline{\Omega} \bigl( (v,\psi_{t_0}v), (\widetilde{\alpha}_tv',-\psi_{t_0} \widetilde{\alpha}_tv' ) \bigr) \right\vert_{t_0}\\
	& = & -2\Omega_0 \Bigl( v,\left.\frac{d}{dt} \widetilde{\alpha_t} \right\vert_{t_0}v' \Bigr)\\
	& = & \Omega_0(v,\psi_{t_0}^{-1}\dot\psi_{t_0}v')\\
	& = & \Omega_0(\psi_{t_0}v,J_0S_{t_0}\psi_{t_0}v').
\end{eqnarray*}
Hence the restriction of $Q$ to $\Ker(\psi_{t_0}-\Id)$ is given by
\begin{equation*}
	Q \Bigl( \Gr\psi_{t_0},\frac{d}{dt}\Gr\psi_{t_0} \Bigr) \bigl( ( v,\psi_{t_0}v),(v',\psi_{t_0}v' ) \bigr) = v^\tau S_{t_0}v' \quad \forall v,v' \in \Ker(\psi_{t_0}-\Id)
\end{equation*}
A crossing $t_0\in [0,1]$ is thus regular for the smooth path $\Gr\psi$ if and only if the restriction of $S_{t_0}$ to $\Ker(\psi_{t_0}-\Id)$ is nondegenerate.
We thus give the following definition
\begin{definition}\cite{RobbinSalamon}\label{cross}
	Let $\psi :  [0,1] \rightarrow \Sp(\R^{2n},\Omega_0) : t\mapsto \psi_t$ be a smooth path of symplectic matrices. Write $\dot\psi_t=J_0S_t \psi_t$ with $t\mapsto S_t$ a path of symmetric matrices.
	
	A number $t \in \left[0,1\right]$ is called a \emph{crossing} if $\det (\psi_t-\Id) = 0.$
	
	For $t\in [0,1]$,  the \emph{crossing form} $\Gamma(\psi,t)$ is defined as the quadratic form which is the restriction of $S_t$ to $\Ker (\psi_t-\Id).$
	
	A crossing $t_0$ is called \emph{regular} if the crossing form $\Gamma(\psi,t_0)$ is nondegenerate.		
\end{definition}
\begin{proposition}\label{murs}
	For a smooth path $\psi:  [0,1] \rightarrow \Sp(\R^{2n},\Omega_0) : t\mapsto \psi_t$ having only regular crossings, the Robbin-Salamon index introduced in definition \ref{RS} is given by
	\begin{equation}\label{muCZRS}
		\mu_{RS}(\psi) = \half \sign \Gamma(\psi,0) + \sum_{{\stackrel{t \textrm{ crossing,}}{\mbox{\tiny{$t\in ]0,1[$}}}}} \sign \Gamma(\psi,t)+\half \sign \Gamma(\psi,1).
	\end{equation}
\end{proposition}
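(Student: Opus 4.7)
The plan is to show that this is essentially a direct translation of the Lagrangian-path definition \eqref{Maslov} into the symplectic-matrix setting, using the computation of the crossing form that has just been carried out above the proposition statement.

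First I would observe that, by Definition \ref{RS}, $\mu_{RS}(\psi) = \mu_{RS}(\Gr \psi, \Delta)$, and that formula \eqref{Maslov} applies as soon as the path of Lagrangians $\Lambda_t = \Gr \psi_t$ has only regular crossings. So the content of the proposition is twofold:
\begin{enumerate}
\item[(i)] the crossings of the smooth path $\psi$ in the sense of Definition \ref{cross} (times $t$ where $\det(\psi_t - \Id) = 0$) are exactly the crossings of the Lagrangian path $\Gr \psi$ relative to $\Delta$, and regularity in the two senses coincides;
\item[(ii)] at each such crossing, $\sign \Gamma(\psi, t) = \sign \Gamma(\Gr \psi, \Delta, t)$.
\end{enumerate}

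For (i) I would use the elementary identification
\[
	\Gr \psi_t \cap \Delta \;\xrightarrow{\sim}\; \Ker(\psi_t - \Id),\qquad (v, \psi_t v) = (v,v) \longleftrightarrow v,
\]
so that $\dim (\Gr \psi_t \cap \Delta) = \dim \Ker(\psi_t - \Id)$, and the Lagrangian crossing condition $\Gr \psi_t \cap \Delta \neq \{0\}$ is exactly the condition $\det(\psi_t - \Id) = 0$.

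For (ii), which is the real heart of the matter, the text preceding the proposition has already computed, for any $t_0$ and any $v, v' \in \Ker(\psi_{t_0} - \Id)$,
\[
	Q\Bigl(\Gr\psi_{t_0}, \tfrac{d}{dt}\Gr\psi_{t_0}\Bigr)\bigl((v,\psi_{t_0}v),(v',\psi_{t_0}v')\bigr) = v^\tau S_{t_0} v',
\]
using the supplementary Lagrangian $\Gr(-\psi_{t_0})$ and the relation $\dot\psi_{t_0} = J_0 S_{t_0} \psi_{t_0}$. Under the identification of $\Gr \psi_{t_0} \cap \Delta$ with $\Ker(\psi_{t_0} - \Id)$, this identifies the Lagrangian crossing form $\Gamma(\Gr \psi, \Delta, t_0)$ with the matrix crossing form $\Gamma(\psi, t_0) = S_{t_0}|_{\Ker(\psi_{t_0} - \Id)}$ introduced in Definition \ref{cross}. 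Regularity and signatures therefore match on the nose.

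Once (i) and (ii) are in hand, the formula \eqref{muCZRS} follows by transcribing the definition \eqref{Maslov} of $\mu_{RS}(\Gr \psi, \Delta)$ term by term. No step is really hard; the only thing to be careful about is that the computation of $Q$ above the proposition uses the supplement $\Gr(-\psi_{t_0})$ (which is always a supplementary Lagrangian to $\Gr \psi_{t_0}$ since $\psi_{t_0} - (-\psi_{t_0}) = 2\psi_{t_0}$ is invertible), and that by the preceding Proposition on the independence of $Q$ from the choice of supplement, the answer does not depend on this choice.
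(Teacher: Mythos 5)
Your proposal is correct and follows the same route as the paper: the paper has no separate proof environment for this proposition because the computation of $Q\bigl(\Gr\psi_{t_0},\tfrac{d}{dt}\Gr\psi_{t_0}\bigr)$ immediately preceding it (using the supplement $\Gr(-\psi_{t_0})$ and the relation $\dot\psi_{t_0}=J_0S_{t_0}\psi_{t_0}$) is precisely the identification of the two crossing forms that you articulate as points (i) and (ii). Your reorganization into an explicit two-step argument is a faithful and slightly more explicit rendering of what the paper does.
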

\begin{proposition}
	Let $\psi : [0,1] \rightarrow \Sp(\R^{2n},\Omega_0)$ be a continuous path of symplectic matrices such
	that $\psi(0)=\Id$ and such that $1$ is not an eigenvalue of $\psi(1)$ (i.e. $\psi \in \textrm{SP}(n)$).
	The Robbin-Salamon index of $\psi$ defined by \eqref{hungry} coincides with the Conley-Zehnder index of $\psi$ as in definition \ref{def1}.
	In particular, for a smooth path  $\psi \in \textrm{SP}(n)$ having only regular crossings, the Conley-Zehnder index is given by
	\begin{equation}
		\mu_{CZ}(\psi) = \half\sign(S_0)+ \sum_{{\stackrel{t \textrm{ crossing,}}{\mbox{\tiny{$t\in ]0,1[$}}}}} \sign \Gamma(\psi,t) 
	\end{equation}
	with  $S_0=-J_0\dot\psi_0.$
\end{proposition}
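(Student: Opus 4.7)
The plan is to apply Proposition \ref{caract} to the restriction of $\mu_{RS}$ to $\textrm{SP}(n)$: it suffices to verify that $\mu_{RS}|_{\textrm{SP}(n)}$ satisfies the homotopy, loop and signature properties, and then the explicit crossing-form formula is a direct consequence of Proposition \ref{murs}, since for $\psi\in\textrm{SP}(n)$ the endpoint $\psi(1)\in\Sp^*(\R^{2n},\Omega_0)$ means $t=1$ is not a crossing and the terminal term $\half\sign\Gamma(\psi,1)$ vanishes, while at $t=0$ one has $\Ker(\psi_0-\Id)=\R^{2n}$ and $\Gamma(\psi,0)=S_0$ with $S_0=-J_0\dot\psi_0$ by the definition $\dot\psi_t=J_0S_t\psi_t$.

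For the homotopy property, given a continuous family $\psi_s$ in $\textrm{SP}(n)$, I would extend each $\psi_s$ by a tail $\eta_s:[1,2]\to\Sp^*(\R^{2n},\Omega_0)$ joining $\psi_s(1)$ to a fixed $W^\pm$ in the connected component of $\Sp^*$ containing $\psi_0(1)$, depending continuously on $s$. Since no matrix along $\eta_s$ has $1$ as eigenvalue, the tail contributes no crossings, so $\mu_{RS}(\psi_s)$ equals the Robbin-Salamon index of the concatenation, and the concatenations form a fixed-endpoint homotopy to which Theorem \ref{proplag}(5) applies. For the signature property with $\psi(t)=\exp(tJ_0S)$, $\|S\|<2\pi$ and $S$ nonsingular, one has $S_t\equiv S$; since the eigenvalues of $tJ_0S$ have modulus strictly less than $2\pi$ and are nonzero for $t\in(0,1]$, the matrix $\psi_t$ never has $1$ as eigenvalue on $(0,1]$, so the only crossing is at $t=0$ with $\Gamma(\psi,0)=S$ on $\R^{2n}$, and formula (\ref{muCZRS}) yields $\mu_{RS}(\psi)=\half\sign S$.

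The main obstacle is the loop property. Applying Lemma \ref{lem:prod} with $\phi(1)=\Id$, the product $\phi\psi$ is homotopic rel endpoints in $\textrm{SP}(n)$ to the catenation of a reparametrization of $\psi$ with the loop $\gamma:t\mapsto\phi(t)\psi(1)$ based at $\psi(1)$; the catenation additivity of $\mu_{RS}$ then reduces the problem to showing $\mu_{RS}(\gamma)=2\mu(\phi)$. I would next observe that the map $\phi\mapsto\mu_{RS}(\gamma)$ is a homomorphism from $\pi_1\bigl(\Sp(\R^{2n},\Omega_0)\bigr)$ to $\Z$: additivity follows from a second application of Lemma \ref{lem:prod} and the catenation property, while homotopy invariance of $\mu_{RS}$ on loops ensures well-definedness on homotopy classes. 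Since $\pi_1\bigl(\Sp(\R^{2n},\Omega_0)\bigr)\cong\Z$, it is enough to verify the identity on one generator, which I would take to be $\phi_0(t)=R(2\pi t)\oplus\Id_{2n-2}$ with $\mu(\phi_0)=1$; choosing $\psi(1)$ within its $\Sp^*$-component so that it splits as $(-\Id_2)\oplus A'$, the product property of $\mu_{RS}$ reduces the computation to the rotating two-dimensional factor, where the unique crossing at $t=1/2$ has crossing form $2\pi\Id_2$ and contributes signature $2$, giving $\mu_{RS}(\gamma)=2$ and completing the verification.
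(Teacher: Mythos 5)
Your proposal is correct and follows the same high-level strategy as the paper (reduce via Proposition \ref{caract} to verifying the homotopy, loop and signature properties for $\mu_{RS}$ restricted to $\textrm{SP}(n)$, then read the crossing formula from Proposition \ref{murs}), but the execution of the loop property differs in a meaningful way. The paper picks, for each $n\in\Z$, an explicit loop $\phi_n$ based at the identity in $\Sp(\R^{2n},\Omega_0)$ and computes $\mu_{RS}(\phi_n)$ directly; since $\phi_n(0)=\phi_n(1)=\Id$, both endpoints are crossings of maximal degeneracy $\Ker = \R^{2n}$ and the endpoint $\thalf$-factors in \eqref{muCZRS} enter the arithmetic. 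You instead first apply Lemma \ref{lem:prod} to reduce to computing $\mu_{RS}$ on the loop $\gamma(t)=\phi(t)\psi(1)$ based at $\psi(1)\in\Sp^*(\R^{2n},\Omega_0)$, note that $\phi\mapsto\mu_{RS}(\gamma_\phi)$ is a homomorphism $\pi_1\bigl(\Sp(\R^{2n},\Omega_0)\bigr)\to\Z$ (additivity again via Lemma \ref{lem:prod} plus catenation), and evaluate on a single generator. This has the small advantage that the endpoints of $\gamma$ are never crossings (since $1$ is not an eigenvalue of $\psi(1)$), so the computation avoids the boundary $\thalf$-factors entirely and produces the clean answer $\mu_{RS}(\gamma)=2$ from the unique interior crossing at $t=\thalf$ with form $2\pi\Id_2$. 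Both routes are valid; yours is slightly more economical in the arithmetic but relies on the algebraic (homomorphism) reformulation.

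One point you should make more explicit: when you ``choose $\psi(1)$ within its $\Sp^*$-component so that it splits as $(-\Id_2)\oplus A'$,'' you are implicitly replacing $\psi$ by a homotopic path $\psi'$ in the same component of $\textrm{SP}(n)$ and then invoking the homotopy-invariance of $\mu_{RS}\vert_{\textrm{SP}(n)}$ (applied to both $\psi$ and $\phi\psi$, which are simultaneously homotoped) to transfer the conclusion back to the original $\psi$. This is a genuine extra step — the endpoint of $\gamma$ moves when $\psi(1)$ moves — and while it is correct, it deserves a sentence. It uses the connectivity of $\Sp^\pm$ established in Theorem \ref{prop:connexite} (specifically, that one may connect $\psi(1)$ to a matrix of the form $(-\Id_2)\oplus A'$ inside its component of $\Sp^*$) together with the fact that the path of endpoints contributes zero to $\mu_{RS}$ since it has no crossings, or equivalently the homotopy property you have already proved.
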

\begin{proof}
	Since the Robbin-Salamon index for paths of Lagrangians is invariant under homotopies with fixed end points, the Robbin-Salamon index  for paths of symplectic matrices is also invariant under homotopies  with fixed endpoints.
	
	Its restriction to $\textrm{SP}(n)$ is actually invariant under homotopies of paths in $\textrm{SP}(n)$ since for any path in $\textrm{SP}(n)$, the starting point $\psi_0=\Id$  is fixed and the endpoint $ \psi_1$ can only move
	in a connected component of $\Sp^*(\R^{2n},\Omega_0)$ where no matrix has $1$ as an eigenvalue.\\

	To show that this index coincides with the Conley-Zehnder index, it is enough, in view of proposition \ref{caract}, to show that it satisfies the loop and signature properties.\\

	Let us prove the signature property. Let $\psi_t=\exp(tJ_0S)$ with $S$ a symmetric nondegenerate matrix with all eigenvalues of absolute value $ <2\pi$, so that 
	$\Ker(\exp(tJ_0S)-\Id)=\{0\}$ for all $t\in ]0,1] .$ Hence the only crossing is at $t=0$, where $\psi_0=\Id$ and $\dot\psi_t= J_0 S \psi_t$ so that $S_t=S$ for all $t$ and 
	\begin{equation*}
		\mu_{CZ}(\psi)=\half\sign S_0=\half\sign S.
	\end{equation*}

	To prove the loop property, note that $\mu_{RS}$ is additive for catenation and invariant under homotopies with fixed endpoints.
	Since we have seen that $(\phi\psi)$ is homotopic to the catenation of $\phi$ and $\psi$,
	it is enough to show that the Robbin-Salamon index of a loop is equal to $2\deg (\rho\circ \phi).$
	Since two loops $\phi$ and $\phi'$ are homotopic if and only if $\deg(\rho\circ\phi)=\deg(\rho\circ\phi'),$ it is enough to consider the loops $\phi_n$ defined by
	\begin{equation*}
		\phi_n(t):=
		\left(\begin{array}{cccc} 
			\cos 2\pi nt & -\sin 2\pi nt &0&0\\
			\sin 2\pi nt & \cos 2\pi nt &0&0\\
			0&0& a(t)\Id &0\\
			0&0&0& a(t)^{-1}\Id
		 \end{array}\right)
		 \in \Sp(\R^2\oplus \R^{2n-2})
	\end{equation*}
	with $a: [0,1] \rightarrow \R^+$ a smooth curve with $a(0)=a(1)=1$ and $a(t)\neq 1$ for $t\in ]0,1[.$
	Since $\rho\bigl(\phi_n(t)\bigr)=e^{2\pi int}$, we have $\deg(\phi_n)=n$.\\
	The crossings of $\phi_n$ arise at $t=\frac{m}{n}$ with $m$ an integer between $0$ and $n.$ At such a crossing, $\Ker\bigl(\phi_n(t)\bigr)$ is $\R^2$ for $0<t<1$ and is $\R^{2n}$ for $t=0$ and $t=1.$ We have
	\begin{equation*}
		\dot\phi_n(t)=
		\left(\begin{array}{cccc} 
			0 & -2\pi n & 0 & 0\\
			2\pi n&0 & 0 &0 \\
			0 & 0 & \frac{\dot a(t)}{a(t)}\Id & 0\\
			0&0&0& -\frac{\dot a(t)}{a(t)}\Id
		 \end{array}\right)
		 \phi_n(t)
	\end{equation*}
	so that 
	\begin{equation*}
		S(t)=
		\left(\begin{array}{cccc} 
			2\pi n &0&0&0\\
			0&2\pi n&0 & 0\\
			0&0&0&- \frac{\dot a(t)}{a(t)}\Id \\
			0&0& -\frac{\dot a(t)}{a(t)}\Id&0
		\end{array}\right).
	\end{equation*}
	Thus  $\sign \Gamma(\phi_n,t)=2$ for all crossings $t=\frac{m}{n} ,\ 0 \le m \le n$.
	From equation \eqref{muCZRS} we get
	\begin{eqnarray*}
		\mu_{RS}(\phi_n)&=&\half\sign \Gamma(\phi_n,0)+\sum_{0<m<n} \sign \Gamma\bigl(\phi_n,\tfrac{m}{n}\bigr)+\half\sign \Gamma(\phi_n,1)\\
		&=&1+2(n-1)+1=2n=2\deg(\rho\circ \phi_n)
	\end{eqnarray*}
	and the loop property is proved.
	Thus the Robbin-Salamon index for paths in $\textrm{SP}(n)$ coincides with the Conley-Zehnder index.\\

	The formula for the Conley-Zehnder index of a path $\psi \in \textrm{SP}(n)$ having only regular crossings, follows then from \eqref{muCZRS}.
	Indeed, we have $\Ker(\psi_1-\Id)=\{0\}$, while $\Ker(\psi_0-\Id)=\R^{2n}$ and $\Gamma(\psi,0)=S_0.$
\end{proof}
\begin{definition}
A \emph{symplectic shear} is a path of symplectic matrices of the form $\psi_t =
	\left(\begin{array}{cc}
		\Id & B(t) \\
		0 & \Id
	\end{array} \right)$
	with $B(t)$ symmetric.
\end{definition}
\begin{proposition}\label{RSforshears}
	The Robbin-Salamon index $\mu_{\textrm{RS}}$ on a symplectic shear $\psi_t =
	\left(\begin{array}{cc}
		\Id & B(t) \\
		0 & \Id
	\end{array} \right)$
	(with $B(t)$ symmetric) is equal to 
	$$
		\mu_{\textrm{RS}}(\psi)=\half \sign B(0) - \half \sign B(1).
	$$
\end{proposition}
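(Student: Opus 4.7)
The plan is to reduce directly to the Localization property (Theorem~\ref{proplag}(4)) of the Robbin-Salamon Lagrangian index, via Naturality, thereby avoiding any need to perturb the path to achieve regular crossings. The key observation is that the only eigenvalue of a symplectic shear $\psi_t$ is $1$, so $-1$ is never an eigenvalue of $\psi_t$; equivalently the Lagrangian $\Gr \psi_t$ is transverse to $L_0 := \Gr(-\Id) \subset (\R^{4n}, \overline{\Omega})$ for every $t \in [0,1]$. Since $\Delta$ and $L_0$ are Lagrangian complements in $(\R^{4n}, \overline{\Omega})$, this transversality lets me write the entire path $\Gr \psi_t$ as the graph of a linear map $\alpha_t : \Delta \to L_0$, hence as the graph of a symmetric bilinear form on $\Delta$ via the symplectic pairing.

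Next I would compute $\alpha_t$ explicitly. Writing $v = \bigl((a,b),(a,b)\bigr) \in \Delta$ and $\alpha_t(v) = \bigl((c,d),(-c,-d)\bigr) \in L_0$, the condition $v + \alpha_t(v) \in \Gr \psi_t$ is equivalent to $\psi_t(a+c, b+d) = (a-c, b-d)$; using the explicit shear form this forces $d = 0$ and $c = -\half B(t) b$. The associated symmetric form $\underline{\alpha}_t(v_1, v_2) := \overline{\Omega}(v_1, \alpha_t(v_2))$ on $\Delta \cong \R^{2n}$ with coordinates $(a,b)$ then evaluates to $-b_1\tr B(t) b_2$, giving the matrix $\operatorname{diag}\bigl(0, -B(t)\bigr)$ of signature $-\sign B(t)$.

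To conclude, by Naturality (Theorem~\ref{proplag}(1)) I would pick a linear symplectomorphism $T : (\R^{4n}, \overline{\Omega}) \to (\R^{4n}, \Omega_0)$ sending $(\Delta, L_0)$ to the standard pair $(\R^{2n} \times \{0\}, \{0\} \times \R^{2n})$, which exists because any two Lagrangian-complement pairs in a symplectic vector space are symplectically equivalent. Under $T$ the path $\Gr \psi_t$ becomes the graph of a path of symmetric $2n \times 2n$ matrices $\tilde{A}(t)$, and the symplecticity of $T$ intertwines the graph-form constructions, giving $\sign \tilde{A}(t) = \sign \underline{\alpha}_t = -\sign B(t)$. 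Theorem~\ref{proplag}(4) then yields
\[
	\mu_{RS}(\psi) = \half \sign \tilde{A}(1) - \half \sign \tilde{A}(0) = \half \sign B(0) - \half \sign B(1).
\]
The main check, and essentially the only obstacle, is this signature compatibility under $T$: one must verify that the symmetric matrix $\tilde{A}(t)$ appearing in the Localization statement really is intertwined by the symplectic transport with the pairing form $\underline{\alpha}_t$ computed above, but this follows straightforwardly from $T$ being symplectic.
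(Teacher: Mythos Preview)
Your proof is correct and takes a genuinely different route from the paper's.

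The paper proceeds by first diagonalising $B(t)$ via orthogonal conjugation (using the naturality in Proposition~\ref{prop:invariance}), then invoking the product property to reduce to the $2\times 2$ case $\psi_t=\bigl(\begin{smallmatrix}1&d(t)\\0&1\end{smallmatrix}\bigr)$. Since this path has nontrivial $1$-eigenspace at every $t$, the paper perturbs it homotopically with fixed endpoints to a nearby path having only two crossings (at $t=0$ and $t=1$), computes the crossing forms there, and applies the crossing-form formula of Proposition~\ref{murs}.

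Your argument instead works globally in $(\R^{4n},\overline{\Omega})$: you observe that $-1$ is never an eigenvalue of a shear, so $\Gr\psi_t$ stays transverse to $L_0=\Gr(-\Id)$ throughout, and hence the entire path is the graph of a symmetric form $\underline{\alpha}_t$ on $\Delta$. You then transport the pair $(\Delta,L_0)$ symplectically to the standard pair and invoke the Localization axiom of Theorem~\ref{proplag} directly. This sidesteps both the dimension reduction and the perturbation to regular crossings; in particular you never touch the crossing-form formula at all. The signature compatibility you flag is indeed the only thing to check, and your argument for it is correct: since $T$ intertwines $\overline{\Omega}$ with $\Omega_0$ and carries $(\Delta,L_0)$ to the standard pair, it conjugates $\alpha_t$ to $\tilde{\alpha}_t$ and hence pulls $\underline{\tilde{\alpha}}_t$ back to $\underline{\alpha}_t$; signatures are preserved under the resulting linear change of basis on $\Delta$.

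What each approach buys: the paper's method is entirely hands-on within the crossing-form framework already developed in that section, so it illustrates how those tools work in practice, at the cost of a somewhat ad hoc perturbation in dimension $2$. Your method is shorter and more conceptual, exploiting the fact that Localization is a strictly stronger statement than the crossing-form formula in the situation where the whole path stays transverse to a single Lagrangian complement.
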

\begin{proof}
		Since any symmetric matrix $B(t)$ is diagonalisable, we  write $B(t)= A(t)^\tau D(t)A(t)$ with $A(t)\in O(\R^{n})$  and  $D(t)$ a diagonal matrix.\\
          The matrix $\phi_t=
          \left(\begin{array}{cc}
		A(t)^\tau & 0 \\
		0 & A(t)
	\end{array} \right)$
	is in $\Sp(\R^{2n},\Omega_0)$ and 
         \begin{equation*}
		\psi'_t:=\phi_t\psi_t\phi_t^{-1}=
		\left(\begin{array}{cc}
			\Id & D(t) \\
			0 & \Id
		\end{array} \right).
        \end{equation*}
	By proposition \ref{prop:invariance} $\mu_{\textrm{RS}}(\psi)=\mu_{\textrm{RS}}(\psi');$ by the product property it is enough to show that $\mu_{\textrm{RS}}(\psi)=\half \sign d(0)-\half \sign d(1)$ for the path 
	$$
		\psi: [0,1]\rightarrow \Sp(\R^2,\Omega_0) : t \mapsto \psi_t=
		\left(\begin{array}{cc}
			1 & d(t) \\
			0 & 1
		\end{array} \right).
	$$
	Since $\mu_{\textrm{RS}}$ is invariant under homotopies with fixed end points, we may assume $\psi_t=
	\left(\begin{array}{cc}
		a(t)& d(t) \\
		c(t) &a(t)^{-1}\bigl(1+ d(t)c(t)\bigr)
	\end{array} \right)$
	with $a,c$  smooth functions such that :\\
	$a(0)=a(1)=1, ~\dot a(0)\neq 0,\, \dot a(1)\neq 0$ and $ a(t)>1$ for $0<t<1;$\\
	$c(0)=c(1)=0,~c(t)d(t)\ge 0~ \forall t$ and $\dot c(t)\neq 0 $ (resp. $=0$) when $d(t)\neq 0 $ (resp. $=0$) for $t=0$ or $1$.\\
	The only crossings are $t=0$ and $t=1$  since the trace of $\psi(t)$ is $>2$ for $0<t<1.$
	Now, at those points ( $t=0$ and $t=1$)  $\dot\psi_t=
	\left(\begin{array}{cc}
		\dot a(t)&  \dot d(t)\\
		\dot c(t) & -\dot a(t)+d(t)\dot c(t)
	\end{array} \right)$
	so that $S_t= -J_0 \dot \psi_t\psi_t^{-1}=
	\left(\begin{array}{cc}
		\dot c(t)& -\dot a(t) \\
		-\dot a(t) & \dot a(t)d(t)-\dot d(t)
	\end{array} \right).$
 
	Clearly, at the crossings, we have  $ \Ker\psi_t =\R^2$ iff $d(t)=0$ and $ \Ker\psi_t $ is spanned by the first basis element iff $d(t)\neq 0,$ so that  from definition \ref{cross}
	$\Gamma(\psi,t)=\bigl( \dot c(t) \bigr)$
	when $d(t)\neq 0$ and $\Gamma(\psi,t)=
	\left(\begin{array}{cc}
		0& -\dot a(t) \\
		-\dot a(t) &0
	\end{array} \right)$
	when $d(t)= 0.$
	Hence both crossings are regular and $\sign \Gamma(\psi,t)=\sign \dot c(t)$ when $d(t)\neq 0$ and $\sign \Gamma(\psi,t)=0$ when $d(t)=0.$
	Since $d(t)c(t)\ge 0$ for all $t,$ we clearly have $\sign \dot c(0)=\sign d(0)$ and $\sign \dot c(1)= -\sign d(1)$.
	Proposition \ref{murs} then gives $\mu_{\textrm{RS}}(\psi)=\half \Gamma(\psi,0)+\half \sign \Gamma(\psi,1)=\half \sign d(0)-\half\sign d(1).$
 \end{proof}
 %%%%%%%%%%%%%%
 
 \subsection{Characterization of the Robbin-Salamon index}
 In this section, we prove the following characterization of the Robbin-Salamon index.
 \begin{theorem}\label{thm:Thechar}
	The Robbin-Salamon index for a path of symplectic matrices is characterized by the following properties:
	\begin{itemize}
		\item\emph{(Homotopy)} it is invariant under homotopies with fixed end points;
		\item\emph{(Catenation)} it is additive under catenation of paths;
		\item\emph{(Zero)} it vanishes on any path $\psi:[a,b]\rightarrow\Sp(\R^{2n},\Omega)$ of matrices such that 
		         $\dim\Ker \bigl(\psi(t)-\Id\bigr) =k   $ is constant for all $t\in [a,b]$;
		\item\emph{(Normalization)} if $S=S\tr \in \R^{2n\times 2n}$ is a symmetric matrix with all eigenvalues of absolute value
			$<2\pi $ and if $\psi(t) = \operatorname{exp}(J_0St)$ for $t\in \left[0,1\right],$ then
			$\mu_{\textrm{RS}}(\psi) = \half \sign S$ where $\sign S$ is the signature of $S$;
	\end{itemize}			
\end{theorem}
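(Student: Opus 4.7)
I would follow the standard strategy: reduce via Homotopy to generic paths, isolate a single crossing via Catenation and Zero, and identify the local contribution via Normalization.

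By the Robbin--Salamon approximation (\cite{RobbinSalamon}, Lemmas 2.1--2.2) applied to the path of Lagrangians $\Gr\psi \subset (\R^{2n}\times\R^{2n},\overline\Omega)$, any continuous path $\psi$ is homotopic with fixed endpoints to a smooth path with only regular crossings in the sense of Definition \ref{cross}. By Homotopy we may assume $\psi$ is such, with finitely many crossings. Catenation subdivides $[a,b]$ into subintervals each containing at most one crossing. On a crossing-free subinterval, $\dim\Ker(\psi_t-\Id)=0$ is constant, so Zero gives $\mu'=0$; the same piece gives $\mu_{RS}=0$ by formula \eqref{muCZRS}. The task reduces to matching $\mu'(\psi|_I) = \mu_{RS}(\psi|_I)$ on each subinterval $I$ containing one regular crossing.

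Two elementary consequences of the axioms are used repeatedly. First, $\mu'$ is invariant under orientation-preserving reparametrization (a rel-endpoint homotopy), and $\mu'(\bar\psi)=-\mu'(\psi)$ for the time-reversed path $\bar\psi$: the concatenation of $\psi$ with $\bar\psi$ is homotopic rel endpoints to the constant path at $\psi(a)$, to which Zero applies, so Catenation yields the identity. Second, for any symmetric $T$ with $\|T\|<2\pi$ and any $\delta\in(0,1]$, $\mu'\bigl(t\mapsto\exp(tJ_0T)\bigr)=\sign T$ on $[-\delta,\delta]$: split at $0$, rescale each half to $[0,1]$ (replacing $T$ by $\delta T$, still of norm $<2\pi$), reverse the left half to get $\exp(\cdot J_0(-\delta T))|_{[0,1]}$, and apply Normalization to each piece.

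Now fix $I=[-\epsilon,\epsilon]$ with interior regular crossing at $0$. The crucial step is to deform $\psi$ by a homotopy rel endpoints $\psi(\pm\epsilon)$ to a path $\tilde\psi$ with: (a) $\tilde\psi$ crossing-free on $[-\epsilon,-\delta]\cup[\delta,\epsilon]$ for some small $\delta>0$; (b) $\tilde\psi(t)=\exp(tJ_0T)$ on $[-\delta,\delta]$ for a symmetric $T$ with $\|T\|<2\pi$ and $\sign T=\sign\Gamma(\psi,0)$. Granted such $\tilde\psi$, Homotopy gives $\mu'(\psi)=\mu'(\tilde\psi)$; Catenation together with Zero on the flanking pieces gives $\mu'(\tilde\psi)=\mu'(\tilde\psi|_{[-\delta,\delta]})$; and the preliminary formula identifies this with $\sign T = \sign\Gamma(\psi,0) = \mu_{RS}(\psi|_I)$ by \eqref{muCZRS}. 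A crossing at an endpoint of $I$ is handled analogously on a one-sided interval, producing the $\half$-contribution via a single application of Normalization.

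The main obstacle is the existence of $\tilde\psi$. This rests on the Robbin--Salamon classification (property 5 in Theorem \ref{proplag}, applied to the graphs $\Gr\psi$ and $\Gr\tilde\psi$ with reference diagonal): two paths with fixed endpoints are rel-endpoint-homotopic iff they have the same $\mu_{RS}$. The endpoints of $\tilde\psi$ at $\pm\epsilon$ are matched to those of $\psi$ by choosing crossing-free flanking paths in $\Sp(\R^{2n},\Omega_0)$ between $\psi(-\epsilon)$ and $\exp(-\delta J_0T)$, and between $\exp(\delta J_0T)$ and $\psi(\epsilon)$; such connections exist because, for $\delta$ small, the respective endpoints lie in the same components of $\Sp^\star(\R^{2n},\Omega_0)$, via the parity identity relating the connected component of $\Sp^\star$ to the parity of $\mu_{RS}$ on the relevant subpath, which is itself fixed by $\sign T=\sign\Gamma(\psi,0)$. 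Matching of the full $\mu_{RS}$-value over $I$ is then guaranteed by the same choice of $\sign T$ via \eqref{muCZRS}. Since this whole argument invokes only properties of $\mu_{RS}$ (not of any hypothetical $\mu'$), no circularity arises.
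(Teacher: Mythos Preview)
Your reduction to single crossings via Homotopy, Catenation, and Zero is fine, and the preliminary formula $\mu'\bigl(t\mapsto \exp(tJ_0T)|_{[-\delta,\delta]}\bigr)=\sign T$ is correctly derived from the axioms. The gap lies in the construction of the local model $\tilde\psi$.

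You require a symmetric $T\in\R^{2n\times 2n}$ with $\|T\|<2\pi$, $\sign T=\sign\Gamma(\psi,0)$, and $\tilde\psi$ crossing-free on $[-\epsilon,-\delta]\cup[\delta,\epsilon]$. For the flanking pieces to be crossing-free you need $\exp(\pm\delta J_0T)\in\Sp^\star$, which forces $T$ to be nondegenerate. But then $\sign T=p-q$ with $p+q=2n$ is necessarily \emph{even}, whereas $\sign\Gamma(\psi,0)$ has the parity of $k=\dim\Ker(\psi_0-\Id)$, which can be odd (e.g.\ $\psi_0=\bigl(\begin{smallmatrix}1&1\\0&1\end{smallmatrix}\bigr)$ in $\Sp(\R^2)$ gives $k=1$ and $\sign\Gamma=\pm1$). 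If you take $T$ degenerate to match an odd signature, then $\Ker T\subset\Ker(\exp(tJ_0T)-\Id)$ for all $t$, so $\exp(\pm\delta J_0T)\notin\Sp^\star$ and no crossing-free flanking path to $\psi(\pm\epsilon)\in\Sp^\star$ exists; the Zero axiom no longer kills those pieces. Your ``parity identity'' paragraph does not address this, and in fact cannot: the obstruction is arithmetical, not a matter of choosing the right component of $\Sp^\star$.

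A second, smaller issue: your invocation of property~5 in Theorem~\ref{proplag} yields a rel-endpoint homotopy of the \emph{Lagrangian} paths $\Gr\psi$ and $\Gr\tilde\psi$ in $\mathcal L_{2n}$, but the Homotopy axiom for $\mu'$ requires a homotopy in $\Sp(\R^{2n},\Omega_0)$. You would need to argue that such a Lagrangian homotopy can be pushed into the image of $\Gr$, which is not automatic.

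The paper's proof proceeds quite differently: it first derives Naturality from Homotopy~$+$~Catenation~$+$~Zero, then establishes the Product property by induction on $n$ (using the normal forms of Theorem~\ref{normalforms1} to connect any $A\in\Sp$ to a matrix of the form $\exp(J_0S')$ along a path of constant $\dim\Ker(\cdot-\Id)$). With Product in hand, the Normalization axiom splits into the nondegenerate ``Signature'' and degenerate ``Shear'' cases, and the characterization reduces to the earlier Lemma~\ref{charRS1}. The Product property is precisely what lets one decouple the $1$-eigenspace block (where the crossing lives) from the rest, circumventing the parity obstruction your global model $\exp(tJ_0T)$ runs into.
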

Before proving this theorem, we show that the Robbin-Salamon index is characterized by the fact that it extends Conley-Zehnder index and has all the properties stated in the previous section. This is made explicit in  Lemma \ref{charRS1}. We then use the characterization of the Conley-Zehnder index given in
Proposition \ref{caract} to give in Lemma \ref{charRS2} a characterization of the Robbin-Salamon index in terms of six properties. We then prove the theorem stated above.
\begin{lemma}\label{charRS1}
The  Robbin-Salamon index is characterized by the following properties:
\begin{enumerate}
\item{(Generalization)} it is a correspondence $\mu_{\textrm{RS}}$ which associates a half integer to any continuous path $\psi:[a,b]\rightarrow \Sp(\R^{2n},\Omega_0)$ of symplectic matrices and it coincides with
$\mu_{\textrm{CZ}}$ on paths starting from the identity matrix and ending at a matrix for which $1$ is not an eigenvalue;
\item \emph{(Naturality)} if $\phi,\psi : \left[0,1\right] \rightarrow \textrm{Sp}(\R^{2n},\Omega_0)$, we have
		 	$\mu_{\textrm{RS}}(\phi\psi\phi^{-1}) = \mu_{\textrm{RS}}(\psi)$;
\item\emph{(Homotopy)} it is invariant under homotopies with fixed end points;
\item\emph{(Catenation)} it is additive under catenation of paths;
\item\emph{(Product)} it has  the product property $\mu_{\textrm{RS}}(\psi'\oplus \psi'')=\mu_{\textrm{RS}}(\psi')+\mu_{\textrm{RS}}(\psi'')$;
\item\emph{(Zero)} it vanishes on any path $\psi:[a,b]\rightarrow\Sp(\R^{2n},\Omega)$ of matrices such that $\dim\Ker (\psi(t)-\Id) =k$ is constant for all $t\in [a,b]$;
\item\emph{(Shear)}on a symplectic shear $,\psi : \left[0,1\right] \rightarrow \textrm{Sp}(\R^{2n},\Omega_0)$ of the form $$\psi_t =
	\left(\begin{array}{cc}
		\Id & -tB \\
		0 & \Id
	\end{array} \right)=\exp t \left(\begin{array}{cc}
		0 &- B \\
		0 & 0
	\end{array} \right)=\exp t J_0  \left(\begin{array}{cc}
		0 &0 \\
		0 & B
	\end{array} \right)$$
	with $B$ symmetric, it is equal to $
		\mu_{\textrm{RS}}(\psi)= \half \sign B.$
\end{enumerate}
\end{lemma}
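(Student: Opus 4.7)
My plan is first to verify that the Robbin--Salamon index itself satisfies all seven listed properties (mostly already established: Generalization from the previous proposition, Naturality from Proposition \ref{prop:invariance}, Homotopy/Catenation/Product from Theorem \ref{proplag}, Zero from the proposition just above, and Shear from Proposition \ref{RSforshears}). The substantive content is uniqueness: any $\mu'$ satisfying the seven properties must equal $\mu_{RS}$. My plan for the uniqueness argument is to reduce an arbitrary path, via Homotopy and Catenation, to pieces on which the value of $\mu'$ is pinned down by Zero, Generalization, or Shear.

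Concretely, by Homotopy I may assume $\psi:[a,b] \to \Sp(\R^{2n},\Omega_0)$ is smooth with only regular crossings (standard perturbation argument, as in the definition of $\mu_{RS}$). Using Catenation to split at non-crossing points, I reduce to two kinds of pieces: crossing-free pieces, on which $\dim\Ker\bigl(\psi(t)-\Id\bigr)=0$ is constant and so Zero forces $\mu'=0=\mu_{RS}$; and pieces containing exactly one regular crossing, either in the interior or at a boundary.

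For a piece $\phi:[c,d] \to \Sp(\R^{2n},\Omega_0)$ with a single \emph{interior} regular crossing and with $\phi(c),\phi(d) \in \Sp^*(\R^{2n},\Omega_0)$, I extend $\phi$ by catenation as follows. Choose $\alpha \in \textrm{SP}(n)$ with $\alpha(1)=\phi(c)$ (possible because $\phi(c) \in \Sp^*$), and choose a path $\beta$ from $\phi(d)$ to $W^\pm$ that stays entirely in $\Sp^*$ (possible by Theorem \ref{prop:connexite}). Then $\beta$ has constant kernel dimension zero, so $\mu'(\beta)=0$ by Zero; the catenation $\alpha\cdot\phi\cdot\beta$ lies in $\textrm{SP}(n)$, so by Generalization and Catenation
\[
    \mu'(\phi) \;=\; \mu_{CZ}(\alpha\cdot\phi\cdot\beta)\;-\;\mu_{CZ}(\alpha).
\]
Since $\mu_{RS}$ also satisfies all the listed properties, the identical relation holds with $\mu_{RS}$ in place of $\mu'$, yielding $\mu'(\phi)=\mu_{RS}(\phi)$.

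For pieces containing a boundary crossing the extension trick fails, since one endpoint of $\phi$ has eigenvalue $1$ and so is not in $\Sp^*$. Here my plan is to apply Naturality to conjugate so that the generalized eigenspace $E_1$ of $1$ at the crossing sits in standard position as a symplectic subspace, then use Homotopy with fixed endpoints together with Product to split $\phi$ into a piece on $E_1$ (where the crossing lives) and a complementary piece having no eigenvalue $1$ (which vanishes by Zero). On the piece over $E_1$, a further Homotopy brings it to a symplectic shear, and the Shear property reads off the value. The main obstacle is this last step: rigorously executing the Homotopy so that a fixed symplectic Product decomposition is preserved throughout a neighborhood of the boundary crossing, and then verifying by direct computation that the resulting Shear value matches the half-signature contribution $\half \sign \Gamma(\psi,t_0)$ in the Robbin--Salamon formula.
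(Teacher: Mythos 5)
Your interior-crossing argument works: for a piece $\phi:[c,d]\rightarrow\Sp(\R^{2n},\Omega_0)$ with both endpoints in $\Sp^*(\R^{2n},\Omega_0)$, the identity $\mu'(\phi)=\mu_{CZ}(\alpha\cdot\phi\cdot\beta)-\mu_{CZ}(\alpha)$ follows from Generalization, Catenation and Zero, and since the same relation holds for $\mu_{RS}$ you get equality. The paths whose endpoint admits $1$ as an eigenvalue --- your ``boundary crossings'' --- are where the real content of the lemma lies, and your sketch does not close the gap there. Two separate problems: (i) your proposed Product split near the crossing time $t_0$ requires homotoping $\phi(t)$, for $t$ near $t_0$, to a family that is block-diagonal for the fixed symplectic splitting of $\R^{2n}$ into the generalized $1$-eigenspace of $\phi(t_0)$ and its symplectic complement; but $\phi(t)$ for $t\neq t_0$ need not preserve this splitting, so building that homotopy with fixed endpoints is precisely the missing technical step, not a routine manipulation; (ii) even after such a split, the restriction of $\phi(t_0)$ to the generalized $1$-eigenspace can carry nontrivial Jordan structure, so it is not a shear, and reducing it to one with controlled index is exactly the remaining difficulty.

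The paper avoids a local normal form for paths: it first reduces, via connectedness of $\Sp(\R^{2n},\Omega_0)$ and Catenation, to paths starting at $\Id$, and then joins the terminal matrix $A$, whenever $1$ is an eigenvalue of $A$, to a matrix without eigenvalue $1$ by an explicit path of determined index. Theorem~\ref{normalforms1} writes $A$ as a symplectic direct sum of a block in $\Sp^*$ and blocks $A^{(1)}_{r_j,d_j}$ built from Jordan blocks $J(1,r_j)$ with $d_j\in\{0,\pm1\}$; for each such block the paper catenates a path $\psi_1$ with constant $\dim\Ker(\psi_1(t)-\Id)$ (verified by a nontrivial determinant computation, hence index $0$ by Zero), a shear $\psi_2$ direct-summed with a fixed $\Sp^*$ matrix (index $\half\sign d_j$ by Shear and Product), and a path $\psi_3$ landing in $\Sp^*$ (index determined by Generalization). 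That explicit construction --- above all the verification that $\psi_1$ has constant kernel dimension --- is the piece your plan would need to supply; as written it leaves the half-signature boundary contribution $\half\sign\Gamma(\psi,t_0)$ unproven.
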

\begin{proof}
We have seen in the previous section that the index $\mu_{\textrm{RS}}$ defined by Robbin and Salamon satisfies all the above properties.
To see that those properties characterize this index, it is enough to show (since the group $\Sp(\R^{2n},\Omega_0)$ is connected and since we have the catenation property)
that those properties determine the index of any path starting from the identity. Since it must be a generalization of the Conley-Zehnder index
and must be additive for catenations of paths, it is enough to show that any symplectic matrix $A$ which admits $1$ as an eigenvalue can be linked to a matrix $B$ which does not admit 
$1$ as an eigenvalue by a continuous path whose index is determined by the properties stated. We have seen in 
Theorem \ref{normalforms1} that there is a basis of $\R^{2n}$ such that $A$ is the direct symplectic sum of a matrix which does not admit $1$ as eigenvalue and 
matrices of the form 
$$
A^{(1)}_{r_j,d_j}:=
\left(\begin{array}{cc}
J(1,r_j)& \operatorname{diag}(0,\ldots, 0,d_j)\bigl( J(1,r_j)^{-1}\bigr)^{\tau}\\
		0& \bigl( J(1,r_j)^{-1}\bigr)^{\tau}
		\end{array}\right);
$$
with $d_j$ equal to $0,1$ or $-1$. The dimension of the eigenspace of eigenvalue $1$  for 
$A^{(1)}_{r_j,d_j}$ is equal to $1$ if $d_j\neq 0$ and is equal to $2$ if $d_j=0$.
 In view of the naturality and the product property of the index, we can consider a direct sum of paths with the constant path on the symplectic subspace where $1$ is not an eigenvalue and we just have to build a path  in $\Sp(\R^{2r_j},\Omega_0)$ from $A^{(1)}_{r_j,d_j}$
 to a matrix which does not admit $1$ as eigenvalue and whose index is determined by the properties given in the statement.
 This we do by the catenation of three paths : we first build the path $\psi_1: [0,1] \rightarrow \Sp(\R^{2r_j},\Omega_0)$
defined by 
$$ \psi_1(t):=\left(\begin{array}{cc}
D(t,r_j)& \operatorname{diag}\bigl(c(t),0,\ldots,0,d(t)\bigr)\bigl( D(t,r_j)^{-1}\bigr)^{\tau}\\
		0& \bigl( D(t,r_j)^{-1}\bigr)^{\tau}
		\end{array}\right)
 $$
 with  $D(t,r_j)=\left(\begin{array}{cccccc} 1&1-t&0&\ldots &\ldots &0\\
                                                                      0&e^t&1-t&0&\ldots &0\\
                                                                      \vdots &0&\ddots &\ddots &0&\vdots\\
                                                                      0&\ldots &0&e^t&1-t &0\\
                                                                        0&\ldots &\ldots &0&e^t&1-t\\
                                                                          0&\ldots &\ldots &\ldots &0&e^t
                                                                      \end{array}\right)$, \\
                                                                      and 
 with $c(t)=td_j, ~d(t)=(1-t)d_j$. 
 Observe  that $\psi_1(0)=A^{(1)}_{r_j,d_j}$ and $\psi_1(1)$ is the symplectic direct sum of 
$\left(\begin{array}{cc} 1&c(1)=d_j\\
                                          0& 1
                                           \end{array}\right)$ and $\left(\begin{array}{cc}e\Id_{r_j-1}  &0\\0&e^{-1}\Id_{r_j-1}                                                                    \end{array}\right)$ and this last matrix does not admit $1$ as eigenvalue.\\

Clearly  $\dim\ker\bigl(\psi_1(t)-\Id\bigr)=2$ for all $t\in [0,1]$ when $d_j=0$; we now prove that $\dim\ker(\psi_1(t)-\Id)=1$ for all $t\in [0,1]$ when $d_j\neq0$. Hence the index of $\psi_1$ must always be zero by the zero property.\\
%%%%%%%
%%Proof of the non vanishing of the determinant
To prove that $\dim\ker(\psi_1(t)-\Id)=1$ we have to show the non vanishing of  the determinant of the $2r_j-1\times 2r_j-1$ matrix
$$
{\left(\begin{array}{ccccccccc} 1-t&0&\ldots &0&c(t)&0&\ldots&\ldots&0\\
                                                                      e^t-1&1-t&\ddots &0&0&0&\ldots&\ldots&0\\
                                                        0 &\ddots&\ddots &0&\vdots&\vdots&&&\vdots\\
                                                                         0&\ddots  &e^t-1&1-t&0&0&\ldots&\ldots&0\\
                                                                         \vdots&\ldots &0 &e^t-1&E^{r_j}_1(t)d(t)&E^{r_j}_2(t)d(t)&\ldots&\ldots&E^{r_j}_{r_j}(t)d(t)\\
                                                                         \vdots&\ldots&\ldots&0&E^2_{1}(t)&E^2_2(t)-1&\ldots&\ldots&E^{1}_{r_j}(t)\\
                                                                          0&\ldots&\ldots&0&\vdots&&&&\vdots\\
                                                                          \vdots&&&\vdots&\vdots&\vdots&\ddots&\ddots&\vdots\\
                                                                            0&\ldots&\ldots  &0&E^{r_j}_1(t)&E^{r_j}_2(t)&\ldots&\ldots&E^{r_j}_{r_j}(t)-1\\
  \end{array}\right) }    
 $$
where $E(t)$ is the  transpose of the inverse of $D(t,r_j)$ so is lower triangular with $E^{i}_i=e^{-t}$ for all $i>1$. 
This determinant is equal to
$$
(-1)^{r_j-1}c(t)(e^t-1)^{r_j-1}(e^{-t}-1)^{r_j-1}+(-1)^{r_j-1}d(t)(1-t)^{r_j-1}\det E'(t)
$$
where $E'(t)$ is obtained by deleting the first line and the last column in $E(t)-\Id$ so is equal to 
$$
{{ \left(\begin{array}{cccccc}
                                  (t-1)e^{-t}&e^{-t}-1&0&\ldots&0\\
                                    (t-1)^2e^{-2t}&(t-1)e^{-2t}&e^{-t}-1&\ddots&0\\[6mm]
                                     \vdots&&\ddots&\ddots&0\\[6mm]
                                          (t-1)^{r_j-2}e^{-(r_j-2)t}&(t-1)^{r_j-3}e^{-(r_j-2)t}&\ldots&(t-1)e^{-2t}&e^{-t}-1\\
                                       (t-1)^{r_j-1}e^{-(r_j-1)t}&(t-1)^{r_j-2}e^{-(r_j-1)t}&\ldots&(t-1)^2e^{-3t}&(t-1)e^{-2t}
  \end{array}\right) }}                                                                
$$
hence $\det E'(t)=(t-1) \det F_{r_j-2}(t)$ where
$$
F_m(t):={{ \left(\begin{array}{ccccc}
                                    (t-1)e^{-2t}&e^{-t}-1&0&\ddots&0\\
                                    (t-1^2)e^{-3t}&(t-1)e^{-2t}&e^{-t}-1&\ddots&\vdots\\[6mm]
                                     \vdots&\ddots&\ddots&\ddots&0\\[6mm]
                                      (t-1)^{m-1}e^{-(m-2t}&\ddots&(t-1)e^{-2t}&e^{-t}-1\\
                                      (t-1)^{m}e^{-(m-1)t}&\ldots&(t-1)^2e^{-3t}&(t-1)e^{-2t}
  \end{array}\right) }}                                                                
$$
and  we have $\det F_m(t)= (t-1)e^{-2t}\det F_{m-1}(t)-(e^{-t}-1)(t-1)e^{-t}\det F_{m-1}(t)=(t-1)e^{-t}\det F_{m-1}(t)$
so that, by induction on $m$, $\det F_m(t)=(t-1)^me^{-(m+1)t}$ hence the determinant we have to study is\\
$
(-1)^{r_j-1}c(t)(2-e^t-e^{-t})^{r_j-1}-(-1)^{r_j-1}d(t)(1-t)^{r_j} \det F_{r_j-2} (t)$\\
which is equal to \\
$
(-1)^{r_j-1}c(t)(2-e^t-e^{-t})^{r_j-1}+(-1)^{r_j}d(t)(1-t)^{r_j}(t-1)^{r_j-2}e^{-(r_j-1)t}$\\
hence to 
$$
c(t)(e^t+e^{-t}-2)^{r_j-1}+d(t)(1-t)^{2r_j-2}e^{-(r_j-1)t}
$$
 which never vanishes if $c(t)=td_j$ and
$d(t)=(1-t)d_j$ since $e^t+e^{-t}-2$ and $(1-t)$ are $\ge 0$.\\

%%%%%%%%%
We then  construct a path $\psi_2: [0,1] \rightarrow \Sp(\R^{2r_j},\Omega_0)$ which is constant on the symplectic subspace where $1$ is not an eigenvalue  and which
 is a symplectic shear on the first two dimensional symplectic vector space, i.e. 
 $$
 \psi_2(t):= \left(\begin{array}{cc} 1&(1-t)d_j\\
                                          0& 1
                                           \end{array}\right) \oplus    \left(\begin{array}{cc}e\Id_{r_j-1}  &0\\0&e^{-1}\Id_{r_j-1}                                                                    \end{array}\right);                                           
 $$  
then the index of $\psi_2$ is equal to $\half\sign d_j$. 
Observe that $\psi_2$ is constant if $d_j=0$; then the index of $\psi_2$ is zero. Observe that in all cases $\psi_2(1)=\Id_2\oplus  \left(\begin{array}{cc}e\Id_{r_j-1}  &0\\0&e^{-1}\Id_{r_j-1}\end{array}\right)$. \\
We then build $\psi_3: [0,1] \rightarrow \Sp(\R^{2r_j},\Omega_0)$ given by
$$
\psi_3(t):=\left(\begin{array}{cc} e^t & 0 \\ 
                                                        0 & e^{-t}
                  \end{array}\right)       \oplus    
                  \left(\begin{array}{cc}  e\Id_{r_j-1}  &0\\
                                                                             0&e^{-1}\Id_{r_j-1}  
                  \end{array}\right)
$$
which is the direct sum of a path whose Conley-Zehnder index is known and a constant path whose index is zero.
Clearly $1$ is not an eigenvalue of $\psi_3(1)$.
\end{proof}

Combining the above with the characterization of the Conley-Zehnder index, we now prove:
\begin{lemma}\label{charRS2}
	The Robbin-Salamon index for a path of symplectic matrices is characterized by the following properties:
	\begin{itemize}
		\item\emph{(Homotopy)} it is invariant under homotopies with fixed end points;
		\item\emph{(Catenation)} it is additive under catenation of paths;
		\item\emph{(Zero)} it vanishes on any path $\psi:[a,b]\rightarrow\Sp(\R^{2n},\Omega)$ of matrices such that $\dim\Ker (\psi(t)-\Id) =k$ is constant for all $t\in [a,b]$;
		\item\emph{(Product)} it has  the product property $\mu_{\textrm{RS}}(\psi'\oplus \psi'')=\mu_{\textrm{RS}}(\psi')+\mu_{\textrm{RS}}(\psi'')$;
		\item\emph{(Signature)} if $S=S\tr \in \R^{2n\times 2n}$ is a symmetric non degenerate matrix with all eigenvalues of absolute value
			$<2\pi $ and if $\psi(t) = \operatorname{exp}(J_0St)$ for $t\in \left[0,1\right],$ then
			$\mu_{\textrm{RS}}(\psi) = \half \sign S$ where $\sign S$ is the signature of $S$;
		\item\emph{(Shear)} if $\psi_t = \exp t J_0  \left(\begin{array}{cc}
		0 &0 \\
		0 & B
	\end{array} \right)$ for $t\in \left[0,1\right],$
	with $B$ symmetric, then $
		\mu_{\textrm{RS}}(\psi)= \half \sign B.$
	\end{itemize}
\end{lemma}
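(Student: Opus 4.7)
My plan is to establish uniqueness: any correspondence $\mu'$ satisfying the six listed axioms must coincide with $\mu_{RS}$. (The forward direction --- that $\mu_{RS}$ itself satisfies all six --- has been verified in the preceding sections: Homotopy, Catenation, Product and Zero follow from Theorem \ref{proplag} applied to the graph construction; Signature from the computation following Proposition \ref{murs}; Shear is Proposition \ref{RSforshears}.) Two preliminary consequences of the axioms are derived first. The \emph{reversal property}, $\mu'(\bar\psi)=-\mu'(\psi)$ with $\bar\psi(t):=\psi(1-t)$, follows because the out-and-back catenation $\psi\diamond\bar\psi$ is homotopic with fixed endpoints to the constant path at $\psi(0)$ via the explicit shrinking homotopy $H(s,t)=\psi\bigl(\min(2t,s,2-2t)\bigr)$; Zero annihilates the constant path (constant kernel dimension), and Catenation then yields $\mu'(\psi)+\mu'(\bar\psi)=0$. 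The \emph{loop property}, $\mu'(\phi)=2\mu(\phi)$ for loops $\phi$ at $\Id$, is obtained by noting that Homotopy and Catenation make $\mu'$ descend to a group homomorphism from $\pi_1(\Sp(\R^{2n},\Omega_0),\Id)\cong\Z$ to $\Z$, whence $\mu'(\phi)=c\,\mu(\phi)$ for a universal constant $c$. Product and Zero reduce the determination of $c$ to the two-dimensional generator $\phi(t)=R(2\pi t)$ in $\Sp(\R^2,\Omega_0)$; splitting by Catenation at $t=\half$ gives $\mu'(\phi)=\mu'(\phi_1)+\mu'(\phi_1')$ with $\phi_1(t)=R(\pi t)=\exp(tJ_0\pi\Id_2)$ and $\phi_1'(t)=R(\pi(1+t))$. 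Signature gives $\mu'(\phi_1)=\half\sign(\pi\Id_2)=1$, and since the $2\pi$-periodicity of $R$ identifies $\phi_1'$ with the reverse of $\alpha(t):=R(-\pi t)=\exp(tJ_0(-\pi\Id_2))$, the reversal property together with Signature yields $\mu'(\phi_1')=-\mu'(\alpha)=-\half\sign(-\pi\Id_2)=1$, so $c=2$.

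With the loop property established, Proposition \ref{caract} forces $\mu'|_{SP(n)}=\mu_{CZ}|_{SP(n)}=\mu_{RS}|_{SP(n)}$, since the three characterizing axioms it requires (Homotopy, Loop, Signature) are all verified for $\mu'$ --- the Loop axiom via Lemma \ref{lem:prod} and Catenation, which give $\mu'(\phi\psi)=\mu'(\psi\diamond\phi)=\mu'(\psi)+\mu'(\phi)=\mu'(\psi)+2\mu(\phi)$. To pass to an arbitrary continuous path $\psi:[0,1]\to\Sp(\R^{2n},\Omega_0)$ whose endpoint $A:=\psi(1)$ admits $1$ as an eigenvalue, I would mimic the tail construction of Lemma \ref{charRS1}: blockwise on the generalized $1$-eigenspace of $A$, catenate $\psi$ with $\tau=\psi_1\diamond\psi_2\diamond\psi_3$, where $\psi_1$ has constant kernel dimension (so $\mu'(\psi_1)=0$ by Zero), $\psi_2$ is (the reverse of) a standard symplectic shear so that $\mu'(\psi_2)=\half\sign d_j$ by Shear, the reversal property, and Product, and $\psi_3$ brings the result into $\Sp^\star(\R^{2n},\Omega_0)$ along a path whose $\mu'$ has already been determined by the $SP(n)$ case. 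Catenation assembles these pieces to fix $\mu'(\psi)$ uniquely, and because the identical recipe produces the same value for $\mu_{RS}$, one concludes $\mu'=\mu_{RS}$.

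The principal obstacle is this final assembly, because Lemma \ref{charRS1}'s tail construction invokes Naturality to conjugate $A$ into the block-normal form of Theorem \ref{normalforms1} before splitting, and Naturality is \emph{not} among the six axioms of the present lemma. The cleanest way around this is to derive Naturality as an additional consequence of the axioms: on $SP(n)$ it is automatic from $\mu'|_{SP(n)}=\mu_{CZ}|_{SP(n)}$ combined with the Naturality of $\mu_{CZ}$, and it extends to arbitrary paths by flanking each path with appropriate $SP(n)$-tails on both sides of the conjugation and applying Catenation together with the $SP(n)$ case. Once Naturality is secured, the blockwise reduction proceeds exactly as in Lemma \ref{charRS1}; the delicate point is verifying that the tails on the two sides of the conjugation can be chosen compatibly so that Catenation actually isolates the difference $\mu'(\phi\psi\phi^{-1})-\mu'(\psi)$.
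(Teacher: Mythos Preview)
Your overall strategy matches the paper's: derive Naturality and the Loop property from the six axioms, then invoke Lemma \ref{charRS1}. Your computation of the loop index for the generator $\phi(t)=R(2\pi t)$ via splitting at $t=\half$, Signature, and reversal is exactly the paper's argument.

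The one place you take a detour is in deriving Naturality. You propose to establish it first on $SP(n)$ via $\mu'|_{SP(n)}=\mu_{CZ}$ and then bootstrap to general paths using $SP(n)$-tails, and you correctly flag the compatibility of these tails under conjugation as a ``delicate point''. The paper avoids this entirely: it observes that Homotopy, Catenation and Zero \emph{alone} already imply Naturality, with no reference to $SP(n)$. The argument (left implicit in the paper) is a square homotopy: for $\phi,\psi:[0,1]\to\Sp(\R^{2n},\Omega_0)$, set $G(s,t)=\phi_s\psi_t\phi_s^{-1}$; the two ways around the square from $(0,0)$ to $(1,1)$ are homotopic with fixed endpoints, and the sides $t=0,1$ are paths $s\mapsto\phi_s\psi_j\phi_s^{-1}$ with constant $\dim\Ker(\cdot-\Id)$, hence zero index by Zero. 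Catenation then gives $\mu'(\phi\psi\phi^{-1})=\mu'(\phi_0\psi\phi_0^{-1})$, and a second square with a path from $\Id$ to $\phi_0$ reduces this to $\mu'(\psi)$. With Naturality obtained this cheaply, the ``delicate point'' you worry about simply does not arise, and Lemma \ref{charRS1} applies directly.
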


\begin{proof}
Remark first that the invariance by homotopies with fixed end points, the additivity under catenation and the zero property
imply the naturality; they also imply the constancy on the components of $ \textrm{SP}(n).$ 
The signature property stated above is the signature property which arose in the characterization of the Conley-Zehnder index given in proposition \ref{caract}.
To be sure that our index is a generalization of the Conley-Zehnder index, there remains just  to prove the loop property.
Since the product of a loop $\phi$ and a path $\psi$ starting at the identity is homotopic to the catenation of $\phi$ and $\psi$,
it is enough to prove that the index of a loop $\phi$  with $\phi(0) = \phi(1) = \Id$ is given by
$ 2 \operatorname{deg}(\rho \circ \phi)$.
Since two loops $\phi$ and $\phi'$ are homotopic if and only if $\deg(\rho\circ\phi)=\deg(\rho\circ\phi'),$ it is enough to consider the loops $\phi_n$ defined by $\phi_n(t):=
		\left(\begin{array}{cc} 
			\cos 2\pi nt & -\sin 2\pi nt \\
			\sin 2\pi nt & \cos 2\pi nt \end{array}\right)\oplus \Id ~
		 \in \Sp(\R^2\oplus \R^{2n-2})$ and since $\phi_n(t)=(\phi_1(t))^n$ it is enough to show, using the homotopy, catenation, product and zero properties that 	the index of the loop given by $\phi(t)=	\left(\begin{array}{cc} 
			\cos 2\pi t & -\sin 2\pi t \\
			\sin 2\pi t & \cos 2\pi t \end{array}\right)$ for $t\in [0,1]$ is equal to $2$. This is true, using the signature property,			writing $\phi$ as the catenation of the path $\psi_1(t):=\phi(\frac{t}{2})=\exp tJ_0\left(\begin{array}{cc} 
			\pi  & 0\\
			0 & \pi  \end{array}\right)$ for $t\in [0,1]$  whose index is $1$ and the path $\psi_2(t):=\phi(\frac{t}{2})=\exp tJ_0\left(\begin{array}{cc} 
			\pi  & 0\\
			0 & \pi  \end{array}\right)$ for $t\in [1,2]$. 
			We introduce the  path in the reverse direction  $\psi^-_2(t):=\exp -tJ_0\left(\begin{array}{cc} 
			\pi  & 0\\
			0 & \pi  \end{array}\right)$ for  $t\in [0,1]$ whose index is $-1$; since the catenation of $\psi^-_2$ and $\psi_2$ is homotopic to
			      the constant path whose index is zero,  the index of  $\phi_1$ is given by the index of $\psi_1$ minus the index of $\psi^-_2$
			      hence is equal to $2$.
\end{proof}

\begin{proof}[ of theorem \ref{thm:Thechar}] 
Observe that any symmetric matrix can be written as the symplectic direct sum of a non degenerate symmetric matrix $S$ and a matrix $S'$ of the form
$ \left(\begin{array}{cc}
		0 &0 \\
		0 & B
	\end{array} \right)$ where $B$ is symmetric and may be degenerate. The index of the path $\psi_t=\exp t J_0S'$ is equal to the index of the path $\psi'_t=\exp t \lambda J_0S'$
	for any $\lambda>0$. Hence the signature and shear conditions, in view of the product condition, can be simultaneously written as: 
	if $S=S\tr \in \R^{2n\times 2n}$ is a symmetric  matrix with all eigenvalues of absolute value
			$<2\pi $ and if $\psi(t) = \operatorname{exp}(J_0St)$ for $t\in \left[0,1\right],$ then
			$\mu_{\textrm{RS}}(\psi) = \half \sign S$. This is the normalization condition stated in the theorem.\\
			
From Lemma \ref{charRS2}, we just have to prove that the product property is a consequence of the other properties. We prove it for paths with values 
in $\Sp(\R^{2n},\Omega_0)$ by induction on $n$, the case $n=1$ being obvious.
Since $\psi'\oplus\psi''$ is homotopic with fixed endpoints to the catenation 
of $\psi'\oplus\psi''(0)$ and $\psi'(1)\oplus\psi''$, it is enough to show that the
index of $A\oplus\psi$ is equal to the index of $\psi$ for any fixed $A\in \Sp(\R^{2n'},\Omega_0)$ with $n'<n$ and any
continuous path $ \psi : [0,1] \rightarrow  \Sp(\R^{2n''},\Omega_0)$ with $n''<n$.

Observe also (using subsection \ref{subsecnormalforms} and proposition \ref{charRS1}) that any symplectic matrix $A$ can be linked by a path $\phi(s)$
with constant dimension of the $1$-eigenspace to a matrix of the form $\operatorname{exp}(J_0S')$ with $S'$ a 
symmetric  $n'\times n'$ matrix with all eigenvalues of absolute value $<2\pi $. The index of $A\oplus\psi$
is equal to the index of $\operatorname{exp}(J_0S')\oplus \psi$; indeed  $A\oplus\psi$ is homotopic with fixed endpoints
to the catenation of the three paths $\phi_s\oplus\psi(0)$, $\operatorname{exp}(J_0S')\oplus \psi$ and the path $\phi_s\oplus\psi (1)$
in  the reverse order, and the index of the first and third paths are zero since the dimension of the $1$-eigenspace does not vary along those paths.

Hence it is enough to show that the index of $\operatorname{exp}(J_0S')\oplus \psi$ is the same as the index of $\psi$. This is true because  the map 
$\mu$
sending  a path $\psi$  in $\Sp(\R^{2n''},\Omega_0)$ (with $n''<n$) to the index of $\operatorname{exp}(J_0S')\oplus \psi$ has the four properties stated in the theorem,
and these characterize the Robbin-Salamon index for those paths by induction hypothesis. 
It is clear that $\mu$ is invariant under homotopies, additive for catenation and equal to
zero on paths $\psi$ for which the dimension of the $1$-eigenspace is constant.  Furthermore
$\mu(\exp t(J_0S))$ which is the index of $\exp(J_0S')\oplus \exp t(J_0S)$ is equal to $\half \sign S$, because the path
$\exp tJ_0(S'\oplus S)$ whose index is $\half \sign (S'\oplus S)= \half \sign S' + \half \sign S$ is homotopic with fixed 
endpoints with the catenation of $\exp t(J_0S')\oplus \Id=\exp tJ_0(S'\oplus 0)$, whose index is $\half \sign S'$,
and the path $\exp(J_0S')\oplus \exp t(J_0S)$.
\end{proof}			
	
%%%%%%%
%%%%%%%
%%%%%%%%%%%%%%%%%%%%%%%%%
\subsection{Another Robbin-Salamon index for paths of symplectic matrices}
In \cite{RobbinSalamon} Robbin and Salamon associate to a path of symplectic matrices $\psi : [0,1]\rightarrow \Sp(\R^{2n},\Omega_0)$
the index
\begin{equation*}
	\mu_{\textrm{RS}2}(\psi) := \mu_{\textrm{RS}}(\psi V,V)
\end{equation*}
where $V = \{0\} \times \R^n$ is a fixed Lagrangian in $\R^{2n}$ and $\psi V$ is the path of Lagrangians defined by
$t \mapsto \psi_t V$.

The properties of theorem \ref{proplag} imply that \cite{RobbinSalamon}
\begin{itemize}
	\item $\mu_{RS2}$ is invariant under homotopies with fixed endpoints and two paths with the same endpoints are homotopic with fixed endpoints if and only if they have the same $\mu_{RS2}$ index,
	\item $\mu_{RS2}$ is additive under catenation of paths,
	\item $\mu_{RS2}$ has the product property $\mu_{RS}(\psi'\oplus\psi'') = \mu_{RS}(\psi')+\mu_{RS}(\psi'')$ as in proposition \ref{proprietescz},
	\item vanishes on a path whose image lies in $$\Sp_k(\R^{2n},\Omega_0,V) = \{ A \in \Sp(\R^{2n},\Omega_0) \, \vert \, \dim AV\cap V = k \}$$
		for a given $k \in \{0, \ldots, n \}$,
	\item has value $\half \sign B(0) - \half \sign B(1)$ when $\psi_t =
		\left(\begin{array}{cc}
			\Id & B(t) \\
			0 & \Id
		\end{array} \right)$.
\end{itemize}
Robbin and Salamon also prove in \cite{RobbinSalamon} that those properties characterize this index.
\begin{proposition}
	The two indices $\mu_{\textrm{RS}}$ and $\mu_{\textrm{RS}2}$ associated to paths of symplectic matrices do not coincide in general.
\end{proposition}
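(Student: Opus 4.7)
My plan is to exhibit a simple explicit counterexample in dimension $2n=2$, namely the rotation path. Let
\[
	\psi : [0,1] \rightarrow \Sp(\R^2,\Omega_0), \qquad \psi_t := \exp(tJ_0 \Id) = \begin{pmatrix}\cos t & -\sin t \\ \sin t & \cos t\end{pmatrix},
\]
so that $1$ is not an eigenvalue of $\psi_1$. Since $\Id$ is symmetric with eigenvalues of absolute value $1<2\pi$, the signature property of $\mu_{\textrm{RS}}$ (which coincides with the Conley--Zehnder signature property on such paths and is established directly from the formula in proposition \ref{murs}) yields $\mu_{\textrm{RS}}(\psi)=\half\sign\Id = 1$.

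Next I would compute $\mu_{\textrm{RS}2}(\psi)$ from the definition, taking $V=\{0\}\times \R$. The path of Lagrangians is $\Lambda_t := \psi_t V = \{(-y\sin t, y\cos t)\,\vert\, y\in\R\}$, and the condition $\Lambda_t\cap V\neq \{0\}$ forces $\sin t=0$, so the unique crossing in $[0,1]$ is $t=0$. With supplementary Lagrangian $W=\R\times\{0\}$, parametrize $\Lambda_t=\{v+\alpha_t(v)\,\vert\, v\in V\}$ for small $t$ by $\alpha_t(0,z)=(-z\tan t,0)$; then the associated symmetric form on $V$ is $\underline{\alpha}_t\bigl((0,z),(0,z')\bigr)=\Omega_0\bigl((0,z),(-z'\tan t,0)\bigr)=zz'\tan t$. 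Differentiating at $t=0$ gives the crossing form $\Gamma(\psi V, V, 0)\bigl((0,z),(0,z')\bigr)=zz'$, which is positive definite of signature $+1$. Since this is a boundary crossing, formula \eqref{Maslov} gives
\[
	\mu_{\textrm{RS}2}(\psi) = \half \sign\Gamma(\psi V, V, 0) = \tfrac{1}{2}.
\]

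Comparing the two values, $\mu_{\textrm{RS}}(\psi)=1\neq \tfrac{1}{2}= \mu_{\textrm{RS}2}(\psi)$, which proves the claim. I do not foresee any real obstacle: the only care needed is in the sign conventions for $\Omega_0$, in verifying independence of the chosen supplementary Lagrangian (already established in the text), and in matching the normalizations of the two indices (handled respectively by the signature property of $\mu_{\textrm{RS}}$ and the direct computation above for $\mu_{\textrm{RS}2}$).
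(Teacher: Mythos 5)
Your counterexample is correct, and I verified the two computations: for $\psi_t=\exp(tJ_0\Id)$ the only crossing of $\Gr\psi$ with $\Delta$ is at $t=0$ with $\Ker(\psi_0-\Id)=\R^2$ and crossing form $S_0=\Id$, giving $\mu_{\textrm{RS}}(\psi)=\half\sign\Id=1$; and for $\psi V$ with $V=\{0\}\times\R$ the only crossing is at $t=0$, your parametrization $\alpha_t(0,z)=(-z\tan t,0)$ is right, $\underline{\alpha}_t\bigl((0,z),(0,z')\bigr)=zz'\tan t$ differentiates to $zz'$, which is $1$-dimensional and positive definite, giving $\mu_{\textrm{RS}2}(\psi)=\half$. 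This is a genuinely different route from the paper. The paper never computes either index on a specific path; instead it exploits the structural fact (Proposition \ref{prop:invariance}) that $\mu_{\textrm{RS}}$ is conjugation-invariant while $\mu_{\textrm{RS}2}$ is not: taking $\psi_t$ lower block-triangular so that $\psi_t V=V$ for all $t$ (hence $\mu_{\textrm{RS}2}(\psi)=0$) and conjugating by $J_0$ to obtain a symplectic shear $\psi'$ with $\mu_{\textrm{RS}2}(\psi')=\half\sign C(1)-\half\sign C(0)\neq 0$, the paper derives a contradiction with $\mu_{\textrm{RS}}(\psi)=\mu_{\textrm{RS}}(\psi')$. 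The paper's argument explains \emph{why} the indices differ (failure of naturality for $\mu_{\textrm{RS}2}$) and pinpoints a whole family of examples via symplectic shears, while your rotation path gives the most concrete and self-contained counterexample, with the small cost of requiring the explicit crossing-form computation that you carried out.
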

\begin{proof}
	Consider the path $\psi : [0,1]\rightarrow \Sp(\R^{2n},\Omega_0) : t\mapsto \psi_t=
	\left(\begin{array}{cc}
			\Id & 0\\
			C(t) & \Id
	\end{array} \right)$.
	Since $\psi_t V \cap V= V \quad \forall t , \quad \psi_t$ lies in $\Sp_n(\R^{2n},\Omega_0,V) \quad \forall t$ and
	$\mu_{\textrm{RS}2}(\psi) = 0$.
	
	Define $\phi =
	\left(\begin{array}{cc}
		0 & \Id \\
		-\Id & 0
	\end{array}\right)$
	and $\psi' = \phi\psi \phi^{-1}$ so that $\psi'_t =
	\left(\begin{array}{cc}
		\Id & -C(t) \\
		0 & \Id
	\end{array}\right)$.
	Then
	\begin{equation*}
		\mu_{\textrm{RS}2}(\psi') = \half \sign C(1) - \half \sign C(0)
	\end{equation*}
	which is in general different from $\mu_{\textrm{RS}2}(\psi)$.

	On the other hand, by \eqref{prop:invariance}, $\mu_{\textrm{RS}}(\psi) = \mu_{\textrm{RS}}(\psi')$.
\end{proof}
\begin{remarque}
	The index $\mu_{\textrm{RS}2}$ vanishes on a path whose image lies into one of the $(n+1)$ strata defined by
	$\Sp_k(\R^{2n},\Omega_0,V) = \{ A \in \Sp(\R^{2n},\Omega_0) \, \vert \, \dim AV\cap V = k \}$ for $0\le k\le n$,
	whereas the index $\mu_{\textrm{RS}}$ vanishes on a path whose image lies into one of the $(2n+1)$ strata defined by the set of symplectic matrices
	whose eigenspace of eigenvalue $1$ has dimension $k$ (for $0\le k\le 2n)$.
\end{remarque}
\begin{proposition}
	The two indices $\mu_{\textrm{RS}}$ and $\mu_{\textrm{RS}2}$ coincide on symplectic shears.
	\end{proposition}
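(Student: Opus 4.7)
The plan is to combine two formulas already established in the text. By Proposition \ref{RSforshears}, for a shear $\psi_t = \left(\begin{array}{cc}\Id & B(t)\\ 0 & \Id\end{array}\right)$ one has
$$\mu_{\textrm{RS}}(\psi) \,=\, \half \sign B(0) - \half \sign B(1).$$
On the other hand, the list of defining properties of $\mu_{\textrm{RS}2}$ recalled at the beginning of this subsection includes exactly the same value $\half\sign B(0)-\half\sign B(1)$ for $\mu_{\textrm{RS}2}$ on such a shear. Comparing the two identities yields the claim immediately.

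If one prefers to derive the $\mu_{\textrm{RS}2}$-value from scratch rather than quote it, the plan is to compute $\mu_{\textrm{RS}}(\psi V,V)$ directly for $V=\{0\}\times\R^n$. A point of $\Lambda_t:=\psi_t V$ has the form $(B(t)y,y)$, and the crossings are precisely the times $t$ with $\ker B(t)\neq 0$. At such a crossing $t_0$, one would use the horizontal Lagrangian supplement $W=\R^n\times\{0\}$ to $\Lambda_{t_0}$ (it is always transverse to $\Lambda_{t_0}$) and observe that the graph map $\alpha_t:\Lambda_{t_0}\to W$ representing $\Lambda_t$ sends $(B(t_0)y,y)$ to $\bigl((B(t)-B(t_0))y,0\bigr)$. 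A short calculation then identifies the crossing form as $-\dot B(t_0)|_{\ker B(t_0)}$. After a homotopy with fixed endpoints making all crossings regular, summing signatures via formula \eqref{Maslov} produces $\half \sign B(0) - \half \sign B(1)$, matching the value of $\mu_{\textrm{RS}}$.

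The main obstacle is essentially nonexistent: once the two explicit endpoint formulas are in hand, the equality is immediate. The only step requiring (routine) care, should one take the second route, is the direct computation of the crossing form, which follows from a standard graph parametrization of Lagrangians transverse to $W$.
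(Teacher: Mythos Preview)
Your first paragraph is exactly the paper's proof: quote Proposition \ref{RSforshears} for $\mu_{\textrm{RS}}$ and quote the listed shear property for $\mu_{\textrm{RS}2}$, and observe that both equal $\half\sign B(0)-\half\sign B(1)$. Your optional second paragraph gives a correct direct computation of the crossing form for $\mu_{\textrm{RS}2}$; it is not in the paper, but it is sound (and the final summation step you indicate is precisely the Localization property of Theorem \ref{proplag}, up to swapping the roles of the two $\R^n$ factors).
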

\begin{proof}
	Indeed, Robbin and Salamon have shown that if $\psi_t =
	\left(\begin{array}{cc}
		\Id & B(t) \\
		0 & \Id
	\end{array} \right)$ then
	$
		\mu_{\textrm{RS}2}(\psi)=\half \sign B(0) - \half \sign B(1).
	$
	We have proven in proposition \ref{RSforshears} that $\mu_{\textrm{RS}}(\psi)=\half \sign B(0) - \half \sign B(1)$.
 \end{proof}
 
 %%%%%%%%%%%%%%%%%%%%%%%%%%%%%%%%%%%%%%%%%%
%%%%%%%%%%%%%%%%%%%%%%%%%%%%%%%%%%%%%%%%%%
%%%%%%%%%%%%%%%%%%%%%%%%%%%%%%%%%%%%%%%%%%%%%%%%%%%%%%%%%%%%%%%%%%%
%%%%%%%%%%%%%%%%%%%%%%%%%%%%%%%%%%%%%%%%%%%%%%%%%%%%%%%%%%%%%%%%%%

%%%%%%%%%%%%%%%%%%%%%%%%%%%%%%%%%%%%%%%%%%%%%%%%%%%%%%%
%%%%%%%%%%%%%%%%%%%%%%%%%%%%%%%%%%%%%%%%%%%%%%%%%%%%%%%
%%%%%%%%%%%%%%%%%%%%%%%%%%%%%%%%%%%%%%%%%%%%%%%%%%%%%%%

\end{document}